\newcommand\ttimes{\mathbin{\ThisStyle{\ensurestackMath{%
  \stackengine{-1\LMpt}{\SavedStyle\times}
  {\SavedStyle_{\hstretch{.9}{\mkern1mu\sim}}}{O}{c}{F}{T}{S}}}}}
\newtheorem{theorem}{Theorem}[section]
\newtheorem{thmx}{Theorem}
\newtheorem*{problem}{Problem}
\newtheorem{proposition}[theorem]{Proposition}
\newtheorem{lemma}[theorem]{Lemma}
\newtheorem{corollary}[theorem]{Corollary}
\newtheorem{claim}[]{Claim}
\theoremstyle{definition}
\newtheorem{definition}[theorem]{Definition}
\theoremstyle{remark}
\newtheorem{remark}[theorem]{Remark}
\numberwithin{equation}{section}
\newcommand{\dv}{\mathrm{div}}
\newcommand{\lb}[1]{\langle#1\rangle}
\newcommand{\mf}{\mathbf}
\newcommand{\mb}{\mathbb}
\newcommand{\mc}{\mathcal}
\newcommand{\ms}{\mathscr}
\newcommand{\mk}{\mathfrak}
\newcommand{\mr}{\mathrm}
\newcommand{\oli}{\overline}
\newcommand{\wti}{\widetilde}
\newcommand{\res}{\scaleobj{1.75}{\llcorner}}
\newcommand{\A}{\mathcal A}
\newcommand{\C}{\mathcal C}
\newcommand{\Vol}{\mathrm{Vol}}
\newcommand{\Area}{\mathrm{Area}}
\newcommand{\Id}{\mathrm{Id}}
\newcommand{\dist}{\operatorname{dist}}
\newcommand{\lc}{\scalebox{1.8}{$\llcorner$}}
\newcommand{\rom}[1]{\expandafter\romannumeral #1}
\newcommand{\Rom}[1]{\uppercase\expandafter{\romannumeral #1}}
\DeclareMathOperator{\Ric}{Ric}
\DeclareMathOperator{\Hess}{Hess}
\DeclareMathOperator{\Diff}{Diff}
\DeclareMathOperator{\spt}{spt}
\DeclareMathOperator{\interior}{int}
\DeclareMathOperator{\closure}{Clos}
\DeclareMathOperator{\Graph}{Graph}
\DeclareMathOperator{\VC}{\mc V\C}
\title{Existence of four minimal spheres in $S^3$ with a bumpy metric}
\author{Zhichao Wang}
\address{Shanghai Center for Mathematical Science, 2005 Songhu Road, Fudan University, Shanghai, 200438, China}
\email{zhichao@fudan.edu.cn}
\author{Xin Zhou}
\address{Department of Mathematics, 531 Malott Hall, Cornell University, Ithaca, NY 14853, USA}
\email{xinzhou@cornell.edu}
\begin{document}

\begin{abstract}
We prove that in the three dimensional sphere with a bumpy metric or a metric with positive Ricci curvature, there exist at least four distinct embedded minimal two-spheres. This confirms a conjecture of S. T. Yau in 1982 for bumpy metrics and metrics with positive Ricci curvature. The proof relies on a multiplicity one theorem for the Simon-Smith min-max theory. 
\end{abstract}

\maketitle

\setcounter{section}{-1}


\section{Introduction}
In his famous 1982 Problem Section \cite{Yau82}, S. T. Yau posed the following problem. 
\begin{problem}[\cite{Yau82}*{Problem 89}]
Prove that there are four distinct embedded minimal spheres in any manifold diffeomorphic to $S^3$.  
\end{problem}

In this paper, we provide a solution of this problem when the metric is bumpy or has positive Ricci curvature. Note that a metric $g$ on a given closed manifold $M$ is called bumpy if every closed embedded minimal hypersurface is non-degenerate. White \cite{Whi91} proved that the set of bumpy metrics is generic in the Baire sense. Our first main result is as follows, and we refer to Theorem \ref{thm:4 minimal spheres in bumpy} for a more general statement and the proof therein.

\begin{thmx}
\label{thm:theoremA}
Assume that $g$ is a bumpy metric or a metric with positive Ricci curvature on $S^3$. Then there exist at least four distinct embedded minimal two-spheres in $(S^3, g)$. 
\end{thmx}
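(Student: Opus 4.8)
The plan is to run the Simon--Smith min-max theory for sweepouts of $S^{3}$ by embedded $2$-spheres. For every integer $p\ge 1$ this produces a \emph{$p$-width} $\omega_{p}=\omega_{p}(S^{3},g)>0$, namely the infimum, over $p$-parameter sweepouts with (possibly degenerate, possibly disconnected) $2$-sphere slices, of the maximal area; these satisfy $0<\omega_{1}\le\omega_{2}\le\omega_{3}\le\cdots$, and such sweepouts exist for every $p$ (obtained from one-parameter sweepouts by standard constructions). Min-max realization, together with the genus bounds of De~Lellis--Pellandini and their multi-parameter refinements, shows that each $\omega_{p}$ is the mass of a stationary integral varifold whose support is a disjoint union of closed embedded minimal surfaces, each of genus $0$ since $S^{3}$ carries no closed embedded non-orientable surface; that is, a disjoint union of embedded minimal $2$-spheres, a priori with integer multiplicities and with total Morse index at most $p$.

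\textbf{The bumpy case.} Here I would invoke the multiplicity one theorem for the Simon--Smith min-max theory established in the paper: when $g$ is bumpy, each $\omega_{p}$ is realized by a \emph{disjoint} union $\Si_{p}=\Si_{p,1}\sqcup\cdots\sqcup\Si_{p,N_{p}}$ of pairwise distinct embedded minimal $2$-spheres, each with multiplicity one, so that $\omega_{p}=\sum_{i}\Area(\Si_{p,i})$ and $\sum_{i}\Index(\Si_{p,i})\le p$. The next step is strict monotonicity $\omega_{p}<\omega_{p+1}$ for all $p$: were $\omega_{p}=\omega_{p+1}$, the Lusternik--Schnirelmann argument of Marques--Neves, adapted to this setting and combined with the multiplicity one conclusion, would produce infinitely many embedded minimal $2$-spheres of area $\omega_{p}$ and index at most $p$; this contradicts the fact that, for a bumpy metric, closed embedded minimal surfaces of bounded area and bounded index are isolated (Sharp's compactness theorem plus non-degeneracy). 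Hence the collections $\Si_{1},\Si_{2},\Si_{3},\ldots$ have pairwise distinct areas, so are pairwise distinct as finite sets of minimal $2$-spheres; since a set with infinitely many distinct finite subsets is infinite, $(S^{3},g)$ contains infinitely many, in particular at least four, distinct embedded minimal $2$-spheres.

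\textbf{The positive Ricci case.} When $\Ric(g)>0$, two classical facts simplify matters: by Frankel's theorem any two closed minimal surfaces in $(S^{3},g)$ intersect, so every min-max surface is connected; and, since every closed minimal surface in positive Ricci curvature is unstable, a min-max surface of multiplicity $\ge 2$ is ruled out by the standard catenoid-estimate argument. Thus each $\omega_{p}$ is realized by a single embedded minimal $2$-sphere $\Si_{p}$ of area $\omega_{p}$, with $1\le\Index(\Si_{p})\le p$. If $\omega_{1}<\omega_{2}<\omega_{3}<\omega_{4}$, these are four distinct minimal $2$-spheres and we are done; if instead $\omega_{j}=\omega_{j+1}$ for some $j\le 3$, the Lusternik--Schnirelmann argument again yields infinitely many embedded minimal $2$-spheres of area $\omega_{j}$. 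Either way $(S^{3},g)$ contains at least four distinct embedded minimal $2$-spheres. (Alternatively, one approximates $g$ by bumpy metrics $g_{k}\to g$, applies the bumpy case, and passes to a limit using the uniform area bound $\omega_{4}(g_{k})$, the index bound, Sharp's compactness, and Frankel's theorem for the limit.)

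\textbf{Main obstacle.} The technical heart is the multiplicity one theorem for Simon--Smith min-max with genus control used in the bumpy case; without it the widths cannot be converted into genuinely distinct minimal $2$-spheres, as the round metric shows, where $\omega_{1}=\omega_{2}=\omega_{3}=\omega_{4}=4\pi$ arises from a single great sphere counted with varying multiplicity. I also expect nontrivial care to be needed in (i) establishing the genus bound and the multiplicity one conclusion for \emph{multi-parameter} $2$-sphere sweepouts, not merely the classical one-parameter case, and (ii) carrying out the Lusternik--Schnirelmann step within the Simon--Smith framework and combining it with compactness, so as to upgrade an equality $\omega_{p}=\omega_{p+1}$ of consecutive widths into the existence of infinitely many minimal $2$-spheres at that area level.
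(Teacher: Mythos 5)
Your positive-Ricci argument is essentially the paper's: positive Ricci rules out stable minimal spheres, Frankel forces connectedness, and instability plus the new multiplicity one theorem force multiplicity one, so the four Lusternik--Schnirelmann widths attached to the $\mb{RP}^3$-structure of the space of embedded spheres give four distinct minimal spheres. The bumpy case, however, has a genuine gap. The multiplicity one theorem that is actually available in the Simon--Smith setting (Theorem \ref{thm:a rough multiplicity one theorem} / Theorem \ref{thm:classical multiplicity one}) asserts multiplicity one only for \emph{unstable two-sided} components; a stable component may occur with multiplicity $m\geq 2$ even when the metric is bumpy, and the paper explicitly flags this as the reason the stable case needs a separate argument. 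Your step ``each $\omega_p$ is realized by a disjoint union of pairwise distinct minimal $2$-spheres, each with multiplicity one'' therefore assumes more than is proved, and the rest of your bumpy-case argument collapses without it: if $\Gamma$ is a stable sphere, nothing you have written excludes $\omega_p = p\cdot\Area(\Gamma)$ for every $p$, in which case the widths are strictly increasing, the realizing varifolds are pairwise distinct as varifolds, and yet there is only one underlying minimal sphere. Strict monotonicity of widths plus ``infinitely many distinct finite subsets'' only yields infinitely many spheres if the realizing objects are genuine multiplicity-one collections, which is exactly the unproven point. (A secondary issue: beyond $p=4$ it is not clear that nontrivial $p$-parameter sweepouts of $S^3$ by embedded \emph{spheres} exist at all, since $H^*(\oli{\ms X},\partial\ms X;\mb Z_2)\cong\mb Z_2[\alpha]/(\alpha^5)$; but only four classes are needed here.)

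The paper closes this gap by a different mechanism when stable spheres are present: one cuts $(S^3,g)$ along a disjoint collection of (non-degenerate) stable minimal spheres, obtains at least two three-balls $N$ with stable minimal boundary, glues a cylindrical end to each following Song, approximates by compact manifolds with mean-concave boundary, and runs a relative/free-boundary version of the min-max with the multiplicity one theorem inside each ball. Since the interiors contain no stable spheres, the two widths $\mf L_{N}(\ms P_1)\le W$ and $\mf L_{N}(\ms P_2)\le 2W$ (bounded via the mean-curvature-flow-with-surgery foliation) are each realized by a single \emph{unstable}, hence multiplicity one, embedded minimal sphere, and Lusternik--Schnirelmann separates them. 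Two balls then yield four spheres (five, counting the stable one). If you want to salvage your width-based outline, you must either prove multiplicity one for stable components in the Simon--Smith setting (which the paper does not do) or incorporate this cut-and-cap argument.
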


Around the time when Yau first posed this problem, Simon-Smith \cite{Smith82} proved the existence of at least one embedded minimal sphere in $S^3$ with an arbitrary metric, using a variant of the min-max theory for minimal hypersurfaces developed by Almgren \cites{Alm62,Alm65} and Pitts \cite{Pi}; see also Schoen-Simon \cite{SS} and Colding-De Lellis \cite{Colding-DeLellis03}. Later on, White \cite{Whi91}, using degree methods, proved the existence of at least two embedded minimal spheres when the metric has positive Ricci curvature, and at least four embedded minimal spheres when the metric is sufficiently close to the round metric. Recently, Haslhofer-Ketover \cite{HK19} proved the existence of at least two embedded minimal spheres for bumpy metrics, by combining the Simon-Smith min-max theory with the mean curvature flow; a key ingredient of their proof is the Catenoid estimates first appeared in Ketover-Marques-Neves \cite{KMN16}. We also remark that branched immersed minimal spheres were obtained by Sacks-Ulenbeck \cite{Sacks-Uhlenbeck81} using min-max theory for harmonic maps; see also Colding-Minicozzi \cite{Colding-Minicozzi08b}. 

The motivation of Yau's conjecture is tightly related to the topology of the space of embedded spheres in $S^3$. By Hatcher's proof of the Smale conjecture \cite{Hat83}*{Appendix (14)}, the space of embedded spheres in $S^3$ deformation retracts to the space of great spheres, which is homeomorphic to $\mathbb{RP}^3$, so the area functional when restricted to this space, should have four nontrivial critical points, that is, embedded minimal spheres. One can simply apply the Simon-Smith min-max theory to the four naturally defined homotopy classes of sweepouts associated with the $\mb{RP}^3$-structure; see Section \ref{subsec:four sweepouts} for detailed discussions. However, the major challenge is that the min-max theory may produce minimal spheres counted with integer multiplicity, so we may not obtain new minimal spheres when applying to new sweepouts. As the major advancement of this paper, we prove a new multiplicity one theorem for the Simon-Smith min-max theory. We refer to Theorem \ref{thm:classical multiplicity one} for the detailed statement and the proof therein.

\begin{thmx}
\label{thm:a rough multiplicity one theorem}
Let $(M, g)$ be a closed, orientable, three dimensional Riemannian manifold. Then the min-max varifold associated with any homotopy class of smooth sweepouts of a fixed orientable genus-$\mk g_0$ surface $\Sigma_0$ is supported on a pairwise disjoint collection of connected, closed, embedded, minimal surface $\{\Gamma_j\}_{j=1}^N$ with integer multiplicities $\{m_1, \cdots, m_N\}$, so that
\begin{itemize}
    \item if $\Gamma_j$ is unstable and two-sided, then $m_j = 1$, and
    \item if $\Gamma_j$ is one-sided, then its connected double cover is stable.
\end{itemize}
Moreover the weighted total genus of $\Sigma_0$ is bounded by $\mk g_0$. 
\end{thmx}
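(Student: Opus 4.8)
The plan is to adapt Zhou's prescribed-mean-curvature (PMC) approach to the multiplicity one conjecture to the Simon-Smith setting of sweepouts by surfaces of a fixed topological type, with the catenoid estimate of Ketover-Marques-Neves and the genus bounds of Simon-Smith and De Lellis-Pellandini entering as essential tools. Fix the given homotopy class $\Pi$ of smooth sweepouts of $\Si_0$ and let $W=W(\Pi)$ be its width. For a fixed generic function $u$ on $M$ and a small parameter $\la>0$, set $h_\la=\la u$ and consider, on the regions bounded by the surfaces appearing in $\Pi$, the functional $\A^{h_\la}(\Omega)=\M(\partial\Omega)-\int_\Omega h_\la\,dV$, whose critical points are hypersurfaces with mean curvature $h_\la$. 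Running the Colding-De Lellis min-max scheme for $\A^{h_\la}$ over $\Pi$ — with the combinatorial/almost-minimizing argument and the regularity theory adapted to the PMC functional as in Zhou-Zhu — produces a PMC width $W^{h_\la}$ and a smooth, closed, embedded PMC surface $\Gamma^{h_\la}$ realizing it, of genus controlled by the Simon-Smith/De Lellis-Pellandini genus-monotonicity, which one must first check survives the PMC modification.

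The crucial point is that for $h_\la\not\equiv0$ every component of $\Gamma^{h_\la}$ has multiplicity one. Heuristically, a connected PMC surface with $h_\la\not\equiv0$ carries a distinguished co-orientation, namely the side into which $\vec H=h_\la\nu$ points; consequently, if a min-max sequence for $\A^{h_\la}$ concentrated along a component with multiplicity $\ge2$, one could use the first eigenfunction of the associated Jacobi operator together with the sign of $h_\la$ to split the sheets and produce a competitor sweepout in $\Pi$ of strictly smaller $\A^{h_\la}$-width, a contradiction. In the Simon-Smith setting this deformation must be realized by surfaces of genus $\le\mk g_0$, and this is precisely where the catenoid estimate is used: the nearby sheets are joined along thin necks arranged in a tree pattern, which keeps the genus bounded and costs only $o(1)$ area.

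Finally let $\la\to0$. Then $W^{h_\la}\to W$, and the multiplicity-one PMC surfaces $\Gamma^{h_\la}$ subconverge as varifolds to a stationary integral varifold $V=\sum_j m_j\,\lvert\Gamma_j\rvert$ supported on a pairwise disjoint collection of connected, closed, embedded minimal surfaces with $\|V\|=W$, and the genus bound passes to the limit to give weighted total genus $\le\mk g_0$. Near a two-sided component $\Gamma_j$ — or, if $\Gamma_j$ is one-sided, near its connected double cover $\wti\Gamma_j$, which is the shape the two-sided sweepout surfaces take there — the surface $\Gamma^{h_\la}$ is, away from the finitely many concentration points forbidden by the genus bound, a union of graphs; the normalized gap between two consecutive sheets converges to a nonnegative solution of the Jacobi equation, which by the maximum principle is either identically zero (impossible when two sheets are present) or a positive Jacobi field, forcing $\Gamma_j$ (resp. $\wti\Gamma_j$) to be stable. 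Hence a two-sided component with $m_j\ge2$ is stable, i.e. an unstable two-sided component has multiplicity one; the one-sided statement is obtained by running the separation argument equivariantly on the double cover, treating the invariant and anti-invariant parts of the first eigenfunction separately.

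I expect the main obstacles to be, first, establishing the PMC analogue of the Simon-Smith min-max — the almost-minimizing property and regularity of $\A^{h_\la}$ within a fixed topological class, together with the genus bound — and, second, making Zhou's width-decreasing deformation compatible with the Simon-Smith constraint that every sweepout surface has genus $\le\mk g_0$, for which the catenoid estimate (with the sheets glued in a tree pattern so as not to create genus) is indispensable. The bookkeeping of the $\la\to0$ varifold limit, and the correct allocation of the stability dichotomy to the two-sided versus one-sided components, should be comparatively routine once these two points are in place.
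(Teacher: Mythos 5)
Your overall strategy --- approximate the area functional by $\A^{\lambda u}$, run a Simon--Smith min-max for the PMC functional, and analyze the varifold limit as $\lambda\to 0$ --- is the same as the paper's, and your final conclusions are allocated correctly. However, two of your key intermediate steps have genuine gaps. The first is the claim that ``for $h_\lambda\not\equiv 0$ every component of $\Gamma^{h_\lambda}$ has multiplicity one.'' In the isotopy setting this is not available and is not what the paper proves: the regularity theory for the $\A^h$-isotopy minimizing problem yields only $C^{1,1}$ almost embedded surfaces whose sheets may coincide on touching sets of positive measure where the mean curvature vanishes, and an $\A^h$-stationary pair can literally be an even multiple of a closed minimal surface with $\Omega\in\{\emptyset,M\}$. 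Your proposed width-decreasing deformation (first eigenfunction plus catenoid necks) cannot split sheets that agree on a large set, and the catenoid estimate only beats the neck cost when the limit is unstable --- which is what you are trying to establish. The paper's substitute is the weaker notion of strong $\A^h$-stationarity (an obstacle-problem condition on the top and bottom sheets, preserved under isotopy minimization and under limits), which rules out only the fully degenerate collapse to an integer multiple of a minimal surface when $h$ changes sign; higher multiplicity of the PMC min-max solutions themselves is never excluded, and the final statement accordingly allows stable components with $m_j\ge 2$.

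The second gap is in the limiting step. Because the sheets satisfy only differential inequalities ($H=0$ a.e.\ on the touching set, $H=\pm\lambda u$ on the regular set, with alternating signs), the normalized gap between the top and bottom sheets converges not to a nonnegative Jacobi field but to a nonnegative weak supersolution of the \emph{inhomogeneous} inequality $-L_{\Sigma_\infty}\varphi\ge 2c\,u|_{\Sigma_\infty}$ for some $c\ge 0$. Stability then follows only if $\int_{\Sigma_\infty}u\,\phi_1=0$ for the first eigenfunction $\phi_1$ of $L_{\Sigma_\infty}$; for a merely ``generic'' $u$ this integral has an uncontrolled sign and the argument fails. The paper therefore chooses $h$ a posteriori: bumpiness together with the stability-in-one-annulus property forces the candidate limit surfaces into a finite set, and $h$ is built to be supported in two small disks on each candidate, to change sign there, and to be $L^2$-orthogonal to each first eigenfunction. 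The same choice (with $h\equiv 0$ near each candidate outside those disks) is what rescues the De~Lellis--Pellandini curve-lifting genus bound, which you correctly flag but do not resolve. Finally, the one-sided case is handled in the paper by lifting sweepouts to the orientation double cover of the space of surfaces, not by an equivariant decomposition of the eigenfunction, so that step also needs a different mechanism.
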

\begin{remark}
We remark that, as compared with the Multiplicity One Theorem \cite{Zhou19}*{Theorem C} for the Almgren-Pitts theory where all non-degenerate components have multiplicity one, Theorem \ref{thm:a rough multiplicity one theorem} only shows that unstable components have multiplicity one. The proof of Theorem \ref{thm:theoremA}, when stable minimal spheres exist, follows by combining a variant of Theorem \ref{thm:a rough multiplicity one theorem} with A. Song's cylindrical manifold construction \cite{Song18}. 
\end{remark}


We have observed tremendous development of the Almgren-Pitts min-max theory since the celebrated resolution of the Willmore Conjecture by Marques-Neves \cite{MN14}. In particular, Yau's another famous problem \cite{Yau82}*{Problem 88} on the existence of infinitely many closed minimal surfaces was solved by combining the works of Marques-Neves \cite{MN17} and Song \cite{Song18}. A key ingredient in this program was the volume spectrum first introduced by Gromov \cite{Gro88} and later by Guth \cite{Guth09}. The Weyl law for the volume spectrum, proved by Liokumovich-Marques-Neves \cite{LMN16}, has led to surprising density and equidistribution results for closed minimal hypersurfaces, by Irie-Marques-Neves \cite{IMN17} and Marques-Neves-Song \cite{MNS17} respectively; see also \cite{Song-Zhou21}. After the resolution of the Multiplicity One Conjecture by the second-named author \cite{Zhou19} (see also \cite{CM20}), Marques-Neves finished their program on establishing a Morse theory for the area functional \cite{MN18}; see also \cites{Marques-Neves16, Marques-Montezuma-Neves20}.  We refer to the survey articles \cites{Marques-Neves21, Zhou22} for more detailed history on this exciting field. 
After we finished the work, there was a nice application of our multiplicity one theorem by Liokumovich-Ketover in their proof of Smale's conjecture for lens spaces \cite{Liokomovich-Ketover23}.

\subsection{Sketch of the proof}
We first describe the proof for Theorem \ref{thm:a rough multiplicity one theorem}. We follow similar strategy as the proof of the Multiplicity One Conjecture in the Almgren-Pitts setting \cite{Zhou19}, that is, to use prescribing mean curvature functionals $\A^h$ \eqref{eq:Ah functional} to approximate the area functional. However, there are several major challenges, mainly caused by the fact that the local $\A^h$-isotopy minimizing problem  has only $C^{1,1}$-solutions, as proved recently by Sarnataro-Stryker \cite{SS23}, and the solution may have a large portion of self-touching set, where the mean curvature vanishes. The challenges and new ideas invoked are summarized as follows.
\begin{itemize}
    \item A notion of strong $\A^h$-stationarity is introduced to prevent the $\A^h$-min-max solutions to degenerate to integer multiple of minimal surfaces in our special scenario. 
    
    \item A new notion of critical set is introduced to include pairs of varifolds and Caccioppoli sets, and a stronger tightening process is developed to show that all pairs in the min-max critical sets are $\A^h$-stationary. 
    
    \item A new scheme of proving $C^{1,1}$ regularity of the $\A^h$-min-max solutions is established, using chains of replacements, without invoking unique continuation. 
    
    \item A new argument for compactness is used, without proving the optimal Morse index bounds, to show the $\A^{\varepsilon h}$-min-max solutions converge in $C^{1,1}_{loc}$ to embedded minimal surfaces, when $\varepsilon\to 0$. The genus bound of the limit minimal surfaces also follows easily from our construction of the approximating $\A^{\varepsilon h}$-min-max solutions.
    
    \item A special prescribing function $h$ can be chosen, which is $L^2$-orthogonal to the first eigenfunctions of all possible limit minimal surfaces (which is a finite set by bumpiness), so that when combined with the strong $\A^{\epsilon h}$-stationarity, one can show that the limit minimal surface with multiplicity greater than one must be stable. 
\end{itemize}

Now we dip into some details of these new ideas. The $\A^h$-functional is defined for a pair of $C^{1,1}$-surface $\Sigma$ and a Caccioppoli set $\Omega$ enclosed by $\Sigma$ \eqref{eq:Ah version2}. A $C^{1,1}$-pair $(\Sigma, \Omega)$ which is merely stationary for the $\A^h$-functional could be just an even multiple of a closed minimal surface with $\Omega = \emptyset$ or $M$. A pair $(\Sigma, \Omega)$ is called strongly $\A^h$-stationary (see Definition \ref{def:strong one-sided stationarity}) if near a touching point, moving the top/bottom sheet away from other sheets increases the $\A^h$-functional up to the first order, or simply the top/bottom sheet solves the obstacle problem for $\A^h$ w.r.t. other sheets. Therefore, if a sequence of such pairs $\{(\Sigma_k, \Omega_k)\}_{k\in \mb N}$ converges to a minimal surface $\Sigma_\infty$ with multiplicity $m\geq 2$, and if the prescribing functions $h_k$ change sign along $\Sigma_\infty$, we know that $\Sigma_k$ cannot be an $m$-sheeted minimal surface. Otherwise, by Corollary \ref{cor:description of Omega near touching point}, $\Omega$ coincides with $M$ near points where $h>0$ and with $\emptyset$ near points where $h<0$, which is not possible.  

We set up our min-max problem using smooth sweepouts of surfaces of a fixed genus in the relative setting following \cites{Colding-DeLellis03, Zhou19}. We then extend the critical sets (Definition \ref{def:minimizing seq, min-max seq, and critical set}) so as to include pairs $(V, \Omega)$ in our newly defined $\VC$-space (Section \ref{ss:Ah functional and VC space}). As the main advantage to get back the $\Omega$-factor in the critical set, it makes sense to say $\A^h$-stationarity for critical pairs. In particular, we re-design the tightening process in Section \ref{SS:tightening} to show that every critical pair is $\A^h$-stationary. This is an improvement as compared with the previous CMC/PMC min-max theory \cites{ZZ17, ZZ18}, where it was only shown that the critical varifolds have uniformly bounded first variation.

We then introduce the notion of $\A^h$-almost minimizing using smooth isotopies, and use the combinatorial argument originally due to Almgren-Pitts to show the existence of a min-max pair, which is $\A^h$-almost minimizing in every small annuli. As a by-product of this step, we can show that there exists an integer $L=L(m)$ depending only on the dimension of the parameter space, such that for any $L(m)$-number of admissible collection of concentric annuli, the min-max pair is stable in at least one of them. This will play the role of Morse index upper bound when proving the desired compactness results. 

To prove the $C^{1,1}$-regularity and strong $\A^h$-stationarity for the min-max pairs, we do induction on density (which are integers). Denote by $\mc S(V, \leq m-1)$ and $\mc S(V, m)$ the subsets of the support $\spt\|V\|$ with density less than $m$ or equal to $m$ respectively. Suppose we have proved the regularity in $\mc S(V, \leq m-1)$. Fix a $q\in\mc S(V, m)$ and a small neighborhood $U_0$; we cover $\closure(U_0)\cap \mc S(V, m)$ by balls of a small radius $\bm r$, and then take successive $\A^h$-replacements over these small balls. By a gluing procedure similar to \cite{ZZ17}*{\S 6}, we can obtain a strongly $\A^h$-stationary and stable $C^{1,1}$ replacement in $U_0$. Letting the radii $\bm r\to 0$, the union of these $\bm r$-balls will converge to $\closure(U_0)\cap \mc S(V, m)$, and by the compactness theory for stable $C^{1,1}$ surfaces, these replacements will converge to a pair $(V^*, \Omega^*)$ which is $C^{1,1}$ and strongly $\A^h$-stationary and stable in $U_0$. Note that $\spt\|V^*\|\subset \spt\|V\|$. To show that $V^*$ is identical to $V$ in $U_0$, we can first choose $U_0$ small enough so that the volume ratio of $V$ for some fixed radius $s\gg \bm r$ centered at any point in $\closure(U_0)\cap \mc S(V, m)$ is close to $m$. Since $V^*$ and $V$ have the same mass in any open set containing $\closure(U_0)$, we can show the density of $V^*$ at any point in $\closure(U_0)\cap \mc S(V, m)$ is less than or equal to $m$ (using the monotonicity formula and the fact that $V^*$ is an integral varifold), and hence $V^* = V$ and the $C^{1,1}$-regularity and strong $\A^h$-stationarity are proved in $\mc S(V, m)$. Note that we do not need any unique continuation type result in this procedure.

Next, we consider those $C^{1,1}$ min-max pairs $\{(\Sigma_k, \Omega_k)\}_{k\in\mb N}$ associated with the $\A^{\varepsilon_k h}$-functionals for some sequence $\varepsilon_k\to 0$. The $C^{1,1}_{loc}$ convergence (away from a finite set) to a smoothly embedded minimal surface $\Sigma_\infty$ follows from the fact that each $(\Sigma_k, \Omega_k)$ is stable in at least one of any $L(m)$-admissible concentric annuli. We can further choose a diagonal sequence (of closed embedded surfaces of a given genus) converging to $\Sigma_\infty$ which are $(\A^{\varepsilon_{k(j)} h}, \epsilon_j, \delta_j)$-almost minimizing in small annuli ($\epsilon_j, \delta_j\to 0$). By choosing $h$ to vanish along $\Sigma_\infty$ except for finitely many sub-disks, this diagonal subsequence is $(\epsilon_j, \delta_j)$-almost minimizing for the area functional away from these sub-disks, and hence the curve lifting argument in \cites{Smith82, DeLellis-Pellandini10, Ketover13} can be applied so as to show the desired genus bound for $\Sigma_\infty$.

To prove Theorem \ref{thm:a rough multiplicity one theorem}, if the convergence $\Sigma_k\to\Sigma_\infty$ has multiplicity greater than one, we can construct a nontrivial nonnegative supsolution to the Jacobi operator $L_{\Sigma_\infty}$ of $\Sigma_\infty$. First by our choice of the prescription function $h$, we know that $\Sigma_k$ cannot be an integer multiple of minimal surfaces, so we can consider the height difference between the top and bottom sheets. Unlike in the proof of the Almgren-Pitts Multiplicity One Theorem \cite{Zhou19}, where both the top and bottom sheets satisfy the prescribing curvature equation, the mean curvature in our current setting may vanish in a large subset. Nevertheless, the key observation is that the height difference, which is nontrivial, will converge after normalization to a nontrivial weak supersolution $\varphi_\infty\geq 0$ of 
\[ L_{\Sigma_\infty}\varphi_\infty \geq 2\cdot c\cdot  h|_{\Sigma_\infty} \varphi_\infty,  \]
for some constant $c\geq 0$. Since we can take $h|_{\Sigma_\infty}$ to be $L^2$-orthogonal to the first eigenfunction $\phi_1$ of $L_{\Sigma_\infty}$, applying the integration-by-part formula will show the stability of $\Sigma_\infty$, that is, $\lambda_1(L_{\Sigma_\infty})\geq 0$. 

All the above arguments provide an outline of proof for Theorem \ref{thm:a rough multiplicity one theorem} for relative min-max. One can extend this to classical min-max (for free homotopy of sweepouts) using the double-cover-lifting argument as in \cite{Zhou19}.

\medskip
We now describe how to prove Theorem \ref{thm:theoremA} using Theorem \ref{thm:a rough multiplicity one theorem}. If $(S^3, g)$ does not contain any stable minimal spheres, Theorem \ref{thm:a rough multiplicity one theorem} applied to the four natural families of sweepouts of embedded spheres gives at least four distinct embedded minimal spheres with multiplicity one.  When $(S^3, g)$ admits a non-degenerate stable minimal sphere $S^2$, we can cut $(S^3, g)$ along this sphere to obtain a three-ball $(B^3, g)$ with a stable minimal boundary. We then glue the boundary $S^2$ with a cylindrical end modeled by $S^2\times [0, \infty)$ as Song \cite{Song18}. The Simon-Smith min-max theory when applied to compact approximations of this noncompact  Lipschitz manifold produces embedded minimal spheres in $(B^3, g)$ in the same way as \cite{Song18}. One can check that there are two family of sweepouts of embedded spheres in these compact approximations with uniformly bounded width, using the method in \cite{HK19}. We can prove a variant of Theorem \ref{thm:a rough multiplicity one theorem} in this non-compact setting, so as to produce at least two distinct embedded minimal spheres in $(B^3, g)$. Since there are two such three balls after cutting, we prove the existence of at least five embedded minimal spheres in this case.

\subsection{Outline of the paper}

We collect preliminary materials in Section \ref{sec:preliminaries}; then we set up the relative min-max problem and establish the tightening process in Section \ref{sec:min-max and tightening}. In Section \ref{sec:almost minimizing}, we introduce several concepts related to the almost minimizing property and prove the existence of almost minimizing pairs. Section \ref{sec:regularity of min-max pairs} is the first essential part of this paper, where we prove the $C^{1,1}$ regularity of min-max pairs.  In Section \ref{sec:passing to limit}, we prove the convergence of min-max pairs to minimal surfaces for a sequence $\{\varepsilon_k h\}_{k\in \mb N}$ with $\lim \varepsilon_k =0$, as well as genus bound for the limit minimal surface. Section \ref{sec:existence of supersolution}, another essential part of this paper, is devoted to the construction of supersolutions. In Section \ref{sec:multiplicity one}, we prove Theorem \ref{thm:a rough multiplicity one theorem}, and in Section \ref{sec:existence of minimal spheres}, we prove Theorem \ref{thm:theoremA}.

\subsection*{Acknowledgement} The authors would like to thank Professor Richard Schoen and Professor Gang Tian for their interest in this work. Z. W. would like to thank Professor Jingyi Chen and Professor Ailana Fraser for their support and encouragement. X. Z. is supported by NSF grant DMS-1945178, and an Alfred P. Sloan Research Fellowship. The authors would also like to thank Robert Haslhofer and Daniel Ketover for pointing out a gap in their original proof of \cite[Theorem 5.2]{HK19} regarding the Lusternik-Schnirelmann inequality and showing their erratum. 

\section{Preliminaries}\label{sec:preliminaries}
In this part, we collect all necessary preliminary materials. After introducing basic notations, we will introduce the $\VC(M)$ space as the closure of the natural embedding $\C(M) \to \mc V(M)\times \C(M)$ under the product metric in Section \ref{ss:Ah functional and VC space}. Then we will define $C^{1,1}$-almost embedded surfaces and boundaries, the $\A^h$-functional and its associated stationarity in Section \ref{SS:C11 almost embedded surface}, and then a crucial notion of strong $\A^h$-stationarity and its corollaries in Section \ref{ss:strong Ah stationarity}. Lastly, we will recall stable compactness for $\A^h$-stationary boundaries in Section \ref{ss:stability and compactness}, and the $\A^h$-isotopy minimizing problem in Section \ref{ss:isotopy minimizing problem}. 

\subsection*{Notations}
We will not specify the ambient manifold to the three-sphere before the last section. 
\begin{itemize}
    \item $(M^3, g)$ denotes a closed, oriented, 3-dimensional Riemannian manifold isometrically embedded in some $\mb R^L$, and $U\subset M$ an open subset ($U$ may be equal to $M$).
    \item $\mr{An}(p; s, r)$ ($p\in M$, $0<s<r$) denotes an annulus given by $B(p, r)\setminus \closure(B(p, s))$. 
    \item $h\in C^{\infty}(M)$ denotes a smooth mean curvature prescribing function.
    \item $\mathcal C(M)$ or $\mathcal C(U)$ denotes the space of sets $\Omega\subset M$ or $\Omega\subset U\subset M$ with finite perimeter (Caccioppoli set); see \cite{Si}*{\S 14}.
    \item $\mc V(M)$ or $\mc V(U)$ denotes the space of $2$-varifolds in $M$ or $U$.
    \item $\mathfrak X(U)$ denotes the space of smooth vector fields compactly supported in $U$.
    \item $\Diff_0(M)$ denotes the connected component of the diffeomorphism group of $M$ containing identity, and $\mk{Is}(U)$ denotes the set of isotopies of $M$ supported in $U$.
    \item A collection of connected $C^1$-embedded surfaces $\{\Gamma^i\}_{i=1}^\ell\subset U$ with $\partial\Gamma^i\cap U=\emptyset$ is said to be ordered, denoted by 
    \[\Gamma^1\leq \cdots\leq \Gamma^\ell,\] if for each $i$, $\Gamma^i$ separates $U$ into two connected components $U^i_+, U^i_-$, ($U\setminus \Gamma^i = U^i_+\sqcup U^i_-$), such that $\Gamma^j\subset \closure(U^i_-)$ for $j=1, \cdots, i-1$, and $\Gamma^j\subset \closure(U^i_+)$ for $j=i+1, \cdots, \ell$.
\end{itemize}

\subsection{{$\mc A^h$}-functional and $\mc{VC}$-space}\label{ss:Ah functional and VC space}

The {\em prescribing mean curvature functional} associated with $h\in C^{\infty}(M)$ in \cite{ZZ18}*{(0.2)} naturally extends to all pairs $(V, \Omega)\in \mc V(M)\times \mc C(M)$ as:
\begin{equation}\label{eq:Ah functional}
\A^h(V, \Omega) = \|V\|(M) - \int_\Omega h \,\mr d\mc H^3.
\end{equation}

We can naturally define push-forward by diffeomorphisms in $\mc V(M)\times \C(M)$. Note that given $(V, \Omega)\in \mc V(M)\times \C(M)$ and $F: (-\epsilon, \epsilon)\times M\to M$ a smooth map with $F^t\in \Diff_0(M)$, then $t\mapsto \A^h\big(F^t_\#(V, \Omega)\big)$ is a smooth function. As a result, we can define $\A^h$-stationarity for pairs in $\mc V(M)\times \C(M)$.

\begin{definition}[$\A^h$-stationary pairs]
A pair $(V, \Omega)\in \mc V(M)\times \C(M)$ is \textit{$\A^h$-stationary in $U$} if for any $X\in\mathfrak X(U)$ with $\phi^t$ the associated flow, 
\begin{equation}
\label{eq:1st variation of Ah}
\begin{split}
    \delta \mathcal A^h_{V, \Omega}(X)  :&= \frac{d}{dt}\Big{|}_{t=0}\A^h\big(\phi^t_\#(V, \Omega)\big)\\
    &= \int_{G_2(M)} \dv_S X(x) \,\mr dV(x, S) - \int_{\partial\Omega}\lb{X, \nu_{\partial\Omega}} h \,\mr d\mu_{\partial\Omega} = 0.
\end{split} 
\end{equation}
An $\A^h$-stationary pair $(V, \Omega)$ is \textit{$\A^h$-stable in U}, if for any $X\in\mathfrak X(U)$,
\begin{equation}\label{eq:2nd variation of Ah}
    \delta^2 \A^h_{V, \Omega}(X, X) : =  \frac{d^2}{dt^2}\Big{|}_{t=0} \A^h\big( \phi^t_\#(V, \Omega) \big) \geq 0.
\end{equation}
\end{definition}

\begin{remark}
    Note that $\A^h$ and its variations $\delta\A^h$, $\delta^2\A^h$ are also naturally defined in $\mc V(U)\times \C(U)$.
\end{remark}

We are mainly interested in a subspace of $\mc V(M)\times \C(M)$ which arises as the completion under weak topology of the ``diagonals" $\Delta(M)=\{ (|\partial\Omega|, \Omega)\in \mc V(M)\times \C(M): \Omega\in \C(M)\}$.

\begin{definition}
Motivated by Almgren's VZ-space \cite{Alm65}, we have the following.
\begin{enumerate}
    \item The \textit{$\VC$-space}, denoted by $\VC(M)$, is the space of all pairs $(V, \Omega)\in \mc V(M)\times\C(M)$ such that there is a sequence $\{\Omega_k\}\subset \C(M)$ with $|\partial\Omega_k|\to V$ in $\mc V(M)$ and $\Omega_k\to \Omega$ in $\C(M)$.

    \item Given two pairs $(V, \Omega)$ and $(V', \Omega')$ in $\VC(M)$, the \textit{$\ms F$-distance between them} is
    \begin{equation*}
        \ms F\big( (V, \Omega), (V', \Omega') \big):= \mf F(V, V') + \mc F(\Omega, \Omega'),
    \end{equation*}
    where $\mf F$ and $\mc F$ are respectively the varifold $\mf F$-metric and the flat metric. 
\end{enumerate}
\end{definition}

The next lemma follows from the lower semi-continuity of measure in weak convergence.
\begin{lemma}
For every $(V,\Omega)\in \VC(M)$, we have that    $\spt(\partial\Omega)\subset \spt(\|V\|)$, and $\|\partial\Omega\|\leq \|V\|$ as measures.
\end{lemma}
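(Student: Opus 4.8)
The plan is to prove the two assertions separately, in both cases using only the defining property of $\VC(M)$ — namely that there is a sequence $\{\Omega_k\}\subset\C(M)$ with $|\partial\Omega_k|\to V$ in $\mc V(M)$ and $\Omega_k\to\Omega$ in $\C(M)$ — together with standard lower semi-continuity facts from geometric measure theory. The inequality $\|\partial\Omega\|\le\|V\|$ as measures is the first target: since $\Omega_k\to\Omega$ in $\C(M)$, i.e.\ $\mathbbm 1_{\Omega_k}\to\mathbbm 1_\Omega$ in $L^1(M)$, the lower semi-continuity of the perimeter (total variation) with respect to $L^1$-convergence gives, for every open set $W\subset M$,
\[
\|\partial\Omega\|(W)\le\liminf_{k\to\infty}\|\partial\Omega_k\|(W).
\]
On the other hand $|\partial\Omega_k|\to V$ in the varifold sense forces $\|\partial\Omega_k\|\to\|V\|$ as Radon measures, so $\liminf_k\|\partial\Omega_k\|(W)\ge\|V\|(W)$ fails in general, but in fact weak-$*$ convergence of measures gives $\liminf_k\|\partial\Omega_k\|(W)\ge\|V\|(W)$ for open $W$ — wait, weak-$*$ convergence gives the reverse for open sets. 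Let me be careful: if $\mu_k\to\mu$ weakly-$*$ then $\mu(W)\le\liminf\mu_k(W)$ for $W$ open and $\mu(K)\ge\limsup\mu_k(K)$ for $K$ compact. So from $\|\partial\Omega_k\|\to\|V\|$ we get $\limsup_k\|\partial\Omega_k\|(K)\le\|V\|(K)$ for compact $K$. Combining with the perimeter lower semi-continuity applied to the interior of $K$ and a routine approximation of an arbitrary Borel set by compact sets from inside, one obtains $\|\partial\Omega\|(A)\le\|V\|(A)$ for every Borel $A$, which is exactly the claimed inequality of measures.

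For the support inclusion $\spt(\partial\Omega)\subset\spt(\|V\|)$, I would argue by contradiction or directly from the measure inequality: if $x\notin\spt(\|V\|)$ then there is an open ball $B(x,r)$ with $\|V\|(B(x,r))=0$; by the inequality just established, $\|\partial\Omega\|(B(x,r))=0$, hence $B(x,r)\cap\spt(\partial\Omega)=\emptyset$, so $x\notin\spt(\partial\Omega)$. Taking complements gives the inclusion. (Alternatively this also follows because $\spt(\partial\Omega)$ is the complement of the largest open set on which $\|\partial\Omega\|$ vanishes, and likewise for $\|V\|$.)

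I would organize the write-up as: (i) recall that $\Omega_k\to\Omega$ in $\C(M)$ means $\mathbbm 1_{\Omega_k}\to\mathbbm 1_\Omega$ in $L^1$, and that $|\partial\Omega_k|\to V$ in $\mc V(M)$ implies $\|\partial\Omega_k\|\to\|V\|$ weakly-$*$ as Radon measures on $M$; (ii) invoke lower semi-continuity of perimeter under $L^1$-convergence of characteristic functions, giving $\|\partial\Omega\|(W)\le\liminf_k\|\partial\Omega_k\|(W)$ for $W$ open; (iii) invoke the upper semi-continuity of weak-$*$ convergent Radon measures on compact sets, $\limsup_k\|\partial\Omega_k\|(K)\le\|V\|(K)$; (iv) chain (ii) and (iii) through an inner regularity approximation to deduce $\|\partial\Omega\|\le\|V\|$ as measures; (v) deduce the support inclusion as above. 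The only mildly delicate point — and the one I would be most careful about — is matching the "open set" direction of perimeter lower semi-continuity with the "compact set" direction of measure convergence; this is handled cleanly by noting $M$ is compact and using that both measures are Radon, so testing against a nested exhaustion of any open set by compacts (and of any Borel set by open sets) closes the gap. Everything else is routine, and there is no substantive geometric obstacle: the lemma is precisely the assertion that the defining weak limits are compatible, which is forced by semi-continuity.
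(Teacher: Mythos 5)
Your argument is correct and is essentially the paper's own (the paper dispatches this lemma in one line by citing lower semi-continuity of measures under weak convergence; you have simply filled in the standard details — lower semi-continuity of perimeter under $L^1$-convergence, the Portmanteau inequalities for the weak-$*$ limit $\|\partial\Omega_k\|\to\|V\|$, and the inner/outer regularity step needed to reconcile the open-set and compact-set directions). The mid-paragraph self-correction about which direction the Portmanteau inequality goes should be cleaned up in a final write-up, but the organized plan (i)--(v) at the end is sound.
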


Then it is clear that we have the following.
\begin{lemma}\label{lem:Ah stationary pair in VC has bounded 1st variation}
Suppose that $(V, \Omega)\in \VC(M)$ is an $\A^h$-stationary in $U$. Denote $c=\sup_{x\in M}|h(x)|$. Then $V$ has $c$-bounded first variation in $U$.
\end{lemma}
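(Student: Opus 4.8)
The plan is to extract the estimate directly from the first variation identity \eqref{eq:1st variation of Ah}, with essentially no further analysis required. Fix $X\in\mathfrak X(U)$ and let $\phi^t$ be its flow. Writing $\delta V(X)=\int_{G_2(M)}\dv_S X(x)\,\mr dV(x,S)$ for the usual first variation of the varifold $V$, the hypothesis that $(V,\Omega)$ is $\A^h$-stationary in $U$ says precisely that
\[
\delta V(X)\;=\;\int_{\partial\Omega}\lb{X,\nu_{\partial\Omega}}\,h\,\mr d\mu_{\partial\Omega}.
\]
So the whole task reduces to bounding the boundary integral on the right-hand side.

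For that I would use only trivial pointwise estimates: $|\nu_{\partial\Omega}|=1$ holds $\mu_{\partial\Omega}$-a.e., and $|h|\le c$ on $M$, so that $|\lb{X,\nu_{\partial\Omega}}\,h|\le c\,|X|$ pointwise on $\partial\Omega$. Integrating, and recalling that $X$ is supported in $U$, this gives
\[
|\delta V(X)|\;\le\;c\int_U |X|\,\mr d\mu_{\partial\Omega}\;=\;c\int_U |X|\,\mr d\|\partial\Omega\|.
\]
Finally I would invoke the preceding lemma, which provides $\|\partial\Omega\|\le\|V\|$ as Radon measures, to replace $\|\partial\Omega\|$ by $\|V\|$ and conclude
\[
|\delta V(X)|\;\le\;c\int_U |X|\,\mr d\|V\|\qquad\text{for all }X\in\mathfrak X(U),
\]
which is exactly the statement that $V$ has $c$-bounded first variation in $U$.

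There is no genuine obstacle here; the only point to watch is that one must arrive at the integral bound $c\int_U|X|\,\mr d\|V\|$ — the correct normalization in the definition of ``$c$-bounded first variation'' — rather than the cruder $c\,\|V\|(U)\,\sup|X|$, but this costs nothing since every inequality above is taken pointwise on $\partial\Omega$ before integration. Note also that membership of $(V,\Omega)$ in $\VC(M)$, as opposed to merely in $\mc V(M)\times\C(M)$, enters only through the comparison $\|\partial\Omega\|\le\|V\|$ furnished by the preceding lemma.
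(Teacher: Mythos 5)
Your argument is correct and is precisely the reasoning the paper leaves implicit (the lemma is stated with no proof beyond ``it is clear''): stationarity converts $\delta V(X)$ into the boundary integral, which is bounded pointwise by $c|X|$ against $\mu_{\partial\Omega}$, and the comparison $\|\partial\Omega\|\leq\|V\|$ from the preceding lemma yields $|\delta V(X)|\leq c\int_U|X|\,\mr d\|V\|$. Nothing is missing.
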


We also have the following.
\begin{lemma}
    Given any $L>0$, the space
    \begin{equation}\label{eq:AL}
        A^L = \{(V, \Omega)\in \VC(M): \|V\|(M)\leq L \}
    \end{equation}
    is a compact metric space under the $\ms F$-metric.
\end{lemma}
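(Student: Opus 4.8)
The plan is to establish that $A^L$ is compact by showing it is a closed subset of a compact metric space and hence compact, and that it is metrizable as a subspace of the product. First I would recall that the space of $2$-varifolds in $M$ with mass bounded by $L$, equipped with the weak-$*$ topology induced by the $\mf F$-metric, is compact; this is a standard consequence of the compactness theorem for Radon measures together with the fact that, for varifolds, the $\mf F$-metric induces the weak topology on mass-bounded sets. Likewise, the space $\C(M)$ of Caccioppoli sets with the flat topology $\mc F$ is sequentially compact on sets of bounded perimeter, but here we do not a priori bound the perimeter of $\Omega$; what we do have is that, by the preceding lemma, $\|\partial\Omega\|\leq\|V\|$, so $\Omega$ has perimeter at most $L$ whenever $(V,\Omega)\in A^L$. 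Thus the projection of $A^L$ to the $\Omega$-factor lands in the compact set of Caccioppoli sets of perimeter $\leq L$, and the projection to the $V$-factor lands in the compact set of varifolds of mass $\leq L$.

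Given these two observations, $A^L$ sits inside the product $K_1\times K_2$ of two compact metric spaces (varifolds of mass $\leq L$ under $\mf F$, Caccioppoli sets of perimeter $\leq L$ under $\mc F$), and this product is itself a compact metric space under the sum metric $\ms F=\mf F+\mc F$. So it suffices to prove that $A^L$ is closed in $K_1\times K_2$. I would argue by sequences: take $(V_i,\Omega_i)\in A^L$ with $(V_i,\Omega_i)\to(V,\Omega)$ in $\ms F$. Each $(V_i,\Omega_i)\in\VC(M)$, so for each $i$ there is a sequence $\{\Omega^i_k\}_k\subset\C(M)$ with $|\partial\Omega^i_k|\to V_i$ and $\Omega^i_k\to\Omega_i$ as $k\to\infty$. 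A diagonal argument then produces a single sequence $\Omega_{k(i)}$ with $|\partial\Omega_{k(i)}|\to V$ in $\mc V(M)$ and $\Omega_{k(i)}\to\Omega$ in $\C(M)$, witnessing $(V,\Omega)\in\VC(M)$; and lower semicontinuity of mass under weak convergence gives $\|V\|(M)\leq\liminf\|V_i\|(M)\leq L$, so $(V,\Omega)\in A^L$. Hence $A^L$ is closed, therefore compact, and metrizable as a subspace of $K_1\times K_2$.

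The main point requiring care — and the step I expect to be the real obstacle — is the diagonal extraction: one must choose, for each $i$, an index $k(i)$ large enough that simultaneously $\mf F(|\partial\Omega^i_{k(i)}|, V_i)<1/i$ and $\mc F(\Omega^i_{k(i)},\Omega_i)<1/i$, and then combine this with $\ms F((V_i,\Omega_i),(V,\Omega))\to 0$ via the triangle inequality for both $\mf F$ and $\mc F$ to conclude $|\partial\Omega^i_{k(i)}|\to V$ and $\Omega^i_{k(i)}\to\Omega$. This is routine once the right quantitative bookkeeping is set up, but it is where the definition of $\VC(M)$ as a \emph{weak closure} is genuinely used. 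The metrizability claim is immediate since any subspace of a metric space is a metric space; and the compactness of the ambient factors $K_1$ (varifolds, mass $\leq L$) and $K_2$ (Caccioppoli sets, perimeter $\leq L$) are cited from standard references such as \cite{Si}. I would also note that this lemma is precisely what is needed later to run min-max compactness arguments in the $\VC$-setting, so no sharper statement is required here.
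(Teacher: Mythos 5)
Your proof is correct. The paper states this lemma without proof (treating it as a standard consequence of the definitions), and your argument is exactly the expected one: the bound $\|\partial\Omega\|\leq\|V\|$ from the preceding lemma is indeed the point that makes the Caccioppoli-set factor precompact (without it, the flat/$L^1$ ball of characteristic functions is not compact), and the diagonal extraction from the approximating sequences $\{\Omega^i_k\}$ combined with the triangle inequality is precisely how one verifies that $\VC(M)$, being defined as a closure, is closed under $\ms F$-limits. Nothing further is needed.
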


It is also clear that for fixed $X\in \mk X(M)$, the map $(V, \Omega)\to \delta \A^h_{V, \Omega}(X)$ is continuous under the $\ms F$-metric, so we have the following.
\begin{lemma}
    The set 
    \begin{equation}\label{eq:AL0}
        A^L_0=\{(V, \Omega)\in A^L: (V, \Omega) \text{ is $\A^h$-stationary}\}
    \end{equation} 
    is a compact subset of $A^L$ under the $\ms F$-metric.
\end{lemma}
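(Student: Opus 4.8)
The plan is to deduce the compactness of $A^L_0$ from the compactness of $A^L$ established above, by showing that $A^L_0$ is a \emph{closed} subset of $A^L$ in the $\ms F$-metric; a closed subset of a compact metric space is itself compact, which is all that is required.

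The key step is to write $A^L_0$ as an intersection of level sets. By the definition of $\A^h$-stationarity, a pair $(V,\Omega)\in A^L$ belongs to $A^L_0$ if and only if $\delta\A^h_{V,\Omega}(X)=0$ for \emph{every} $X\in\mathfrak X(M)$, so
\[
A^L_0 \;=\; \bigcap_{X\in\mathfrak X(M)} \big\{\, (V,\Omega)\in A^L : \delta\A^h_{V,\Omega}(X)=0 \,\big\}.
\]
For a fixed $X\in\mathfrak X(M)$, the map $(V,\Omega)\mapsto \delta\A^h_{V,\Omega}(X)$ is continuous on $A^L$ with respect to the $\ms F$-metric, as noted in the remark preceding the statement. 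Indeed, the varifold term $\int_{G_2(M)}\dv_S X(x)\,\mr dV(x,S)$ depends continuously on $V$ under the $\mf F$-metric, since $\dv_S X$ is a fixed bounded continuous function on $G_2(M)$ and the masses are uniformly bounded by $L$; and the boundary term, which by the Gauss--Green formula for Caccioppoli sets equals $\int_\Omega \dv(hX)\,\mr d\mc H^3 = \int_M \mathbbm 1_\Omega\,\dv(hX)\,\mr d\mc H^3$, depends continuously on $\Omega$ under the $L^1$-convergence of indicator functions recorded in the $\mc F$-factor of the $\ms F$-metric. Hence each set $\{(V,\Omega)\in A^L : \delta\A^h_{V,\Omega}(X)=0\}$ is the preimage of the closed set $\{0\}\subset\mathbb R$ under a continuous function, so it is closed in $A^L$, and $A^L_0$, being an intersection of closed subsets of $A^L$, is closed in $A^L$. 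Combined with the compactness of $A^L$, this proves the lemma.

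I do not expect a real obstacle here: the only substantive ingredient is the joint continuity of $(V,\Omega)\mapsto\delta\A^h_{V,\Omega}(X)$, which the excerpt already grants, and the single point deserving a line of care is that the boundary term should first be converted to a volume integral over $\Omega$, so that weak ($\ms F$-metric) convergence of the pair — rather than convergence of the reduced boundaries as varifolds — suffices for continuity. No countability reduction on the family of test vector fields is needed, since an arbitrary intersection of closed sets is closed.
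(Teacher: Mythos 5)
Your proof is correct and follows the same route the paper intends: the paper justifies the lemma solely by the observation that $(V,\Omega)\mapsto\delta\A^h_{V,\Omega}(X)$ is $\ms F$-continuous for each fixed $X$, so that $A^L_0$ is a closed subset of the compact space $A^L$. Your extra care in converting the boundary term to $\int_M \mathbbm 1_\Omega\,\dv(hX)\,\mr d\mc H^3$ so that flat-metric (i.e.\ $L^1$) convergence of $\Omega$ suffices is exactly the right justification for that continuity.
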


\subsection{$C^{1,1}$ almost embedded $h$-surfaces}\label{SS:C11 almost embedded surface}

\begin{definition}[$C^{1,1}$ almost embedding]
\label{def:C11 almost embedding}
We say that a $C^{1, 1}$ immersed surface $\phi: \Sigma\to U$ with $\phi(\partial \Sigma)\cap U=\emptyset$ is a {\em $C^{1,1}$ almost embedded surface} in $U$, if at any point $p\in \phi(\Sigma)$ near which $\phi$ is not an embedding, there exists a neighborhood $W\subset U$ of $p$, such that
\begin{itemize}
\item $\Sigma\cap\phi^{-1}(W)$ is a disjoint union of connected components $\sqcup_{i=1}^\ell \Gamma^i$;
\item $\phi: \Gamma^i \to W$ is a $C^{1, 1}$ embedding for each $i$;
\item for each $i$, any other component $\phi(\Gamma^j)$, ($j\neq i$), lies on one side of $\phi(\Gamma^i)$ in $W$.
\end{itemize}
We will denote $\phi(\Sigma)$ by $\Sigma$ and $\phi(\Gamma^i)$ by $\Gamma^i$ in appropriate context. The subset of points in $\Sigma$ where $\Sigma$ is not embedded will be called the {\em touching set}, and denoted by $\mathcal S(\Sigma)$. The {\em regular set} $\Sigma\setminus\mathcal S(\Sigma)$ will be denoted by $\mathcal R(\Sigma)$.
\end{definition}

\begin{remark}
Note that the touching set $\mathcal S(\Sigma)$ is a relatively closed subset of $\Sigma$, and regular set $\mathcal R(\Sigma)$ is relatively open in $\Sigma$. 
\end{remark}

\begin{definition}[$C^{1,1}$ boundary]\label{def:c11 boundary}
We say that a $C^{1,1}$ almost embedded surface $\phi: \Sigma\to  U$ is a {\em $C^{1,1}$ (almost embedded) boundary} in $U$, if $\Sigma$ is oriented, and there exists $\Omega\in \C(U)$, such that 
\begin{equation}\label{eq:Sigma and Omega}
\phi_{\#}(\llbracket\Sigma\rrbracket) = \partial\Omega \text{ as 2-currents in $U$};
\end{equation}
here $\llbracket\Sigma\rrbracket$ denotes the fundamental class of $\Sigma$.
\end{definition}

\begin{lemma}\label{lem:choice of outer normal}
Let $(\Sigma, \Omega)$ be a $C^{1,1}$-boundary in $U$. Then there exists a natural choice of unit normal $\nu_\Sigma$ of $\Sigma$ (as an immersed surface), such that if $\Omega\notin\{\emptyset, U\}$, then $\nu_\Sigma$ coincides with $\nu_{\partial\Omega}$ along $\spt(\partial\Omega)$. Moreover, if $\Sigma$ decomposes to ordered sheets $\Gamma^1\leq \cdots\leq \Gamma^\ell$ in any open subset $W\subset U$, then $\nu_\Sigma$ must alternate orientations along $\{\Gamma^i\}$.   
\end{lemma}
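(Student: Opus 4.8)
The plan is to construct $\nu_\Sigma$ locally on each embedded sheet using the orientation of $\Sigma$, and then check that the local choices are forced to be consistent with the Caccioppoli set $\Omega$ whenever $\Omega \notin \{\emptyset, U\}$. First I would work on the regular set $\mathcal R(\Sigma)$, where $\phi$ is an embedding: the orientation of $\Sigma$ (which exists by the hypothesis in Definition \ref{def:c11 boundary}) determines a unit normal $\nu_\Sigma$ pointwise, and this is smooth of class $C^{0,1}$ since $\Sigma$ is $C^{1,1}$. The relation $\phi_\#(\llbracket\Sigma\rrbracket) = \partial\Omega$ as $2$-currents means that on $\mathcal R(\Sigma) \cap \spt(\partial\Omega)$, the multiplicity-one current $\llbracket \Sigma\rrbracket$ agrees with the reduced-boundary current of $\Omega$; by the structure theorem for sets of finite perimeter (De Giorgi), $\partial\Omega$ restricted to its reduced boundary is oriented by the measure-theoretic outer normal $\nu_{\partial\Omega}$, so matching orientations of the two currents forces $\nu_\Sigma = \nu_{\partial\Omega}$ there. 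The only subtlety is that a priori $\spt(\partial\Omega)$ could be a proper subset of $\Sigma$ (points where the two adjacent sheets of $\Sigma$ cancel in the boundary current); at such points $\nu_\Sigma$ is still well-defined by the orientation of $\Sigma$, and since $\Sigma$ is connected (or we argue component by component), continuity propagates the choice.

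Next I would address the touching set $\mathcal S(\Sigma)$ and the alternation claim. Near a touching point $p$, Definition \ref{def:C11 almost embedding} gives a neighborhood $W$ in which $\Sigma$ decomposes into ordered embedded sheets $\Gamma^1 \leq \cdots \leq \Gamma^\ell$. Each $\Gamma^i$ inherits a unit normal from the already-constructed $\nu_\Sigma$ (restricting the global orientation to the sheet). To see the alternation, I would push the current identity $\phi_\#(\llbracket\Sigma\rrbracket) = \partial\Omega$ into $W$: the left side is $\sum_i \varepsilon_i \llbracket\Gamma^i\rrbracket$ where $\varepsilon_i = \pm 1$ records whether the orientation of $\Gamma^i$ (from $\nu_\Sigma$) points "up" or "down" relative to the ordering, and the right side $\partial\Omega \res W$ must be the boundary of a Caccioppoli set. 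Writing the complement of $\bigcup_i \Gamma^i$ in $W$ as ordered open strips $W_0, W_1, \ldots, W_\ell$ (with $\Gamma^i$ separating $W_{i-1}$ from $W_i$), the only Caccioppoli sets whose reduced boundary is contained in $\bigcup_i \Gamma^i$ are unions of these strips, and $\partial$ of such a union assigns to consecutive sheets $\Gamma^i, \Gamma^{i+1}$ that both bound the included region opposite co-orientations when they are "back to back." Matching this with $\sum_i \varepsilon_i \llbracket\Gamma^i\rrbracket$ forces the signs $\varepsilon_i$ to alternate, which is exactly the statement that $\nu_\Sigma$ alternates orientation along the $\Gamma^i$. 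I would also note the edge cases $\Omega = \emptyset$ or $\Omega = U$: then $\partial\Omega = 0$, so $\sum_i \varepsilon_i \llbracket \Gamma^i\rrbracket = 0$, which (by linear independence of the currents $\llbracket\Gamma^i\rrbracket$ for distinct sheets, or by a local argument) can only happen when $\ell$ is even and the sheets pair up with opposite signs — consistent with alternation, but in this degenerate case the matching with $\nu_{\partial\Omega}$ is vacuous, which is why the lemma only claims the identification with $\nu_{\partial\Omega}$ under the assumption $\Omega \notin \{\emptyset, U\}$.

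The main obstacle I anticipate is the careful bookkeeping at the touching set: translating "the boundary current decomposes as a signed sum over sheets" into the combinatorial alternation statement requires knowing that the only Caccioppoli sets in $W$ with reduced boundary inside $\bigcup \Gamma^i$ are the unions of the ordered strips $W_j$, and that distinct sheets contribute linearly independent currents so the signs are uniquely determined. This is where one uses that each $\Gamma^i$ is a genuine $C^{1,1}$ embedded hypersurface (so $W \setminus \Gamma^i$ really has two components with well-defined sides) together with the ordering from Definition \ref{def:C11 almost embedding}. Once that local picture is in hand, globalizing is routine: the normal $\nu_\Sigma$ defined sheet-by-sheet and on $\mathcal R(\Sigma)$ patches together into a single continuous choice of unit normal on the immersed surface $\Sigma$ because the orientation of $\Sigma$ is global, and the agreement $\nu_\Sigma = \nu_{\partial\Omega}$ on $\spt(\partial\Omega)$ then holds by the De Giorgi structure theorem argument above. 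A short remark that $\nu_\Sigma$ is in fact $C^{0,1}$ (hence the mean curvature with respect to it is defined a.e.\ and bounded) can be appended, since that is what is used in the subsequent development of the $\A^h$-functional.
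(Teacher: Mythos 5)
Your construction and your argument for the alternation are correct in the case where the ordered sheets $\Gamma^1\leq\cdots\leq\Gamma^\ell$ are pairwise distinct as subsets of $W$: there the current identity \eqref{eq:Sigma and Omega} together with the Constancy Theorem does force the signs to alternate, which is exactly how the paper treats that case. The genuine gap is in the degenerate case where consecutive sheets \emph{coincide} as subsets of $W$ (e.g.\ $\Sigma\res W$ is an integer multiple of a single embedded disk, with $\partial\Omega\res W=0$ there). This case is not exotic in this paper --- it is precisely the scenario of Lemma \ref{lem:singular part of strong stationary h-boundary} and Corollary \ref{cor:description of Omega near touching point}. When $\Gamma^i=\Gamma^{i+1}$, the currents $\llbracket\Gamma^i\rrbracket$ are \emph{not} linearly independent, the ``strips'' between coincident sheets are empty, and the local identity $\sum_i\varepsilon_i\llbracket\Gamma^i\rrbracket=\partial\Omega\res W$ only constrains the sum of the signs on each coincident group, not their distribution relative to the ordering. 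Your closing remark that the sheets ``pair up with opposite signs --- consistent with alternation'' does not establish alternation: a sign pattern like $+,+,-,-$ also pairs up with opposite signs. Nor can one simply relabel coincident sheets to force alternation, because the ordering in $W$ may be pinned down by continuity from a nearby region where the sheets separate.

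The paper closes this gap with two ingredients that are absent from your proposal. First, $\nu_\Sigma$ is not taken to be the orientation-induced normal everywhere: on a connected component of $\Sigma$ whose image has integer multiplicity greater than one, and on components disjoint from $\spt(\partial\Omega)$ (which necessarily have even multiplicity by \eqref{eq:Sigma and Omega}), the normal is \emph{defined} by ordering the sheets and flipping alternately --- the alternation there is a convention built into the choice, not a consequence of the local current identity. Second, for consecutive sheets that coincide in $W$ but are not part of such a multiplicity component, one must enlarge $W$ and track the sheets through $U$ until they either separate somewhere (at which point your distinct-sheet argument applies and propagates back by continuity of $\nu_\Sigma$) or are identified as belonging to a higher-multiplicity component (at which point the convention applies). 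Without this global tracking step, the alternation claim is unproved exactly where it matters most for the later arguments (Lemma \ref{lem:enhanced 1st variation of Ah} and Lemma \ref{lem:singular part of strong stationary h-boundary} both rely on alternation along coincident sheets).
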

\begin{proof}
The orientation of $\Sigma$ induces a choice of unit normal $\nu_\Sigma$. If a connected component $\Sigma_0$ of $\Sigma$ (as an immersed surface) intersects  $\spt(\partial\Omega)$, we may possibly flip $\nu_\Sigma$ to $-\nu_\Sigma$ to let $\nu_\Sigma = \nu_{\partial\Omega}$ along $\spt(\partial\Omega)$. If this connected component $\Sigma_0$ has an integer multiplicity (as a subset of $U$), we can order these sheets by keeping $\nu_\Sigma$ in the first sheet and flipping $\nu_\Sigma$ alternatively for other sheets.  If a connected component $\Sigma_0$ does not intersect $\spt(\partial\Omega)$, it must have an even multiplicity by \eqref{eq:Sigma and Omega}, and we can arbitrarily order them by flipping $\nu_\Sigma$ alternatively. Thus $\nu_\Sigma$ has been chosen. 

We now check that the orientations of ordered sheet decomposition $\{\Gamma^i\}$ must alternate. Write $\nu$ for $\nu_\Sigma$. Assume for contradiction that $\nu|_{\Gamma^{i+1}}$ and $\nu|_{\Gamma^i}$ point to the same direction. If $\Gamma^{i+1}$ is not identical to $\Gamma^i$, this violates the assumption \eqref{eq:Sigma and Omega}. Assume now $\Gamma^{i+1}=\Gamma^i$. To show that $\nu$ alternates, we need to enlarge the open subset $W$ and track the connected components containing $\Gamma^{i+1}=\Gamma^i$ until either one sheet separates from the other, or we find a connected component of $\Sigma$ with multiplicity greater than one. The first case follows from the former discussion, and the later case follows from our choice of $\nu_\Sigma$ above. 
\end{proof}

\
The $\A^h$-functional is naturally defined on a $C^{1, 1}$-boundary as follows:
\begin{equation}\label{eq:Ah version2}
\mathcal A^h(\Sigma, \Omega) = \mc H^2 (\Sigma) - \int_{\Omega} h \,\mr d\mc H^3.
\end{equation}

\begin{lemma}\label{lem:enhanced 1st variation of Ah}
Let $(\Sigma, \Omega)$ be a $C^{1,1}$-boundary in $U$. For any $X\in \mk X(U)$, the first variation is
\begin{equation}
\label{eq:1st variation of Ah2}
\delta \mathcal A^h_{\Sigma, \Omega}(X) = \int_\Sigma \dv_\Sigma X -\lb{X, \nu_\Sigma} h \,\mr d\mc H^2.  
\end{equation}
\end{lemma}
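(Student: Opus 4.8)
The plan is to compute the two terms of $\A^h$ separately under the flow $\phi^t$ of a fixed $X\in\mk X(U)$, then combine. For the area term $\mc H^2(\phi^t(\Sigma))$, I would use the standard first-variation formula for a $C^{1,1}$ immersed surface: since $\Sigma$ is $C^{1,1}$ and $X$ is smooth and compactly supported, the map $t\mapsto \mc H^2(\phi^t(\Sigma))$ is differentiable at $t=0$ with derivative $\int_\Sigma \dv_\Sigma X\,\mr d\mc H^2$, where the integrand is the tangential divergence along $\Sigma$ (computed via the $C^{1,1}$, hence a.e.\ well-defined, tangent planes). This is just the area formula plus the computation $\tfrac{d}{dt}\big|_{t=0}J\phi^t = \dv_\Sigma X$ on the regular set; the touching set $\mc S(\Sigma)$ has measure zero in $\Sigma$ only if it is lower-dimensional — actually $\mc S(\Sigma)$ may carry positive $\mc H^2$-measure, but the formula still holds because we integrate over $\Sigma$ as an abstract manifold (the immersed domain), not over the image, and on the domain the immersion is $C^{1,1}$ everywhere.

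For the enclosed-volume term I would differentiate $\int_{\phi^t(\Omega)} h\,\mr d\mc H^3$. Changing variables by $\phi^t$, this equals $\int_\Omega (h\circ\phi^t)\,J\phi^t\,\mr d\mc H^3$, whose $t$-derivative at $0$ is $\int_\Omega \big(\lb{\nabla h, X} + h\,\dv_M X\big)\,\mr d\mc H^3 = \int_\Omega \dv_M(hX)\,\mr d\mc H^3$. Since $\Omega$ is a Caccioppoli set and $\phi_\#(\llbracket\Sigma\rrbracket)=\partial\Omega$ as currents in $U$ (Definition \ref{def:c11 boundary}), the divergence theorem for sets of finite perimeter gives $\int_\Omega \dv_M(hX)\,\mr d\mc H^3 = \int_{\partial\Omega}\lb{hX,\nu_{\partial\Omega}}\,\mr d\mu_{\partial\Omega}$. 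Finally, by Lemma \ref{lem:choice of outer normal}, when $\Omega\notin\{\emptyset,U\}$ the chosen normal $\nu_\Sigma$ agrees with $\nu_{\partial\Omega}$ on $\spt(\partial\Omega)$; moreover on any component of $\Sigma$ with even multiplicity not meeting $\spt(\partial\Omega)$, the alternating orientation of $\nu_\Sigma$ means the contribution $\int_\Sigma \lb{X,\nu_\Sigma}h$ cancels in pairs, matching the fact that such sheets contribute nothing to $\partial\Omega$. Hence $\int_{\partial\Omega}\lb{hX,\nu_{\partial\Omega}}\,\mr d\mu_{\partial\Omega} = \int_\Sigma \lb{X,\nu_\Sigma}h\,\mr d\mc H^2$, and subtracting gives \eqref{eq:1st variation of Ah2}.

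The one point requiring care — and the main obstacle — is the bookkeeping between the immersed surface $\Sigma$ and the reduced boundary $\partial\Omega$: the area term is naturally an integral over the (multiply-covered) immersed $\Sigma$, while the volume term produces an integral over $\partial^*\Omega$, which only sees each sheet with its net multiplicity. Reconciling these uses precisely the structure in Lemma \ref{lem:choice of outer normal} (components meeting $\spt\partial\Omega$ carry odd multiplicity with the normal aligned and the remaining sheets alternate; components missing $\spt\partial\Omega$ carry even multiplicity and fully cancel). I would therefore state the identity $\int_{\partial\Omega}\lb{hX,\nu_{\partial\Omega}}\,\mr d\mu_{\partial\Omega}=\int_\Sigma\lb{X,\nu_\Sigma}h\,\mr d\mc H^2$ as a short separate step, justified by decomposing $\Sigma$ into ordered sheets locally and using the alternation of $\nu_\Sigma$; everything else is the standard first-variation computation. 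The degenerate cases $\Omega=\emptyset$ or $\Omega=U$ are immediate since then $\partial\Omega=0$ and $\Sigma$ has everywhere even multiplicity, so both sides of the volume identity vanish.
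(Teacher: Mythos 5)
Your proposal is correct, but it reaches \eqref{eq:1st variation of Ah2} by a different route than the paper. The paper works entirely locally: near any $p\in\Sigma$ it decomposes $\Sigma$ into ordered sheets $\Gamma^1\leq\cdots\leq\Gamma^\ell$, assigns to each sheet the half-region $\Omega^i$ of $W\setminus\Gamma^i$ with $\nu_\Sigma=\nu_{\partial\Omega^i}$, and invokes the Constancy Theorem to write $\llbracket\Omega\res W\rrbracket-\sum_i\llbracket\Omega^i\rrbracket=m\llbracket W\rrbracket$; since $m\llbracket W\rrbracket$ is invariant under a flow supported in $W$, this gives $\delta\A^h_{\Sigma,\Omega}(X)=\sum_i\delta\A^h_{\Gamma^i,\Omega^i}(X)$, and each summand is the classical single-sheet first variation. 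You instead compute the two terms of $\A^h$ globally and then reconcile the resulting integral over $\partial^*\Omega$ with the integral over the immersed $\Sigma$. That reconciliation is indeed the only nontrivial point, and your sheet-alternation argument handles it (with the caveat that at a touching point covered by an odd number of sheets the contributions do not cancel completely --- an odd alternating sum leaves one net sheet, which is exactly what $\partial\Omega$ sees there; your parenthetical about parity of components is slightly loose but immaterial). A cleaner way to close that step, which avoids the case analysis altogether, is to observe that the identity $\int_{\partial\Omega}\lb{hX,\nu_{\partial\Omega}}\,\mr d\mu_{\partial\Omega}=\int_\Sigma\lb{X,\nu_\Sigma}h\,\mr d\mc H^2$ is nothing but the current equation $\phi_\#(\llbracket\Sigma\rrbracket)=\partial\Omega$ from Definition \ref{def:c11 boundary} tested against the $2$-form obtained by contracting the volume form with $hX$, combined with De Giorgi's structure theorem on the left. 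What your approach buys is independence from the already-stated general formula \eqref{eq:1st variation of Ah} (you rederive the volume variation from Gauss--Green for Caccioppoli sets); what the paper's approach buys is that it never needs the global reconciliation identity, reducing everything to the embedded single-graph case at the cost of one application of the Constancy Theorem. Either argument is acceptable.
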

\begin{proof}
Note that by \eqref{eq:1st variation of Ah}, the integral of $\lb{X, \nu_\Sigma}$ in $\delta\A^h$ is only defined on $\partial\Omega$. Nevertheless, we can use the $C^{1,1}$-structure to rewrite it over $\Sigma$. Indeed, we only need to check \eqref{eq:1st variation of Ah2} locally. Given any $p\in\Sigma$, there is a neighborhood $W\subset U$ of $p$, such that $\Sigma$ decomposes into ordered sheets $\Gamma^1\leq \cdots\leq \Gamma^\ell$. For each $\Gamma^i$, we choose $\Omega^i$ to be the connected component of $W\setminus\Gamma^i$ such that $\nu_\Sigma = \nu_{\partial\Omega^i}$. Then by the Constancy Theorem \cite{Si}*{26.27}, we know that $\llbracket \Omega\res W \rrbracket - \sum_{i=1}^\ell \llbracket\Omega^i\rrbracket = m\llbracket W \rrbracket$ for some integer $m\in \mb Z$. Therefore, for any $X\in \mk X(W)$, we know $\delta\A^h_{\Sigma, \Omega}(X) = \sum_{i=1}^\ell \delta \A^h_{\Gamma^i, \Omega^i}(X)$, which is exactly \eqref{eq:1st variation of Ah2}.
\end{proof}

\begin{definition}[$C^{1,1}$ $h$-boundary]
\label{def:C11 h-boundary}
A $C^{1,1}$-boundary $(\Sigma, \Omega)$ in $U$ is called a {\em $C^{1,1}$ (almost embedded) $h$-boundary} in $U$, 
if $(\Sigma, \Omega)$ is $\A^h$-stationary in $U$; 
that is, for any $X\in\mathfrak X(U)$, $\delta \mathcal A^h_{\Sigma, \Omega}(X) = 0$.
\end{definition}

\begin{lemma}\label{lem:regular part is PMC}
Assume that $(\Sigma, \Omega)$ is a $C^{1,1}$ $h$-boundary. Then the regular set $\mathcal R(\Sigma)$ is smoothly embedded, and its mean curvature $H$ (w.r.t. the unit normal $\nu_\Sigma$) is prescribed by $h$; that is,
\[ H = h|_{\Sigma}, \quad \text{ on } \mathcal R(\Sigma). \]
\end{lemma}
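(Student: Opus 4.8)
The plan is to argue locally near a regular point: from the first variation identity \eqref{eq:1st variation of Ah2} I will read off that $\Sigma$ solves, weakly, the prescribed mean curvature equation there, and then I will upgrade $C^{1,1}$ to $C^\infty$ by the standard bootstrap for quasilinear elliptic equations with smooth data.

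First, fix $p\in\mc R(\Sigma)$. Since $\mc R(\Sigma)$ is relatively open in $\Sigma$, there is a neighborhood $W\subset U$ of $p$ on which $\Sigma\cap W$ is a single connected $C^{1,1}$ embedded surface. Choosing coordinates adapted to $T_p\Sigma$, I would write $\Sigma\cap W$ as a graph $\{(x,u(x)):x\in D\}$ of a function $u\in C^{1,1}(D)$ over a domain $D\subset\mb R^2$ with $u(0)=0$ and $\nabla u(0)=0$. For test fields $X\in\mk X(W)$ only this one sheet contributes to \eqref{eq:1st variation of Ah2}, so
\[
\int_\Sigma \dv_\Sigma X - \lb{X,\nu_\Sigma}\,h\,\mr d\mc H^2 = 0 \qquad\text{for all }X\in\mk X(W).
\]
In the graphical coordinates this is precisely the weak form of a uniformly elliptic divergence-structure equation for $u$ --- a smooth-metric perturbation of the minimal graph operator $\dv\big(\nabla u/\sqrt{1+|\nabla u|^2}\big)$ on the left, and a right-hand side built from $h$ and the (smooth) metric coefficients --- i.e.\ the non-parametric form of $H=h|_\Sigma$ taken with respect to $\nu_\Sigma$. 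Since $u\in C^{1,1}=W^{2,\infty}_{loc}$, one may equivalently integrate by parts and read this as an a.e.\ pointwise identity.

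Now run the bootstrap. As $u\in C^{1,1}\subset C^{1,\alpha}$ for every $\alpha\in(0,1)$, the operator linearized at $u$ has $C^{0,\alpha}$ coefficients and $C^{0,\alpha}$ inhomogeneity, so interior Schauder estimates give $u\in C^{2,\alpha}_{loc}(D)$; because $g$ and $h$ are smooth, each further iteration gains one Hölder derivative on the coefficients and the inhomogeneity, whence $u\in C^\infty(D)$. Since $p$ was an arbitrary point of $\mc R(\Sigma)$, it follows that $\mc R(\Sigma)$ is a smoothly embedded surface, the displayed identity holds classically, and it gives $H=h|_\Sigma$ on $\mc R(\Sigma)$ with the unit normal $\nu_\Sigma$ fixed in Lemma \ref{lem:choice of outer normal}.

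The only point that needs a little care is the localization: that for $X$ supported in $W$ the identity \eqref{eq:1st variation of Ah2} really does collapse to the single-sheet prescribed-mean-curvature identity with the correct choice of unit normal. This is immediate from the definition of the regular set together with the construction of $\nu_\Sigma$ in Lemma \ref{lem:choice of outer normal}; past this, the statement is just the classical regularity theory for $C^{1,1}$ solutions of the prescribed mean curvature equation with smooth prescribing function, exactly as in the analogous CMC/PMC settings (cf.\ \cites{ZZ17,ZZ18}).
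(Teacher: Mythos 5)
Your proof is correct and is exactly the argument the paper compresses into its one-line proof ("first variation formula and standard elliptic regularity"): localize at a regular point, read \eqref{eq:1st variation of Ah2} as the weak non-parametric prescribed mean curvature equation for the graph function, and bootstrap from $C^{1,1}$ to $C^\infty$ via Schauder. No substantive difference in approach.
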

\begin{proof}
It follows from the first variation formula \eqref{eq:1st variation of Ah2} and standard elliptic regularity theory. 
\end{proof}

\subsection{Strong $\A^h$-stationarity}
\label{ss:strong Ah stationarity}
Near a touching point, while the above notion says that the sheets as a union is stationary for $\A^h$, a relatively stronger notion says that the top and bottom sheets are stationary for $\A^h$ w.r.t. deformations pointing away from all other sheets. This is the following {\em strongly $\A^h$-stationary property}.

\begin{definition}[Strong $\A^h$-stationarity]\label{def:strong one-sided stationarity}
A $C^{1, 1}$ $h$-boundary $(\Sigma, \Omega)$ is said to be {\em strongly $\A^h$-stationary in $U$}, if the following holds: 

For every $p\in \mc S(\Sigma)\cap U$, that is, $\ell:= \Theta^2(\Sigma, p)\geq 2$, 
there exists a small neighborhood $W\subset U$ of $p$, and decomposition $\Sigma\cap W = \cup_{i=1}^\ell \Gamma^i$ into $\ell\geq 2$ connected disks with a natural ordering $\Gamma^1\leq \cdots \leq \Gamma^\ell$. Denote by $W^1$ and $W^\ell$ the bottom and top components of $W\setminus \Sigma$. We require for $i=1, \ell$ and all $X\in \mk X(W)$ pointing into $W^i$ along $\Gamma^i$,
\begin{equation}\label{eq:1-sided stationary1:new}
\begin{split}
\delta \A^h_{\Gamma^i, W^i} (X) \geq 0, &\quad \text{ when $W^i\subset \Omega$}, \\
\delta \A^h_{\Gamma^i, W\setminus W^i}(X) \geq 0, &\quad \text{ when $W^i\cap \Omega=\emptyset$}.
\end{split}
\end{equation}
\end{definition}

\begin{remark}
This notion simply means that moving the top/bottom sheet of $\Sigma$ near $p$ away from all other sheets increases the $\A^h$-functional up to the first order. If a $C^{1,1}$-boundary $(\Sigma, \Omega)$ is the limit of an isotopic $\A^h$-minimizing sequence of embedded surfaces, then $(\Sigma, \Omega)$ is strongly $\A^h$-stationary; see Theorem \ref{thm:interior regularity}.
\end{remark}

The strongly $\A^h$-stationary property can deduce more information of $\mc S(\Sigma)$ as follows. 

\begin{lemma}\label{lem:singular part of strong stationary h-boundary}
As above, let $(\Sigma, \Omega)$ be a strongly $\A^h$-stationary, $C^{1,1}$ $h$-boundary in $U$. 
\begin{enumerate}[label=\roman*)]
\item \label{item:minimal around odd pt}If $p\in\mc S(\Sigma)$ and $\Theta^2(\Sigma, p)$ is odd, then there exists a neighborhood $W$ of $p$, such that $\Sigma\cap W$ is a minimal surface with multiplicity $\Theta^2(\Sigma, p)$, and $h(x)=0$ for all $x\in \Sigma\cap W$.
\item The generalized mean curvature of $\Sigma$ as an immersion vanishes $\mc H^2$-almost everywhere in $\mc S(\Sigma)$.
\end{enumerate}
\end{lemma}
\begin{remark}
Item \ref{item:minimal around odd pt} implies that if $h\neq 0$ in a neighborhood of $p\in \mc S(\Sigma)$, then $\Theta^2(\Sigma, p)$ is an even number.    
\end{remark}

\begin{proof}
We continue to use notations in Definition \ref{def:strong one-sided stationarity}. Let $p\in \mc S(\Sigma)$ with $\Theta^2(\Sigma,p)=\ell\geq 2$. Assume that the ordered sheets $\Gamma^1\leq \cdots \leq \Gamma^\ell$ in $W\subset U$ are graphs of $C^{1,1}$-functions $u^1\leq \cdots \leq u^\ell$ over a common domain $\mc W\subset \mb R^2$ with 
\[ u^1(0)=\cdots =u^\ell(0)=0.  \]
We first prove the first half of Item i), which describes the structure of $\mc S(\Sigma)$ with odd density.
\begin{claim}\label{claim:odd density is embedded}
If $p\in \mc S(\Sigma)$ and $\ell$ is odd, then $\Sigma\cap W$ is an embedded disk with multiplicity $\ell$.
\end{claim}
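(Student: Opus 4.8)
The plan is to argue by contradiction: suppose $\ell$ is odd but $\Sigma\cap W$ is genuinely non-embedded, so at least two of the ordered sheets $u^1\leq\cdots\leq u^\ell$ are not identical near $0$. First I would recall from Lemma \ref{lem:choice of outer normal} that the chosen unit normal $\nu_\Sigma$ alternates along the ordered sheets $\Gamma^1\leq\cdots\leq\Gamma^\ell$; since $\ell$ is odd, $\nu_\Sigma$ points the \emph{same} way along $\Gamma^1$ and $\Gamma^\ell$ (say both ``upward''). Using the current identity $\phi_\#\llbracket\Sigma\rrbracket=\partial\Omega$ together with the local description $\llbracket\Omega\res W\rrbracket-\sum_i\llbracket\Omega^i\rrbracket=m\llbracket W\rrbracket$ from the proof of Lemma \ref{lem:enhanced 1st variation of Ah}, I would determine which of the two alternatives in \eqref{eq:1-sided stationary1:new} applies at the top sheet $\Gamma^\ell$ and at the bottom sheet $\Gamma^1$: because the normals agree and the sheets are alternately ``inside/outside'' $\Omega$, the bottom component $W^1$ and the top component $W^\ell$ sit on the same side relative to $\Omega$ (both in $\Omega$, or both in its complement). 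This is the key bookkeeping step, and I expect it to be the main obstacle — one has to chase orientations and the parity of $\ell$ carefully to see that the strong $\A^h$-stationarity inequalities for $i=1$ and $i=\ell$ become inequalities of \emph{opposite} sign when compared as obstacle problems against the \emph{same} reference configuration.

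Next I would convert \eqref{eq:1-sided stationary1:new} into pointwise differential inequalities for $u^1$ and $u^\ell$. By Lemma \ref{lem:regular part is PMC} the interior (embedded) parts already satisfy the prescribed mean curvature equation $H=h|_\Sigma$; the strong stationarity at touching points says, in the graphical chart, that $u^\ell$ is a \emph{supersolution} of the $h$-mean-curvature operator (moving $\Gamma^\ell$ up into $W^\ell$ does not decrease $\A^h$) while $u^1$ is a \emph{subsolution} of the same operator, relative to whichever side of $\Omega$ they bound — or, after the sign analysis of the previous paragraph, it is precisely the reverse pairing that yields a contradiction. Concretely: $u^\ell$ satisfies $\mathcal{M}(u^\ell)\le h$ (weakly, as a $C^{1,1}$ supersolution of the obstacle problem) and $u^1$ satisfies $\mathcal{M}(u^1)\ge h$, where $\mathcal{M}$ is the mean curvature quasilinear operator computed with the common upward normal; since $u^1\le u^\ell$ and they touch at the origin ($u^1(0)=u^\ell(0)=0$ with $u^1\le u^\ell$), the origin is an interior touching point of a supersolution lying above a subsolution of the \emph{same} uniformly elliptic operator.

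The conclusion then follows from the strong maximum principle for $C^{1,1}$ solutions of quasilinear elliptic equations (the difference $w=u^\ell-u^1\ge0$ satisfies a linear uniformly elliptic inequality $Lw\le 0$ with bounded measurable coefficients obtained by the usual interpolation of the coefficients of $\mathcal{M}$ along the segment between $\nabla u^1$ and $\nabla u^\ell$, so $w\equiv0$ near $0$): $u^1\equiv u^\ell$ in a neighborhood of the origin. Since the intermediate sheets are trapped, $u^1\le u^2\le\cdots\le u^\ell$, this forces all $u^i$ to coincide near $0$, i.e.\ $\Sigma\cap W'$ is an embedded disk with constant multiplicity $\ell$ on a possibly smaller neighborhood $W'$. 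As $\mc S(\Sigma)$ is relatively closed and $\mathcal R(\Sigma)$ relatively open, a connectedness argument on the component of $\mc S(\Sigma)$ through $p$ upgrades this local statement to the asserted structure, completing the proof of the claim. The delicate point throughout, as flagged above, is getting the orientation/parity bookkeeping right so that the two one-sided inequalities in \eqref{eq:1-sided stationary1:new} combine into a genuine sub/supersolution sandwich rather than a vacuous pair.
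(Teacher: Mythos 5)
Your overall strategy is the same as the paper's: extract one-sided mean curvature inequalities for the top and bottom sheets from strong $\A^h$-stationarity, subtract them, and apply the strong maximum principle (weak Harnack for strong solutions) to the height difference $w=u^\ell-u^1\ge 0$ at the touching point. However, the step you yourself flag as "the main obstacle" — the orientation/parity bookkeeping — is done incorrectly, and the error is not cosmetic. You claim that for odd $\ell$ the components $W^1$ and $W^\ell$ lie on the \emph{same} side relative to $\Omega$. The opposite is true: since $\nu_\Sigma$ alternates and $\ell$ is odd, the upward normal direction agrees on $\Gamma^1$ and $\Gamma^\ell$, but $W^1$ lies \emph{below} $\Gamma^1$ while $W^\ell$ lies \emph{above} $\Gamma^\ell$, so exactly one of $W^1,W^\ell$ is contained in $\Omega$ (this is what the paper uses; the "both on the same side" configuration is precisely the \emph{even}-$\ell$ case, cf.\ Corollary \ref{cor:description of Omega near touching point} and \eqref{eq:bottom}).

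This matters because the side of $\Omega$ determines the sign of $h$ in the inequality extracted from \eqref{eq:1-sided stationary1:new}. With the correct (odd-$\ell$) bookkeeping one gets, w.r.t.\ the common upward normal, $H^\ell\ge h$ on $\Gamma^\ell$ and $H^1\le h$ on $\Gamma^1$ — both compared against the \emph{same} function $h$ — so the subtracted inequality has right-hand side $h(x,u^\ell)-h(x,u^1)=c(x)w$ with $c$ bounded, and the maximum principle forces $w\equiv 0$. With your stated configuration the two inequalities compare $H^\ell$ and $H^1$ against $h$ and $-h$ respectively, so the subtracted inequality has an inhomogeneous term of size $\approx 2|h|$ that does \emph{not} vanish where $w$ vanishes, and the strong maximum principle gives nothing — indeed this is exactly why even multiplicities cannot be ruled out and why the paper needs the entire supersolution/eigenfunction machinery of Sections \ref{sec:existence of supersolution}--\ref{sec:multiplicity one} in the even case. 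Your next paragraph silently writes the correct pair ($\mathcal M(u^\ell)\le h$ and $\mathcal M(u^1)\ge h$ against the same $h$), but this is inconsistent with the bookkeeping you stated and is never derived; the proof therefore has a genuine gap at its pivotal step. (A minor additional remark: the final "connectedness argument" is unnecessary — the claim is local, and the weak Harnack inequality already gives $w\equiv 0$ on the whole common graphical domain.)
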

\begin{proof}[Proof of Claim \ref{claim:odd density is embedded}]
By Lemma \ref{lem:choice of outer normal}, the orientations of $\Gamma^i$ must alternate, and hence only one of $W^1, W^\ell$ lies in $\Omega$ (as $\ell$ is odd). Assume without loss of generality $W^1\subset \Omega$ and $W^\ell\cap \Omega = \emptyset$. Choose the unit normal vector fields $\nu^1, \nu^\ell$ along $\Gamma^1, \Gamma^\ell$ pointing away from $\Omega$ respectively. By \eqref{eq:1-sided stationary1:new}, we have
\[ \int_{\Gamma^1} \dv_{\Gamma^1} X - h\cdot \lb{X, \nu^1} \geq 0, \quad \text{ for all $X\in \mk X(W)$ with $\lb{X, \nu^1}_{\Gamma^1}\leq 0$}, \]
\[ \int_{\Gamma^\ell} \dv_{\Gamma^\ell} X - h\cdot \lb{X, \nu^\ell} \geq 0, \quad \text{ for all $X\in \mk X(W)$ with $\lb{X, \nu^\ell}_{\Gamma^\ell}\geq 0$}. \]
This implies that the generalized mean curvatures (w.r.t. $\nu^1, \nu^\ell$ respectively) satisfy:
\begin{equation}\label{eq:mean curvature inequalities1:new}
 H_{\Gamma^1}\leq h|_{\Gamma^1} \quad \text{ and }\quad H_{\Gamma^\ell}\geq h|_{\Gamma^\ell}.   
\end{equation}
Note that $\nu^1, \nu^\ell$ all point upward as $\ell$ is odd. Subtracting the two inequalities in \eqref{eq:mean curvature inequalities1:new}, the height difference $\varphi = u^\ell-u^1 \in C^{1,1}(\mc W)$ satisfies a differential inequality almost everywhere:
\[ L_{\mc W} \varphi \geq h(x, u^\ell(x)) - h(x, u^1(x)) = c(x)\varphi(x), \]
where $L_{\mc W}$ is a positive elliptic operator on $\mc W\subset \mb R^2$. Since $\varphi \geq 0$ and $\varphi = 0$ somewhere, by the Harnack estimates for strong solutions \cite{GT}*{Theorem 9.22}, we must have $\varphi \equiv 0$. 
This proves Claim \ref{claim:odd density is embedded}.
\end{proof}

We now determine the generalized mean curvature of each slice on $\mc S(\Sigma)$. By basic function theory applying to the functions $u^1\leq \cdots \leq u^\ell$, we know that the Hessians $\{\Hess u^i\}$ are identical almost everywhere along $\{u^1=\cdots=u^\ell\}$, and so the generalized mean curvature $H^i$ of $\Gamma^i$ (w.r.t. a common unit normal) are identical almost everywhere along $\Gamma^1\cap\cdots\cap\Gamma^\ell$. We next show that these generalized mean curvatures are zero almost everywhere in $\Gamma^1\cap\cdots\cap\Gamma^\ell$.  

Since $(\Sigma,\Omega)$ is $\mc A^h$-stationary and $\Gamma^i$ is a $C^{1, 1}$-surface, we have by \eqref{eq:1st variation of Ah2},
\begin{equation}\label{eq:form wrt g:new}
\sum_{i=1}^\ell\int_{\Gamma^i}\dv X\,\mr d\mc H^2 = \sum_{i=1}^\ell\int_{\Gamma^i}(-1)^{i-1}h(x)\langle X,\nu^i\rangle\,\mr d\mc H^2(x), \quad \forall X\in \mk X(W),
\end{equation}
where $\nu^i$ denotes the upward-pointing unit normal of $\Gamma^i$. Also the generalized mean curvature $H^i$ of $\Gamma^i$ (w.r.t. $\nu^i$) satisfies 
\[ \int_{\Gamma^i}\dv X\,\mr d\mc H^2=\int_{\Gamma^i}H^i\langle X,\nu^i\rangle \,\mr d\mc H^2, \quad \forall X\in \mk X(W).  \]
This together with \eqref{eq:form wrt g:new} gives that 
\begin{equation}\label{eq:h and H:new} 
\sum_{i=1}^\ell H^i(x) = \sum_{i=1}^\ell(-1)^{i-1}h(x),
\end{equation}
for $\mc H^2$-a.e. $x\in \Gamma^1\cap\cdots\cap\Gamma^\ell$. 
Recall that for $\mc H^2$-a.e. $x\in \Gamma^1\cap\cdots\cap\Gamma^\ell$, we have
\[  H^1(x)= \cdots =H^\ell(x),  \quad \text{and } \quad \nu^1(x) = \cdots = \nu^\ell(x).\]

\medskip
\begin{itemize}
\item {\noindent\em If $\ell$ is even}, we also know that the sum of the right hand side of \eqref{eq:h and H:new} vanishes $\mc H^2$-a.e. on $\Gamma^1\cap\cdots\cap\Gamma^\ell$. 
By \eqref{eq:h and H:new} again, we have that for $\mc H^2$-a.e. $x\in \Gamma^1\cap\cdots\cap\Gamma^\ell$,
\begin{gather*}
H^1(x)= \cdots = H^\ell (x) = 0.
\end{gather*} 

\item {\noindent\em If $\ell$ is odd}, then by Claim \ref{claim:odd density is embedded}, 
We have $\Gamma^1=\cdots=\Gamma^\ell =: \oli\Gamma$ in $W$. Then \eqref{eq:h and H:new} gives that for $\mc H^2$-a.e. $x\in \oli\Gamma$,
\[ H^1(x) = \cdots = H^\ell(x) = \frac{1}{\ell} h(x).\] 
Together with \eqref{eq:mean curvature inequalities1:new}, we have that for $\mc H^2$-a.e. $x\in\oli\Gamma$,
\[  h(x)\geq H^1(x)=\frac{1}{\ell} h(x)=H^\ell(x)\geq h(x).  \]
It follows that for $\mc H^2$-a.e. $x\in \oli\Gamma$,
\[    H^1(x)=\cdots=H^\ell(x)=h(x)=0.\]
\end{itemize}
This finishes the proof of $\mc H^2$-a.e. vanishing of $H$ on $\{\Theta^2(\Sigma,p)=\ell\}$ for $\ell\geq 2$, and hence on all $\mc S(\Sigma)$. 
\end{proof}

By combining Lemma \ref{lem:regular part is PMC} and Lemma \ref{lem:singular part of strong stationary h-boundary}, we have the following characterization of the mean curvature of strongly $\mc A^h$-stationary $h$-boundaries. 

\begin{corollary}\label{cor:mean curvature charaterization}
As above, let $\nu$ be the unit outer normal of $\Sigma$ induced by $\Omega$ by Lemma \ref{lem:choice of outer normal}. 
Then the generalized mean curvature $H$ of $\Sigma$ w.r.t. $\nu$ satisfies:
\[
H(p) = \left\{\begin{array}{ll}
     h(p) & \text{ when } p\in\mc R(\Sigma)\cap U  \\
     0 & \text{ for $\mc H^2$-a.e. } p\in \mc S(\Sigma)\cap U 
\end{array}\right ..
\]
\end{corollary}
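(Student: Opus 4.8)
The plan is to assemble Corollary~\ref{cor:mean curvature charaterization} directly from the two ingredients that precede it, with essentially no new computation. The statement partitions $\Sigma\cap U$ into the regular set $\mc R(\Sigma)\cap U$ and the touching set $\mc S(\Sigma)\cap U$, and on each piece the mean curvature has already been identified.

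First I would recall from Lemma~\ref{lem:regular part is PMC} that on $\mc R(\Sigma)\cap U$ the surface $\Sigma$ is smoothly embedded and its (classical) mean curvature with respect to $\nu_\Sigma$ equals $h|_\Sigma$. Here one must check that the normal $\nu$ furnished by Lemma~\ref{lem:choice of outer normal} (the one induced by $\Omega$) agrees with the normal $\nu_\Sigma$ used in Lemma~\ref{lem:regular part is PMC}; but this is exactly the content of Lemma~\ref{lem:choice of outer normal}, which says $\nu_\Sigma$ can be (and is) chosen to coincide with $\nu_{\partial\Omega}$ along $\spt(\partial\Omega)$, so the two normals are the same and the first line $H(p)=h(p)$ on $\mc R(\Sigma)\cap U$ holds. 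Second, on $\mc S(\Sigma)\cap U$ I would invoke Lemma~\ref{lem:singular part of strong stationary h-boundary}~(\rom{2}), which states precisely that the generalized mean curvature of $\Sigma$ as an immersion vanishes $\mc H^2$-almost everywhere on $\mc S(\Sigma)$; this gives the second line $H(p)=0$ for $\mc H^2$-a.e.\ $p\in\mc S(\Sigma)\cap U$. Combining the two lines yields the displayed formula, and the proof is essentially a one-line citation of the two lemmas.

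The only point requiring a little care — and the one I would flag as the ``main obstacle,'' though it is minor — is the bookkeeping of normals and the meaning of ``generalized mean curvature'' at touching points where $\Sigma$ is an immersed, merely $C^{1,1}$ surface: one wants the quantity $H$ in the corollary to be unambiguously defined as the $\mc H^2$-a.e.\ defined generalized mean curvature of the immersion with respect to the globally chosen $\nu$, and one must note that at a touching point with even density the alternating orientation of the sheets (Lemma~\ref{lem:choice of outer normal}) is consistent with the a.e.-vanishing conclusion of Lemma~\ref{lem:singular part of strong stationary h-boundary}. Since Lemma~\ref{lem:singular part of strong stationary h-boundary} already proves vanishing sheet-by-sheet with respect to a common normal on $\Gamma^1\cap\cdots\cap\Gamma^\ell$, no sign ambiguity survives. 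Hence the corollary follows immediately, and I would simply write: ``This follows by combining Lemma~\ref{lem:regular part is PMC} with Lemma~\ref{lem:singular part of strong stationary h-boundary}, using Lemma~\ref{lem:choice of outer normal} to match the normals.''
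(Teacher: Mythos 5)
Your proposal is correct and matches the paper exactly: the corollary is stated there as an immediate consequence of combining Lemma \ref{lem:regular part is PMC} (regular set) with Lemma \ref{lem:singular part of strong stationary h-boundary} (touching set), with the normal conventions fixed by Lemma \ref{lem:choice of outer normal}. Your extra remark on matching the normals and the a.e.\ sheet-by-sheet vanishing is a reasonable clarification of what the paper leaves implicit.
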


We can also deduce the following important corollary of strong $\A^h$-stationarity, which asserts when the top/bottom sheets can have touching subsets.

\begin{proposition}\label{prop:mean curvature compared with pmc function}
As above, given $p\in \Sigma\cap U$, assume that $\Sigma$ decomposes into ordered sheets $\Gamma^1\leq \cdots \leq \Gamma^\ell$ in a neighborhood $W\subset U$ of $p$. Then the following holds (note that all generalized mean curvatures are defined w.r.t. $\nu$ induced from $\Omega$ by Lemma \ref{lem:choice of outer normal}):
\begin{enumerate}
    \item\label{item:case h>0} Assume $h>0$ in $W$. 
    \begin{enumerate}[label=\roman*)]
        \item\label{item:h>0:top} If $\Omega$ does not contain the region above $\Gamma^\ell$, then $\Gamma^\ell$ belongs to $\mc R(\Sigma)$ and $H^\ell = h|_{\Gamma^\ell}$. 

        \item If $\Omega$ contains the region above $\Gamma^\ell$, then $\Gamma^\ell$ may contain a subset of $\mc S(\Sigma)$, and in this case $H^\ell \leq  h|_{\Gamma^\ell}$. 
    \end{enumerate}

    \item\label{item:case h<0} Assume $h<0$ in $W$.
    \begin{enumerate}[label=\roman*)]
        \item If $\Omega$ contains the region above $\Gamma^\ell$, then $\Gamma^\ell$ belongs to $\mc R(\Sigma)$ and $H^\ell = h|_{\Gamma^\ell}$.

        \item If $\Omega$ does not contain the region above $\Gamma^\ell$, then $\Gamma^\ell$ may contain a subset of $\mc S(\Sigma)$, and in this case $H^\ell \geq h|_{\Gamma^\ell}$. 
    \end{enumerate}
\end{enumerate}
\end{proposition}

\begin{remark}
    Since we may flip the ordering, all the above statements for $\Gamma^\ell$ have corresponding statements for $\Gamma^1$. For instance, in Case \eqref{item:case h>0}(i), if $\ell$ is an even number, then $\Omega$ does not contain the region below $\Gamma^1$, so $\Gamma^1$ belongs to $\mc R(\Sigma)$ and $H^1 = h|_{\Gamma^1}$; if $\ell$ is odd, $\Omega$ contains the region below $\Gamma^1$, so $\Gamma^1$ may contain a subset of $\mc S(\Sigma)$, and in this case we only have $H^1 \leq h|_{\Gamma^1}$.
\end{remark}

\begin{proof}
    By possibly simultaneously flipping $(\Omega, h)$ to $(M\setminus \Omega, -h)$, we only need to prove Case \ref{item:case h>0}. 
    For Case \ref{item:case h>0}(\rom{1}), if for contrary $\Gamma^\ell\cap \mc S(\Sigma)\neq \emptyset$, then by the assumption of orientations and Corollary \ref{cor:mean curvature charaterization}, we must have
    \[ H^\ell \leq h|_{\Gamma^\ell}, \text{ and } H^\ell =0 < h|_{\Gamma^\ell} \text{ along } \Gamma^\ell\cap \mc S(\Sigma). \]
    However, under these assumptions, for any $X\in \mk X(W)$ with $\lb{X, \nu}> 0$ along $\Gamma^\ell$, (note that $\nu$ points into $W^\ell$ using notations in Definition \ref{def:strong one-sided stationarity}),
    we have
    \[ \delta \A^h_{\Gamma^\ell, W^\ell}(X) = \int_{\Gamma^\ell} (H^\ell-h) \lb{X, \nu} d\mc H^2 <0, \]
    which contradicts with the strongly $\A^h$-stationary assumption.

    Case \ref{item:case h>0}(\rom{2}) follows directly from Corollary \ref{cor:mean curvature charaterization} using the above argument. 
\end{proof}

Finally, we have the following direct corollary which forbids a strongly $\A^h$-stationary, $C^{1,1,}$ $h$-boundary to collapse to an even multiple of minimal surfaces in certain situation. 

\begin{corollary}\label{cor:description of Omega near touching point}
As above, assume that $p$ lies in the interior of $\mc S(\Sigma)$ and $h\neq 0$ near $p$. By Lemma \ref{lem:singular part of strong stationary h-boundary}, there exists some neighborhood $W\subset U$ of $p$, such that $\Sigma\cap W = m [\Gamma]$ for some $m \in 2\mb N$ and some minimal surface $\Gamma$. Then 
\begin{enumerate}[label=\roman*)]
    \item if $h>0$ in $W$, then $\Omega \cap W = W$;
    \item if $h<0$ in $W$, then $\Omega \cap W = \emptyset$.
\end{enumerate}
\end{corollary}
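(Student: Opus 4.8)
The plan is to deduce the statement purely from Proposition \ref{prop:mean curvature compared with pmc function} together with Lemma \ref{lem:singular part of strong stationary h-boundary}. By Lemma \ref{lem:singular part of strong stationary h-boundary}, since $p$ lies in the interior of $\mc S(\Sigma)$ and $h\neq 0$ near $p$, the density $\Theta^2(\Sigma, p)$ must be even, and there is a neighborhood $W\subset U$ of $p$ on which $\Sigma\cap W = m[\Gamma]$ for some $m\in 2\mb N$ and some connected minimal surface $\Gamma$; in particular all the ordered sheets $\Gamma^1\leq\cdots\leq\Gamma^\ell$ ($\ell=m$) coincide with $\Gamma$, and their common generalized mean curvature w.r.t.\ $\nu$ vanishes $\mc H^2$-a.e.\ on $\Gamma$. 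Since $h\neq 0$ on $W$ and $W$ can be taken connected, we may assume $h>0$ on $W$ (Case i)); the case $h<0$ follows by the flipping symmetry $(\Omega, h)\mapsto(M\setminus\Omega, -h)$ exactly as in the proof of Proposition \ref{prop:mean curvature compared with pmc function}.

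Now I would apply Proposition \ref{prop:mean curvature compared with pmc function}(1) to the top sheet $\Gamma^\ell$. Since $H^\ell = 0$ $\mc H^2$-a.e.\ on $\Gamma^\ell=\Gamma$ while $h|_{\Gamma^\ell}>0$, we cannot have $H^\ell = h|_{\Gamma^\ell}$, so alternative i) of Proposition \ref{prop:mean curvature compared with pmc function}(1) is impossible. Therefore alternative ii) holds: $\Omega$ contains the region of $W$ lying above $\Gamma^\ell$. The same argument applied to the bottom sheet $\Gamma^1$: because $\ell = m$ is even, the remark after Proposition \ref{prop:mean curvature compared with pmc function} gives that $\Omega$ does not contain the region below $\Gamma^1$ would force $H^1 = h|_{\Gamma^1} > 0$, again contradicting $H^1 = 0$ $\mc H^2$-a.e.; hence $\Omega$ \emph{does} contain the region of $W$ below $\Gamma^1$. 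Since all sheets coincide, $\Gamma^\ell = \Gamma^1 = \Gamma$, so the ``region above $\Gamma^\ell$'' and the ``region below $\Gamma^1$'' are precisely the two connected components of $W\setminus\Gamma$. Both are contained in $\Omega$, and since $\|\partial\Omega\|\leq\|V\| = m\mc H^2\llcorner\Gamma$ has support in $\Gamma$, $\Omega\cap W$ differs from $W$ only by a set of measure zero; being a Caccioppoli set we conclude $\Omega\cap W = W$ as claimed.

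The only delicate point is the bookkeeping of which "side" the Caccioppoli set $\Omega$ sits on relative to the alternating normals $\nu^i$, i.e.\ making sure the two applications of Proposition \ref{prop:mean curvature compared with pmc function} (to $\Gamma^\ell$ and to $\Gamma^1$) are genuinely complementary rather than contradictory. This is where one uses that $\ell$ is \emph{even}: the normal $\nu_\Sigma$ alternates across the sheets (Lemma \ref{lem:choice of outer normal}), so $\nu|_{\Gamma^1}$ and $\nu|_{\Gamma^\ell}$ point in opposite directions, and the conclusions "$\Omega$ contains the region above $\Gamma^\ell$" and "$\Omega$ contains the region below $\Gamma^1$" together cover all of $W$ rather than overlapping in a way that would be vacuous. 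In the odd case this would fail — consistent with the fact that the corollary is stated only for even multiplicity — but here the evenness is guaranteed by Lemma \ref{lem:singular part of strong stationary h-boundary}(i). Once this orientation bookkeeping is set up, the rest is immediate from the already-established Proposition.
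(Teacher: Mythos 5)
Your proof is correct and is precisely the derivation the paper has in mind: the corollary is stated there without proof as a direct consequence of Proposition \ref{prop:mean curvature compared with pmc function}, and your contrapositive use of Case (1)(i) for the top sheet and of its flipped analogue for the bottom sheet (both impossible because $\Gamma\subset\mc S(\Sigma)$ forces $H\equiv 0\neq h$ $\mc H^2$-a.e.\ by Corollary \ref{cor:mean curvature charaterization}) yields $W\setminus\Gamma\subset\Omega$ and hence $\Omega\cap W=W$, with the $h<0$ case following from the $(\Omega,h)\mapsto(M\setminus\Omega,-h)$ symmetry exactly as in the paper. One cosmetic point: in your closing paragraph, the two regions cover $W\setminus\Gamma$ for any $\ell$; what evenness actually guarantees is that the two containments are mutually compatible with $\partial\Omega\res W=\sum_{i}(-1)^{i-1}[\Gamma]=0$ (for odd $\ell$ they would contradict the boundary structure, but that case is already excluded by Lemma \ref{lem:singular part of strong stationary h-boundary}(i)).
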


\subsection{Stability and compactness}
\label{ss:stability and compactness}
In this subsection, we recall the compactness of stable $C^{1,1}$ $h$-boundaries in \cite{SS23}*{\S 16 and \S 17}, which are natural generalizations of \cites{SSY, Schoen83, ZZ18} to the $C^{1,1}$-PMC setting. We will further show that the strongly $\A^h$-stationary property is preserved under suitable notion of convergence.

\begin{definition}[stable $C^{1,1}$ $h$-boundary]
Let $(\Sigma, \Omega)$ be a $C^{1,1}$ $h$-boundary in an open set $U\subset M$. $(\Sigma,\Omega)$ is \textit{stable} in $U$ if for any $X\in \mk X(U)$ with $\phi^t$ the associated flow (see also \eqref{eq:2nd variation of Ah}),
\[ \frac{d^2}{dt^2}\mc A^h\big(\phi^t(\Sigma, \Omega)\big)\geq 0. \]
If in addition $(\Sigma,\Omega)$ is strongly $\mc A^h$-stationary, by direct calculation, this is equivalent to,
\[ \int_\Sigma |\nabla ^\perp X^\perp|^2 -\mr{Ric}(X^\perp,X^\perp)-|A^\Sigma|^2|X^\perp|^2\,\mr d\mc H^2\geq \int_{\partial\Omega}\langle X^\perp,\nabla h\rangle \langle X,\nu\rangle \,\mr d\mc H^2, \]
where $X^\perp$ is the normal part of $X$ w.r.t. $\Sigma$, $\mr{Ric}$ is the Ricci curvature of $(M, g)$, and $A^\Sigma$ is the second fundamental form of $\Sigma$ (as an immersion) w.r.t. the unit outward normal $\nu$.
\end{definition}

\begin{proposition}
\label{prop:compactness}
Let $h_j, h\in C^2(M)$ be such that $\|h_j-h\|_{C^2}\to 0$. Let $\{(\Sigma_j,\Omega_j)\}_{j\in\mb N}$ be a sequence of stable $C^{1,1}$ $h_j$-boundary in $U$ satisfying $\mc H^2(\Sigma_j\cap U)\leq \Lambda$ for some $\Lambda>0$. Then there exists a stable $C^{1,1}$ $h$-boundary $(\Sigma, \Omega)$, so that $(\Sigma_j,\Omega_j)$ subsequently converges to $(\Sigma,\Omega)$ in the following sense:
\begin{enumerate}
    \item $\Sigma_j$ converges to $\Sigma$ in $U$ as varifolds and also in the sense of $C^{1,\alpha}_{loc}$ for all $\alpha\in(0,1)$; 
    \item $\Omega_j$ converges to $\Omega$ as currents in $\C(U)$. 
\end{enumerate}
Furthermore, we also have the following.
\begin{enumerate}[label=(\roman*)]
    \item  If $h\equiv 0$, then $\Sigma_j$ converges to $\Sigma$ in $U$ in the $C^{1,1}_{loc}$ topology.
    \item If $(\Sigma_j,\Omega_j)$ is strongly $\mc A^{h_j}$-stationary in $U$, then $(\Sigma,\Omega)$ is strongly $\mc A^h$-stationary in $U$.
\end{enumerate}

\end{proposition}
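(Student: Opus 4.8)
The plan is to prove Proposition~\ref{prop:compactness} in two stages: first the convergence statement (items (1), (2)) together with stability of the limit, which is essentially the $C^{1,1}$-PMC analogue of the classical stable compactness theorems of \cites{SSY, Schoen83, ZZ18}, and then the two ``furthermore'' conclusions (i) and (ii), which are the genuinely new points in the $C^{1,1}$-framework. For the first stage I would argue as follows. The uniform area bound $\mc H^2(\Sigma_j\cap U)\le\Lambda$ together with the uniform bound $c=\sup_j\sup_M|h_j|$ (which follows from $h_j\to h$ in $C^2$) gives, via Lemma~\ref{lem:Ah stationary pair in VC has bounded 1st variation} or directly from \eqref{eq:1st variation of Ah2}, a uniform first-variation bound on $\Sigma_j$, so by Allard-type compactness a subsequence converges as varifolds to some $V$; set $\Sigma = \spt\|V\|$ in the appropriate almost-embedded sense. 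The stability hypothesis plus the Schoen--Simon--Yau / Schoen curvature estimates in the $C^{1,1}$-setting (available from \cite{SS23}*{\S 16--17}) provide uniform $|A^{\Sigma_j}|$-bounds on compact subsets of $U\setminus\{\text{finite set}\}$, upgrading varifold convergence to $C^{1,\alpha}_{loc}$ convergence of the sheets for every $\alpha\in(0,1)$; a removable-singularity argument (as in the cited works) shows the exceptional set is in fact removable so that $(\Sigma,\Omega)$ is a $C^{1,1}$ almost-embedded surface. For item (2), the Caccioppoli sets $\Omega_j$ have perimeters controlled by $\mc H^2(\Sigma_j)\le\Lambda$ (since $\|\partial\Omega_j\|\le\|\,|\Sigma_j|\,\|$), hence by compactness in $BV$ a further subsequence converges in $\C(U)$ to some $\Omega$, and passing \eqref{eq:Sigma and Omega} to the limit as currents shows $(\Sigma,\Omega)$ is again a $C^{1,1}$-boundary; passing \eqref{eq:1st variation of Ah2} to the limit (using $h_j\to h$ in $C^2$ and $C^{1,\alpha}_{loc}$ convergence on the regular part, plus the first-variation/perimeter control near the touching set) shows $(\Sigma,\Omega)$ is a $C^{1,1}$ $h$-boundary, and passing the stability inequality \eqref{eq:2nd variation of Ah} to the limit (Fatou on the gradient term, uniform convergence of the curvature/$\Ric$ terms on the regular part) gives stability of $(\Sigma,\Omega)$.

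For conclusion (i), if $\Sigma$ happens to be smooth (hence genuinely embedded, with empty touching set), then on a neighborhood of each point $\Sigma$ is a single graph solving the smooth prescribed-mean-curvature equation $H=h|_\Sigma$, and the nearby sheets of $\Sigma_j$ are graphs $u_j$ over $\Sigma$ with $u_j\to 0$ in $C^{1,\alpha}$ solving an elliptic PDE with $C^1$ coefficients depending on $h_j$ and on the graph; Schauder estimates applied to this (genuinely smooth, non-obstacle) equation bootstrap the convergence to $C^{1,1}_{loc}$ — one does not get better than $C^{1,1}$ because the $\Sigma_j$ are only $C^{1,1}$ a priori, but that is exactly the asserted conclusion. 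The point is simply that smoothness of the limit rules out touching points, so no obstacle problem is present locally and one has honest uniform elliptic estimates.

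Conclusion (ii) is the crux and the main obstacle: one must show strong $\A^{h}$-stationarity (Definition~\ref{def:strong one-sided stationarity}) is preserved. Fix $p\in\mc S(\Sigma)\cap U$ with $\Theta^2(\Sigma,p)=\ell\ge 2$, a small $W$ in which $\Sigma\cap W=\bigcup_{i=1}^\ell\Gamma^i$ decomposes into ordered sheets $\Gamma^1\le\cdots\le\Gamma^\ell$ with bottom/top components $W^1,W^\ell$, and take $X\in\mk X(W)$ pointing into $W^i$ along $\Gamma^i$ for $i=1$ or $i=\ell$. I want to produce, for $j$ large, a corresponding decomposition $\Sigma_j\cap W'$ into ordered sheets $\Gamma_j^1\le\cdots\le\Gamma_j^{\ell_j}$ ($\ell_j\ge\ell$, possibly with extra sheets that merge in the limit) on a slightly smaller $W'$, and a vector field $X_j$ converging to $X$ in $C^1$ that points into the bottom/top component of $W'\setminus\Sigma_j$ along the bottom/top sheet of $\Sigma_j$ — the existence of such $X_j$ uses the $C^{1,\alpha}_{loc}$ convergence of sheets and an open-ness argument for the ``pointing into'' condition (strict on the interior of the relevant sheet, which one arranges by first shrinking $X$). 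Then $\delta\A^{h_j}_{\Gamma_j^{\bullet},\,W_j^{\bullet}}(X_j)\ge 0$ by the strong $\A^{h_j}$-stationarity of $(\Sigma_j,\Omega_j)$, and I pass to the limit: the divergence term $\int_{\Gamma_j^{\bullet}}\dv_{\Gamma_j^{\bullet}}X_j$ converges because the top/bottom sheets converge in $C^{1,\alpha}$ (Rellich on first derivatives suffices), and the term $\int\lb{X_j,\nu}h_j$ converges using $h_j\to h$ in $C^0$ and varifold/$C^{1,\alpha}$ convergence; one also checks that $W_j^{\bullet}\subset\Omega_j$ (resp.\ $W_j^{\bullet}\cap\Omega_j=\emptyset$) holds in the limit exactly when $W^{\bullet}\subset\Omega$ (resp.\ $=\emptyset$), using $\Omega_j\to\Omega$ in $\C(U)$ and the fact that the top/bottom regions are ``open'' conditions detectable on a fixed-size subregion. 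The delicate point — and where I expect to spend the most effort — is the bookkeeping when several sheets of $\Sigma_j$ coalesce onto a single $\Gamma^i$ of $\Sigma$, i.e.\ matching the ordered decomposition of $\Sigma_j$ to that of $\Sigma$ and ensuring that the genuine top sheet of $\Sigma_j$ (on which we have the one-sided inequality) is the one converging to $\Gamma^\ell$; this is handled by choosing $W'\subset\subset W$ so that on $\partial$-collar regions the sheets of $\Sigma_j$ are already well-separated and graphical, and then using the ordering to conclude. With this matching in place the limit inequality is exactly \eqref{eq:1-sided stationary1:new} for $(\Sigma,\Omega)$, completing (ii).
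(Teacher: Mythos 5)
Your treatment of items (1), (2), the stability of the limit, and conclusion (ii) follows essentially the same route as the paper: the convergence and stability are delegated to the Sarnataro--Stryker compactness machinery (\cite{SS23}*{Lemma 16.3, Theorem 17.3}), with the first and second variations passed to the limit by continuity of $(\Sigma,\Omega,h)\mapsto\delta\A^h_{\Sigma,\Omega}(X)$ and $\delta^2\A^h_{\Sigma,\Omega}(X,X)$ in $\mc V(M)\times\C(M)\times C^2(M)$; and for (ii) the paper does exactly what you propose, namely it matches the $\ell$ ordered sheets of $\Sigma_j$ to those of $\Sigma$ near a touching point, pushes the test field forward by a $C^{1,1}$-homeomorphism $\phi_j\to\Id$ so that $(\phi_j)_*X$ still points into $W_j^1$, invokes the strong $\A^{h_j}$-stationarity of $\Sigma_j$, and passes to the limit using $C^{1,\alpha}$ convergence and $\Omega_j\to\Omega$ in $\C(W)$ to see $W_j^1\subset\Omega_j$. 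Your bookkeeping concern about coalescing sheets is handled automatically: since $\Theta^2(\Sigma,p)=\ell$ and the convergence is $C^{1,\alpha}_{loc}$ with mass convergence, $\Sigma_j$ has exactly $\ell$ ordered graphical sheets in $W$ for large $j$.

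There is, however, a genuine gap in your argument for conclusion (i). You assert that ``smoothness of the limit rules out touching points, so no obstacle problem is present locally.'' This is false: smoothness of $\Sigma$ means the \emph{support} of the limit is a smoothly embedded surface, but the convergence $\Sigma_j\to\Sigma$ may well have multiplicity $m\geq 2$, in which case the $\Sigma_j$ are $m$-sheeted almost embedded surfaces whose sheets can (and in the paper's main application do) touch each other on large sets. On the touching set each sheet has vanishing generalized mean curvature, while off it the mean curvature is $\pm h_j$; so an individual sheet does \emph{not} solve a clean prescribed-mean-curvature equation, and a single-graph Schauder bootstrap does not apply --- the obstacle structure is still present and is precisely the source of the $C^{1,1}$ (rather than $C^2$) regularity. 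The correct input, and what the paper uses, is the a priori $C^{1,1}$ estimate for the full ordered multilayer system of $\A^{h_j}$-stationary sheets (Proposition \ref{prop:C11 estimates}, i.e.\ \cite{SS23}*{Corollary 11.2}), which bounds $\sum_i\|u_j^i\|_{C^{1,1}}$ by $\sum_i\|u_j^i\|_{C^{1,\alpha}}+\|h_j\|_{C^{1,\alpha}}$; combined with the already established $C^{1,\alpha}_{loc}$ convergence (possible because the smooth limit lets one write all sheets of $\Sigma_j$ as graphs over $\Sigma$), this yields the uniform $C^{1,1}$ bounds and hence $C^{1,1}_{loc}$ convergence. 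Your conclusion is correct, but your justification for it would fail exactly in the multiplicity case that the proposition is designed to cover.
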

\begin{proof}
The subsequential convergence (1) and (2) are essentially proved by Sarnataro-Stryker \cite{SS23}*{Lemma 16.3 and Theorem 17.3}. 
Note that they require relatively stronger assumption (their Theorem 1.1) on the regularity of $(\Sigma_j, \Omega_j)$ to 
derive the stability inequality \cite{SS23}*{(17.1)} and show it can be passed to limit under $C^{1,\alpha}_{loc}$-convergence. 
This part can be replaced by the following fact: for a fixed $X\in \mk X(U)$, the maps $(\Sigma, \Omega, h) \mapsto \delta\A^h_{\Sigma, \Omega}(X)$ and $(\Sigma, \Omega, h) \mapsto \delta^2\A^h_{\Sigma, \Omega}(X, X)$ are both continuous w.r.t. the product topology in $\mc V(M)\times \C(M)\times C^2(M)$. Therefore, the blowup limit in their proof of Theorem 17.3 is stationary and stable. All other parts therein work well under our stable $C^{1,1}$ $h_j$-boundary assumptions. By this fact, we also know that $(\Sigma, \Omega)$ is a stable $C^{1,1}$ $h$-boundary.

Assume that $h\equiv 0$, then $\Sigma$ is a smooth minimal surface, and $\Sigma_j$ can be written as ordered $C^{1,1}$ graphs over $\Sigma$ for all large $j$. Moreover, the $C^{1,1}_{loc}$ norm is bounded by the $C^{1,\alpha}$ norm which converges to $0$ by Proposition \ref{prop:C11 estimates}; see also \cite{SS23}*{Corollary 11.2}. Thus we conclude Item (\rom{1}) -- the $C^{1,1}_{loc}$ convergence. 

It remains to prove Item (\rom{2}) -- the strong $\mc A^h$-stationarity. Fix  $p\in\Sigma\cap U$ with $\ell:=\Theta^2(\Sigma,p)\geq 2$. Let $W\subset U$ be a neighborhood of $p$ so that $\Sigma$ has a decomposition $\Sigma\cap W=\cup_{i=1}^\ell \Gamma^i$ into $\ell$  ordered sheets
\[ \Gamma^1\leq \cdots\leq \Gamma^\ell. \]
Denote by $W^1$ and $W^\ell$ the bottom and top components of $W\setminus\Sigma$ (as in Definition \ref{def:strong one-sided stationarity}). 
Without loss of generality, we assume that $W^1\subset \Omega$. Take an arbitrary $X\in \mk X(W)$ pointing into $W^1$ along $\Gamma^1$.

Recall that $\Sigma_j$ converges to $\Sigma$ in the sense of $C^{1,\alpha}$. Thus for all sufficiently large $j$, $\Sigma_j$ has a decomposition $\Sigma_j=\cup_{i=1}^\ell \Gamma_j^i$ into $\ell$ ordered sheets
\[\Gamma^1_j\leq \cdots\leq \Gamma^\ell_j. \] 
Denote by $W^1_j$ and $W^\ell_j$ the bottom and top components of $W\setminus\Sigma_j$. Since $W_j^1\to W^1$ and $\Omega_j\to \Omega$ in $\C(W)$, we must have $W_j^1\subset \Omega_j$.

Fix an open subset $W'\subset\subset W$ with $\spt(X)\subset W'$. Since $\Gamma^1_j$ converges to $\Gamma^1$ in $C^{1, \alpha}_{loc}(U)$, we may write $\Gamma^1_j$ as a $C^{1,1}$-graph $u_j$ over $\Gamma^1\cap W'$ for all $j$ large, and $\|u_j\|_{C^{1,\alpha}}\to 0$ as $j\to \infty$. For each such $j$, we can find a $C^{1,1}$-homeomorphism $\phi_j: W'\to W'$, such that $\phi_j(\Gamma^1\cap W') = \Gamma_j^1\cap W'$, $\phi_j(W^1\cap W') = W_j^1\cap W'$, and $\phi_j$ converges to the identity map in $C^{1, \alpha}$. Note that $(\phi_j)_*X$ must point into $W_j^1$ along $\Gamma_j^1\cap W'$. By the strong $\mc A^h$-stationarity of $\Sigma_j$, we have
\[  \int_{\Gamma_j^1} \dv_{\Gamma_j^1} \big((\phi_j)_*X\big) + h_j \lb{(\phi_j)_*X, \nu_j} \geq 0, \]
where $\nu_j$ is the unit normal of $\Gamma_j^1$ pointing into $W_j^1$. Then by the $C^{1,\alpha}$-convergence $\Gamma^1_j \to \Gamma^1$ in $W'$, we conclude by taking $j\to\infty$ that,
\[ \int_{\Gamma^1}\dv_{\Gamma^1}(X) + h \lb{X, \nu} \geq 0, \]
where $\nu$ be the unit normal of $\Gamma^1$ pointing into $W^1$. The desired inequality for $\Gamma^\ell$ can be proved by the same argument. Hence Proposition \ref{prop:compactness} is proved.
\end{proof}

\subsection{Isotopy minimizing problem}
\label{ss:isotopy minimizing problem}
In this part, we recall the regularity result for $\A^h$-isotopic minimizing problem covered in \cite{SS23}*{Theorem 1.1}, which generalized \cites{AS79, Meeks-Simon-Yau82} to the PMC setting. 
Let $\bm r_0=\bm r_0(M,g,\sup |h|)>0$ (see \cite{SS23}*{\S 14}) be a sufficiently small constant and $U\subset B_{
\bm r_0}(p)\subset M$ be an open set. Let $\mc R\in \C(U)$ be such that  $\Sigma:=\partial \mc R\cap U$ is a smoothly embedded surface.

Let $\{\phi_k\}\subset \mk{Is}(U)$ be a sequence of isotopies, such that 
\[ \lim_{k\to\infty} \A^h\big(\phi_k(\Sigma, \mc R)\big) = \inf\{\A^h\big(\phi(\Sigma, \mc R)\big): \phi \in \mk{Is}(U)\}. \]
Then up to a subsequence, we can assume that there is a pair $(V, \Omega)\in \VC(U)$ such that 
\[ (V, \Omega) = \lim_{k\to\infty}\big(\phi_k(\Sigma), \phi_k(\mc R)\big) \quad \text{under the $\ms F$-metric}. \]
In the following of this section, we use $(\Sigma_k,\Omega_k)$ to denote $(\phi_k(\Sigma),\phi_k(\mc R))$.
\begin{theorem}\label{thm:interior regularity}
As above, $(V, \Omega)$ is a strongly $\A^h$-stationary and stable $C^{1,1}$ $h$-boundary in $U$.
\end{theorem}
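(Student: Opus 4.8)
The plan is to follow the Almgren--Schoen--Meeks--Simon--Yau scheme for isotopy-minimizing problems, adapted to the $\A^h$-functional and its $C^{1,1}$ regularity theory as developed in \cite{SS23}, and then to verify that the two extra properties we need---strong $\A^h$-stationarity and stability---pass to the limit. I would organize the argument in three stages: (a) regularity of $(V,\Omega)$ as a $C^{1,1}$ $h$-boundary; (b) the strong $\A^h$-stationarity; (c) the stability inequality.

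For stage (a), I would invoke \cite{SS23}*{Theorem 1.1} directly: the $\A^h$-isotopy-minimizing problem over $\mk{Is}(U)$ with smooth embedded initial data $(\Sigma,\mc R)$ on a sufficiently small ball $B_{\bm r_0}(p)$ has a minimizer that is a $C^{1,1}$ almost embedded $h$-boundary. The content here is to check that the limit pair $(V,\Omega)\in\VC(U)$ obtained along the minimizing sequence $(\Sigma_k,\Omega_k)=(\phi_k(\Sigma),\phi_k(\mc R))$ is exactly such a minimizer: the $\ms F$-metric convergence $(\Sigma_k,\Omega_k)\to(V,\Omega)$ forces $\|V\|(U)\le\liminf\mc H^2(\Sigma_k\cap U)$ and $\int_\Omega h = \lim\int_{\Omega_k}h$ (the latter by $\mc F$-convergence and boundedness of $h$), hence $\A^h(V,\Omega)\le\liminf\A^h(\Sigma_k,\Omega_k)$, which equals the infimum; since the infimum is not increased under further isotopies of $V$ (an isotopy of $(V,\Omega)$ is approximated by isotopies applied to the $(\Sigma_k,\Omega_k)$), $(V,\Omega)$ itself realizes the infimum and the regularity theory of \cite{SS23} applies to give that $(\Sigma,\Omega):=(\mathrm{spt}\|V\|,\Omega)$ is a $C^{1,1}$ $h$-boundary. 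In particular, by Lemma \ref{lem:regular part is PMC}, $\mc R(\Sigma)$ is smoothly embedded with $H=h|_\Sigma$.

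For stage (b), strong $\A^h$-stationarity: at a touching point $p\in\mc S(\Sigma)\cap U$ with density $\ell\ge2$, decompose $\Sigma\cap W=\Gamma^1\le\cdots\le\Gamma^\ell$ into ordered sheets in a small neighborhood $W$, and let $W^1,W^\ell$ be the bottom/top components. Consider, say, the case $W^1\subset\Omega$ and a vector field $X\in\mk X(W)$ pointing into $W^1$ along $\Gamma^1$. The key point is that pushing the bottom sheet $\Gamma^1$ \emph{into} $W^1$ using a flow $\psi^t$ generated by such an $X$ produces competitor domains $\psi^t(\Omega)$, and---because $X$ points away from all the other sheets $\Gamma^2,\dots,\Gamma^\ell$---for small $t>0$ this deformation can be realized by an honest isotopy of $M$ supported in $W$ applied to the approximating embedded surfaces $\Sigma_k$ (the one-sidedness guarantees no sheet-crossing, so embeddedness is preserved). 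By minimality of $\A^h(V,\Omega)$, $\frac{d}{dt}\big|_{t=0^+}\A^h(\psi^t(V,\Omega))\ge0$; using Lemma \ref{lem:enhanced 1st variation of Ah} and the fact that only $\Gamma^1$ moves (so the variation localizes to the $\Gamma^1$-sheet relative to $W^1$), this is exactly $\delta\A^h_{\Gamma^1,W^1}(X)\ge0$, which is \eqref{eq:1-sided stationary1:new}. The same argument on the top sheet $\Gamma^\ell$ (with $W\setminus W^\ell$ in the case $W^\ell\cap\Omega=\emptyset$) finishes the definition of strong $\A^h$-stationarity.

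For stage (c), stability: once $(V,\Omega)$ is known to be a strongly $\A^h$-stationary $C^{1,1}$ $h$-boundary, the second variation inequality $\frac{d^2}{dt^2}\big|_{t=0}\A^h(\phi^t(\Sigma,\Omega))\ge0$ for arbitrary $X\in\mk X(U)$ follows again from minimality: for any flow $\phi^t$ of a compactly supported $X$, the perturbed pairs $\phi^t(\Sigma_k,\Omega_k)$ are admissible competitors, so $\A^h(\phi^t(\Sigma_k,\Omega_k))\ge\A^h(\Sigma_k,\Omega_k)-o(1)$; passing $k\to\infty$ (using continuity of $(\Sigma,\Omega,h)\mapsto\A^h$ and its first two variations in the $\ms F$-topology, exactly as in the proof of Proposition \ref{prop:compactness}) gives $\A^h(\phi^t(V,\Omega))\ge\A^h(V,\Omega)$ for all $t$, and since $t=0$ is stationary this forces the second derivative to be nonnegative. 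I expect stage (b) to be the main obstacle: one must be careful that the outward normal $\nu_\Sigma$ is the one induced by $\Omega$ via Lemma \ref{lem:choice of outer normal}, that the sign conventions in \eqref{eq:1-sided stationary1:new} match the direction of the admissible competitor deformations, and above all that the one-sided deformations of the $C^{1,1}$ limit sheets can genuinely be approximated by smooth ambient isotopies applied to the $\Sigma_k$ without creating self-intersections---this uses the ordered-sheet structure crucially and is where the $C^{1,1}$ (rather than merely $W^{2,p}$ or Lipschitz) regularity of the sheets is needed, together with the obstacle-problem interpretation from \cite{SS23}*{\S 14}.
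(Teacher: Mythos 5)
Your overall architecture matches the paper's: stages (a) and (c) are indeed disposed of by citing \cite{SS23}*{Theorem 1.1}, and the real content is stage (b), which the paper also proves by moving a single extremal sheet of the approximating embedded surfaces along the flow of the test vector field and comparing $\A^h$-values. However, your stage (b) contains a genuine gap at its central step. You assert that because $X$ points away from the other sheets, the deformation "can be realized by an honest isotopy \ldots (the one-sidedness guarantees no sheet-crossing, so embeddedness is preserved)." This is false in general. The admissible test fields only satisfy $\lb{X,\nu}\geq 0$ along the extremal sheet (strict positivity is incompatible with compact support and with differentiating at $t=0$), so at touching points where $\lb{X,\nu}=0$ the tangential part of $X$ produces an $O(t^2)$ normal displacement that can push $\phi^t(\Gamma_k^\ell)$ across the neighboring sheets, whose separation from $\Gamma_k^\ell$ tends to zero as $k\to\infty$. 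The paper explicitly acknowledges that the surface $\hat\Sigma_k=(\Sigma_k\setminus\Gamma_k^\ell)\sqcup\phi^t(\Gamma_k^\ell)$ "may have self-intersections," and repairs this with the resolution-of-overlaps lemma \cite{SS23}*{Lemma 7.3}, which produces a genuinely embedded competitor $\wti\Sigma_k=\partial\wti\Omega_k$ isotopic to $\Sigma_k$ with quantitative control on both the mass and the enclosed $3$-current. Without this (or an equivalent device) your competitor is not admissible and the minimality of $(V,\Omega)$ cannot be invoked.

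Two further points you elide. First, to speak of "the sheet of $\Sigma_k$ corresponding to $\Gamma^\ell$" at all, one must first put $\Sigma_k\res B_{2r}(p)$ into the form of finitely many pairwise disjoint disks converging sheet-by-sheet to the $\Gamma^i$; the paper does this via the Replacement Lemma \cite{SS23}*{Lemma 8.2}, after reducing the whole argument to the case where $\Sigma\cap U$ is a union of disks by $\gamma$-reduction (\cite{SS23}*{\S 13}). Second, your phrase "$\frac{d}{dt}\big|_{t=0^+}\A^h(\psi^t(V,\Omega))\geq 0$" presupposes a meaning for moving one sheet of the \emph{pair}: when only $\Gamma^\ell$ moves, $\Omega$ changes by (roughly) the region $T(\phi^t(\Gamma^\ell),\Gamma^\ell)$ swept between the old and new sheet, and one must check both that $\mc H^3\big(\Omega_k\cap T(\phi^t(\Gamma_k^\ell),\Gamma_k^\ell)\big)\to 0$ and that the resulting volume term $\int_{T(\phi^t(\Gamma^\ell),\Gamma^\ell)}h$ enters the endpoint comparison with the correct sign before differentiating in $t$; this bookkeeping is exactly equation \eqref{eq:limit of wti pair} in the paper and is where the two cases of \eqref{eq:1-sided stationary1:new} (i.e.\ $W^i\subset\Omega$ versus $W^i\cap\Omega=\emptyset$) are distinguished. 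You correctly flag the sign conventions as delicate but do not carry out the computation.
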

\begin{proof}
All the above conclusions besides strong $\A^h$-stationarity were already proved by \cite{SS23}*{Theorem 1.1}. We will prove the strong $\A^h$-stationarity by assuming that $\Sigma\cap U$ is a union of disjoint disks. Then the general case follows from the $\gamma$-reduction process in \cite{SS23}*{\S 13}, which is a generalization of \cite{Meeks-Simon-Yau82}*{\S 3}. 

 Fix $p\in \spt\|V\|$ with $\Theta^2(\|V\|, p) = \ell \geq 2$. Then there exists $r=r(p)>0$, such that $V$ decomposes into ordered sheets in $B_{2r}(p)$
\[ \Gamma^1\leq  \cdots\leq \Gamma^\ell,\] 
where each $\Gamma^i$ is a $C^{1,1}$-graph over a small disk in the tangent plane of $V$ at $p$. Without loss of generality, we assume that $\Omega$ does not intersect the region above $\Gamma^\ell$ in $B_r(p)$.

By the Replacement Lemma \cite{SS23}*{Lemma 8.2}, one can assume that  $\Sigma_k\res B_{2r}(p)$ consists of finitely many pairwise disjoint disks $\Gamma_k^1,\cdots,\Gamma_k^{m_k}$ with $\partial\Gamma_k^i\cap B_{2r}(p)=\emptyset$, and 
\[
\lim_{k\to\infty}[\Gamma_k^i]\res B_{r}(p)=[\Gamma^i]\res B_r(p), \quad \text{for } i=1,\cdots,\ell.
\]
Now let $X\in \mk X(B_r(p))$ be such that $\langle X,\nu\rangle\geq 0$, where $\nu$ is the upward normal to $\Gamma_\ell$. Denote by $\{\phi^t\}_{t\in[0,1]}\subset \mr{Diff}_0\big(B_r(p)\big)$ the flow generated by $X$. Fix $t\in[0,1]$. Then by replacing $\Gamma_k^\ell$ with $\phi^t(\Gamma_k^\ell)$, we obtain an immersed surface $\hat\Sigma_k$ (which may have self-intersections since we only moved one sheet). Applying resolution of overlaps lemma \cite{SS23}*{Lemma 7.3} to $\hat\Sigma_k$, there exists a smoothly embedded surface $\wti\Sigma_k$ with $\wti\Sigma_k=\partial\wti\Omega_k$ in $U$ for some $\wti\Omega_k\in \C(U)$, such that $(\wti\Sigma_k, \wti\Omega_k)$ can be obtained from $(\Sigma_k, \Omega_k)$ through some $\hat\phi_k\in \mk{Is}\big(B_r(p)\big)$, and
\[ \mf F([\Sigma_k],[\wti \Sigma_k])+ \mf M\big(\llbracket \wti \Omega_k\rrbracket-\llbracket\Omega_k\rrbracket-T(\phi^t(\Gamma_k^\ell),\Gamma_k^\ell)\big) <\frac{1}{k};\]
here $\llbracket \wti \Omega_k\rrbracket$, $\llbracket\Omega_k\rrbracket$ and $T(A,B)$ are 3-currents (mod 2) so that $\partial T(A,B)=\llbracket A\rrbracket-\llbracket B\rrbracket$ for any two disks $A,B$ with $\partial A=\partial B$. Note that  $T\big(\phi^t(\Gamma_k^\ell),\Gamma_k^\ell\big)$ converges to $T\big(\phi^t(\Gamma^\ell),\Gamma^\ell\big)$, and the interior of $T\big(\phi^t(\Gamma^\ell),\Gamma^\ell\big)$ does not intersect $\Omega$ (the limit of $\Omega_k$), since $X$ points upward. Then we conclude that, as $k\to\infty$, 
\begin{equation}\label{eq:convergence of each component} 
\mc H^3\big(\Omega_k \cap T(\phi^t(\Gamma_k^\ell),\Gamma_k^\ell)\big) \to 0, \text{ and hence } \wti\Omega_k \to \Omega\cup T\big(\phi^t(\Gamma^\ell),\Gamma^\ell\big) \text{ in $\C(U)$}.
\end{equation}
Observe that $\wti \Sigma_k$ is a slight perturbation of $\hat\Sigma_k = (\Sigma_k\setminus \Gamma_k^\ell)\sqcup\phi^t(\Gamma_k^\ell)$; this yields that
\[
\Big|\mc A^h(\wti \Sigma_k,\wti \Omega_k)- \big(\mc H^2(\Sigma_k\setminus\Gamma_k^\ell) + \mc H^2(\phi^t(\Gamma_k^\ell))-\int_{\wti\Omega_k}h\,\mr d\mc H^3\big) \Big| \to 0, \text{ as } k\to\infty.
\]
This together with \eqref{eq:convergence of each component} gives 
\begin{equation}\label{eq:limit of wti pair}
\lim_{k\to\infty}\mc A^h(\wti\Sigma_k,\wti\Omega_k)= \|V\|(U) - \mc H^2(\Gamma^\ell)+\mc H^2(\phi^t(\Gamma^\ell))-\int_{\Omega}h\,\mr d\mc H^3 - \int_{T\big(\phi^t(\Gamma^\ell),\Gamma^\ell\big)}h\,\mr d\mc H^3.
\end{equation}
Since $(V,\Omega)$ is $\mc A^h$-minimizing, then 
\[
\mc A^h(\wti\Sigma_k,\wti\Omega_k)\geq \mc A^h(V,\Omega) = \|V\|(U) - \int_\Omega h\,\mr d\mc H^3.
\]
Combining with \ref{eq:limit of wti pair}, we conclude that 
\[
\mc H^2(\phi^t(\Gamma^\ell))\geq \mc H^2(\Gamma^\ell) + \int_{T\big(\phi^t(\Gamma^\ell),\Gamma^\ell\big)}h\,\mr d\mc H^3.
\]
By taking the derivative w.r.t. to $t$, this gives the desired inequality of the strong $\mc A^h$-stationarity for $\Gamma^\ell$. The same argument will also give the desired inequality for $\Gamma^1$. This completes the proof of Theorem \ref{thm:interior regularity}.
\end{proof}


\section{Min-max and tightening}\label{sec:min-max and tightening}

In this section, we will set up the relative min-max problem for the $\A^h$-functional in the space of separating surfaces in Section \ref{ss:min-max problem}. We will also establish the pull-tight process and prove the tightening theorem in Section \ref{SS:tightening}.

\subsection{Min-max problem}\label{ss:min-max problem} 
Fix a connected closed surface $\Sigma_0$ of genus $\mk g_0$. A smooth embedding $\phi: \Sigma_0\to M$ is said to be \textit{separating} if $M\setminus \phi(\Sigma_0)$ is the disjoint union of two nonempty domains $\Omega^1, \Omega^2$ enjoying a common smooth boundary $\phi(\Sigma_0)$. We will write the image as $\Sigma = \phi(\Sigma_0)$. When we write a pair $(\Sigma, \Omega)$, where $\Omega$ is an arbitrary choice of $\{\Omega^1, \Omega^2\}$, we assume that $\Sigma$ carries the orientation induced by the outer normal $\nu$ of $\Omega$, and say that \textit{$\Omega$ is bounded by $\Sigma$}, or \textit{$\Sigma$ bounds $\Omega$}. 

We denote
\begin{equation}\label{eq:ms E}
\ms E = \left\{(\Sigma, \Omega): \text{ $\Sigma$ is a separating embedding of $\Sigma_0$ which bounds $\Omega$}\right\}, \end{equation}
endowed with oriented smooth topology in the usual sense, that is, $(\Sigma_j, \Omega_j)$ converges to $(\Sigma_\infty, \Omega_\infty)$ if $\Sigma_j$ converges in the smooth topology to $\Sigma_\infty$ and $\Omega_j$ converges to $\Omega_\infty$ in $\C(M)$.

Let $X$ be a finite dimensional cubical complex, and $Z\subset X$ be a subcomplex. Let $\Phi_0: X\to \ms E$ be a continuous map. We let $\Pi$ be the set of all continuous maps $\Phi: X\to \ms E$ which is homotopic to $\Phi_0$ relative to $\Phi_0|_Z: Z\to \ms E$. We call such a $\Phi$ an \textit{$(X, Z)$-sweepout}, or simply a \textit{sweepout}.

\begin{definition}
    Given $(X, Z)$ and $\Phi_0$ as above, $\Pi$ is called the \textit{$(X, Z)$-homotopy class} of $\Phi_0$. 
\end{definition}

We can now set up the relative min-max problem for the $\A^h$-functional as usual.
\begin{definition}\label{def:min-max sequence}
    The \textit{$h$-width} of $\Pi$ is defined by:
    \[ \mf L^h = \mf L^h(\Pi) = \inf_{\Phi\in \Pi} \sup_{x\in X} \A^h\big(\Phi(x)\big). \]
\end{definition}

\begin{definition}\label{def:minimizing seq, min-max seq, and critical set}
    A sequence $\{\Phi_i\}_{i\in\mb N}\subset \Pi$ is called a \textit{minimizing sequence} if
    \[ \mf L^h(\Phi_i): = \sup_{x\in X}\A^h\big(\Phi_i(x)\big) \to \mf L^h, \text{ when } i\to\infty. \]
    A subsequence $\{\Phi_{i_j}(x_j): x_j\in X\}_{j\in \mb N}$ is called a \textit{min-max (sub)sequence} if 
    \[ \A^h\big( \Phi_{i_j}(x_j) \big) \to \mf L^h, \text{ when } j \to\infty. \]
    The \textit{critical set} of a minimizing sequence $\{\Phi_i\}$ is defined by
    \[ \mf C(\{\Phi_i\})=\left\{(V, \Omega)\in\VC(M)\left|\,
    \begin{aligned}   
        & \exists \text{ a min-max subsequence }\{\Phi_{i_j}(x_j)\} \text{ such}\\
        & \text{that } \ms F\big(\Phi_{i_j}(x_j), (V, \Omega)\big) \to  0 \text{ as } j\to\infty
    \end{aligned}\right\}\right..
    \]
\end{definition}

We have the following min-max theorem, and the proof will be given in Section \ref{SS:proof of pmc min-max}.

\begin{theorem}[PMC Min-Max Theorem]\label{thm:pmc min-max theorem}
    With all notions as above, suppose
    \begin{equation}\label{eq:width nontrivial1}
        \mf L^h(\Pi)> \max\left\{\max_{x\in Z}\A^h\big(\Phi_0(x)\big), 0\right\}.
    \end{equation}
    Then there exist a minimizing sequence $\{\Phi_i\}\subset \Pi$, and 
    a strongly $\A^h$-stationary, $C^{1,1}$ $h$-boundary $(\Sigma, \Omega)$ lying in the critical set $\mf C(\{\Phi_i\})$, such that
    \[ \A^h(\Sigma, \Omega) = \mf L^h(\Pi). \]
\end{theorem}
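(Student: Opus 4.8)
The plan is to run the standard Almgren–Pitts/Simon–Smith min-max machinery adapted to the $\A^h$-functional and the $\VC$-space, in three stages: (i) a tightening step producing a minimizing sequence all of whose critical pairs are $\A^h$-stationary; (ii) a combinatorial almost-minimizing step producing a min-max pair that is $\A^h$-almost minimizing in sufficiently small annuli; (iii) a regularity step upgrading such a pair to a strongly $\A^h$-stationary, $C^{1,1}$ $h$-boundary. Since the theorem statement here is only the existence statement, the detailed content of stages (ii) and (iii) is deferred to the later sections; here I would assemble them.

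First I would carry out the pull-tight construction (Section \ref{SS:tightening}): starting from an arbitrary minimizing sequence $\{\Phi_i^0\}\subset\Pi$, one builds a continuous ``tightening'' deformation on $\ms E$ that strictly decreases $\A^h$ away from the set $A^{\mf L^h}_0$ of $\A^h$-stationary pairs of mass $\le \mf L^h$, is the identity near that set, and fixes $Z$ (using the strict inequality \eqref{eq:width nontrivial1} to guarantee the deformation is supported away from $\Phi_0|_Z$). Applying this deformation yields a new minimizing sequence $\{\Phi_i\}$, still in $\Pi$, whose critical set $\mf C(\{\Phi_i\})$ consists entirely of $\A^h$-stationary pairs $(V,\Omega)\in\VC(M)$; by Lemma \ref{lem:Ah stationary pair in VC has bounded 1st variation} each such $V$ has $c$-bounded first variation. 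The compactness of $A^{\mf L^h}$ and $A^{\mf L^h}_0$ under the $\ms F$-metric guarantees $\mf C(\{\Phi_i\})\ne\emptyset$ and is compact. Next, I would apply the combinatorial argument originally due to Almgren–Pitts (as in \cites{Colding-DeLellis03, Zhou19}), adapted to the $\ms F$-topology on $\VC(M)$ and the $\A^h$-functional: among all minimizing sequences obtained by further tightening, there is one whose critical set contains a pair $(V,\Omega)$ that is $\A^h$-almost minimizing in every sufficiently small annulus $\mr{An}(p;s,r)$ around every $p\in M$. This is the ``min-max pair,'' and its mass is $\mf L^h(\Pi)$ by construction. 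Finally, I would invoke the interior regularity theory developed later in the paper (the $\A^h$-replacement/gluing scheme culminating in the $C^{1,1}$-regularity theorem of Section \ref{sec:regularity of min-max pairs}, which uses Theorem \ref{thm:interior regularity} locally on small balls and the stable compactness of Proposition \ref{prop:compactness}): an $\A^h$-almost minimizing pair in small annuli, together with $\A^h$-stationarity, is a strongly $\A^h$-stationary, $C^{1,1}$ $h$-boundary $(\Sigma,\Omega)$, and $\A^h(\Sigma,\Omega)=\|V\|(M)-\int_\Omega h=\mf L^h(\Pi)$.

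I expect the main obstacle to be stage (iii) — promoting the varifold-plus-Caccioppoli-set min-max pair to a genuine $C^{1,1}$ $h$-boundary with the strong $\A^h$-stationarity. In the classical Almgren–Pitts/Simon–Smith setting one has smooth (or, for PMC, smooth up to the touching set) regularity from replacements and unique continuation; here the local $\A^h$-isotopy minimizer is only $C^{1,1}$ (by \cite{SS23}), self-touching sets can be large, and unique continuation is unavailable, so the regularity must be obtained via chains of replacements over shrinking balls covering $\closure(U_0)\cap\mc S(V,m)$, an induction on the integer density, the gluing procedure in the style of \cite{ZZ17}*{\S6}, and a monotonicity-formula argument identifying the replacement $(V^*,\Omega^*)$ with $(V,\Omega)$. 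The tightening and combinatorial steps, by contrast, are routine adaptations of the existing theory to the $\VC(M)$ space and the $\ms F$-metric, using the continuity of $(V,\Omega)\mapsto\delta\A^h_{V,\Omega}(X)$ and the compactness of $A^L$; the one genuinely new point there is that working in $\VC(M)$ (rather than just with varifolds) is what makes ``$\A^h$-stationarity of the critical pair'' a meaningful and attainable conclusion, an improvement over the CMC/PMC min-max of \cites{ZZ17,ZZ18}.
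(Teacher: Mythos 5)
Your plan matches the paper's proof step for step: the pull-tight deformation (Theorem \ref{thm:pull tight}), the Almgren--Pitts combinatorial argument together with the annuli-picking lemma to produce a pair that is $\A^h$-almost minimizing in all small annuli (Theorem \ref{thm:existence of almost minimizing pairs}), and the replacement-chain regularity theory (Theorem \ref{thm:replacement for AM}), with the hard content correctly located in the regularity stage. The only step left implicit is the removable-singularity argument at the finitely many centers $p_1,\dots,p_{\mk m}$ of the almost-minimizing annuli (the almost minimizing property only holds in punctured balls), which the paper handles in Section \ref{SS:proof of pmc min-max} via tangent-cone analysis, a cutoff trick, and Allard regularity; otherwise the assembly is exactly as in the paper.
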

\begin{remark}
The strong $\A^h$-stationarity is an essential part of the regularity result. This will play a crucial role in our new multiplicity one theorem.
\end{remark}

\subsection{Tightening}\label{SS:tightening}
Take $L= \mf L^h + \sup_M |h(p)| \cdot \Vol(M) + 1$. Recall that $A^L$ and $A^L_0$ are defined in \eqref{eq:AL} and\eqref{eq:AL0} respectively. Fix a compact subset $B\subset \ms E\cap A^L$, which we usually take to be $B=\Phi_0(Z)$. Following the procedure in \cite{ZZ17}*{Section 4} (see also \cite{Zhou19}*{\S 1.2}), we will describe the tightening process in four steps.

\medskip
{\noindent\em Step 1: Annular decomposition}. 

\medskip
Consider the concentric annuli around $A^L_0\cup B$ under the $\ms F$-metric,
\begin{equation}\label{eq:A_j}
\begin{split}
    & A_0 = A^L_0\cup B, \\
    & A_1 = \{(V, \Omega)\in A^L: \ms F\big((V, \Omega), A_0 \big)\geq \frac{1}{2} \}, \\
    & A_j = \{(V, \Omega)\in A^L: \frac{1}{2^j} \leq \ms F\big((V, \Omega), A_0 \big)\leq \frac{1}{2^{j-1}}\},\quad j\in\mb N,\, j\geq 2.
\end{split}
\end{equation}
By a straightforward contradiction argument using the compactness of $A_j$, we can find some $c_j>0$ depending only on $j$, such that for any $(V, \Omega)\in A_j$, there exists $\mc X_{V, \Omega}\in \mk X(M)$, such that
\begin{equation*}
    \|\mc X_{V, \Omega}\|_{C^1(M)}\leq 1, \quad \delta A^h_{V, \Omega}(\mc X_{V, \Omega})\leq -c_j<0.
\end{equation*}

\medskip
{\noindent\em Step 2: A map from $A^L$ to $\mk X(M)$}. 

\medskip
We will construct a map $\mc X: A^L \to \mk X(M)$ which is continuous under the $C^1$ topology on $\mk X(M)$. In this part, we will use $\ms B_r(V, \Omega)$ to denote the open ball in $(\VC(M), \ms F)$ centered at $(V, \Omega)$ with radius $r>0$.

As mentioned in Section \ref{ss:Ah functional and VC space}, for a fixed $\mc X\in \mk X(M)$, the map $(V, \Omega)\mapsto \delta \A^h_{V, \Omega}(\mc X)$ is continuous under the $\ms F$-metric. Therefore, for any $(V, \Omega)\in A_j$, there exists $0<r_{V, \Omega}<\frac{1}{2^{j+1}}$, such that for any $(V', \Omega')\in \ms B_{r_{V, \Omega}}(V, \Omega)$, we have
\begin{equation}\label{eq:first variational upper bound in small balls}
    \delta \A^h_{V', \Omega'}(\mc X_{V, \Omega}) \leq \frac{1}{2} \delta \A^h_{V, \Omega}(\mc X_{V, \Omega}) \leq -\frac{1}{2}c_j<0.
\end{equation}
Now $\{\ms B_{r_{V, \Omega}/2}(V, \Omega): (V, \Omega)\in A_j\}$ forms an open covering of $A_j$. By the compactness of $A_j$, we can find a finite subset $\{\ms B_{r_{j, i}}(V_{j, i}, \Omega_{j, i}): (V_{j, i}, \Omega_{j, i})\in A_j, 1\leq i \leq q_j\}$ where $r_{j, i}$ is the radius associated with $(V_{j, i}, \Omega_{j, i})$, such that,
\begin{enumerate}[label=\roman*)]
    \item the balls $\wti{\ms B}_{j, i}$ (with half radii of $\ms B_{j, i}$) covers $A_j$;

    \item the balls $\ms B_{j, i}$ are disjoint from $A_k$ for $|k-j|\geq 2$, (this can be easily achieved by possibly shrinking $r_{V, \Omega}$).
\end{enumerate}
Here and in the following we use $\ms B_{j, i}$, $\wti{\ms B}_{j, i}$, and $\mc X_{j, i}$ to denote $\ms B_{r_{j, i}}(V_{j, i}, \Omega_{j, i})$, $\ms B_{r_{j, i}/2}(V_{j, i}, \Omega_{j, i})$, and $\mc X_{V_{j, i}, \Omega_{j, i}}$ respectively.

Now we construct a partition of unity $\{\varphi_{j, i}: j\in \mb N, 1\leq i\leq q_j\}$ sub-coordinate to the covering $\{\wti{\ms B}_{j, i}\}$ by 
\[ \varphi_{j, i}(V, \Omega) = \frac{\psi_{j, i}(V, \Omega)}{\sum \{\psi_{p, q}(V, \Omega): p\in \mb N, 1\leq q\leq q_p \}},\]
where $\psi_{j, i}(V, \Omega) = \ms F\big((V, \Omega), A^L\setminus \wti{\ms B}_{j, i}\big)$.

We define the desired map $\mc X: A^L \to \mk X(M)$ by 
\begin{equation}\label{eq: X(V, G)}
    \mc X(V, \Omega) = \ms F\big((V, \Omega), A_0 \big) \sum_{j\in \mb N, 1\leq i\leq q_j} \varphi_{j, i}(V, \Omega) \mc X_{j, i}.
\end{equation}
Since in a sufficiently small neighborhood of any $(V, \Omega)\in A^L$, the above sum has only finitely many summands, we know
\begin{lemma}\label{lem:continuous maps to vector fields}
    The map constructed above is continuous under the $C^1$ topology on $\mk X(M)$. Moreover, the restriction $\mc X: A^L\setminus (A^L_0\cup B) \to \mk X(M)$ is continuous under the smooth topology on $\mk X(M)$.
\end{lemma}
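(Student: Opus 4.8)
The plan is to reduce the lemma to one observation: near every point of $A^L$ the sum \eqref{eq: X(V, G)} is \emph{locally finite}, the one exception being that as one approaches $A_0:=A^L_0\cup B$ the number of nonzero summands is unbounded --- but there the prefactor $\ms F(\cdot,A_0)$, together with the normalization $\|\mc X_{j,i}\|_{C^1(M)}\le 1$, compensates. I would first record the elementary geometry of the covering balls: since the center $(V_{j,i},\Omega_{j,i})$ lies in $A_j$ and the radius satisfies $r_{j,i}<2^{-(j+1)}$, every pair in $\ms B_{j,i}$ has $\ms F$-distance to $A_0$ in a fixed interval comparable to $2^{-j}$ when $j\ge 2$, while for $j=1$ there are only $q_1<\infty$ indices $i$. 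Hence, if $(V_0,\Omega_0)\in A^L$ has $d_0:=\ms F\big((V_0,\Omega_0),A_0\big)>0$, then on the $\ms F$-ball of radius $d_0/2$ about $(V_0,\Omega_0)$ the distance to $A_0$ stays in $(d_0/2,2d_0)$, so only finitely many of the functions $\psi_{j,i}$ and $\varphi_{j,i}$ --- hence only finitely many summands in \eqref{eq: X(V, G)} and in the denominator defining $\varphi_{j,i}$ --- fail to vanish identically there.

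Granting this, continuity on $A^L\setminus A_0=A^L\setminus(A^L_0\cup B)$ is routine. The denominator $\sum_{p,q}\psi_{p,q}$ is locally a finite sum of the continuous distance functions $\psi_{p,q}=\ms F\big(\cdot,A^L\setminus\wti{\ms B}_{p,q}\big)$ and is strictly positive because the $\wti{\ms B}_{p,q}$ cover $A^L\setminus A_0$; thus each $\varphi_{j,i}$ is continuous with $\sum_{j,i}\varphi_{j,i}\equiv 1$, and on a small ball $\mc X$ equals a finite combination $\ms F(\cdot,A_0)\sum_{(j,i)\in F}\varphi_{j,i}(\cdot)\,\mc X_{j,i}$ of \emph{fixed} smooth vector fields with continuous scalar coefficients. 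Such a combination is continuous into $\mk X(M)$ in every $C^k$ norm, so $\mc X$ is continuous on $A^L\setminus A_0$ in the smooth topology --- this is the ``moreover'' assertion --- and a fortiori in the $C^1$ topology. At a point $(V_0,\Omega_0)\in A_0$ the partition of unity is not needed: $\mc X(V_0,\Omega_0)=0$ because the prefactor vanishes, and for every $(V,\Omega)\in A^L\setminus A_0$,
\[
\|\mc X(V,\Omega)\|_{C^1(M)}\le \ms F\big((V,\Omega),A_0\big)\sum_{j,i}\varphi_{j,i}(V,\Omega)\,\|\mc X_{j,i}\|_{C^1(M)}\le \ms F\big((V,\Omega),A_0\big),
\]
which tends to $0$ as $(V,\Omega)\to(V_0,\Omega_0)$ (the case $(V,\Omega)\in A_0$ being trivial); hence $\mc X$ is $C^1$-continuous on all of $A^L$.

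I expect the only point needing genuine care to be the local finiteness estimate near $A_0$: there really is no uniform bound on the number of active indices, which is exactly why smooth-topology continuity cannot be extended across $A_0$ --- the higher $C^k$ norms of the $\mc X_{j,i}$ are uncontrolled and the vanishing of $\ms F(\cdot,A_0)$ rescues only the $C^1$ norm. Everything else is bookkeeping with properties (\rom{1})--(\rom{2}) of the covering already established in the construction, together with the standard fact that the $\ms F$-distance to a set is $1$-Lipschitz, hence continuous.
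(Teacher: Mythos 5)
Your proof is correct and follows the same idea the paper uses, namely local finiteness of the sum \eqref{eq: X(V, G)} away from $A^L_0\cup B$ together with the prefactor $\ms F(\cdot, A^L_0\cup B)$ and the normalization $\|\mc X_{j,i}\|_{C^1}\leq 1$ at points of $A^L_0\cup B$. The paper states only the local-finiteness observation and omits the details; your write-up correctly supplies the missing estimate at points of $A^L_0\cup B$, where local finiteness alone does not apply and only the $C^1$ bound survives.
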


\medskip
{\noindent\em Step 3: A map from $A^L$ to the space of isotopies}. 

\medskip
We will associate each $(V, \Omega)\in\VC(M)$ with an isotopy of $M$ in a continuous manner in the same way as \cite{ZZ17}*{Section 4.3}. The isotopies will be flows $\{\Phi_{V, \Omega}(t)\}_{t\geq 0}\in \Diff_0(M)$ associated with $\mc X_{V, \Omega}$ for each $(V, \Omega)$. For our purpose, we will need to specify how far (some $T_{V, \Omega}>0$) we can flow along each $\mc X_{V, \Omega}$, that is, $t\in [0, T_{V, \Omega}]$. Nevertheless, by Lemma \ref{lem:continuous maps to vector fields}, the family of vector fields obtained in this way are only continuous in the $C^1$-topology. We will carefully smooth out these families to make them continuous in the smooth topology at the end.

Given $(V, \Omega)\in \VC(M)$, write $(V_t, \Omega_t) = \Phi_{V, \Omega}(t)_\#(V, \Omega) \in \VC(M)$. We will show that the $\A^h$-values $\{\A^h(V_t, \Omega_t)\}$ can be deformed down by a fixed amount depending only on $\ms F\big((V, \Omega), A_0\big)$. To show this, given any $(V, \Omega)\in A_j$, there are only finitely many balls $\wti{\ms B}_{k, i}$ that contains $(V, \Omega)$ by our construction, so we let $\rho_{V, \Omega}$ be the smallest radii of those balls $\wti{\ms B}_{k, i}$ with $(V, \Omega)\in \wti{\ms B}_{k, i}$. Since for each $j$, only balls in the collection $\{\wti{\ms B}_{k, i}: j-1\leq k\leq j+1, 1\leq i\leq q_k\}$ may intersect $A_j$ nontrivially, we know that $\rho_{V, \Omega}\geq r_j >0$ for some $r_j$ depending only on $j$. By \eqref{eq:A_j}, \eqref{eq:first variational upper bound in small balls} and the definition of $\mc X(V, \Omega)$ in \eqref{eq: X(V, G)}, we have for any $(V', \Omega')\in \ms B_{\rho_{_{V, \Omega}}}(V, \Omega)$ that
\[ 
\begin{aligned}
    \delta \A^h_{V', \Omega'}\big(\mc X(V, \Omega)\big) & \leq \ms F\big((V, \Omega), A_0 \big) \cdot (-\frac{1}{2})\cdot \min\{c_{j-1}, c_j, c_{j+1}\} \\
    & \leq -\frac{1}{2^{j+1}} \min\{c_{j-1}, c_j, c_{j+1}\}. 
\end{aligned} 
\]
Therefore, we can find two continuous functions $g, \rho: (0, \infty) \to (0, \infty)$ such that $\lim_{t\to 0}g(t)=0$, $\lim_{t\to 0}\rho(t)=0$, and for any $(V', \Omega')\in A^L$,
\begin{equation}\label{eq:first variation upper bound2}
    \delta\A^h_{V', \Omega'}\big(\mc X(V, \Omega)\big) \leq -g\big(\ms F((V, \Omega), A_0)\big), \quad \text{if } \ms F\big((V', \Omega'), (V,  \Omega)\big) \leq \rho\big(\ms F((V, \Omega), A_0)\big).
\end{equation}

Next, we will construct a continuous time function $T: (0, \infty)\to (0, \infty)$, such that $T(t)\to  0$ as $t\to 0$, and for any $(V, \Omega)\in A^L$, denoting $\gamma = \ms F\big((V, \Omega), A_0\big)$,
\begin{itemize}
    \item $(V_t, \Omega_t)$ (obtained by deformations using isotopies $\Phi_{V,\Omega}(t)$) belongs to $\ms B_{\rho(\gamma)}(V, \Omega)$ for all $t\in [0, T(\gamma)]$. 
\end{itemize}
To check this, for any $(V, \Omega)\in A_j$, denoting $\rho = \rho\big(\ms F((V, \Omega), A_0)\big)$, there exists $T_{V, \Omega}>0$, such that $(V_t, \Omega_t)$ belongs to $\ms B_{\rho}(V, \Omega)$ for all $t\in [0, T_{V, \Omega}]$. By the compactness of $A_j$ and the continuity $\big(t, (V, \Omega)\big) \mapsto (V_t, \Omega_t)$, we can choose $T_{V, \Omega}$ such that $T_{V, \Omega}\geq T_j>0$ for all $(V, \Omega)\in A_j$ and for some $T_j$ depending only on $j$. The desired function $T(\gamma)$ can be obtained by interpolation between $T_j$'s. 

\medskip
In summary, for any $(V, \Omega)\in A^L\setminus A_0$, denoting $\gamma = \ms F\big((V, \Omega), A_0\big)>0$, we can define
\begin{equation}\label{eq:Psi_V,Omega}
    \Psi_{V, \Omega}(t, \cdot) = \Phi_{V, \Omega}\big( T(\gamma)t, \cdot \big),\quad \text{ for } t\in[0, 1],
\end{equation}
and $\mc L: (0, \infty)\to (0, \infty)$, with $\mc L(\gamma)= T(\gamma)g(\gamma)$; then $\lim_{\gamma\to 0}T(\gamma) = 0$. We can deform any $(V, \Omega)\in A^L\setminus A_0$ through a continuous family $\{(V_t, \Omega_t) = \Psi_{V, \Omega}(t)_\#(V, \Omega): t\in [0, 1]\}\subset \ms B_{\rho(\gamma)}(V, \Omega)$, such that, by \eqref{eq:first variation upper bound2},
\begin{equation}\label{eq:Ah deformation down}
    \begin{split}
    \A^h(V_1, \Omega_1) - \A^h(V, \Omega) & \leq \int_0^{T(\gamma)} \delta \A^h_{V_t, \Omega_t}\big(\mc X(V, \Omega)\big) \,\mr dt \leq -T(\gamma)\cdot g(\gamma)\\
    & = -\mc L(\gamma) <0.
    \end{split}
\end{equation}

\medskip
{\noindent\em Step 4: Smoothing out families of vector fields}. 

\medskip
We will use the construction above to prove the following \textit{pull-tight} result.

\begin{theorem}[Pull-tight]\label{thm:pull tight}
    Let $\Pi$ be an $(X, Z)$-homotopy class generated by some continuous $\Phi_0: X \to \ms E$ relative to $\Phi_0|_Z$. Given a minimizing sequence $\{\Phi^*_i\}_{i\in\mb N}\subset \Pi$ associated with $\A^h$, there exists another minimizing sequence $\{\Phi_i\}_{i\in \mb N}\subset \Pi$, such that $\mf C(\{\Phi_i\})\subset \mf C(\{\Phi_i^*\})$ and every element $(V, \Omega)\in \mf C(\{\Phi_i\})$ is either $\A^h$-stationary, or belongs to $B = \Phi_0(Z)\subset \ms E$. 
\end{theorem}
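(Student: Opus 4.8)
The plan is to run the standard Almgren--Pitts--style tightening deformation, using the continuous vector-field assignment $\mc X \colon A^L \to \mk X(M)$ and the associated isotopy assignment $\Psi_{V,\Omega}$ constructed in Steps 1--3. First I would fix the target map: given the input minimizing sequence $\{\Phi_i^*\}$, I want to produce $\{\Phi_i\}$ by post-composing each $\Phi_i^*$ with the time-one map of the tightening flow, i.e. set $\Phi_i(x) = \Psi_{\Phi_i^*(x)}(1)_\#\big(\Phi_i^*(x)\big)$. The first point to check is that this lands in $\Pi$: each $\Phi_i$ is homotopic to $\Phi_i^*$ through the family $t \mapsto \Psi_{\Phi_i^*(x)}(t)_\#\big(\Phi_i^*(x)\big)$, which is a continuous homotopy fixing $Z$ (since on $B = \Phi_0(Z)$ we never deform, as $A_0 \supset B$ and the flow is trivial there), and $\Phi_i^* \in \Pi$; so $\Phi_i \in \Pi$. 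One also needs $\Phi_i(x) \in \ms E$ — i.e. that pushing a separating smooth embedding forward by a diffeomorphism of $M$ keeps it a separating smooth embedding — which is immediate since $\Psi_{V,\Omega}(t) \in \Diff_0(M)$.

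The core estimate is the uniform ``deformation down'' inequality \eqref{eq:Ah deformation down}: for every $(V,\Omega) \in A^L \setminus A_0$, writing $\gamma = \ms F\big((V,\Omega), A_0\big)$, one has $\A^h\big(\Psi_{V,\Omega}(1)_\#(V,\Omega)\big) - \A^h(V,\Omega) \le -\mc L(\gamma) < 0$, while on $A_0$ the deformation is the identity. The key consequence I would extract: there is a continuous, increasing function $\mc L \colon [0,\infty) \to [0,\infty)$ with $\mc L(0) = 0$ and $\mc L(\gamma) > 0$ for $\gamma > 0$, such that for all $(V,\Omega)\in A^L$,
\[
    \A^h\big(\Psi_{V,\Omega}(1)_\#(V,\Omega)\big) \le \A^h(V,\Omega) - \mc L\big(\ms F((V,\Omega), A_0)\big).
\]
From this, $\sup_x \A^h(\Phi_i(x)) \le \sup_x \A^h(\Phi_i^*(x))$, so $\{\Phi_i\}$ is again minimizing; and, crucially, any min-max subsequence $\Phi_{i_j}(x_j) \to (V,\Omega)$ in $\ms F$ forces $\A^h(\Phi_{i_j}^*(x_j)) \to \mf L^h$ as well (by the same inequality, since $\A^h(\Phi_{i_j}(x_j)) \le \A^h(\Phi_{i_j}^*(x_j))$ and both are $\ge \mf L^h$ up to $o(1)$ — here one uses that $\A^h(\Phi_{i_j}^*(x_j))$ is bounded below along a minimizing sequence near the sup, together with the lower bound coming from $\mc L \ge 0$). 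Then $\Phi_{i_j}^*(x_j)$ subconverges (after passing to a further subsequence, using compactness of $A^L$) to some $(V', \Omega')$ in the critical set $\mf C(\{\Phi_i^*\})$; and by continuity of $(V,\Omega) \mapsto \Psi_{V,\Omega}(1)_\#(V,\Omega)$ in the $\ms F$-topology one gets $(V,\Omega) = \Psi_{V',\Omega'}(1)_\#(V',\Omega')$. Feeding this back into the displayed inequality: $\mf L^h = \A^h(V,\Omega) \le \A^h(V',\Omega') - \mc L(\ms F((V',\Omega'), A_0)) = \mf L^h - \mc L(\ms F((V',\Omega'), A_0))$, so $\mc L(\ms F((V',\Omega'),A_0)) = 0$, hence $\ms F((V',\Omega'), A_0) = 0$, i.e. $(V',\Omega') \in A_0 = A^L_0 \cup B$. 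But $(V,\Omega) = \Psi_{V',\Omega'}(1)_\#(V',\Omega') = (V',\Omega')$ since the flow is trivial on $A_0$; so $(V,\Omega) \in A^L_0 \cup B$, i.e. either $\A^h$-stationary or in $B$. This also shows $\mf C(\{\Phi_i\}) \subset \mf C(\{\Phi_i^*\})$ since each such $(V,\Omega)$ was produced as an $\ms F$-limit of $\Phi_{i_j}^*(x_j)$.

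The main obstacle — and the content of Step 4, which the excerpt flags but does not carry out — is the regularity of the homotopy $\Phi_i \sim \Phi_i^*$ in the correct topology. The assignment $(V,\Omega) \mapsto \mc X(V,\Omega)$ from \eqref{eq: X(V, G)} is only continuous in the $C^1$-topology on $\mk X(M)$ (Lemma \ref{lem:continuous maps to vector fields}), but an $(X,Z)$-sweepout must be continuous into $\ms E$ with the \emph{smooth} topology on embeddings, so I cannot directly use the raw flow of $\mc X$. The fix, following \cite{ZZ17}*{\S 4.4} and \cite{Zhou19}*{\S 1.2}, is to mollify: replace each $\mc X_{j,i}$ by a nearby smooth vector field, or rather convolve the time-dependent family $(t,x) \mapsto \mc X(V_t, \Omega_t)$ in the $t$-variable at a scale fine enough that (a) the perturbed flow still decreases $\A^h$ by at least, say, $\tfrac12 \mc L(\gamma)$ — which is possible because \eqref{eq:first variation upper bound2} holds with a definite gap on a whole $\ms F$-ball — and (b) the resulting map $x \mapsto \Psi_{\Phi_i^*(x)}(t)_\#\big(\Phi_i^*(x)\big)$ is now continuous in the smooth topology, using Lemma \ref{lem:continuous maps to vector fields}'s second assertion (smoothness of $\mc X$ away from $A_0$) plus the fact that $\Phi_i^*(X)$ stays in a compact subset of $\ms E$ bounded away from stationarity except near $A_0$. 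I expect this smoothing step to be the only genuinely technical part; everything else is a diagram-chase with the compactness of $A^L$ and the uniform decrement $\mc L$. I would present it by citing the construction of \cite{ZZ17}*{\S 4} essentially verbatim, noting that the $\VC$-setting changes nothing since $\ms F = \mf F + \mc F$ behaves exactly like the varifold $\mf F$-metric for these continuity and compactness purposes.
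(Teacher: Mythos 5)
Your proposal follows essentially the same route as the paper: the annular decomposition, the $C^1$-continuous vector-field assignment, the uniform decrement $\mc L(\gamma)$, the smoothing step to restore continuity in the smooth topology, and the limit argument identifying $\mf C(\{\Phi_i\})$ inside $A^L_0\cup B$. The only (harmless) deviations are in the final step: the paper smooths the parameter-dependence $x\mapsto \mc X_i(x)$ with a $C^1$-error of size $1/i$ and concludes by letting the resulting additive error in the decrement inequality tend to zero, rather than convolving in the time variable and demanding the perturbed flow still decrease by $\tfrac12\mc L(\gamma)$ pointwise, and it avoids invoking continuity of the full tightening map by directly estimating $\ms F\big(\Phi^*_{i_j}(x_j),\Phi_{i_j}(x_j)\big)\to 0$ from the $C^1$-smallness of the vector fields near $A_0$.
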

\begin{proof}
    We can assume without loss of generality that $\mf L^h(\Phi^*_i)\leq \mf L^h+1$ for all $i\in\mb N$, so clearly we have $\Phi^*_i(x)\in A^L$ for all $i\in \mb N$ and $x\in X$. 
    
    For each $\Phi^*_i: X\to \ms E$, we can associate it with a family of vector fields:
    \[ \mc X_i: X \to \mk X(M), \text{ such that } \mc X_i(x) = \mc X(\Phi^*_i(x)); \]
    then this map is continuous under the $C^1$-topology on $\mk X(M)$ by Lemma \ref{lem:continuous maps to vector fields}. Moreover, by our construction, 
    \[ \mc X_i(x) = 0, \quad \text{ for any } x\in Z. \]
    Define $\Psi_i: X \to \mk{Is}(M)$ such that $\Psi_i(x) = \Psi_{\Phi^*_i(x)}$ via \eqref{eq:Psi_V,Omega} if $\Phi^*_i(x) \notin A^L_0\cup B$, and $\Psi_i(x) = \Id$ if $\Phi^*_i(x) \in A^L_0\cup B$. Note that $x \mapsto \Psi_i(x)$ is only continuous under the $C^1$-topology on $\mk{Is}(M)$. Write $\wti\Phi_i(x) = \Psi_i(1, \Phi^*_i(x))$. Using \eqref{eq:Ah deformation down}, we have that
    \[ \A^h\big(\wti\Phi_i(x)\big) - \A^h\big(\Phi^*_i(x)\big) \leq -\mc L \big(\ms F(\Phi^*_i(x), A^L_0\cup B) \big). \]
    
    For each $i\in \mb N$, we can smooth out $\mc X_i$ to some $\wti{\mc X}_i : X \to \mk X(M)$ which is continuous under the smooth topology, and such that $\wti{\mc X}_i(x) = 0$ for any $x\in Z$, and $\|\mc X_i - \wti{\mc X}_i\|_{C^1}\leq \frac{1}{i}$. Note that by \eqref{eq:1st variation of Ah},
    \[ |\delta\A^h_{V, \Omega}(\mc X) - \delta \A^h_{V, \Omega}(\wti{\mc X})| \leq C\big(\|V\|(M)+\|\partial\Omega\|(M)\big)\cdot \|\mc X - \wti{\mc X}\|_{C^1} \leq C'\|\mc X - \wti{\mc X}\|_{C^1}, \]
    for some universal constant $C, C'>0$ independent of the choice $(V, \Omega)\in A^L$. 
    Now define $\wti{\Psi}_i : X\to \mk{Is}(M)$ using $\wti{\mc X}_i$ instead of $\mc X_i$ in \eqref{eq:Psi_V,Omega}, then $x\mapsto \wti\Psi_i(x)$ is continuous under the smooth topology on $\mk{Is}(M)$, and $\wti{\Psi}_i |_Z \equiv \Id$. Writing $\Phi_i(x)=\wti{\Psi}_i(1, \Phi^*_i(x))$, then $\Phi_i$ is homotopic to $\Phi^*_i$ in $\ms E$ relative to $\Phi_0|_Z$, so $\Phi_i$ belongs to $\Pi$. Now by \eqref{eq:Ah deformation down} we have that
    \begin{equation}\label{eq:Ah comparison for tightened sweepout}
        \A^h\big(\Phi_i(x)\big) - \A^h\big(\Phi^*_i(x)\big) \leq - \mc L \big(\ms F(\Phi^*_i(x), A^L_0\cup B) \big) + \frac{C''}{i},
    \end{equation}
    for some universal $C''>0$.

    Suppose that $\{\Phi_{i_j}(x_j)\}$ is a min-max subsequence, then $\A^h\big(\Phi_{i_j}(x_j)\big)\to \mf L^h$. By \eqref{eq:Ah comparison for tightened sweepout} and the fact that $\{\Phi^*_i\}$ is a minimizing sequence, we know that $\{\Phi^*_{i_j}(x_j)\}$ is also a min-max subsequence. Then the left hand side of \eqref{eq:Ah comparison for tightened sweepout}, when applied to the two min-max subsequences, will converge to $0$, and hence
    \[ \ms F(\Phi^*_{i_j}(x_j), A^L_0\cup B) \to 0, \text{ when } j\to \infty.  \]
    By the definition of $\mc X_i$, this implies that $\mc X_{i_j}(x_j)$ converges to $0$ in the $C^1$-topology, and so is $\wti{\mc X}_{i_j}(x_j)$. Hence we have $\ms F\big(\Phi^*_{i_j}(x_j), \Phi_{i_j}(x_j)\big) \to 0$ as $j\to \infty$, and this proves that $\mf C(\{\Phi_i\})\subset \mf C(\{\Phi^*_i\})$. Moreover, we also have
    \[ \ms F(\Phi_{i_j}(x_j), A^L_0\cup B) \to 0, \text{ when } j\to \infty. \]
    This implies that elements in $\mf C(\{\Phi_i\})$ is either $\A^h$-stationary (lying in $A^L_0$) or belongs to $B = \Phi_0(Z)$.
\end{proof}

\section{Almost minimizing}\label{sec:almost minimizing}

In this part, we adapt the almost minimizing property to the $\A^h$-functional using embedded separating surfaces; see Section \ref{ss:definitions for almost minimizing}. As the main difference compared with \cite{Colding-DeLellis03} where they need successive replacements in annuli, we need the existence of  chains of replacements in open subsets; see Definition \ref{def:replacement chain}. We then prove the existence $\A^h$-almost minimizing pairs using a combinatorial arguments originally due to Almgren-Pitts in Section \ref{ss:existence of almost minimizing pairs}.

\subsection{Definitions}\label{ss:definitions for almost minimizing}

\begin{definition}[c.f. \cite{Colding-DeLellis03}*{Definition 3.2}]\label{def:ep pmc almost minimizing}
Given $\epsilon, \delta >0$, an open set $U\subset M$, and an embedded separating surface $(\Sigma, \Omega)\in \ms E$, we say that {\em $(\Sigma, \Omega)$ is $(\A^h, \epsilon, \delta)$-almost minimizing in $U$} if there does not exist any isotopy $\psi\in \mk{Is}(U)$, such that
\[ \A^h(\psi(t, \Sigma, \Omega)) \leq \A^h(\Sigma, \Omega) + \delta \quad \text{ for all } t\in[0,1];\]
\[  \A^h(\psi(1, \Sigma, \Omega)) \leq \A^h(\Sigma, \Omega) - \epsilon. \]
\end{definition}

\begin{definition}[$\A^h$-almost minimizing pairs]\label{def:pmc almost minimizing}
Given an open subset $U\subset M$, a pair $(V, \Omega)\in \VC(M)$, and a sequence of embedded separating surfaces $\{(\Sigma_j, \Omega_j)\}_{j\in \mb N}\subset \ms E$. 
We say that $(V, \Omega)$ is {\em $\A^h$-almost minimizing w.r.t. $\{(\Sigma_j, \Omega_j)\}$ in $U$}, if there exist $\epsilon_j\to 0$ and $\delta_j\to 0$, such that
\begin{itemize}
\item $(\Sigma_j, \Omega_j) \to (V, \Omega)$ in the $\ms F$-metric as $j\to \infty$;

\item $(\Sigma_j, \Omega_j)$ is $(\A^h, \epsilon_j, \delta_j)$-almost minimizing in $U$. 
\end{itemize}
Sometime we also say {\em $(V, \Omega)$ is $\A^h$-almost minimizing in $U$} without referring to the sequence $\{(\Sigma_j, \Omega_j)\}$.
\end{definition}

We now show that $\A^h$-almost minimizing implies $\A^h$-stationary and $\A^h$-stable.
\begin{lemma}\label{lem:am implies stationary and stable}
    Let $(V, \Omega)\in \VC(M)$ be $\A^h$-almost minimizing in $U$, then
    \begin{enumerate}[label=\roman*)]
        \item $(V, \Omega)$ is $\A^h$-stationary in $U$;
        \item $(V, \Omega)$ is $\A^h$-stable in $U$.
    \end{enumerate}
\end{lemma}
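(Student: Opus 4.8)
The plan is to deduce both assertions directly from the definition of $\A^h$-almost minimizing by a diagonalization argument, transferring the variational stability of the approximating embedded surfaces $(\Sigma_j, \Omega_j)$ to the limit pair $(V, \Omega)$. The key point is that the $(\A^h, \epsilon_j, \delta_j)$-almost minimizing property with $\epsilon_j, \delta_j \to 0$ says that for each $j$, one cannot decrease $\A^h$ along $(\Sigma_j, \Omega_j)$ by more than $\epsilon_j$ through an isotopy supported in $U$ that stays within $\delta_j$ of $\A^h(\Sigma_j, \Omega_j)$ throughout. Since isotopies and their generated flows act continuously on pairs in $\mc V(M) \times \C(M)$ and the map $t \mapsto \A^h(\phi^t_\#(V, \Omega))$ is smooth, this obstruction in the limit forces the first variation to vanish and the second variation to be nonnegative.

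For Item (i), I would argue by contradiction: if $\delta\A^h_{V, \Omega}(X) \neq 0$ for some $X \in \mk X(U)$, then (replacing $X$ by $-X$ if necessary) $\delta\A^h_{V, \Omega}(X) = -2a < 0$. Let $\phi^t$ be the flow of $X$; then for some fixed small $T > 0$ we have $\A^h(\phi^T_\#(V, \Omega)) - \A^h(V, \Omega) \leq -aT < 0$, and moreover $\A^h(\phi^t_\#(V, \Omega)) \leq \A^h(V, \Omega) + aT/2$ for all $t \in [0, T]$ by continuity in $t$ (shrinking $T$ if needed). Now I would use $(\Sigma_j, \Omega_j) \to (V, \Omega)$ in the $\ms F$-metric together with the continuity of $(W, \Lambda) \mapsto \A^h(\phi^t_\#(W, \Lambda))$ uniformly for $t \in [0, T]$: for $j$ large, the isotopy $\psi(t, \cdot) = \phi^{Tt}(\cdot) \in \mk{Is}(U)$ applied to $(\Sigma_j, \Omega_j)$ satisfies $\A^h(\psi(t, \Sigma_j, \Omega_j)) \leq \A^h(\Sigma_j, \Omega_j) + 3aT/4 \leq \A^h(\Sigma_j, \Omega_j) + \delta_j$ once $\delta_j$ is bounded below, which fails when $\delta_j \to 0$ — so instead I should fix the estimates first and then note that for $j$ large both $\epsilon_j < aT/4$ and $\delta_j < aT/2$, giving a contradiction with $(\A^h, \epsilon_j, \delta_j)$-almost minimizing in $U$. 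The only subtlety is that $\psi$ must be supported in $U$, which holds since $X \in \mk X(U)$.

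For Item (ii), the argument is parallel but uses a second-order expansion. If $\delta^2\A^h_{V, \Omega}(X, X) = -2b < 0$ for some $X \in \mk X(U)$, then since $(V, \Omega)$ is already $\A^h$-stationary by Item (i), the expansion $\A^h(\phi^t_\#(V, \Omega)) = \A^h(V, \Omega) + \tfrac{t^2}{2}\delta^2\A^h_{V, \Omega}(X, X) + o(t^2)$ shows that for some small $T > 0$ the path strictly decreases $\A^h$ by a definite amount $\geq bT^2/2$ while the intermediate values stay below $\A^h(V, \Omega) + bT^2/4$ (here one should use a reparametrized path or a cutoff so that the path is monotone enough; alternatively, since along the flow of a fixed $X$ the function $t\mapsto \A^h(\phi^t_\#)$ may not be monotone, I would instead compose with the flow for a suitably chosen short time and accept the small positive bump, which is still $o(T^2)$ and hence $< \delta_j$ for large $j$). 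Then transferring to $(\Sigma_j, \Omega_j)$ via the same isotopy $\psi(t,\cdot) = \phi^{Tt}(\cdot)$ and using $\ms F$-convergence plus continuity of $\delta^2\A^h$ in the pair contradicts $(\A^h,\epsilon_j,\delta_j)$-almost minimizing once $\epsilon_j, \delta_j$ are small.

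The main obstacle is the non-monotonicity of $t \mapsto \A^h(\phi^t_\#(V,\Omega))$ along the flow of a fixed vector field: the definition of almost minimizing controls the \emph{sup over $t$} of the $\A^h$-deficit, so one must ensure the competitor path never rises more than $\delta_j$ above the starting value. I expect to handle this by taking $T$ small enough that the higher-order remainder is dominated by the leading (first- or second-order) term, so that the path's excursion above its initial value is $o(T)$ (resp. $o(T^2)$) while its net decrease is $\gtrsim T$ (resp. $\gtrsim T^2$); then choosing $j$ large so that $\epsilon_j$ and $\delta_j$ fall below these fixed thresholds yields the contradiction. The uniform continuity of $(W,\Lambda,t) \mapsto \A^h(\phi^t_\#(W,\Lambda))$ on the compact set $A^L \times [0,T]$ is what lets the estimates at $(V,\Omega)$ pass to $(\Sigma_j,\Omega_j)$ for all large $j$.
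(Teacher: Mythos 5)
Your overall strategy --- argue by contradiction with a fixed test field $X\in\mk X(U)$, and transfer the sign of the first/second variation from $(V,\Omega)$ to the approximating surfaces $(\Sigma_j,\Omega_j)$ using $\ms F$-convergence and the continuity of $\delta\A^h$ and $\delta^2\A^h$ under the $\ms F$-metric --- is exactly the paper's. The problem is your treatment of the constraint $\A^h(\psi(t,\Sigma_j,\Omega_j))\leq \A^h(\Sigma_j,\Omega_j)+\delta_j$, which you correctly identify as the main obstacle but then resolve incorrectly. Since $\delta_j\to 0$, you cannot ``accept a small positive bump'': any excursion of the competitor path above its starting value --- whether a fixed quantity that is $o(T^2)$ in $T$, or a $j$-dependent quantity tending to $0$ --- would have to be shown to be $\leq\delta_j$, and a sequence tending to $0$ need not be dominated by another sequence tending to $0$. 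Your sentence ``the bump \ldots is still $o(T^2)$ and hence $<\delta_j$ for large $j$'' has the inequality backwards, and in Item (i) requiring ``$\delta_j<aT/2$ for large $j$'' makes the constraint harder to satisfy, not easier (a reparametrization of the path does not help either, since it does not change $\sup_t\A^h$ along the path). The correct resolution, and the one the paper uses, is to arrange the competitor path to be \emph{monotone non-increasing} in $\A^h$, so that the $\delta_j$-constraint holds trivially for every $\delta_j\geq 0$.

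For Item (i) this is essentially automatic: after scaling $X$ so that $\frac{d}{dt}\A^h\big(\phi^t_\#(V,\Omega)\big)<-a<0$ for all $t\in[0,1]$, the continuity of the first variation in the $\ms F$-metric (uniformly in $t$ by compactness of $[0,1]$) gives $\frac{d}{dt}\A^h\big(\phi^t_\#(\Sigma_j,\Omega_j)\big)<0$ for all $t$ once $j$ is large; the path is then strictly decreasing, there is no excursion at all, and the total drop is bounded below independently of $j$, which beats $\epsilon_j$ eventually. For Item (ii) a genuine extra step is needed, because $(\Sigma_j,\Omega_j)$ is not stationary, so $g_j(t):=\A^h\big(\phi^t_\#(\Sigma_j,\Omega_j)\big)$ may have $g_j'(0)\neq 0$ and really does increase initially for one of the two flow directions. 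The fix: choose $T$ so that $\frac{d^2}{dt^2}\A^h\big(\phi^t_\#(V,\Omega)\big)<-b$ on $[-T,T]$; by continuity of the second variation, $g_j''<-b/2$ on $[-T,T]$ for $j$ large, so $g_j'$ is strictly decreasing there. If $g_j'(0)\leq 0$ flow forward along $X$; if $g_j'(0)>0$ flow backward (i.e.\ use $-X$). In either case the chosen path is monotone decreasing on $[0,T]$ with total drop at least $bT^2/4$, independent of $j$, yielding the contradiction once $\epsilon_j<bT^2/4$. The paper omits the details of Item (ii) entirely (``the proof follows in the same way''), but as written your handling of the non-monotonicity would fail.
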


\begin{proof}
Suppose on the contrary that $(V,\Omega)$ is not $\mc A^h$-stationary in $U$. Then there exists $X\in \mk X(U)$ with associated flow $\phi^t$, 
such that for all $t\in [0,1]$,
\[ \frac{d}{dt}\mc A^h\big((\phi^t)_\#(V,\Omega)\big)<0. \] 
Let 
\[ \epsilon:=\mc A^h(V,\Omega)-\mc A^h\big((\phi^1)_\#(V,\Omega)\big)>0.\] 
Observe that there exists $\eta>0$ small enough such that for all $(V',\Omega')\in \mc{VC}(M)$, 
\[ \ms F\big((V,\Omega),(V',\Omega')\big)<\eta \quad \Longrightarrow \quad \frac{d}{dt}\mc A^h\big((\phi^t)_\#(V', \Omega')\big)<0, \quad \forall\, t\in [0,1] . \]
By assumption, there exists $(\Sigma_j,\Omega_j) (\in \ms E)\to (V,\Omega)$ which is $(\mc A^h,\epsilon_j,\delta_j)$-almost minimizing in $U$ with $\epsilon_j,\delta_j\to 0$. Since $(\Sigma_j,\Omega_j)$ converges to $(V,\Omega)$, then for all sufficiently large $i$, $\ms F\big((V,\Omega),(\Sigma_j, \Omega_j)\big)<\eta$; this implies that
\[   \frac{d}{dt}\mc A^h\big((\phi^t)_\#(\Sigma_j,\Omega_j)\big)<0, \quad \forall\, t\in [0,1].\]
Moreover, by taking sufficiently large $j$, we have $\epsilon_j<\epsilon/2$ and 
\[\mc A^h(\Sigma_j, \Omega_j) - \mc A^h\big((\phi^1)_\#(\Sigma_j,\Omega_j)\big) > \epsilon/2.\]
This leads to a contradiction. Hence we have proved that $(V,\Omega)$ is $\mc A^h$-stationary in $U$.

The proof of Item (\rom{2}) follows in the same way by using the second variations. We omit the details here. 
\end{proof}

Next, we introduce the notion of $\A^h$-replacements. 

\begin{definition}\label{def:pmc replacement} 
Given an open subset $U\subset M$ and $(V, \Omega)\in \VC(M)$, a pair $(V^*, \Omega^*)\in \VC(M)$ is said to be an {\em $\A^h$-replacement} of $(V, \Omega)$ in $U$ if 
\[ (V^*, \Omega^*) = (V, \Omega) \text{ outside } \closure(U), \quad \A^h(V^*, \Omega^*) = \A^h(V, \Omega), \text{ and }\]
\[ (V^*, \Omega^*) \text{ is a strongly $\A^h$-stationary and stable $C^{1, 1}$ $h$-boundary in $U$}. \]
\end{definition}

\begin{definition}\label{def:weak good replace}
As above, $(V, \Omega)$ is said to have {\em (weak) good replacement property in $U$} if for any $p\in U$, there exists $r_p>0$, such that $(V, \Omega)$ has an $\A^h$-replacement $(V^*, \Omega^*)$ in any open annulus $\mr{An }\subset\subset \mr{An}(p; 0, r_p)$. 
\end{definition}

The following stronger good replacement property plays a key role in our new proof of regularity of min-max pairs without invoking unique continuation. 

\begin{definition}[Replacement chain property]\label{def:replacement chain}
Let $(V,\Omega)\in\VC(M)$ and $U\subset M$ be an open set. $(V, \Omega)$ is said to have the \textit{replacement chain property in $U$} if the following statement holds. 
For any sequence of open subsets $B_1,\cdots,B_k\subset \subset U$, there exist a sequence $(V,\Omega)=(V_0,\Omega_0),(V_1,\Omega_1),\cdots, (V_k,\Omega_k)$ in $\VC(M)$ satisfying that 
\[ \text{$(V_j,\Omega_j)$ is an $\mc A^h$-replacement of $(V_{j-1},\Omega_{j-1})$ in $B_j$ for $j=1,\cdots,k$,}\]
and 
\[ \text{$(V_j,\Omega_j)$ is $\mc A^h$-stationary and stable in $U$.}\]
Furthermore, if there is another sequence of open subsets $B_1, \cdots, B_{k}, B_{k+1}', \cdots , B_{\ell}'\subset\subset U$, then the sequence of replacements  $(\wti V_j,\wti \Omega_j)$ can be chosen so that  
\[ (\wti V_j,\wti \Omega_j)=(V_j,\Omega_j)  \quad \forall j=1,\cdots, k. \]
\end{definition}

\begin{remark}\label{rem:consequence of replacement chain property}
By definition, we have,
\begin{enumerate}[label=\roman*)]
    \item if $(V, \Omega)\in \VC(M)$ satisfies the replacement chain property in $U$, and $B\subset\subset U$ is open, then an $\A^h$-replacement $(V^*, \Omega^*)$ of $(V, \Omega)$ in $B$ also satisfies the replacement chain property;
    
    \item if $(V, \Omega)$ has the replacement chain property  in $U$, $(V, \Omega)$ itself is $\mc A^h$-stationary and stable in $U$;

    \item the replacement chain property implies the (weak) good replacement property. 
\end{enumerate}
\end{remark}

\subsection{Existence of almost minimizing pairs}\label{ss:existence of almost minimizing pairs}

In this part, we use the Almgren-Pitts type combinatorial arguments to find an $\A^h$-min-max pair $(V, \Omega)$ which is $\A^h$-almost minimizing in small annuli. 

Let $\Pi$ be an $(X, Z)$-homotopy class of $(X, Z)$-sweepouts generated by some continuous $\Phi_0: X \to \ms E$ relative to $\Phi_0|_Z$. Suppose that the nontriviality condition \eqref{eq:width nontrivial1} holds. Let $\{\Phi_i\}_{i\in \mb N}\subset \Pi$ be a pull-tight minimizing sequence obtained by Theorem \ref{thm:pull tight}. Then every $(V, \Omega)\in \mf C(\{\Phi_i\})$ is $\A^h$-stationary.

\begin{theorem}[Existence of almost minimizing pairs]\label{thm:existence of almost minimizing pairs}
As above, suppose \eqref{eq:width nontrivial1} holds, then there exist an $\A^h$-stationary pair $(V_0, \Omega_0)\in \mf C(\{\Phi_i\})$, and a min-max subsequence $\{(\Sigma_j, \Omega_j) = \Phi_{i_j}(x_j)\}_{j\in \mb N} \subset \ms E$, such that {\em $(V_0, \Omega_0)$ is $\A^h$-almost minimizing in small annuli w.r.t. $\{(\Sigma_j, \Omega_j)\}$} in the following sense: for any $p\in M$, there exists $r_{am}(p)>0$, such that for any annulus $\mr{An}=\mr{An}(p; s, r)$ with $0<s<r<r_{am}(p)$,  $(V_0, \Omega_0)$ is $\A^h$-almost minimizing w.r.t. to $\{(\Sigma_j, \Omega_j)\}$ in $\mr{An}$.
\end{theorem}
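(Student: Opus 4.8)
The plan is to run the Almgren–Pitts combinatorial argument adapted to the $\VC$-setting, following the scheme of Colding–De Lellis \cite{Colding-DeLellis03}*{\S 4--5} and its prescribed-mean-curvature analog in \cite{ZZ17}. First I would fix a pull-tight minimizing sequence $\{\Phi_i\}$ from Theorem~\ref{thm:pull tight} and argue by contradiction: suppose no $\A^h$-stationary $(V_0,\Omega_0)\in\mf C(\{\Phi_i\})$ is $\A^h$-almost minimizing in small annuli. The key point is that the almost-minimizing property localizes in annuli, so its negation means that for every $(V,\Omega)\in\mf C(\{\Phi_i\})$ there is a point $p=p(V,\Omega)$ and, for all small scales, an isotopy supported in a concentric annulus at $p$ that drops $\A^h$ by a definite amount without ever increasing it much. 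One then needs a covering-type argument to combine these local ``bad'' annuli, using a fixed finite collection of $(3+1)$-many pairwise-disjoint admissible collections of annuli (the Colding–De Lellis trick with the $3.7$-radii so that any point is interior to one of them).

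The heart of the argument is the \emph{interpolation/deformation} step: one must convert the pointwise local competitors into a genuine competitor sweepout $\wti\Phi_i\in\Pi$ with $\sup_X\A^h\circ\wti\Phi_i<\mf L^h$, contradicting the definition of the width. Here I would mimic \cite{ZZ17}*{\S 5} carefully: cover $X$ by finitely many open sets $U_\alpha$ on each of which $\Phi_i$ maps into a small $\ms F$-ball around some element of (a neighborhood of) $\mf C(\{\Phi_i\})$; on each $U_\alpha$ apply the local annular isotopies (which exist by the contradiction hypothesis and continuity of $\delta\A^h$ and $\A^h$ under $\ms F$), and patch the resulting families using a partition of unity on $X$ together with the fact that isotopies supported in disjoint annuli commute and can be composed. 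One must be careful that the competitor stays in $\ms E$ (embedded separating surfaces of genus $\mk g_0$) — this is automatic since we only apply ambient isotopies — and that on $Z$ we do not move, which is guaranteed because elements of $\mf C(\{\Phi_i\})$ near $B=\Phi_0(Z)$ are handled by the pull-tight conclusion (they are either $\A^h$-stationary or lie in $B$, and on the $B$-part we simply do nothing, while the width nontriviality \eqref{eq:width nontrivial1} keeps the $Z$-values strictly below $\mf L^h$).

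The \textbf{main obstacle} I expect is the patching/interpolation in the presence of the $\Omega$-factor: unlike the pure area functional, the deformations act on pairs $(\Sigma,\Omega)$ and the competitor must remain in $\VC$ with the correct enclosed region, so the commuting-isotopies bookkeeping and the continuity estimates for $\A^h$ under $\ms F$-small perturbations must be tracked simultaneously for both $V$ and $\Omega$; this is exactly the step where \cite{ZZ17} had to refine Colding–De Lellis, and the extraction of the final min-max subsequence $\{(\Sigma_j,\Omega_j)=\Phi_{i_j}(x_j)\}$ realizing $(V_0,\Omega_0)$ and being $(\A^h,\epsilon_j,\delta_j)$-almost minimizing in every admissible annulus at once requires a diagonal argument over the countably many relevant scales and centers (e.g. a countable dense set of points and rational radii). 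A secondary technical point is defining $r_{am}(p)>0$ uniformly: one uses that $(V_0,\Omega_0)$ is almost minimizing in each member of the chosen finite disjoint annular families, and that every $p$ has a small ball all of whose concentric sub-annuli are contained in one such member, so $r_{am}(p)$ can be read off from the geometry of the finite cover. Once the contradiction is reached, the existence of the desired $(V_0,\Omega_0)$ and the min-max subsequence follows, completing the proof of Theorem~\ref{thm:existence of almost minimizing pairs}.
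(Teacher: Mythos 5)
Your proposal follows essentially the same route as the paper: the Almgren--Pitts combinatorial covering/deformation argument (in the form of Colding--Gabai--Ketover's Lemma A.1, which the paper reproduces as Lemma \ref{lem:AM in L annuli}) to produce a min-max subsequence that is $(\A^h,\epsilon_j,\delta_j)$-almost minimizing in at least one annulus of every $L(m)$-admissible collection, followed by a subsequence-extraction (``annuli picking'') argument to upgrade this to almost minimizing in all small annuli around every point, which is the paper's Proposition \ref{prop:general property}. The only caveats are minor: for an $m$-dimensional parameter space the admissible collections must contain $L(m)=(3^m)^{3^m}$ annuli rather than the $(3+1)$ of the one-parameter Colding--De Lellis setting (your later invocation of $d(m)$ shows you are aware of this), and the final step of producing $r_{am}(p)$ is a genuine subsequence extraction via an inductive radius-selection and covering argument rather than something read off directly from a finite cover.
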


We will adapt the proof for the area functional by Colding-Gabai-Ketover in \cite{Colding-Gabai_Ketover18}*{Appendix}. To do so, we introduce some notions. 

\begin{definition}\label{def:AM in L}
Given an $L\in \mb N$ and $p\in M$, a collection of annuli centered at $p$ 
\[ \ms C = \{\mr{An}(p;s_1,r_1),\cdots, \mr{An}(p;s_L,r_L)\} \] 
is said to be {\em $L$-admissible} if $2r_{j+1}<s_j$ for all $j=1,\cdots, L-1$. 

We say a pair $(V, \Omega)\in \VC(M)$ is \textit{$\A^h$-almost minimizing in $\ms C$ w.r.t. a sequence $\{(\Sigma_j, \Omega_j)\}\subset \ms E$}, if there exists $\epsilon_j\to 0$ and $\delta_j\to 0$, such that 
\begin{itemize}
    \item $(\Sigma_j, \Omega_j) \to (V, \Omega)$ in the $\ms F$-metric as $j\to \infty$;
    \item for each $j$, $(\Sigma_j, \Omega_j)$ is $(\A^h, \epsilon_j, \delta_j)$-almost minimizing in at least one annulus in $\ms C$.
\end{itemize}
\end{definition}

Assume that the parameter space $X$ is a cubical subcomplex of the cell complex $I(m, k_0)$ for some $m, k_0 \in \mb N$. Here $I(m, k)=I(1, k)\otimes \cdots I(1, k)$ ($m$-times), where $I(1, k)$ denotes the complex on $I=[0,1]$ whose $1$-cells and $0$-cells are, respectively,
\[ [1, 3^{-k}], [3^{-k}, 2\cdot 3^{-k}], \cdots, [1-3^{-k}, 1] \text{ and } [0], [3^{-k}], \cdots, [1-3^{-k}], [1].\]
We refer to \cite{Zhou19}*{Appendix A} for a summary of notions; (see also \cite{MN17}*{Section 2.1}).

\begin{lemma}\label{lem:AM in L annuli}
As above, there exist an integer $L=L(m)$ (depending only on the dimension of the large complex $I(m, k_0)$ where $X$ is embedded to), and a min-max subsequence $\{(\Sigma_j, \Omega_j) = \Phi_{i_j}(x_j)\}_{j\in \mb N} \subset \ms E$ converging to an $\mc A^h$-stationary pair $(V_0,\Omega_0)\in \mf C(\{\Phi_i\})$ such that $(V_0,\Omega_0)$ is $\mc A^h$-almost minimizing in every $L$-admissible collection of annuli w.r.t. $\{(\Sigma_j,\Omega_j)\}$.
\end{lemma}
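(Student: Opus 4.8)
\textbf{Proof plan for Lemma \ref{lem:AM in L annuli}.}

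The plan is to run the Almgren--Pitts combinatorial argument in the form adapted by Colding--Gabai--Ketover, replacing the area functional throughout by the $\A^h$-functional. First I would fix the ambient complex $I(m,k_0)$ into which $X$ is embedded, and set $L = L(m)$ to be the number of cells in $I(m,k_0)$ of dimension $\le m$ (or any combinatorial quantity large enough that the pigeonhole step below works; the precise value is determined by the discretization argument, not by geometry). The core of the argument is a proof by contradiction: suppose no such min-max subsequence exists. Then for \emph{every} $\A^h$-stationary pair $(V_0,\Omega_0)\in \mf C(\{\Phi_i\})$ and every min-max subsequence converging to it, there is some point $p$ and some $L$-admissible collection $\ms C$ of annuli centered at $p$ in which $(\Sigma_j,\Omega_j)$ fails to be $(\A^h,\epsilon_j,\delta_j)$-almost minimizing for all $j$ in the subsequence (after passing to a further subsequence), i.e. in \emph{each} of the $L$ annuli of $\ms C$ there is a competitor isotopy lowering $\A^h$ by a definite amount $\epsilon_j$ while never raising it by more than $\delta_j$.

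Next I would localize and glue. The annuli in an $L$-admissible collection are pairwise disjoint (indeed well-separated, since $2r_{k+1}<s_k$), so the isotopies supported in the distinct annuli can be performed simultaneously; composing them produces a single isotopy $\psi$ of $M$ that decreases $\A^h\big(\Phi_i(x)\big)$ by at least $\epsilon_j$ at the ``bad'' parameter $x = x_j$, while controlling the intermediate values by $\delta_j$. The essential combinatorial input is the discretization/interpolation machinery (as in \cite{Zhou19}*{Appendix} and \cite{MN17}*{Section 2.1}, following Pitts): after subdividing $X$ finely, one assigns to each vertex $x$ of the subdivided complex a point $p(x)\in M$ and a collection $\ms C(x)$ in whose annuli $\Phi_i(x)$ fails to be almost minimizing, and one must do this consistently on cells so that the local competitor isotopies patch into a global continuous family $\{\Phi_i^{new}\}\subset \Pi$ with $\sup_X \A^h\big(\Phi_i^{new}\big) \le \mf L^h - c$ for a fixed $c>0$ independent of $i$. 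This contradicts the definition of the width $\mf L^h$. The role of $L = L(m)$ is precisely that, along any path in the parameter space, the point $p(x)$ can be made to vary so that at most $L$ ``scales'' are needed to separate the bad annuli at nearby vertices, which is why an $L$-admissible collection always contains a genuinely good annulus once $L$ exceeds this combinatorial bound; this is the pigeonhole step.

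Two points need care in the $\A^h$-setting as opposed to the pure-area setting. First, the competitor isotopies must act on \emph{pairs} $(\Sigma,\Omega)\in\ms E$ and keep them in $\ms E$ (separating embeddings bounding a Caccioppoli set), which is automatic since isotopies in $\mk{Is}(U)$ act diffeomorphically on $M$ and hence preserve $\ms E$; the $\A^h$-values transform continuously under such isotopies as noted in Section \ref{ss:Ah functional and VC space}. Second, the $\delta_j$-control in the definition of $(\A^h,\epsilon_j,\delta_j)$-almost minimizing must survive the gluing of finitely many ($\le L$ per vertex, times the bounded number of cells) isotopies; this is fine because the isotopies are supported in pairwise disjoint annuli, so the total intermediate increase is at most $\delta_j$ (not $L\delta_j$) at any fixed parameter, provided one performs the disjoint pieces simultaneously rather than sequentially. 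Once the contradiction is reached, extracting the desired $\A^h$-almost minimizing pair $(V_0,\Omega_0)$ and its min-max subsequence is a standard diagonal argument over the countably many candidate radii and a countable dense set of points $p$, together with the compactness of $A^L$ under the $\ms F$-metric to ensure $(\Sigma_j,\Omega_j)\to(V_0,\Omega_0)$.

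\textbf{Main obstacle.} The hard part will be the bookkeeping in the interpolation/gluing step: making the assignment $x\mapsto (p(x),\ms C(x))$ and the associated local competitor isotopies depend on $x$ continuously (or piecewise-continuously on a fine subdivision) while keeping the quantitative bounds $\epsilon_j,\delta_j$ uniform, so that the patched family genuinely lies in $\Pi$ and genuinely lowers the width. This is exactly the technical heart of the Almgren--Pitts combinatorial lemma; in our setting it is inherited essentially verbatim from \cite{Colding-Gabai_Ketover18}*{Appendix} and \cite{Zhou19}, with the only modifications being the replacement of area by $\A^h$ and of surfaces by pairs, both of which are benign because $\A^h$ is continuous under smooth isotopies of pairs and $\ms E$ is preserved.
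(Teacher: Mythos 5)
Your proposal follows essentially the same route as the paper: negate the claim at the level of near-maximal slices $\Phi_i(x)$, cover the near-maximal parameter set by finitely many open sets each carrying a competitor isotopy supported in one annulus of a bad $L$-admissible collection, use the wide separation of scales in admissible collections together with the bounded overlap $d(m)$ of the cover to ensure the simultaneously active annuli are pairwise disjoint, and glue via cutoff functions to lower the width below $\mf L^h$ — exactly the Colding--Gabai--Ketover adaptation of the Almgren--Pitts combinatorial argument that the paper invokes (the paper works directly with a finite open cover in the continuous setting rather than the full discretization/interpolation machinery you cite, but these are interchangeable here). One small quantitative point: even when disjointly supported isotopies are performed simultaneously, the intermediate $\A^h$-increase can accumulate to roughly $d(m)\delta_0$ rather than $\delta_0$, which is why the paper chooses $\delta_0<\epsilon_0/(8d(m))$; this does not affect the validity of your argument.
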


The proof is essentially the same as that of \cite{Colding-Gabai_Ketover18}*{Lemma A.1}, and we provide some necessary details for completeness. 
\begin{proof}[Proof of Lemma \ref{lem:AM in L annuli}]
The lemma will follow directly from the following claim.
\begin{claim}
    There exists $L=L(m)\in \mb N$, such that for any $\epsilon>0$, there exist $\delta>0$, an $i>\frac{1}{\epsilon}$, and an $x\in X$ with 
    \begin{equation}\label{eq:Phi_i(x) Ah lower bound}
        \A^h\big(\Phi_i(x)\big) \geq \mf L^h -\epsilon,
    \end{equation}
    such that for any $L$-admissible collection $\ms C$, $\Phi_i(x)$ is $(\A^h, \epsilon, \delta)$-almost minimizing in at least one annulus in $\ms C$. 
\end{claim}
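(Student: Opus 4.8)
The plan is to establish the Claim by an Almgren–Pitts style combinatorial contradiction argument, following the scheme of \cite{Colding-Gabai_Ketover18}*{Lemma A.1}. First I would fix the integer $L = L(m)$ to be (a constant times) the dimension of the ambient cube complex $I(m, k_0)$; the precise value is dictated by the pigeonhole step below, and it is the reason $L$ depends only on $m$ and not on the sweepout. Suppose the Claim fails for this $L$. Then there is some $\epsilon > 0$ such that for every $\delta > 0$, every large $i$, and every $x \in X$ with $\A^h(\Phi_i(x)) \geq \mf L^h - \epsilon$, there exists an $L$-admissible collection $\ms C = \ms C(x)$ of concentric annuli in which $\Phi_i(x)$ is \emph{not} $(\A^h, \epsilon, \delta)$-almost minimizing in \emph{any} of the $L$ annuli. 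By Definition \ref{def:ep pmc almost minimizing}, in each such annulus there is an isotopy supported in that annulus which decreases $\A^h$ by $\epsilon$ while never increasing it by more than $\delta$.

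The key combinatorial step is then to splice these local isotopies into a global competitor sweepout $\Phi_i' \in \Pi$ that beats the width, contradicting $\Phi_i$ being (nearly) optimal. The annuli in an $L$-admissible collection are pairwise disjoint (by $2r_{j+1} < s_j$), so the isotopies in distinct annuli commute and can be performed simultaneously; the main issue is continuity in the parameter $x \in X$, since the collection $\ms C(x)$ and the associated isotopies need not vary continuously. This is handled exactly as in Almgren–Pitts/Colding–Gabai–Ketover: one subdivides $X$ finely (replacing $I(m,k_0)$ by $I(m,k)$ for $k$ large), and on each small cube chooses a single collection of annuli and a single family of isotopies that works for all $\Phi_i(x)$ with $x$ in that cube — using uniform continuity of $x \mapsto \Phi_i(x)$ in the smooth topology and the fact that $(\A^h, \epsilon, \delta)$-almost-minimizing is an open condition in the right sense, so a witnessing isotopy for one parameter value works, with slightly worse constants, for nearby values. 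One then interpolates between the cube-wise deformations across the skeleton of the subdivided complex; here the admissibility condition $2r_{j+1} < s_j$ gives enough ``room'' (a definite number $L$ of nested annuli) that on each cube one can select an annulus disjoint from the finitely many annuli used on adjacent cubes, so the deformations can be glued without interference. The outcome is a map $\Phi_i' \in \Pi$ homotopic to $\Phi_i$ rel $Z$ (the deformations are isotopies, and they are the identity near $Z$ since there $\A^h$ is below the width by the nontriviality hypothesis \eqref{eq:width nontrivial1}) with $\sup_{x} \A^h(\Phi_i'(x)) \leq \mf L^h - \epsilon/2 < \mf L^h$, contradicting the definition of $\mf L^h = \mf L^h(\Pi)$.

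Granting the Claim, Lemma \ref{lem:AM in L annuli} follows by a standard diagonal extraction: apply the Claim with $\epsilon = \epsilon_j \to 0$ to obtain $\delta_j \to 0$, indices $i_j \to \infty$, and parameters $x_j \in X$ with $\A^h(\Phi_{i_j}(x_j)) \geq \mf L^h - \epsilon_j$, so that $(\Sigma_j, \Omega_j) := \Phi_{i_j}(x_j)$ is $(\A^h, \epsilon_j, \delta_j)$-almost minimizing in at least one annulus of every $L$-admissible collection. Since $\A^h(\Phi_{i_j}(x_j)) \to \mf L^h$, this is a min-max subsequence, and by compactness of $A^L$ (under the $\ms F$-metric) we may pass to a further subsequence so that $(\Sigma_j, \Omega_j) \to (V_0, \Omega_0) \in \mf C(\{\Phi_i\})$, which is $\A^h$-stationary by the pull-tight Theorem \ref{thm:pull tight}. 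Then Theorem \ref{thm:existence of almost minimizing pairs} is deduced from Lemma \ref{lem:AM in L annuli} by the usual argument: given $p \in M$, if $(V_0,\Omega_0)$ were \emph{not} $\A^h$-almost minimizing in small annuli at $p$, one could extract $L$ pairwise well-separated annuli at $p$ in none of which the convergence is almost minimizing, contradicting Lemma \ref{lem:AM in L annuli} applied to the resulting $L$-admissible collection.

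\medskip
\noindent\emph{The main obstacle.} The hard part is the continuity/interpolation step in the construction of the competitor $\Phi_i'$: the local almost-minimizing isotopies are produced pointwise in $x$ with no a priori continuous dependence, and the annular collections $\ms C(x)$ jump around. Controlling this requires (i) a uniform-continuity argument for $\Phi_i$ on the finely subdivided complex, (ii) the observation that the number $L$ of nested annuli one is allowed to use is large enough — depending only on $m$ — to select, on each small cube, an annulus disjoint from those used on the finitely many neighboring cubes (the combinatorics of $I(m,k)$ controls how many cubes meet at a vertex), and (iii) showing the glued deformation is a bona fide element of $\Pi$ (isotopic rel $Z$, values in $\ms E$). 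All the remaining pieces — disjointness of annuli forcing commutativity of isotopies, openness of the almost-minimizing condition, the diagonal extraction, and the deduction of Theorem \ref{thm:existence of almost minimizing pairs} — are routine adaptations of \cite{Colding-Gabai_Ketover18}*{Appendix} to the $\A^h$-functional on the $\VC$-space, using that $\A^h$ and its first variation are continuous in the $\ms F$-metric (Lemma \ref{lem:continuous maps to vector fields} and the discussion preceding it).
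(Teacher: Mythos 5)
Your proposal is correct and follows essentially the same route as the paper: a contradiction argument in the style of Colding--Gabai--Ketover, with a fixed $L=L(m)$ (the paper takes $L(m)=(3^m)^{3^m}$), a finite open cover of the "high-area" parameter set on which a single annulus and isotopy works, disjointness of the annuli attached to overlapping cover elements so the local deformations can be spliced, an overlap count $d(m)$ forcing $\delta \ll \epsilon/d(m)$, and a resulting competitor sweepout with supremum strictly below $\mf L^h$. The continuity/interpolation issue you flag as the main obstacle is precisely what the paper's cover conditions (the functions $\phi_j$, the bounded overlap, and the pairwise disjointness of annuli on intersecting cover elements) are designed to resolve.
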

We can take 
\begin{equation}\label{eq:L(m)}
L(m)=(3^m)^{3^m}.    
\end{equation} 
If by contradiction that the claim does not hold, we can find $\epsilon_0>0$, such that for any $\delta>0$, any $i>1/\epsilon_0$, and any $x\in X$ satisfying \eqref{eq:Phi_i(x) Ah lower bound} (with $\epsilon_0$ in place of $\epsilon$), there exists an $L$-admissible collection $\ms C_{i, x}$, such that $\Phi_i(x)$ is not $(\A^h, \epsilon_0, \delta)$-almost minimizing in any annulus in $\ms C_{i, x}$.  Fix a $\delta_0\ll \epsilon_0$, and an $i_0>1/\epsilon_0$; we let
\[ S_{i_0} = \{x\in X: \Phi_i(x) \text{ satisfies \eqref{eq:Phi_i(x) Ah lower bound}}\}\subset X. \]
For simplicity we drop the sub-index $i_0$ for a moment. Following the same argument in \cite{Colding-Gabai_Ketover18}*{Lemma A.1}, we can find a finite open cover $\{\mc U_j\}$ of $S$, where each $\mc U_j\subset I(m, 0) = [0, 1]^m$ is open, such that
\begin{enumerate}
    \item each $\mc U_j$ can be associated with some annulus $\mr{An}_j$ belonging to some $\ms C_x$, $x\in S$, such that there exists a smooth isotopy $\psi_j \in \mk{Is}(\mr{An}_j)$, for any $y\in X\cap \mc U_j$,
    \begin{itemize}
        \item $\A^h\big( \psi_j(t, \Phi(y))\big) \leq \A^h\big( \Phi(y)\big) + 2\delta_0$ for all $t\in [0, 1]$, and
        \item $\A^h\big( \psi_j(1, \Phi(y))\big) \leq \A^h\big( \Phi(y) \big) - \epsilon_0/2$;
    \end{itemize}
    \item each $\mc U_j$ intersects at most $d(m)$ many other elements in $\{\mc U_j\}$;
    \item each $\mc U_j$ can be associated with a smooth function $\phi_j\in C^\infty_c(\mc U_j)$, $0\leq \phi_j \leq 1$, and for any $x\in S$, at least one $\phi_j(x) = 1$;
    \item if $\phi_j(x)$ and $\phi_{j'}(x)$ are both nonzero for some $j$ and $j'$, we have $\mr{An}_j\cap \mr{An}_{j'}=\emptyset$.
\end{enumerate}
We can homotopically deform $\Phi$ to $\Phi_1, \cdots, \wti\Phi$ in $\ms E$ successively using $\{\psi_j\}$ up to time $\phi_j(x)$ at each $x\in X$, such that
\[ \Phi_{j+1}(x) = \psi_j\big( \phi_j(x), \Phi_j(x) \big), \quad j =1, \cdots. \]
If we choose $\delta_0 < \frac{\epsilon_0}{8 d(m)}$, then we must have (resuming the subindex $i_0$)
\[ \sup_{x\in X}\A^h\big( \wti\Phi_{i_0}(x) \big) \leq \sup_{x\in X}\A^h\big( \Phi_{i_0}(x)\big) -\epsilon_0/2 + 2d(m)\cdot \delta_0 < \mf L^h, \]
for $i_0$ sufficiently large, which is a contradiction.
\end{proof}

As a direct corollary of the above result and Lemma \ref{lem:am implies stationary and stable}, we have that
\begin{corollary}\label{cor:property R}
As above, the $\A^h$-stationary pair $(V_0, \Omega_0)\in \mf C(\{\Phi_i\})$ satisfies 
\begin{equation}\label{eq:property R}
\begin{aligned}
    \text{Property {\bf(R)}}:\quad  & \text{for every $L(m)$-admissible collection $\ms C$ of annuli, $(V_0, \Omega_0)$ is $\A^h$-stable}\\
    & \text{in at least one annulus in $\ms C$.}
\end{aligned}
\end{equation}
\end{corollary}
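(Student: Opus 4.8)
The plan is to obtain Property \textbf{(R)} essentially for free by combining Lemma~\ref{lem:AM in L annuli} with Lemma~\ref{lem:am implies stationary and stable}; the only genuine manipulation is a pigeonhole extraction, so there is no serious obstacle here, which is why the statement is recorded as a corollary.

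First I would invoke Lemma~\ref{lem:AM in L annuli} to fix the integer $L=L(m)$, the $\A^h$-stationary pair $(V_0,\Omega_0)\in\mf C(\{\Phi_i\})$, and the min-max subsequence $\{(\Sigma_j,\Omega_j)=\Phi_{i_j}(x_j)\}_{j\in\mb N}\subset\ms E$ with the property that $(V_0,\Omega_0)$ is $\A^h$-almost minimizing in \emph{every} $L$-admissible collection of annuli with respect to $\{(\Sigma_j,\Omega_j)\}$. Next I would fix an arbitrary $L(m)$-admissible collection $\ms C=\{\mr{An}(p;s_1,r_1),\dots,\mr{An}(p;s_L,r_L)\}$. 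By Definition~\ref{def:AM in L} there are sequences $\epsilon_j,\delta_j\to 0$ with $(\Sigma_j,\Omega_j)\to(V_0,\Omega_0)$ in the $\ms F$-metric such that, for each $j$, the pair $(\Sigma_j,\Omega_j)$ is $(\A^h,\epsilon_j,\delta_j)$-almost minimizing in at least one annulus of $\ms C$.

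Since $\ms C$ is a finite set with exactly $L$ elements, the pigeonhole principle yields a single index $k_*\in\{1,\dots,L\}$ such that $(\Sigma_j,\Omega_j)$ is $(\A^h,\epsilon_j,\delta_j)$-almost minimizing in $\mr{An}_{k_*}:=\mr{An}(p;s_{k_*},r_{k_*})$ for infinitely many $j$. Passing to this subsequence (and relabelling), I may assume this holds for all $j$, while still $\epsilon_j,\delta_j\to 0$ and $(\Sigma_j,\Omega_j)\to(V_0,\Omega_0)$ in the $\ms F$-metric; note that passing to a subsequence affects neither the membership $(V_0,\Omega_0)\in\mf C(\{\Phi_i\})$ nor the hypotheses of Lemma~\ref{lem:am implies stationary and stable}. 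By Definition~\ref{def:pmc almost minimizing}, this is precisely the statement that $(V_0,\Omega_0)$ is $\A^h$-almost minimizing in $\mr{An}_{k_*}$.

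Finally I would apply Lemma~\ref{lem:am implies stationary and stable}(\rom{2}) in the open set $U=\mr{An}_{k_*}$ to conclude that $(V_0,\Omega_0)$ is $\A^h$-stable in $\mr{An}_{k_*}$, which is one of the annuli in $\ms C$. As $\ms C$ was an arbitrary $L(m)$-admissible collection, this is exactly Property \textbf{(R)}, completing the proof. The one point deserving a remark, rather than an obstacle, is that the whole argument is carried out for a fixed $\ms C$ with its own pigeonhole subsequence, so stability is asserted per-collection and not along one universal subsequence — which is all that Property \textbf{(R)} requires.
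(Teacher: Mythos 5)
Your proof is correct and is exactly the argument the paper intends when it calls this "a direct corollary" of Lemma~\ref{lem:AM in L annuli} and Lemma~\ref{lem:am implies stationary and stable}: fix $\ms C$, pigeonhole to extract a subsequence that is $(\A^h,\epsilon_j,\delta_j)$-almost minimizing in a single annulus of $\ms C$, note this subsequence still converges to $(V_0,\Omega_0)$ with $\epsilon_j,\delta_j\to 0$ so Definition~\ref{def:pmc almost minimizing} applies, and then invoke Lemma~\ref{lem:am implies stationary and stable}(\rom{2}). Your closing remark that the subsequence may depend on $\ms C$ while the stability conclusion is a property of the fixed pair $(V_0,\Omega_0)$ is the right observation and is all that Property \textbf{(R)} requires.
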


\begin{proof}[Proof of Theorem \ref{thm:existence of almost minimizing pairs}]
The statement in Theorem \ref{thm:existence of almost minimizing pairs} follows by taking further subsequences of $\{(\Sigma_j, \Omega_j)\}$ in Lemma \ref{lem:AM in L annuli} the same way as in \cite{Colding-Gabai_Ketover18}*{Lemma A3}. We provide the details using a general version of this argument given in Appendix \ref{appen:am in a uniform subsequence}.

By the proof of Lemma \ref{lem:AM in L annuli}, there exist $\epsilon_j\to 0$ and $\delta_j\to 0$ such that 
\begin{itemize}
    \item $(\Sigma_j, \Omega_j) \to (V_0, \Omega_0)$ in the $\ms F$-metric as $j\to \infty$;
    \item for each $j$, $(\Sigma_j, \Omega_j)$ is $(\A^h, \epsilon_j, \delta_j)$-almost minimizing in at least one annulus in any $\ms C$.
\end{itemize}
Let $\mc P_j$ be the collection of annuli $\mr {An}$ where $(V_j,\Omega_j)$ is $(\mc A^h,\epsilon_j,\delta_j)$-almost minimizing.  Then for every $L$-admissible collection $\ms C$ of annuli, we have that $\ms C\cap \mc P_j\neq \emptyset$. Clearly, if $\mr {An}_1\subset \mr {An}\in \mc P_j$, then $\mr{An}_1\in \mc P_j$. Thus $\{\mc P_j\}$ satisfy the requirements in Proposition \ref{prop:general property}, and hence there exists a subsequence (still denoted by $\{\mc P_j\}$) such that for each $p\in M$, there exists $r_{am}(p)>0$ such that for each $0<s<r<r_{am}(p)$, $\mr{An}(p;s,r)\in \mc P_j$ for all sufficiently large $j$. This gives that $(V_0,\Omega_0)$ is $\mc A^h$-almost minimizing w.r.t. $\{(\Sigma_j,\Omega_j)\}$ in each $\mr{An}\subset \mr {An}(p,0,r_{am}(p))$ for all sufficiently large $j$. This completes the proof of Theorem \ref{thm:existence of almost minimizing pairs}.
\end{proof}


\section{Regularity of min-max pairs}\label{sec:regularity of min-max pairs}

In this section, we prove the main regularity results for $\A^h$-min-max pairs. We first prove that $\A^h$-stationary pairs with the replacement chain property are $C^{1,1}$ and strongly $\mc A^h$-stationary in Section \ref{ss:initial regularity}. We develop a novel way of using chains of replacements to prove the regularity without invoking unique continuation. We then construct the replacements using the aforementioned regularity results in Section \ref{ss:construction of replacements}, and prove the interior regularity in Section \ref{ss:regularity for almost minimizing pairs} and the full regularity in Section \ref{SS:proof of pmc min-max}.

\subsection{Initial regularity}\label{ss:initial regularity}
We start with the following characterization of tangent varifolds for an $\A^h$-stationary pair satisfying the weak good replacement property.

\begin{proposition}\label{prop:tangent plane}
Let $(V, \Omega)\in \VC(M)$ be $\A^h$-stationary in an open subset $U\subset M$. If $(V, \Omega)$ has (weak) good replacement property in $U$, then $V$ is integer rectifiable in $U$. In fact, for any $p\in \spt\|V\|\cap U$, every tangent varifold of $V$ at $p$ is an integer multiple of a plane in $T_pM$. 
\end{proposition}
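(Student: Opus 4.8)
The plan is to follow the classical Schoen--Simon--Yau / Colding--De Lellis scheme for analyzing tangent cones of almost minimizing varifolds, adapted to the $\A^h$-setting via the good replacement property. First, since $(V,\Omega)$ is $\A^h$-stationary and lies in $\VC(M)$, by Lemma \ref{lem:Ah stationary pair in VC has bounded 1st variation} the varifold $V$ has $c$-bounded first variation in $U$ with $c=\sup_M|h|$. Hence the monotonicity formula with a controlled density ratio holds: for $p\in\spt\|V\|\cap U$, the function $r\mapsto e^{cr}\frac{\|V\|(B_r(p))}{\pi r^2}$ is monotone nondecreasing (up to standard adjustments), so the density $\Theta^2(\|V\|,p)$ exists, and every tangent varifold $C$ at $p$ is a stationary (cone-like) integral-or-not varifold in $T_pM\cong\mathbb{R}^3$ with constant density along rays. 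The remaining task is to upgrade $C$ to an integer multiple of a single plane.

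The key step is to exploit the replacement property at the level of tangent varifolds. Fix $p$ and a small radius $r_p>0$ as in Definition \ref{def:weak good replace}. For a sequence of scales $s_k\to 0$, rescale $(V,\Omega)$ around $p$; by the compactness result for bounded first variation (Allard) together with the compactness in $\VC$ (the space $A^L$ is $\ms F$-compact), a subsequence converges to a tangent pair $(C,\Omega_\infty)$. The point is that the good replacement property \emph{passes to the blow-up limit}: choosing replacements $(V^*,\Omega^*)$ in annuli $\mr{An}(p;\sigma s_k, \tau s_k)$ and rescaling, one obtains, in the limit, that $C$ itself admits an $\A^0$-replacement (the prescribing function scales away) in annuli $\mr{An}(0;\sigma,\tau)\subset\mathbb{R}^3$, and this replacement is a stable, strongly stationary $C^{1,1}$ \emph{minimal} boundary (stable minimal surface) in the annulus. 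Here we use the stable compactness of $C^{1,1}$ $h$-boundaries from Proposition \ref{prop:compactness} with $h_j\to 0$ to guarantee the limit replacement is a genuine stable minimal surface, and that convergence is as varifolds so $\|C\|$ is unaffected. Then the standard argument applies: a stationary integral-or-not cone in $\mathbb{R}^3$ that equals a smooth stable minimal surface in every sub-annulus must be translation-invariant along its singular ray, hence (being a cone in $\mathbb{R}^3$) is a union of half-planes through the origin; stability plus the stationarity of the whole forces the half-planes to line up into full planes, and the Constancy Theorem together with the fact that the replacement is a $C^{1,1}$ \emph{boundary} (so the sheets are ordered and the multiplicity is locally constant) forces $C$ to be a single plane with an integer multiplicity. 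This simultaneously gives integer rectifiability of $V$ by Riesz-type / rectifiability criteria (a varifold all of whose tangents are integer multiples of planes, with bounded first variation, is integer rectifiable — alternatively one invokes Allard's rectifiability theorem once the density is shown to be positive and the tangents are planes).

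The main obstacle I expect is verifying that the replacement property genuinely survives the blow-up: one must check that the replacements $(V^*_k,\Omega^*_k)$, after rescaling by $1/s_k$, have uniformly bounded mass on compact subsets of $\mathbb{R}^3$ (which follows from the monotonicity/density bound for $V$, since $\A^h(V^*_k,\Omega^*_k)=\A^h(V,\Omega)$ forces $\|V^*_k\|$ to agree with $\|V\|$ outside the annulus and be controlled inside by the minimizing property), and that the $\ms F$-limit of the rescaled pairs is again in $\VC(\mathbb{R}^3)$ and is the claimed minimal replacement of $C$. A secondary subtlety is handling the prescribing term: under rescaling $x\mapsto (x-p)/s_k$, the term $\int_\Omega h\,\mathrm{d}\mathcal H^3$ scales like $s_k^3$ against a mass term scaling like $s_k^2$, so $h$ formally tends to $0$ and the limiting objects are honestly stationary/minimal; this is exactly the regime covered by Proposition \ref{prop:compactness} with $\|h_j\|_{C^2}\to 0$, so no new compactness input is needed. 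Once these two points are in place, the conclusion — every tangent varifold is an integer multiple of a plane, hence $V$ is integer rectifiable in $U$ — follows from the now-classical cone-splitting argument in dimension three.
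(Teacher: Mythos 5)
Your argument is correct and is essentially the proof the paper intends: the paper simply cites \cite{Colding-DeLellis03}*{Lemma 6.4} and \cite{SS23}*{Lemma 20.2}, whose content is exactly your scheme of blowing up, passing the replacement property to the tangent cone (with the prescribing term scaling away so the rescaled replacements converge via Proposition \ref{prop:compactness} to stable embedded minimal surfaces in annuli), and then using the cone structure, monotonicity, and embeddedness to force the tangent varifold to be an integer multiple of a plane. No substantive deviation from the paper's route.
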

\begin{proof}
The proof is the same as \cite{Colding-DeLellis03}*{Lemma 6.4} and \cite{SS23}*{Lemma 20.2}.
\end{proof}

The following lemma says that an $\A^h$-stationary, $C^{1}$ $h$-boundary\footnote{Note that we can define $C^1$-boundaries the same way as in Definition \ref{def:c11 boundary}.} that is strongly $\A^h$-stationary and $C^{1,1}$ outside a $C^1$ interface is strongly $\A^h$-stationary and $C^{1,1}$ in the whole region. This result will be used to glue two of strongly $\A^h$-stationary $C^{1,1}$ $h$-boundaries that match in the $C^1$-manner along an interface.

\begin{lemma}\label{lem:remove C1 line}
Given an open subset $W\subset M$, let $(\Sigma, \Omega)$ be an $\A^h$-stationary $C^1$-boundary in $W$. Suppose that $\Sigma$ decomposes into $C^1$-ordered sheets:
\[ \Gamma^1\leq \cdots \leq \Gamma^\ell. \]
Let $\mc T$ be a $C^1$-embedded surface in $W$ with $\partial\mc T\cap W=\emptyset$, which intersects transversely with $\Gamma^1,\cdots,\Gamma^\ell$. Suppose in addition that $\Sigma\res (W\setminus\mc T)$ is $C^{1,1}$ and $(\Sigma, \Omega)$ is strongly $\A^h$-stationary in $W\setminus \mc T$. Then $(\Sigma, \Omega)$ is a strongly $\mc A^h$-stationary $C^{1,1}$ $h$-boundary in $W$.
\end{lemma}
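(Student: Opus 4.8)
The plan is to localize near an arbitrary point $q\in\mc T\cap\Sigma$ and show that the $C^{1,1}$ bound and the strong $\A^h$-stationarity, which are known on $W\setminus\mc T$, extend across $\mc T$. Since $\mc T$ is a $C^1$-embedded surface meeting each sheet $\Gamma^i$ transversely, $\mc T\cap\Sigma$ is (locally) a $C^1$ curve in each sheet, so the ``bad set'' within each $\Gamma^i$ has Hausdorff dimension one and measure zero. First I would establish the $C^{1,1}$ regularity of each sheet $\Gamma^i$ across this curve. On each side of $\mc T$ the sheet is $C^{1,1}$, and globally it is $C^1$; by Lemma~\ref{lem:regular part is PMC} (or its $C^1$ analogue) the regular part of each sheet away from other sheets is a classical $h$-surface, and along the touching set $\A^h$-stationarity of the pair gives the weak equation \eqref{eq:1st variation of Ah2}. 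The key point is that a $C^1$ function on a domain in $\mb R^2$ that is $C^{1,1}$ on the complement of a $C^1$ curve, with a uniform second-derivative (distributional) bound on each side, is in fact $C^{1,1}$ across the curve: one tests the weak formulation of the prescribed-mean-curvature equation (which the whole sheet satisfies as an $\A^h$-stationary $C^1$-boundary) against vector fields supported across $\mc T$, and uses that the $C^1$-matching kills the potential jump term in $\nabla u$ on the interface, leaving an elliptic equation with bounded right-hand side, hence $W^{2,\infty}_{loc}=C^{1,1}_{loc}$ interior regularity. This is the technical heart of the argument. Here one should cite the $C^{1,1}$ estimates of Sarnataro--Stryker (Proposition~\ref{prop:C11 estimates}, i.e. \cite{SS23}*{Corollary 11.2}) to get the quantitative bound, applied on a thin neighborhood of $\mc T$ foliated by the two $C^{1,1}$ pieces.

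Once each sheet is $C^{1,1}$ in all of $W$, I would upgrade to strong $\A^h$-stationarity. Fix a touching point $p\in\mc S(\Sigma)\cap W$; by Definition~\ref{def:C11 almost embedding} there is a neighborhood $W'\subset W$ in which $\Sigma$ decomposes into ordered $C^{1,1}$ sheets and, by Lemma~\ref{lem:choice of outer normal}, the normals alternate, so the top/bottom components $W^1,W^\ell$ of $W'\setminus\Sigma$ are well-defined together with the inclusion/exclusion of $\Omega$. The inequalities \eqref{eq:1-sided stationary1:new} for the top and bottom sheets are, by Lemma~\ref{lem:enhanced 1st variation of Ah}, equivalent to differential inequalities $H_{\Gamma^\ell}\ge h|_{\Gamma^\ell}$ or $\le h|_{\Gamma^\ell}$ (depending on the $\Omega$-case) holding almost everywhere on $\Gamma^\ell$, and similarly for $\Gamma^1$. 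These hold a.e.\ on $\Gamma^\ell\setminus\mc T$ by the hypothesis that $(\Sigma,\Omega)$ is strongly $\A^h$-stationary in $W\setminus\mc T$; since $\mc T\cap\Gamma^\ell$ has $\mc H^2$-measure zero, the inequalities hold a.e.\ on all of $\Gamma^\ell$. Finally, an a.e.\ pointwise differential inequality for a $C^{1,1}$ (hence twice-differentiable a.e., and with $W^{2,\infty}$ coefficients) graph integrates back to the variational inequality \eqref{eq:1-sided stationary1:new} for every admissible $X$, because $\delta\A^h_{\Gamma^\ell,W^\ell}(X)=\int_{\Gamma^\ell}(H_{\Gamma^\ell}-h)\lb{X,\nu}\,\mr d\mc H^2$ and the integrand has a fixed sign by the pointwise inequality together with the sign restriction on $\lb{X,\nu}$. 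This gives strong $\A^h$-stationarity in each such $W'$, hence in $W$.

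The main obstacle is the first step: promoting $C^1\cap C^{1,1}_{loc}(W\setminus\mc T)$ to $C^{1,1}(W)$ for each sheet. The subtlety is that a priori the two $C^{1,1}$ pieces could have mismatched second fundamental forms along $\mc T\cap\Gamma^i$, and one must rule out a genuine corner in the normal direction. The $C^1$-matching alone does not obviously do this for an arbitrary $C^1$-boundary; what saves us is that the sheet actually solves (weakly, across $\mc T$) the prescribed mean curvature equation $H=h$ on its regular part and the touching set carries the a.e.\ identity from Corollary~\ref{cor:mean curvature charaterization}, so the relevant quantity satisfies a uniformly elliptic equation in $W'$ with bounded right-hand side and no distributional measure concentrated on $\mc T$. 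Interior $W^{2,p}$ and then $W^{2,\infty}$ estimates (Calder\'on--Zygmund plus the $L^\infty$ bound on $H$) close the argument; I would isolate this as the one place where a careful local computation is genuinely required.
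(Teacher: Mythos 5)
Your proposal follows essentially the same two-step route as the paper: first show each individual sheet $\Gamma^i$ satisfies a weak elliptic equation with bounded right-hand side across $\mc T$ because the $C^1$-matching cancels the interface (co-normal) contributions, then observe that $\mc T\cap\Gamma^i$ is $\mc H^2$-null so the one-sided inequalities extend. The paper implements step one by writing the first variation of each sheet as a sum over the two halves and cancelling the exterior co-normals $\bm\eta^i_1=-\bm\eta^i_2$, then invoking Allard to get $C^{1,\alpha}$ and Proposition \ref{prop:C11 regularity} for $C^{1,1}$; it implements step two by a cutoff-function argument rather than your a.e.\ pointwise mean-curvature inequalities, but both are legitimate once the sheet is known to be $C^{1,1}$ so that the first variation carries no singular part on $\mc T$.

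One correction: your closing claim that ``interior $W^{2,p}$ and then $W^{2,\infty}$ estimates (Calder\'on--Zygmund plus the $L^\infty$ bound on $H$) close the argument'' is wrong as stated. An $L^\infty$ right-hand side gives $W^{2,p}$ for all finite $p$, hence $C^{1,\alpha}$ for all $\alpha<1$, but \emph{not} $W^{2,\infty}=C^{1,1}$; Calder\'on--Zygmund fails at $p=\infty$. The $C^{1,1}$ conclusion genuinely requires the multilayer/obstacle-type structure of the ordered system, i.e.\ Propositions \ref{prop:C11 regularity}--\ref{prop:C11 estimates} from \cite{SS23}, which you do cite elsewhere — so the proof is salvageable by deleting the Calder\'on--Zygmund sentence and routing the final regularity upgrade through those results, exactly as the paper does. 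Relatedly, your standalone claim that $C^1$ globally plus $C^{1,1}$ on each side implies $C^{1,1}$ across $\mc T$ presupposes a uniform second-derivative bound up to the interface, which is not part of the hypothesis; the correct order is first bounded first variation, then $C^{1,\alpha}$, then the multilayer $C^{1,1}$ estimate.
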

\begin{proof}
We first show that each sheet has bounded first variation in $W$, and hence has improved regularity. Let $\gamma^i := \Gamma^i \cap \mc T$, which is a $C^1$ curve by transversality for each $i$. Denote the two components of $\Gamma^i\setminus \gamma^i$ as $\Gamma^i_1$ and $\Gamma^i_2$ and the exterior unit co-normal along $\gamma^1$ by $\bm\eta^i_1$ and $\bm\eta^i_2$ respectively. Since $\Sigma^i$ is $C^1$, we know that
\[ \bm\eta^i_1 = -\bm\eta^i_2 \quad \text{along } \gamma^i. \]
Since $(\Sigma, \Omega)$ is $C^{1, 1}$ and strongly $\A^h$-stationary in $W\setminus \mc T$, the generalized mean curvature $H^i$ of $\Gamma^i$ (w.r.t. the unit outer normal $\nu$ given in Lemma \ref{lem:choice of outer normal}) exists $\mc H^2$-a.e. in $\Gamma^i\setminus \gamma^i$, and by Corollary \ref{cor:mean curvature charaterization}, is bounded $|H^i(x)| \leq |h(x)|$ for $\mc H^2$-a.e. $x\in \Gamma^i\setminus \gamma^i$.

For any $X\in \mk X(W)$, and any $1\leq i\leq \ell$, we have
\[
\begin{aligned}
    \int_{\Gamma^i} \dv X\,\mr d\mc H^2 & = \int_{\Gamma^i_1} \dv X \,\mr d\mc H^2 + \int_{\Gamma^i_2} \dv X\,\mr  d\mc H^2 \\
    & = \int_{\Gamma^i_1} H^i \cdot \lb{X, \nu}\,\mr d\mc H^2 + \int_{\gamma^i} \lb{X, \bm\eta^i_1}\,\mr d\mc H^1 + \int_{\Gamma^i_2} H^i \cdot \lb{X, \nu} \,\mr d\mc H^2 + \int_{\gamma^i} \lb{X,\bm \eta^i_2}\,\mr  d\mc H^1\\
    & = \int_{\Gamma^i} H^i \cdot \lb{X, \nu} \,\mr d\mc H^2.
\end{aligned}
\]
Note that we used the divergence theorem for Lipschitz vector fields on a $C^{1,1}$-surface with $C^1$-boundary. Hence $\Gamma^i$ has bounded first variation, and by the Allard regularity theorem \cite{Al75}, (see also \cite{Si}*{Theorem 24.2}), $\Gamma^i$ is  $C^{1,\alpha}$. Moreover, by Proposition \ref{prop:C11 regularity}, the $C^{1,1}$-regularity for multilayer $\A^h$-stationary boundaries, we know that $\Gamma^i$ is also in $C^{1,1}$. 

\medskip

We next prove strong $\A^h$-stationarity by a cutoff trick. We only need to check the Definition \ref{def:strong one-sided stationarity} near any points $p\in \Sigma\cap \mc T$. Assume without loss of generality that $\Theta^2(\Sigma, p) = \ell$, that is, all the $\ell$ sheets touch together at $p$. We will prove the desired inequality for $\Gamma^1$ and that for $\Gamma^\ell$ follows in a similar manner. Assume that $0\leq \eta_k \leq 1$ is a sequence of cutoff functions, such that $\spt(\eta_k)\subset W\setminus \mc T$ and $\eta_k(x)\to 1$ as $k\to\infty$ for any $x\in W\setminus \Gamma$. Denote by $W^1$ the bottom component of $W\setminus \Sigma$. Given any $X\in \mk X(W)$ supported in a sufficiently small neighborhood of $p$, such that $X$ points toward $W^1$ along $\Gamma^1$, then $X$ is a legitimate test vector field in Definition \ref{def:strong one-sided stationarity}. Since $\Gamma^1$ is a $C^{1,1}$-surface, we have by the divergence theorem,
\[
\int_{\Gamma^1} \dv X  = \int_{\Gamma^1} \dv X^\perp = \lim_{k\to\infty} \int_{\Gamma^1} \eta_k \dv X^\perp = \lim_{k\to\infty} \int_{\Gamma^1} \dv (\eta_k X^{\perp}) = \lim_{k\to\infty} \int_{\Gamma^1} \dv (\eta_k X).
\]
Then we have the following two cases.
\begin{itemize}
    \item If $W^1\subset \Omega$, then we have 
    \[
    \int_{\Gamma^1} \dv X  = \lim_{k\to\infty} \int_{\Gamma^1} \dv (\eta_k X) \geq \lim _{k\to\infty} \int_{\Gamma^1} \eta_k \lb{X, \nu} \cdot h = \int_{\Gamma^1} \lb{X, \nu} \cdot h.
    \]
    \item If $W^1\cap \Omega=\emptyset$, then we have 
    \[
     \int_{\Gamma^1} \dv X  = \lim_{k\to\infty} \int_{\Gamma^1} \dv (\eta_k X)\geq \lim _{k\to\infty} -\int_{\Gamma^1} \eta_k \lb{X, \nu} \cdot h = -\int_{\Gamma^1} \lb{X, \nu} \cdot h.
    \]
\end{itemize}
Here we use the fact that $(\Sigma, \Omega)$ is strongly $\A^h$-stationary in $W\setminus \mc T$ in the ``$\geq$'' above. Therefore, we have checked the requirement in Definition \ref{def:strong one-sided stationarity} for $\Gamma^1$ and finished the proof.
\end{proof}

In the next lemma, we will show that a replacement $(V^*, \Omega^*)$ will glue nicely with the original pair $(V, \Omega)$ along regular part under natural assumptions.

\begin{lemma}[Gluing Lemma]\label{lem:glue}
Suppose $(V,\Omega)\in \mc {VC}(M)$ satisfies the replacement chain property in an open set $U\subset M$. Assume further that $(V, \Omega)$ is a strongly $\A^h$-stationary, $C^{1,1}$ $h$-boundary in an open subset $W \subset U$. Let $B\subset\subset U$ be an open geodesic ball such that 
\begin{enumerate}[label=\arabic*)]
    \item\label{item:partial B mean curvature large} $H_{\partial B}> \|h\|_{L^\infty}$, and 
    \item $\partial B$ intersects with $\spt\|V\|$ transversely in $W$.
\end{enumerate}  
Let $(V^*,\Omega^*)$ be an $\A^h$-replacement of $(V,\Omega)$ in $B$ (which also satisfies the replacement chain property in $U$ by Remark \ref{rem:consequence of replacement chain property}). Then $(V^*, \Omega^*)$ is a strongly $\A^h$-stationary, $C^{1,1}$ $h$-boundary in  $W\cup B$.
\end{lemma}

\begin{proof}
By assumption, $V\res W$ and $V^*\res B$ are induced by $C^{1,1}$-almost embedded surfaces $\Sigma\subset W$ and $\Gamma\subset B$ respectively; $(\Sigma, \Omega)$ and $(\Gamma, \Omega^*)$ form strongly $\A^h$-stationary, $C^{1,1}$ $h$-boundary in $W$ and $B$ respectively. By Remark \ref{rem:consequence of replacement chain property}, both $(V, \Omega)$ and $(V^*, \Omega^*)$ are $\A^h$-stationary and stable in $U$, and Proposition \ref{prop:tangent plane} applies to both $V$ and $V^*$.

\medskip
We divide the proof into the following four steps.

\medskip
{\noindent\em Step 1: $\spt \|V^*\| \cap \partial B \subset \spt \|V\|\cap \partial B$, and $\Sigma\cap \partial B = [\closure(\Gamma)\setminus B] \cap W $}.
\medskip

The first part is a standard application of the Maximum Principle. Suppose on the contrary that $x\in \spt\|V^*\|\cap \partial B$, but $x\notin \spt\|V\|\cap \partial B$. Since $\spt\|V\|=\spt\|V^*\|$ outside $\closure(B)$, this implies that in a sufficiently small neighborhood $\wti{W}$ of $x$, $\spt\|V^*\|$ is contained in $\wti{W}\cap \closure(B)$. The $\A^h$-stationary condition implies that $V^*\lc \wti{W}$ has $\|h\|_{L^\infty}$-bounded first variation in $\wti{W}$, then the Maximum Principle (c.f. \cite{ZZ17}*{Proposition 2.13}, \cite{Whi10}*{Theorem 5}) and the mean curvature assumption \ref{item:partial B mean curvature large} of $\partial B$ imply that $\spt\|V^*\|\cap \wti{W}\cap \partial B = \emptyset$, which contradicts the choice of $x$.

To show the second part, since $\Sigma\cap\partial B = \spt\|V\|\cap W\cap\partial B$ and $\Gamma = \spt\|V^*\|\cap B$, we only need to show $\Sigma\cap \partial B\subset \closure(\spt\|V^*\|\cap B)\setminus B$. Given any $x\in \Sigma\cap \partial B$, we know by Proposition \ref{prop:tangent plane} that each tangent varifold of $V^*$ at $x$ is an integer multiple of a plane $P$. As $V$ and $V^*$ are identical outside $\closure(B)$, $P$ contains half of the tangent plane $T_x\Sigma$, so we must have $P=T_x\Sigma$. By the assumption, $P$ intersects $\partial B$ transversely, so $x$ must be a limit point of $\spt \|V^*\| \cap B$, and hence $x\in \closure(\spt\|V^*\|\cap B)\setminus B$. \hfill$\Box$

\medskip
\noindent\textit{Step 2: Let $x_i\in \spt\|V^*\|\cap U$ be a sequence of points with $x_i\to x \in U$ and $r_i\to 0$. Denote by $\pmb{\mu}_{x, r}: \mb R^L \to \mb R^L$ the dilation map $\pmb{\mu}_{x, r}(y) = \frac{y-x}{r}$. Then the blow up limit $\oli{V} = \lim(\pmb{\mu}_i)_\# V^*$, where $\pmb{\mu}_i$ denotes $\pmb{\mu}_{x_i, r_i}$, is induced by an embedded minimal surface in $T_x M$.} (Note that this conclusion depends only on the replacement chain property. We refer to a similar result for CMC min-max varifold in \cite{ZZ17}*{Lemma 5.10}.)

\medskip
Clearly $\oli V$ must be stationary in $T_xM$. We will prove that the blow up limit has replacement chain property in any fixed bounded open subset $W\subset T_x M$ for the area functional.\footnote{Note that Definition \ref{def:replacement chain} can be straightforwardly adapted to the area functional.} Then the desired regularity result follows from that of min-max varifolds \cite{Colding-DeLellis03}*{Proposition 6.3}.\footnote{Note that our replacement chain property is stronger than the good replacement property \cite{Colding-DeLellis03}*{Definition 6.2}.} 

We start by showing this for a single open subset. Since $\pmb{\mu}_i(M) \to T_x M$ locally uniformly, we can identify $\pmb{\mu}_i(M)$ with $T_x M$ on compact subsets for $i$ large. For any open subset $\mc B\subset W\subset T_xM$, denote $\mc B'_i:=\bm \mu_i^{-1}(\mc B)\subset U$. Then for sufficiently large $i$, there is an $\mc A^h$-replacement $(V_i,\Omega_i)$ of $(V^*,\Omega^*)$ in $\mc B_i'$. Up to a subsequence we have 
\[ \wti{V} = \lim_{i\to\infty} (\pmb{\mu}_i)_\# V_i \quad \text{ in } \mc V(T_x M). \]
We can deduce the following:
\begin{itemize}
    \item $\wti{V}$ and $\oli{V}$ are identical outside $\closure(\mc B)$.
    \item $\A^h(V_i, \Omega_i) = \A^h(V^*, \Omega^*)$ $\Longrightarrow$ $\|\wti{V}\|(W) = \|\oli{V}\|(W)$. 
    \item By the replacement chain property, $(V_i, \Omega_i)$ is $\A^h$-stationary and stable in $U$, hence $\wti{V}$ is stationary and stable in $T_x M$.
    \item Note that $(\pmb{\mu}_i)_\# (V_i \res {\mc B}_i')$ is a strongly $\A^{r_i h}$-stationary, stable $C^{1,1}$ $(r_i h)$-boundary in ${\mc B}$. Also the mass of $(\pmb{\mu}_i)_\# (V_i \res {\mc B}_i')$ is uniformly bounded by the monotonicity formula. Thus Proposition \ref{prop:compactness} implies that $\wti{V}$ is induced by a stable embedded minimal surface in $\mc {B}$.
\end{itemize}
Thus, $\wti{V}$ is a replacement of $\oli{V}$ in $\mc B$ for the area functional.   

When there is a list of open sets $\mc B_1, \cdots,\mc B_a \subset T_x M$, we can take a chain of replacements $(V_{i, k}, \Omega_{i, k})$ successively in $(\pmb{\mu}_i)^{-1}(\mc B_k)$. By similar arguments as above, we can show that the weak limits $\{\wti{V}_k := \lim_{i\to\infty} (\pmb{\mu}_i)_\# V_{i, k}\}$ is a chain of replacements for $\oli{V}$. This completes the proof of Step 2. \hfill$\Box$

\medskip

Denote by 
\[ \gamma:=\Sigma\cap \partial B.\]
Then $\gamma$ is a $C^{1,1}$-curve by transversality. Now let us fix an arbitrary point $p\in \gamma$.

\medskip
{\noindent\em  Step 3: Suppose that $x_i\in \Gamma$ and $x_i\to p$. Then 
\[ \lim_{i\to\infty} |\langle \nu_\Gamma (x_i),\nu_\Sigma(p)\rangle |= 1,  \]
where $\nu_\Sigma$ and $\nu_\Gamma$ are respectively the unit normal of $\Sigma$ and $\Gamma$ (which are both $C^{1,1}$ surfaces).
}

\medskip
Take $z_i\in \gamma$ such that 
\[  r_i:=\dist_M(x_i,\gamma)=\dist_M(x_i,z_i).\] 
Note that $V^*$ has a unique tangent plane in a neighborhood of $p$. To check this, by the regularity assumption of $V^*$, we only need to check the tangent plane of $V^*$ at any $x\in \spt\|V^*\|\cap \partial B\subset \Sigma$ near $p$. Indeed, any tangent plane $P$ of $V^*$ at $x$ must contain half of $T_x\Sigma$, and hence $P=T_x\Sigma$. Thus the unit normal vector $\nu^*$ of $V^*$ is well-defined. Clearly, $\nu^*(z_i)=\pm \nu_\Sigma(z_i)$, where the sign depends on the choice of the orientation. Since $x_i\to p$, we have $z_i\to p$ and $\nu^*(z_i)=\pm \nu_\Sigma(z_i)\to \pm\nu_\Sigma(p)=\pm\nu^*(p)$. By Step 2, $(\pmb \mu_{x_i, r_i})_\#V^*$ converges to an embedded minimal surface $Q$ in $T_p M$. Moreover, by the regularity of $\Sigma$, we know that $Q$ contains only several half-planes parallel to $T_p\Sigma$ in a half-space of $T_p M$ (one side of $\lim_{i\to\infty} \pmb \mu_{x_i, r_i}(\partial B)$); (we refer to \cite{ZZ17}*{Claim 3(B) on page 477} for more details of the converging scenario for $\{\pmb \mu_{x_i, r_i}(\Sigma)\}$.) Thus by the half space theorem in \cite{HM90}*{Theorem 1}, $Q$ consists of a union of parallel planes counted with integer multiplicity. Moreover, since $\Gamma$ is a stable $C^{1,1}$ $h$-boundary in $B$, the convergence $(\pmb \mu_{x_i, r_i})_\#V^* \to Q$ is $C^{1,\alpha}$ in $B_{1/2}(0)$. Thus $\nu_{\Gamma}(x_i)$ converges to $\nu_Q(=\nu_\Sigma(p))$, which is the unit normal of $Q$. Hence we have finished Step 3. \hfill$\Box$

\medskip
{\noindent\em Step 4: Graphical decomposition of $\Gamma$ around $p$.}

\medskip 
Take a geodesic ball $B_r(p)\subset\subset W$. We can assume that the conclusion in Step 3 holds for any $q\in \gamma \cap B_r(p)$. We will show that $V^*$ has $C^1$-graphical decomposition in a smaller neighborhood of $p$. By the $C^{1,1}$-regularity of $\Sigma$ and possibly shrinking $r$, we may assume that $\Sigma\res B_{10r}(p)$ has an ordered decomposition
\[ \Sigma^1\leq \cdots\leq \Sigma^\ell.\]

We start by introducing a family of cylindrical neighborhoods of $p$. We can identify $B_{10 r}(p)$ with the corresponding ball in $T_pM$. Denote by $P=T_p\Sigma$ the tangent plane, $\bm\pi$ the projection to $P$, and $\mc B_r(p):= B_r (p)\cap P$. Given $s>0$, denote 
\[  \mc K_{r,s}(p) := \{x\in \bm \pi^{-1}(\mc B_r(p)); \dist_M(x,P)\leq sr\}.  \]
Fix a constant $\delta>0$ small enough. Since $\Sigma$ is $C^{1,1}$ near $p$, by taking small enough $r$, $\Sigma$ will be sufficiently flat in the sense that: $ \Sigma\cap \mc K_{r,3\delta}(p) = \Sigma \cap \mc K_{r,\delta}(p)$. Since the tangent varifold of $V^*$ at $p$ is the same as $T_p\Sigma$, we can also assume that 
\[ \spt\|V^*\| \cap \mc K_{r,3\delta}(p) = \spt \|V^*\| \cap \mc K_{r,\delta}(p).\]  
By the argument in Step 3, we have for each $x\in \spt \|V^*\|\cap \mc K_{r,\delta}(p)$, the tangent plane of $\|V^*\|$ at $x$ is unique and the unit normal $\nu^*$ satisfies 
\begin{equation}\label{eq:close normal}
|\langle \nu^*(x),\nu(p)\rangle|>1-\epsilon.
\end{equation}
Here $\epsilon\to 0$ as $r\to 0$ by Step 3.

Next we use the argument in \cite{WZh22}*{Theorem 3.2} (see also \cite{Lin}*{Lemma 2.1}) to construct the desired graphical decomposition. We claim that $\bm\pi$ maps $\spt \|V^*\|\cap \mc K_{\delta,r}(p)$ onto $\mc B_{r}(p)$. Suppose not, then there exist $q\in \mc B_r(p)$, $t>0$ and $y\in \mc K_{r,\delta}(p)\cap \spt\|V^*\|$ such that 
\[ \bm\pi^{-1}\big(\mc B_t(q)\big)\cap \mc K_{r,\delta}(p)\cap \spt \|V^*\|=\emptyset, \quad \text{ and } \bm \pi(y)\in \partial \mc B_t(q).
   \]
Thus, the tangent plane of $V^*$ at $y$ must be parallel to that of the vertical cylinder $\bm\pi^{-1}(\partial\mc B_t(q))$, which contradicts \eqref{eq:close normal}.

Next we define the graph functions over $\mc B_r(p)$ inductively. Let $d^s$ be the signed distance function to $\mc B_r(p)$. Define 
\[
u_1(x):=\inf\{d^s(z):\, z\in \bm\pi^{-1}(\{x\})\cap \spt\|V^*\|\}.
\]
Then by the previous paragraph, $u_1$ is well-defined for all $x\in \mc B_r(p)$. Moreover, by Step 1 and 3, $u_1$ is a $C^1$-function. Denote by $\Gamma^1$ the graph of $u_1$. Then $\wti V^*:=V^*\res \mc K_{r,\delta}(p) - [\Gamma^1]$ is still an integer rectifiable varifold so that the tangent varifold of $y\in \spt \|\wti V^*\|$ is a unique plane which satisfies \eqref{eq:close normal}. Then we define $u_j$ and $V^*(j)$ inductively as follows:
\begin{gather*}
V^*(j):=V^*\res \mc K_{r,\delta}(p)-\sum_{i\leq j}[\Gamma^i], \quad \text{ where } \Gamma^i:=[\mr{graph}\, u_i];\\
 u_{j+1}(x):=\inf\left\{ d^s(z):\, z\in\bm \pi^{-1}(\{x\})\cap \spt \|V^*(j)\|\right\}.
\end{gather*}
By the same argument for $u_1$, one can prove that if $\spt\|V^*(j)\|$ is non-empty, then $u_{j+1}$ is well-defined for all $x\in \mc B_r(p)$. By the decomposition of $\Sigma$ in $B_{10r}(p)$, we conclude that $V^*$ consists of exactly $\ell$ number of $C^1$-graphs in $\mc K_{r, \delta}(p)$:
\[ \Gamma^1\leq \cdots\leq \Gamma^\ell. \]
This finishes Step 4. \hfill $\Box$

\medskip
So far, we have proved that $V^*$ is induced by a union of ordered $C^1$-surfaces near $p$, and hence $(V^*, \Omega^*)\res B_r(p)$ is an $\A^h$-stationary $C^1$-boundary. Moreover, by assumption, $(V^*, \Omega^*)\res (B_r(p)\setminus \partial B)$ is $C^{1,1}$ and strongly $\mc A^h$-stationary. Thus by Lemma \ref{lem:remove C1 line}, $(V^*, \Omega^*)\res B_r(p)$ is $C^{1,1}$ and strongly $\mc A^h$-stationary. Since the strong $\A^h$-stationarity is a local notion, this implies that $(V^*, \Omega^*)\res (W\cup B)$ is $C^{1,1}$ and strongly $\A^h$-stationary. This completes the proof of Lemma \ref{lem:glue}.
\end{proof}

Now we are ready to prove the first main regularity result. We use the existence of chains of replacements in a totally new way, as compared with Pitts \cite{Pi}*{Chap. 7}. 

\begin{theorem}[First Regularity]\label{thm:1st reg}
Given an open set $U\subset M$, let $(V, \Omega)\in \VC(M)$ satisfy the replacement chain property in $U$ (see Definition \ref{def:replacement chain}). Then $(V, \Omega)$ is induced by a strongly $\A^h$-stationary and stable $C^{1, 1}$ $h$-boundary in $U$. 
\end{theorem}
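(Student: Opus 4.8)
The strategy is to localize the regularity question at each point $p\in \spt\|V\|\cap U$ and produce, via a chain of $\A^h$-replacements over small balls, a replacement pair $(V^*,\Omega^*)$ that on the one hand is a strongly $\A^h$-stationary, stable $C^{1,1}$ $h$-boundary in a neighborhood of $p$ (by Gluing Lemma \ref{lem:glue} applied iteratively), and on the other hand has $\spt\|V^*\|\subset\spt\|V\|$ and the same mass as $V$ in an open set, forcing $V^*=V$ there. First I would use the weak good replacement property (which follows from the replacement chain property by Remark \ref{rem:consequence of replacement chain property}) together with Proposition \ref{prop:tangent plane} to know $V$ is integer rectifiable in $U$ with planar tangent varifolds everywhere. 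Then I would fix $p$ and a small geodesic ball $B_\rho(p)\subset\subset U$, choosing $\rho$ small enough that: (a) for every $q$ in a neighborhood of $p$ the volume ratio $\Theta(\|V\|, q, s)$ is close to $\Theta^2(\|V\|,p)=:m$ for some fixed $s\gg\rho$ (possible by upper semicontinuity of density and the monotonicity formula, since $V$ has bounded first variation by Lemma \ref{lem:Ah stationary pair in VC has bounded 1st variation}); and (b) one can find a family of geodesic spheres $\partial B_{r}(x_i)$ with $H_{\partial B_r}>\|h\|_{L^\infty}$ meeting $\spt\|V\|$ transversely, so that Gluing Lemma \ref{lem:glue} is applicable.

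Next I would cover $\closure(B_\rho(p))\cap\spt\|V\|$ by finitely many such small balls $B_1,\dots,B_k$ and take a chain of replacements $(V,\Omega)=(V_0,\Omega_0),(V_1,\Omega_1),\dots,(V_k,\Omega_k)$, with $(V_j,\Omega_j)$ an $\A^h$-replacement in $B_j$; each $(V_j,\Omega_j)$ is $\A^h$-stationary and stable in $U$ and still has the replacement chain property. Since each replacement is a strongly $\A^h$-stationary $C^{1,1}$ $h$-boundary in its own ball, repeated application of Gluing Lemma \ref{lem:glue} shows $(V_k,\Omega_k)$ is a strongly $\A^h$-stationary, stable $C^{1,1}$ $h$-boundary on $\bigcup_j B_j\supset \closure(B_\rho(p))\cap\spt\|V\|$ (away from $\spt\|V\|$ the pair is trivially smooth). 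Then I would let the radii of the covering balls go to zero, take the corresponding diagonal limit of these $C^{1,1}$ replacements, and invoke the compactness Proposition \ref{prop:compactness} (using the uniform mass bound from the monotonicity formula and the fact that mass is preserved under replacement) to obtain a limiting pair $(V^*,\Omega^*)$ that is $C^{1,1}$, strongly $\A^h$-stationary and stable in a neighborhood $U_0$ of $p$, with $\spt\|V^*\|\subset\spt\|V\|$ and $\|V^*\|(U') = \|V\|(U')$ for every open $U'\supset\closure(U_0)$.

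Finally I would argue $V^*=V$ near $p$. Since $V^*$ is an integral varifold and $\spt\|V^*\|\subset\spt\|V\|$, the monotonicity formula at scale $s$ together with the equality of masses and condition (a) forces $\Theta^2(\|V^*\|,q)\le m$ for every $q\in\closure(U_0)\cap\spt\|V\|$; combined with $\|V^*\|=\|V\|$ and lower bounds on $\Theta^2(\|V\|,\cdot)$ this pins down $V^*=V$ in $U_0$, hence $(V,\Omega)$ itself is a strongly $\A^h$-stationary, stable $C^{1,1}$ $h$-boundary near $p$. As $p\in\spt\|V\|\cap U$ was arbitrary (and outside $\spt\|V\|$ there is nothing to prove), this gives the conclusion on all of $U$. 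The main obstacle I expect is carrying out the iterated gluing cleanly as the covering is refined: one must ensure the transversality hypothesis (2) of Lemma \ref{lem:glue} holds for enough spheres (a Sard-type/genericity argument in the radius) and that the $C^{1,1}$ bounds are uniform along the chain so that Proposition \ref{prop:compactness} applies to the diagonal sequence; this is exactly where the strength of the replacement chain property — that all intermediate pairs remain stationary and stable in the whole $U$ — is essential, replacing the unique continuation argument used in Pitts's original scheme.
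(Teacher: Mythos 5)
Your skeleton — cover by small balls, run a chain of $\A^h$-replacements, glue with Lemma \ref{lem:glue}, let the radii shrink, pass to a limit $(V^*,\Omega^*)$ via Proposition \ref{prop:compactness}, and identify $V^*=V$ by a monotonicity/density comparison — is the right one, but you are missing the organizing device that makes the last step (and the first gluing) actually work: the paper argues by \emph{induction on the density} $\Theta^2(\|V\|,\cdot)\in\mb N$, stratifying $\spt\|V\|\cap U$ into $\mc S(V,m)$ and covering by balls \emph{only the top stratum} $\mc S(V,m)\cap\closure(U_0)$ at each stage, not all of $\closure(B_\rho(p))\cap\spt\|V\|$ as you propose.

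The induction matters in two places. First, Lemma \ref{lem:glue} needs a pre-existing open set $W$ on which the pair is already a strongly $\A^h$-stationary $C^{1,1}$ $h$-boundary and against which $\partial B$ is transverse; in the paper $W=U_0\setminus\mc S(V,m)$, which is regular by the inductive hypothesis, so every replacement ball glues onto a fixed regular background. Second, and more seriously, your final identification breaks down when $\Theta^2(\|V\|,\cdot)$ is not constant near $p$: since your union of balls shrinks to all of $\spt\|V\|\cap\closure(U_0)$, the only information you retain is $\|V^*\|(\closure(U_0))=\|V\|(\closure(U_0))$ together with the \emph{uniform} bound $\Theta^2(\|V^*\|,q)\le m$ coming from the volume-ratio estimate at the maximal density $m$. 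Two integral varifolds with the same support, equal total mass, and densities bounded by $m$ need not coincide (e.g. density $2$ everywhere versus density $1$ on half and $3$ on the other half), so you cannot conclude $V^*=V$. In the paper the covered set is exactly $\mc S(V,m)$, where $\Theta^2(\|V\|,\cdot)\equiv m$; there the inequality $\Theta^2(\|V^*\|,\cdot)\le m=\Theta^2(\|V\|,\cdot)$ holds pointwise a.e., and combined with $V^*=V$ off $\mc S(V,m)\cap\closure(U_0)$ and the mass equality it forces $\Theta^2(\|V^*\|,\cdot)=m$ a.e. and hence $V^*=V$. You should restructure your argument as an induction on $m$, with the base case $\mc S(V,1)$ handled by Allard regularity and Lemma \ref{lem:regular part is PMC}.
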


\begin{proof}
By the replacement chain property, we know that $(V, \Omega)$ is $\A^h$-stationary and stable in $U$, so we only need to prove the $C^{1,1}$-regularity and strong $\A^h$-stationarity.

By Proposition \ref{prop:tangent plane}, we know that $\Theta^2(\|V\|, x)\in \mb N$ for any $x\in \spt\|V\|\cap U$. We will prove the desired regularity by doing induction on $\Theta^2(\|V\|, x)$.  Fix an integer $m\in \mb N$. Define
\[\mc S(V, m) = \left\{x\in \spt\|V\|\cap U: \Theta^2(\|V\|, x) =m\right \},\]
\[ \mc S(V, \leq m) =  \cup_{i=1}^m \mc S(V, i), \] 
and
\[ \mc S(V, > m) =  \spt\|V\|\cap U \setminus \mc S(V, \leq m). \]
By the upper semi-continuity of the density function $\Theta^2(\|V\|, \cdot)$ (which holds as $V$ has uniformly bounded first variation via Lemma \ref{lem:Ah stationary pair in VC has bounded 1st variation}), we know that for each $m\in \mb N$,
\[\text{$\mc S(V, \leq m)/\mc S(V, >m)$ is a relatively open/closed subset of $\spt\|V\|\cap U$.}\] 

By the Allard regularity, standard elliptic regularity, and Lemma \ref{lem:regular part is PMC}, we know that $\mc S(V, 1)$ constitutes a smoothly embedded surface $\Sigma_1$, whose mean curvature w.r.t. the outer normal of $\Omega$ is prescribed by $h$.

Suppose by induction that $V\res \mc S(V, \leq m-1)$ is induced by a $C^{1,1}$-almost embedded surface $\Sigma_{m-1}$, and $(\Sigma_{m-1}, \Omega)$ is a strongly $\mc A^h$-stationary, stable, $h$-boundary in  $U\setminus \mc S(V, >m-1)$ for some $m\geq 2$. We will prove the same regularity for $V\lc \mc S(V, \leq m)$ so as to finish the induction.

\medskip

Now we fix $q\in \mc S(V,m)$ and a sufficiently small constant $\delta>0$. By  the monotonicity formula and the fact that $\mc S(V, \leq m)$ is relatively open in $\spt\|V\|$, we can find 
\begin{itemize}
    \item $s_0>0$, and
    \item a geodesic ball with sufficiently small radius $U_0\subset U$ of $q$ with $\spt\|V\|\cap \closure(U_0) \subset \mc S(V, \leq m)$ (note that this implies $U_0\setminus \mc S(V, m)$ is an open subset of $U_0$),
\end{itemize}  
such that for all $x\in \spt\|V\| \cap \closure(U_0)$ and $s\leq s_0$,
\begin{equation}\label{eq:ratio bound}
\frac{\|V\|(B_s(x))}{\pi s^2}\leq m+\delta. 
\end{equation}
Without loss of generality, we may assume that 
\[ \closure(U_0)\subset \cap_{x\in U_0} B_{s_0/2}(x). \]

Now fix $\bm r\in (0,s_0/4)$ small enough (we will let $\bm r\to 0$ in the end), and let $B_1,\cdots ,B_N$ be finitely many geodesic balls centered on $\mc S(V, m)\cap \closure(U_0)$ with radius $\bm r$ in $M$, so that 
\[  \mc S(V,m)\cap \closure(U_0)\subset \cup_{i=1}^NB_i, \quad \text{and} \quad B_i \cap \big(\mc S(V,m)\cap \closure(U_0)\big) \neq \emptyset.\]
We can also assume that the mean curvature $H_{\partial B_i}>\|h\|_{L^\infty(M)}$.

By slightly enlarging $B_1$ to $\wti B_1$, we may assume that $\partial \wti{B}_1$ is transverse to $\Sigma_{m-1}$ (if $\partial\wti{B}_1\cap \Sigma_{m-1}\neq \emptyset$).  Let $(V_1,\Omega_1)$ be an $\A^h$-replacement of $(V, \Omega)$ in $\wti{B}_1$. Then $(V_1, \Omega_1)\res \wti{B}_1$ is a strongly $\mc A^h$-stationary, $C^{1,1}$ $h$-boundary in $\wti{B}_1$. Furthermore, by Remark \ref{rem:consequence of replacement chain property}, $(V_1, \Omega_1)$ also satisfies the replacement chain property in $U$, so $(V_1, \Omega_1)$ is $\A^h$-stationary and stable in $U$, and hence $V_1$ is integer rectifiable in $U$ by Proposition \ref{prop:tangent plane}. Moreover, by Lemma \ref{lem:glue}, it is also $C^{1,1}$ and strongly $\mc A^h$-stationary in $\wti{B}_1\cup [U_0\setminus \mc S(V, m)]$.

\medskip
Next we construct a sequence $(V_2,\Omega_2),\cdots, (V_N,\Omega_N)\in \VC(M)$ and a sequence of balls $\wti{B}_2, \cdots, \wti B_N$ inductively, so that for $j=2,\cdots,N$,
\begin{itemize}
    \item $\wti B_j\supset B_j$ is a ball with radius slightly larger than $\bm r$; 
    \item $(V_j,\Omega_j)$ is an $\A^h$-replacement of $(V_{j-1},\Omega_{j-1})$ in $\wti B_j$;
    \item $(V_j, \Omega_j)$ satisfies the replacement chain property in $U$ and hence is $\A^h$-stationary, stable and integer rectifiable in $U$; 
    \item $(V_{j},\Omega_{j})$ is $C^{1,1}$ and strongly $\mc A^h$-stationary in 
    \[  \wti{B}_1 \cup\cdots\cup \wti B_{j}\cup [U_0\setminus \mc S(V,m)].  \]
\end{itemize}   
In fact, assume that we have finished the construction at the $(j-1)$-th step, we can take a slightly larger ball $\wti B_j\supset B_j$ so that $\partial \wti B_j$ is transverse to $\spt \|V_{j-1}\|$ in $\wti B_1\cup\cdots\cup \wti B_{j-1}\cup [U_0\setminus \mc S(V,m)]$. By the replacement chain property of $(V_{j-1}, \Omega_{j-1})$, we can always find an $\A^h$-replacement $(V_j,\Omega_j)$ of $(V_{j-1},\Omega_{j-1})$ in $\wti B_j$. Then by Remark \ref{rem:consequence of replacement chain property} and Lemma \ref{lem:glue}, $(V_j, \Omega_j)$ is $C^{1,1}$ and strongly $\mc A^h$-stationary in 
$\wti B_1\cup \cdots \cup \wti B_{j} \cup [ U_0\setminus  \mc S(V,m)]$, so that we can continue the induction process. 

Since $U_0\subset \wti B_1 \cup \cdots\cup \wti B_N \cup [U_0\setminus \mc S(V, m)]$, we know that $(V_N,\Omega_N)$ is a strongly $\mc A^h$-stationary, stable, $C^{1,1}$ $h$-boundary in $U_0$, and 
\begin{equation}\label{eq:Ah equals}
\mc A^h(V,\Omega)=\mc A^h(V_1,\Omega_1)=\cdots=\mc A^h(V_N,\Omega_N).
\end{equation}

\medskip
So far, for each $\bm r>0$, we can cover the $\mc S(V,m)\cap \mr{Clos}(U_0)$ by finitely many small balls with radius $\leq 2\bm r$ and then use the replacement chain property to get a pair in $\VC(M)$, denoted by $(V_{\bm r},\Omega_{\bm r})$, which is a strongly $\mc A^h$-stationary, stable $C^{1,1}$ $h$-boundary in $U_0$. Note that $(V_{\bm r}, \Omega_{\bm r})$ is still $\A^h$-stationary and integer rectifiable in $U$, and
\[ (V_{\bm r} ,\Omega_{\bm r}) = (V, \Omega) \text{ outside } \closure(\ms B_{\bm r}), \text{ where } \ms B_{\bm r} =\cup_{j=1}^N \wti B_j. \]
By the standard compactness theory for integral varifolds with uniformly bounded first variation and for sets of finite perimeter, we have (up to a subsequence) 
\[ (V_{\bm r}, \Omega_{\bm r}) \to (V^*, \Omega^*) \text{ under the $\ms F$-metric}, \]
where $V^*$ is integer rectifiable in $U$. Moreover, by Proposition \ref{prop:compactness}, $(V_{\bm r}, \Omega_{\bm r})\res U_0$ converges to a strongly $\mc A^h$-stationary, stable, $C^{1,1}$ $h$-boundary $\Sigma^*\subset U_0$ (associated with $\Omega^*$) in the sense of $C^{1,\alpha}_{loc}$, that is, $(V^*, \Omega^*)\res U_0 = (\Sigma^*, \Omega^*\res U_0)$. Observe that $\mc S(V, m)\cap \closure(U_0)$ is a compact and $2$-rectifiable subset of $M$,
and for each $\bm r>0$ and for any $x\in \wti B_j$, we have $\dist_M(x,\mc S(V,m))\leq 2\bm r$. It follows that  
\[  \ms B_{\bm r} \to \mc S(V,m) \cap \closure(U_0), \quad \text{ as } \bm r\to 0. \]
Hence we have that 
\begin{equation}\label{eq:spt}
\spt \|V^*\|\subset \spt \|V\|, \quad \text{and}\quad \Omega^*=\lim_{r\to 0}\Omega_r=\Omega.
\end{equation}
Moreover,
\begin{equation}\label{eq:in U0}
V^*\res \big(U\setminus [\mc S(V,m) \cap \closure(U_0)] \big)=V\res \big(U\setminus [\mc S(V,m) \cap \closure(U_0)] \big).  
\end{equation}
Note that \eqref{eq:Ah equals} gives
\begin{equation*}
    \mc A^h(V^*,\Omega^*)=\mc A^h(V,\Omega).
\end{equation*}  
Together with \eqref{eq:spt} and \eqref{eq:in U0}, we obtain
\begin{equation}\label{eq:bar U0}
    \|V^*\|\big(\mr{Clos}(U_0)\big)=\|V\|\big(\mr {Clos}(U_0)\big). 
\end{equation}
Since $\closure(U_0)\subset B_{s_0}(y)$ for all $y\in \mc S(V,m)\cap \closure(U_0)$, by \eqref{eq:ratio bound}, \eqref{eq:in U0} and \eqref{eq:bar U0}, we have
\[
\frac{\|V^*\|(B_{s_0}(y))}{\pi s_0^2}=\frac{\|V\|(B_{s_0}(y))}{\pi s_0^2}<m+\delta.
\]
By the monotonicity formula and the fact that $\Theta^2(\|V^*\|, y)\in \mb N$ for $\mc H^2$-a.e. $y\in \spt\|V^*\|\cap U$, we have that  $\Theta^2(\|V^*\|, y)\leq m$ for $\mc H^2$-a.e. $y\in \mc S(V, m)\cap \closure(U_0)$. Recall that for $\mc H^2$-a.e. $y\in \mc S(V, m)\cap \closure(U_0)$, $\Theta^2(\|V\|, y)=m$.
Thus we obtain 
\[ \Theta^2(\|V^*\|, y)\leq \Theta^2(\|V\|, y), \quad \text{for $\mc H^2$-a.e. } y \in \mc S(V, m)\cap \mr{Clos}(U_0). \]
Together with \eqref{eq:spt}, \eqref{eq:in U0} and \eqref{eq:bar U0}, we conclude that $V^*=V$. This gives the desired regularity of $V$ near $q$. By the arbitrariness of $q$, we proved the desired regularity of $\mc S(V,\leq m)$. This completes the induction and the theorem is proved.
\end{proof}

\subsection{Construction of replacements}\label{ss:construction of replacements}
Given an embedded separating surface $(\Sigma,\Omega)\in \ms E$, an open set $U\subset M$ and $\delta>0$, set 
\[ \mk{Is}^h_\delta(U)=\{ \psi\in \mk{Is}(U); \ \mc A^h\big( \psi(t,\Sigma, \Omega)\big)\leq \mc A^h(\Sigma,\Omega)+\delta, \ \forall\, t\in[0,1]\}  \]
and 
\[  m_\delta:=\inf\{ \mc A^h\big(\psi(1,\Sigma,\Omega)\big); \psi \in \mk{Is}^h_\delta(U)\}.  \]
We say that a sequence $\{(\Sigma_k,\Omega_k)\}_{k\in \mb N}\subset \ms E$ is {\em minimizing in Problem $(\Sigma,\Omega, \mk{Is}_\delta^h(U))$} if there exists a sequence $\{\psi_k\}_{k\in\mb N}\in \mk{Is}_\delta^h(U)$ with 
\[ (\Sigma_k,\Omega_k)= \psi_k(1,\Sigma,\Omega), \quad \text{ and }\quad \mc A^h(\Sigma,\Omega)\geq \mc A^h(\Sigma_k,\Omega_k)\to m_\delta, \text{ as } k\to\infty.   \]

We will use the following lemma, which says that any isotopy in a sufficiently small ball that does not increase $\A^h(\Sigma_k, \Omega_k)$ can be realized by an isotopy in $\mk{Is}^h_\delta(U)$. 
The proof will be given in Appendix \ref{appen:proof of minimizing lemma in small ball}.

\begin{lemma}\label{lem:minimizing in small ball}
Suppose that $\{(\Sigma_k,\Omega_k)\}_{k\in \mb N}$ is minimizing in Problem $(\Sigma,\Omega,\mk{Is}^h_\delta(U))$. Then given $U'\subset \subset U$, there exists $\rho_0>0$ such that for $k$ sufficiently large, the following holds: for any $B_{2\rho}(x)\subset U'$ ($\rho<\rho_0$) and $\varphi\in\mk{Is}(B_{\rho} (x))$ with $\mc A^h (\varphi(1,\Sigma_k,\Omega_k )) \leq \mc A^h (\Sigma_k,\Omega_k )$, there
exists an isotopy $\Phi\in \mk{Is}(B_{2\rho}(x))$ such that
\begin{gather*}
\Phi(1,\cdot)=\varphi(1,\cdot),\  \text{ and } \ \mc A^h(\Phi(t,\Sigma_k,\Omega_k)\leq \mc A^h(\Sigma_k,\Omega_k)+\delta, \ \text{for all }\, 0\leq t\leq 1.
\end{gather*}
Moreover, the constant $\rho_0$ depends on $\mc H^2(\Sigma)$, $\|h\|_{L^\infty(M)}$, $U'$, $M$ and $\delta$, but does not depend on the minimizing sequence $\{(\Sigma_k,\Omega_k)\}$.
\end{lemma}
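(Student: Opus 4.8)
The plan is to realize $\Phi$ as a concatenation of three isotopies of $M$, all supported in $B_{2\rho}(x)$. First a \emph{shrinking} isotopy $\{g_s\}_{s\in[0,1]}$ with $g_0=\mathrm{id}$, built from radial diffeomorphisms so that $g:=g_1$ rescales $B_\rho(x)$ linearly onto a tiny concentric ball $B_\sigma(x)$ and equals the identity near $\partial B_{2\rho}(x)$; this carries $(\Sigma_k,\Omega_k)$ to $(g(\Sigma_k),g(\Omega_k))$. Then the \emph{conjugated} isotopy $t\mapsto g\circ\varphi(t)\circ g^{-1}$, which is supported in $g(B_\rho(x))=B_\sigma(x)$ and carries $(g(\Sigma_k),g(\Omega_k))$ to $\big(g(\varphi(1,\Sigma_k)),g(\varphi(1,\Omega_k))\big)$. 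Finally the reverse shrink $s\mapsto g_{1-s}\circ g^{-1}$, which carries this to $(\varphi(1,\Sigma_k),\varphi(1,\Omega_k))$. After the usual smoothing at the two joints, $\Phi\in\mk{Is}(B_{2\rho}(x))$ and $\Phi(1,\cdot)=g^{-1}\circ\big(g\circ\varphi(1,\cdot)\circ g^{-1}\big)\circ g=\varphi(1,\cdot)$, so it remains only to control $\mc A^h$ along $\Phi$. This is the scheme of \cite{Colding-DeLellis03} adapted to the $\mc A^h$-functional, with the $\int h$-term treated as a Lipschitz perturbation: any of the diffeomorphisms used below changes $\int_\Omega h$ by at most $\|h\|_{L^\infty}\Vol(B_{2\rho}(x))\le C\|h\|_{L^\infty}\rho^3$.

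For the middle stage, the part of $g(\varphi(t,\Sigma_k))$ outside $B_\sigma(x)$ equals $g(\Sigma_k\setminus B_\rho(x))$ and is independent of $t$, while the part inside $B_\sigma(x)$ is the $(\sigma/\rho)$-rescaled copy of $\varphi(t,\Sigma_k)\cap B_\rho(x)$, of area at most $(1+O(\rho))(\sigma/\rho)^2\sup_t\mc H^2\big(\varphi(t,\Sigma_k)\cap B_\rho(x)\big)$; since $\varphi$ (hence this finite supremum) is given before $\sigma$ is chosen, taking $\sigma$ small makes the middle stage stay below $\mc A^h(g(\Sigma_k),g(\Omega_k))+\delta/4$. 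For the shrink $\{g_s\}$, and symmetrically for its reverse applied to $(\varphi(1,\Sigma_k),\varphi(1,\Omega_k))$ — whose area is $\le\mc H^2(\Sigma_k)+C\|h\|_{L^\infty}\rho^3$ and whose trace on the annulus $B_{2\rho}(x)\setminus B_\rho(x)$ coincides with that of $\Sigma_k$ because $\varphi$ is supported in $B_\rho(x)$ — the radial construction gives that area is distorted by a factor $1+O(\rho)$ on $B_\rho(x)$ and by at most an absolute factor $C_1=C_1(M)$ on the annulus, so the shrink raises $\mc A^h$ by at most $(C_1-1)\,\mc H^2\big(\Sigma_k\cap(B_{2\rho}(x)\setminus B_\rho(x))\big)+O(\rho)\,\mc H^2(\Sigma_k)+C\|h\|_{L^\infty}\rho^3$.

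Thus the lemma reduces to a uniform density estimate: for $k$ large, $\mc H^2\big(\Sigma_k\cap B_{2\rho}(x)\big)<\eta$ whenever $B_{2\rho}(x)\subset U'$ and $\rho\le\rho_0$, with $\eta$ prescribable. Granting it with $(C_1-1)\eta\le\delta/8$ and with $\rho_0$ small enough that the residual $O(\rho_0)$- and $\rho_0^3$-errors total $\le\delta/8$ and $3\rho_0<\dist(\oli{U'},\partial U)$, each of the three stages stays below $\mc A^h(\Sigma_k,\Omega_k)+\delta$, as required. To prove the density estimate I would first show, by a tightening argument, that up to a subsequence $(\Sigma_k,\Omega_k)$ converges in the $\ms F$-metric to a pair $(V,\Omega)$ that is $\mc A^h$-stationary in $U$: if some $X\in\mk X(U)$ had $\delta\mc A^h_{V,\Omega}(X)<0$, then for $k$ large its flow — which only decreases $\mc A^h$ and hence, prepended by the isotopy realizing $(\Sigma_k,\Omega_k)$, stays in $\mk{Is}^h_\delta(U)$ — would drive $\mc A^h(\Sigma_k,\Omega_k)$ strictly below $m_\delta$, a contradiction. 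By Lemma \ref{lem:Ah stationary pair in VC has bounded 1st variation}, $V$ has $\|h\|_{L^\infty}$-bounded first variation in $U$, so the monotonicity formula gives $\|V\|(B_r(y))\le C\big(M,\|h\|_{L^\infty},\mc H^2(\Sigma)\big)r^2$ uniformly for $y\in\oli{U'}$ and $r$ small; since $|\Sigma_k|\to V$ as varifolds, $\limsup_k\mc H^2(\Sigma_k\cap K)\le\|V\|(K)$ for compact $K$, and covering $\oli{U'}$ by finitely many balls of radius $\rho_0$ upgrades this to the claimed bound for $k$ large, with $\rho_0$ depending only on $\mc H^2(\Sigma),\|h\|_{L^\infty},M,U',\delta$. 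I expect this uniform density estimate — uniform in $x$ and $\rho$, with $\rho_0$ independent of the minimizing sequence — to be the main obstacle; the remaining steps are careful bookkeeping weighing the area distortion of the radial shrink against the fixed $\delta$-budget.
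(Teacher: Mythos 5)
Your proposal is correct and follows essentially the same route as the paper's Appendix B: a three-stage concatenation (radial shrink, conjugated isotopy in the shrunken ball, reverse shrink), with the $h$-volume term absorbed as an $O(\rho^3)$ perturbation and everything reduced to the uniform $C\rho^2$ area bound on small balls, which both arguments derive from the $\A^h$-stationarity of the limit of the minimizing sequence via the monotonicity formula and varifold convergence. The only differences are cosmetic: you are more explicit about the finite covering needed to make the density estimate uniform in $x$, while the paper is more explicit about choosing a transversal radius by slicing so the concatenation can be smoothed.
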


Now we will use the First Regularity Theorem \ref{thm:1st reg} to prove the regularity for  constrained $h$-minimizing problems. The key step is to verify the replacement chain property where we need Lemma \ref{lem:minimizing in small ball}.

\begin{proposition}[Regularity of constrained $h$-minimizer]\label{prop:reg of con min}
Assume that $(\Sigma,\Omega)\in \ms E$ is $(\mc A^h,\epsilon, \delta)$-almost minimizing in $U$. Suppose that $\{(\Sigma_k,\Omega_k)\}$ is minimizing in Problem $(\Sigma,\Omega,\mk{Is}_\delta^h(U))$. Then $(\Sigma_k,\Omega_k)$ converges (subsequentially without relabeling) to some $(\hat V,\hat \Omega)\in \VC(M)$ with
\begin{equation}\label{eq:Ah value comparison after replacement}
\A^h(\Sigma, \Omega) - \epsilon\leq \A^h(\hat{V}, \hat{\Omega}) \leq \A^h(\Sigma, \Omega);    
\end{equation}
and moreover, $(\hat V, \hat\Omega)\res U$ is a strongly $\mc A^h$-stationary and stable $C^{1,1}$ $h$-boundary in $U$.
\end{proposition}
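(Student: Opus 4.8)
The plan is to establish Proposition \ref{prop:reg of con min} by verifying that the limit pair $(\hat V, \hat \Omega)$ satisfies the replacement chain property in $U$, and then invoke the First Regularity Theorem \ref{thm:1st reg}. First I would address the convergence and the mass bounds \eqref{eq:Ah value comparison after replacement}: compactness of $A^L$ under the $\ms F$-metric gives a subsequential limit $(\hat V, \hat\Omega) \in \VC(M)$, the upper bound $\A^h(\hat V, \hat\Omega) \le \A^h(\Sigma,\Omega)$ follows from lower semicontinuity of mass together with $\mc A^h(\Sigma_k,\Omega_k) \le \mc A^h(\Sigma,\Omega)$, and the lower bound $\A^h(\hat V,\hat\Omega) \ge \A^h(\Sigma,\Omega) - \epsilon$ follows from the $(\mc A^h,\epsilon,\delta)$-almost minimizing property of $(\Sigma,\Omega)$ in $U$: if the limit area dropped by more than $\epsilon$ this would (after approximating the limit by an isotoped embedded surface staying within the $\delta$-band, which is possible since the $\Sigma_k$ are already such competitors) contradict the definition of almost minimizing.

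The core of the argument is the replacement chain property. Given balls $B_1, \dots, B_k \subset\subset U$, I would construct the replacements one at a time. For the first ball $B_1$, take a minimizing sequence for the $\A^h$-isotopy problem in $B_1$ among isotopies supported in $B_1$ applied to $(\Sigma_k, \Omega_k)$; by Lemma \ref{lem:minimizing in small ball} (applicable once $B_1$ has radius below the threshold $\rho_0$, which one can arrange by first subdividing $B_1$ into finitely many small balls and replacing successively, exactly as in the proof of Theorem \ref{thm:1st reg}), any such $\A^h$-nonincreasing isotopy supported in $B_1$ can be promoted to one lying in $\mk{Is}^h_\delta(U)$. Hence a diagonal sequence still lies in $\mk{Is}^h_\delta(U)$, its limit $(V_1, \Omega_1)$ agrees with $(\hat V, \hat \Omega)$ outside $\closure(B_1)$, has the same $\A^h$-value (the value cannot drop below $m_\delta = \lim \mc A^h(\Sigma_k,\Omega_k)$, and it is $\le$ that by construction), and by Theorem \ref{thm:interior regularity} applied inside $B_1$ it is a strongly $\A^h$-stationary and stable $C^{1,1}$ $h$-boundary in $B_1$; moreover by Lemma \ref{lem:am implies stationary and stable} (or directly, since the diagonal sequence is still $(\A^h, \epsilon_k', \delta)$-almost minimizing in $U$ after the modification) $(V_1,\Omega_1)$ is $\A^h$-stationary and stable in $U$. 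So $(V_1,\Omega_1)$ is an $\A^h$-replacement of $(\hat V, \hat \Omega)$ in $B_1$. Iterating — each $(V_{j-1}, \Omega_{j-1})$ is again the limit of a minimizing sequence in Problem $(\Sigma, \Omega, \mk{Is}^h_\delta(U))$ and we replace in $B_j$ — produces the chain $(V_1,\Omega_1), \dots, (V_k,\Omega_k)$ with all the required properties, and the consistency of the extended chain (the last clause of Definition \ref{def:replacement chain}) is automatic since we build the chain successively. Therefore $(\hat V, \hat\Omega)$ has the replacement chain property in $U$, and Theorem \ref{thm:1st reg} yields that $(\hat V, \hat\Omega)\res U$ is a strongly $\mc A^h$-stationary and stable $C^{1,1}$ $h$-boundary in $U$.

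I expect the main obstacle to be the careful bookkeeping at the step where we pass from an arbitrary $\A^h$-nonincreasing isotopy supported in a small ball $B_j$ (needed to form the minimizing sequence for the local isotopy problem) to an isotopy that stays within the global $\delta$-band and hence lies in $\mk{Is}^h_\delta(U)$, so that the diagonal sequence remains a legitimate competitor in Problem $(\Sigma,\Omega,\mk{Is}^h_\delta(U))$ and its limit is still governed by the almost-minimizing constraint. This is precisely the role of Lemma \ref{lem:minimizing in small ball}, but one must check that its radius threshold $\rho_0$ is uniform along the minimizing sequence (which the lemma asserts) and handle balls $B_j$ larger than $\rho_0$ by the subdivision trick — covering $B_j$ by finitely many balls of radius $<\rho_0$ and performing a finite sequence of replacements, exactly mirroring the inner construction in Theorem \ref{thm:1st reg}. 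A secondary technical point is verifying that after each replacement the new pair is again realized as a limit of a minimizing sequence for the $\delta$-constrained problem in $U$ (so that the induction can continue), which follows because the replacements are obtained by isotopies in $\mk{Is}^h_\delta(U)$ composed with the original ones, and $\mk{Is}^h_\delta(U)$ is closed under such composition up to the $\delta$-band estimate.
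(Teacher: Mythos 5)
Your overall strategy is the paper's: verify the replacement chain property, construct the replacements as limits of local isotopy-minimizing sequences (Theorem \ref{thm:interior regularity} for the local regularity, Lemma \ref{lem:minimizing in small ball} to promote the local isotopies into $\mk{Is}^h_\delta(U)$ so that the diagonal sequence remains a competitor and the $\A^h$-value is preserved), and then invoke Theorem \ref{thm:1st reg}. The treatment of \eqref{eq:Ah value comparison after replacement} and the iteration producing the chain are essentially correct.

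The gap is in how you handle open sets $B_j$ of radius larger than the threshold $\rho_0$ of Lemma \ref{lem:minimizing in small ball}. You aim to establish the replacement chain property in all of $U$, which by Definition \ref{def:replacement chain} requires a genuine $\A^h$-replacement in an \emph{arbitrary} $B_j\subset\subset U$, and you propose to obtain this by subdividing $B_j$ into balls of radius $<\rho_0$ and replacing successively ``exactly as in the proof of Theorem \ref{thm:1st reg}.'' This does not work: a chain of replacements in small balls covering $B_j$ is not a single replacement in $B_j$ in the sense of Definition \ref{def:pmc replacement}. Either the small balls protrude beyond $\partial B_j$ (so the result is altered outside $\closure(B_j)$), or they stay inside $B_j$ and fail to cover a neighborhood of $\partial B_j$, where you have no regularity for the original pair; and even on the covered region, concluding that the concatenated replacements recover a $C^{1,1}$ surface equal to the original varifold is precisely the content of the $\bm r\to 0$ density-induction argument of Theorem \ref{thm:1st reg} — invoking it here to build the hypothesis of that same theorem is circular. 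The correct fix, and what the paper does, is to localize: fix $p\in U$, set $r_0=\min\{\rho_0,\dist_M(p,\partial U)/4\}$, verify the replacement chain property only in $B_{r_0}(p)$ (where every admissible $B_j\subset\subset B_{r_0}(p)$ is automatically within the reach of Lemma \ref{lem:minimizing in small ball}), apply Theorem \ref{thm:1st reg} in $B_{r_0}(p)$, and conclude regularity in $U$ by the arbitrariness of $p$, since $C^{1,1}$-regularity and strong $\A^h$-stationarity are local properties. With that modification your argument goes through.
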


\begin{proof}
Clearly \eqref{eq:Ah value comparison after replacement} follows from the definition of $(\A^h, \epsilon, \delta)$-almost minimizing property. Since $\{(\Sigma_k,\Omega_k)\}$ is minimizing in Problem $(\Sigma,\Omega,\mk{Is}_\delta^h(U))$, we conclude that $(\hat V,\hat \Omega)$ is $\mc A^h$-stationary and stable in $U$.  The proof is similar to that in Lemma \ref{lem:am implies stationary and stable}.

We next prove the regularity. Fix $p\in U$. Denote by 
\[ r_1:=\dist_M(p,\partial U).\]
Let $\rho_0$ be the constant in Lemma \ref{lem:minimizing in small ball} for the Problem $(\Sigma, \Omega, \mk{Is}_\delta^h(U))$ and $U'=B_{r_1/4}(p)$, and
\[ r_0:=\min\{ \rho_0, r_1/4\}.\]
We will prove that $(\hat V,\hat \Omega)$ satisfies the replacement chain property in $B_{r_0}(p)$. Then by Theorem \ref{thm:1st reg}, $(\hat V, \hat\Omega)\res B_{r_0}(p)$ is a strongly $\mc A^h$-stationary and stable $C^{1,1}$ $h$-boundary. By the arbitrariness of $p$, it follows that $(\hat V, \hat\Omega)\res U$ is a strongly $\mc A^h$-stationary, $C^{1,1}$ $h$-boundary. 

Consider a finite collection of open subsets $B_1, \cdots , B_j\subset\subset B_{r_0}(p)$. Let $\{(\Sigma_{k,\ell},\Omega_{k,\ell})\}_{\ell\in\mb N}$ be {\em a} {\em minimizing} {\em sequence} {\em of} {\em Problem} $(\Sigma_k,\Omega_k,\mk{Is}^h(B_1))$, i.e. there exist $\{\Psi_{k,\ell}\}\subset \mk{Is}(B_1)$ such that 
\[
(\Sigma_{k,\ell},\Omega_{k,\ell})=\Psi_{k,\ell}(1,\Sigma_k,\Omega_k),\quad 
\lim_{\ell\to\infty}\mc A^h(\Sigma_{k,\ell},\Omega_{k,\ell}) = \inf\{\mc A^h(\psi(1,\Sigma_k,\Omega_k);\psi\in\mk{Is}(B_1)\}.
\]
Denote by $(\wti V_k,\wti \Omega_k)$ the limit of $(\Sigma_{k,\ell},\Omega_{k,\ell})$ as $\ell\to \infty$. Then $(\wti V_k, \wti\Omega_k)\res B_1$ is a strongly $\mc A^h$-stationary and stable $C^{1,1}$ $h$-boundary in $B_1$ by Theorem \ref{thm:interior regularity}. Denote by $(\wti V,\wti\Omega)$ the limit of $(\wti V_k,\wti \Omega_k)$ as $k\to \infty$.  We can take sufficiently large $\ell(k)$ so that 
\[  (\Sigma_{k,\ell(k)},\Omega_{k,\ell(k)}) \to (\wti V,\wti\Omega) \quad \text{ as } k\to\infty.  \]
By the definition of $(\Sigma_{k, \ell}, \Omega_{k, \ell})$, we have that $(\wti V,\wti \Omega)=(\hat V,\hat \Omega)$ on $U\setminus \mr{Clos}(B_1)$. Then by the regularity of $(\wti V_k, \wti\Omega_k)\res B_1$ and Proposition \ref{prop:compactness}, $(\wti V, \wti\Omega)\res B_1$ is a strongly $\mc A^h$-stationary and stable $C^{1,1}$ $h$-boundary.

Note that by Lemma \ref{lem:minimizing in small ball}, each $(\Sigma_{k,\ell(k)},\Omega_{k,\ell(k)})$ can be constructed from $(\Sigma_k,\Omega_k)$ via an isotopy $\Phi_k\in \mk{Is}(B_{2r_0}(p))$ satisfying 
\[ \A^h\big(\Phi_k(t, \Sigma_k, \Omega_k)\big) \leq \A^h(\Sigma_k, \Omega_k) + \delta, \text{ for all } 0\leq t\leq 1. \]
Since $\{(\Sigma_k, \Omega_k)\}$ is a minimizing sequence in Problem $(\Sigma, \Omega, \mk{Is}_{\delta}^h(U))$, there exist isotopies $\{\Phi_k'\}\subset \mk{Is}(U)$, such that
\[(\Sigma_k, \Omega_k) = \Phi_k'(1, \Sigma, \Omega),\quad \A^h(\Sigma_k, \Omega_k)\leq \A^h(\Sigma, \Omega) \text{ and}\] 
\[ \A^h\big(\Phi_k'(t, \Sigma, \Omega)\big) \leq \A^h(\Sigma, \Omega) + \delta, \text{ for all } 0\leq t\leq 1. \]
By concatenating $\Phi_k'$ with $\Phi_k$, we know that  $(\Sigma_{k,\ell(k)},\Omega_{k,\ell(k)})$ is obtained from $(\Sigma, \Omega)$ via an isotopy $\wti \Phi_k\in\mk{Is}(U)$ with
\[
\wti\Phi(1,\Sigma,\Omega)=(\Sigma_{k,\ell(k)},\Omega_{k,\ell(k)}),\quad   \mc A^h(\wti \Phi(t,\Sigma,\Omega))\leq \mc A^h(\Sigma,\Omega)+\delta, \text{ for all } 0\leq t\leq 1.
\]
Thus $\{(\Sigma_{k,\ell(k)},\Omega_{k,\ell(k)})\}$ is also minimizing in Problem $(\Sigma,\Omega,\mk{Is}_\delta^h(U))$, which implies that $\mc A^h(\hat V,\hat \Omega)=\mc A^h(\wti V,\wti\Omega)$.

All of these give that $(\wti V, \wti\Omega)$ is an $\A^h$-replacement of $(\hat V, \hat\Omega)$ in $B_1$. Observe that the replacement $(\wti V, \wti\Omega)$ is also the limit of a minimizing sequence in Problem $(\Sigma,\Omega,\mk{Is}_\delta^h(U))$. Therefore $(\wti V,\wti \Omega)$ has a replacement in $B_2$. Continuing the process, we have proved that $(\hat V,\hat \Omega)$ satisfies the replacement chain property in $B_{r_0}(p)$. This finishes the proof of Proposition \ref{prop:reg of con min}.
\end{proof}

\subsection{Interior regularity for almost minimizing pairs}\label{ss:regularity for almost minimizing pairs}
In this part, we use the regularity of constrained $h$-minimizer (Proposition \ref{prop:reg of con min}) to construct $C^{1,1}$-replacements of a given $\mc A^h$-almost minimizing pair. Then the First Regularity Theorem \ref{thm:1st reg} applies to give the full regularity.

\begin{theorem}[Regularity of $\A^h$-almost minimizing pairs]\label{thm:replacement for AM}
Given an open set $U\subset M$, let $(V, \Omega)\in\VC(M)$ be $\A^h$-almost minimizing in $U$; that is, 
\begin{itemize}
    \item there exists a sequence $\{(\Sigma_j, \Omega_j)\}_{j\in \mb N}\subset \ms E$ that converges to $(V,\Omega)$ under the $\ms F$-metric;
    \item $(\Sigma_j,\Omega_j)$ is $(\mc A^h, \epsilon_j, \delta_j)$-almost minimizing in $U$ for some $\epsilon_j\to 0$ and $\delta_j\to 0$ as $j\to\infty$.
\end{itemize}
Then $(V, \Omega)\res U$ is induced by a strongly $\mc A^h$-stationary and stable $C^{1,1}$ $h$-boundary.
\end{theorem}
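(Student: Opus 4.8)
The plan is to prove that $(V,\Omega)$ satisfies the replacement chain property (Definition \ref{def:replacement chain}) in a small geodesic ball $B_{r_0}(p)$ around every point $p\in U$, and then to conclude by the First Regularity Theorem \ref{thm:1st reg} together with Lemma \ref{lem:am implies stationary and stable}; since regularity is a local property, covering $U$ by such balls yields the assertion on all of $U$. Here $r_0>0$ (depending on $p$) is chosen small enough that $B_{r_0}(p)\subset\subset U$ and that Theorem \ref{thm:interior regularity} is available on sub-balls of $B_{r_0}(p)$.

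The core of the argument is an iteration step: \emph{if $(W,\Theta)\in\VC(M)$ is $\A^h$-almost minimizing in $B_{r_0}(p)$ and $B\subset\subset B_{r_0}(p)$ is open, then $(W,\Theta)$ admits an $\A^h$-replacement $(W^*,\Theta^*)$ in $B$ which is again $\A^h$-almost minimizing in $B_{r_0}(p)$.} Granting this, one constructs the chain $(V,\Omega)=(V_0,\Omega_0),(V_1,\Omega_1),\dots,(V_k,\Omega_k)$ associated with any finite family $B_1,\dots,B_k\subset\subset B_{r_0}(p)$ by applying the iteration step successively, starting from $(V,\Omega)$ — which is $\A^h$-almost minimizing in $B_{r_0}(p)$ by restricting the hypothesis. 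Each $(V_i,\Omega_i)$ is then $\A^h$-stationary and stable in $B_{r_0}(p)$ by Lemma \ref{lem:am implies stationary and stable}, as required by Definition \ref{def:replacement chain}; the compatibility (``furthermore'') clause is obtained by fixing consistent subsequence choices as the list $B_1,\dots,B_k$ is enlarged.

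To prove the iteration step, write $(W,\Theta)=\lim_j(\Xi_j,\Theta_j)$ in the $\ms F$-metric, where $(\Xi_j,\Theta_j)\in\ms E$ is $(\A^h,\epsilon_j,\delta_j)$-almost minimizing in $B_{r_0}(p)$ with $\epsilon_j,\delta_j\to 0$. For each fixed $j$, apply Proposition \ref{prop:reg of con min} with ambient open set $B$ and constraint $\delta_j$: a minimizing sequence of Problem $(\Xi_j,\Theta_j,\mk{Is}^h_{\delta_j}(B))$ converges in $\ms F$ to a pair $(\hat V_j,\hat\Omega_j)$ that restricts in $B$ to a strongly $\A^h$-stationary, stable $C^{1,1}$ $h$-boundary, equals $(\Xi_j,\Theta_j)$ outside $\closure(B)$, and obeys $\A^h(\Xi_j,\Theta_j)-\epsilon_j\le \A^h(\hat V_j,\hat\Omega_j)\le \A^h(\Xi_j,\Theta_j)$. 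Since $\|\hat V_j\|(M)$ is uniformly bounded, Proposition \ref{prop:compactness} lets us pass to a subsequence with $(\hat V_j,\hat\Omega_j)\to(W^*,\Theta^*)$ in $\ms F$, where $(W^*,\Theta^*)$ restricts in $B$ to a strongly $\A^h$-stationary, stable $C^{1,1}$ $h$-boundary, equals $(W,\Theta)$ outside $\closure(B)$ (because $(\Xi_j,\Theta_j)\to(W,\Theta)$), and has $\A^h(W^*,\Theta^*)=\A^h(W,\Theta)$ (because $\epsilon_j\to 0$). Hence $(W^*,\Theta^*)$ is an $\A^h$-replacement of $(W,\Theta)$ in $B$. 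It remains to see that $(W^*,\Theta^*)$ is $\A^h$-almost minimizing in $B_{r_0}(p)$: taking suitable diagonal terms $(\Xi_{j,\ell(j)},\Theta_{j,\ell(j)})\in\ms E$ of the constrained minimizing sequences as witnesses, which converge to $(W^*,\Theta^*)$, one checks by concatenating isotopies that they are $(\A^h,\widetilde\epsilon_j,\widetilde\delta_j)$-almost minimizing in $B_{r_0}(p)$ for suitable $\widetilde\epsilon_j,\widetilde\delta_j\to 0$.

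I expect the main obstacle to be precisely this last point — propagation of the almost minimizing property to the replacement. One must show that a surface obtained from an $(\A^h,\epsilon_j,\delta_j)$-almost minimizing surface by an isotopy that remains inside the $\delta_j$-window is again almost minimizing, with only a controlled degradation of the parameters, which calls for a careful concatenation argument and the right choice of the terminal $\A^h$-level of the diagonal terms. A secondary issue is the $\A^h$-stationarity of $(W^*,\Theta^*)$ across the interface $\partial B$, which is dealt with by the gluing arguments of Section \ref{ss:initial regularity} (Lemmas \ref{lem:remove C1 line} and \ref{lem:glue}). Note that by working with the constrained problem $\mk{Is}^h_{\delta_j}(B)$ — rather than the unconstrained minimization — the identity $\A^h(W^*,\Theta^*)=\A^h(W,\Theta)$ drops out of the almost minimizing inequality directly, so the scale-dependence of Lemma \ref{lem:minimizing in small ball} on $\delta_j$ is confined to the proof of Proposition \ref{prop:reg of con min} (where $\delta_j$ is fixed) and creates no difficulty at the scale of the present theorem.
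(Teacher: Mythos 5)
Your proposal is correct and follows essentially the same route as the paper: establish the replacement chain property by iterating the constrained minimization problems $(\Sigma_j,\Omega_j,\mk{Is}^h_{\delta_j}(B))$ via Proposition \ref{prop:reg of con min}, pass to the limit in $j$ with Proposition \ref{prop:compactness}, propagate the almost-minimizing property to the replacement through diagonal terms and concatenation of isotopies, and conclude with Theorem \ref{thm:1st reg}. The only cosmetic difference is that you localize to small balls $B_{r_0}(p)$ whereas the paper works on an arbitrary $U'\subset\subset U$ (with the scale issue from Lemma \ref{lem:minimizing in small ball} absorbed into Proposition \ref{prop:reg of con min}, exactly as you observe).
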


\begin{proof}
Clearly, $(V, \Omega)\res U$ is $\mc A^h$-stationary and stable by Lemma \ref{lem:am implies stationary and stable}. To prove the desired regularity, we will show that given any $U'\subset \subset U$, $(V,\Omega)$ satisfies the replacement chain property in $U'$; then it follows from Theorem \ref{thm:1st reg}.

Fix $U'\subset \subset U$, 
and let $B_1,\cdots, B_N\subset\subset U'$ be an arbitrary collection of open subsets. We now construct the desired chain of replacements as follows. We start with $B_1$. Let $\{(\Sigma_{j,\ell},\Omega_{j,\ell})\}_{\ell\in \mb N}$ be
a minimizing sequence for Problem $(\Sigma_j,\Omega_j,\mk{Is}_{\delta_j}^h(B_1))$. Then by Proposition \ref{prop:reg of con min}, $(\Sigma_{j,\ell},\Omega_{j,\ell})$ converges to some $(\hat V_j,\hat \Omega_j)\in \mc{VC}(M)$ which is a strongly $\mc A^h$-stationary and stable $C^{1,1}$ $h$-boundary in $B_1$, and satisfies 
\begin{equation}\label{eq:Ah comparison after replacements 2}
\A^h(\Sigma_j, \Omega_j)-\epsilon_j\leq \A^h(\hat V_j, \hat\Omega_j) \leq \A^h(\Sigma_j, \Omega_j).    
\end{equation}
We remark that 
\begin{equation}\label{eq:Vj and Sigmaj}
(\hat V_j, \hat\Omega_j)\res (M\setminus \mr{Clos}(B_1)) = (\Sigma_j, \Omega_j)\res (M\setminus \mr{Clos}(B_1)).
\end{equation}
Now letting $j\to \infty$, by Proposition \ref{prop:compactness}, $(\hat V_j,\hat \Omega_j)$ converges to $(V^*,\Omega^*)$, where $(V^*, \Omega^*)\res B_1$ is a strongly $\mc A^h$ stationary, stable, $C^{1,1}$ $h$-boundary, and $\hat V_j\res B_1\to V^*\res B_1$ in the sense of $C^{1,\alpha}_{loc}$. 

We now verify that $(V^*,\Omega^*)$ is a replacement of $(V,\Omega)$ in $B_1$. Note that by \eqref{eq:Vj and Sigmaj}, \[ (V^*, \Omega^*)\res  (M\setminus \mr{Clos}(B_1)) = (V, \Omega)\res (M\setminus \mr{Clos}(B_1)).\]  
Letting $j\to \infty$, we obtain from \eqref{eq:Ah comparison after replacements 2} that
\[ \mc A^h(V^*,\Omega^*)=\mc A^h(V,\Omega).\]
So far, we have verified that $(V^*,\Omega^*)$ is a replacement of $(V, \Omega)$ in $B_1$.

\medskip
In the next, we will show that $(V^*,\Omega^*)$ is also the limit of a sequence $\{(\Sigma_j',\Omega'_j)\}\subset \ms E$ which is $(\mc A^h, \epsilon_j, \delta_j)$-almost minimizing in $U$. Then by the same argument, one can construct a replacement of $(V^*,\Omega^*)$ in $B_2$. Moreover, we can continue the process and then the lemma is proved.

Indeed, for each $j\in \mb N$, we can take $\ell(j)$ sufficiently large so that $(\Sigma_{j,\ell(j)},\Omega_{j,\ell(j)})$ converges to $(V^*,\Omega^*)$ as $j\to\infty$, and 
\[  \mc A^h(\Sigma_{j,\ell(j)},\Omega_{j,\ell(j)})\leq \mc A^h(\Sigma_{j},\Omega_{j}).\]
Together with the fact that $(\Sigma_{j,\ell(j)},\Omega_{j,\ell(j)})$ can be constructed from $(\Sigma_j,\Omega_j)$ via an isotopy in $\mk{Is}_{\delta_j}^h(B_1)$, we can then conclude that $(\Sigma_{j,\ell(j)},\Omega_{j,\ell(j)})$ is $(\mc A^h, \epsilon_j, \delta_j)$-almost minimizing in $U$ (similarly as in Proposition \ref{prop:reg of con min}). This completes the proof of Theorem \ref{thm:replacement for AM}.
\end{proof}

\subsection{Proof of Theorem \ref{thm:pmc min-max theorem}}\label{SS:proof of pmc min-max}

We will show that the pair $(V_0, \Omega_0)$ obtained in Theorem \ref{thm:existence of almost minimizing pairs} satisfies the regularity conclusion of Theorem \ref{thm:pmc min-max theorem}. We can find finitely many balls $\{B_{r_i}(p_i)\}_{i=1}^{\mk m}$, where $r_i$ is the almost minimizing radius given by Theorem \ref{thm:existence of almost minimizing pairs}, to cover $M$. Then for each small enough open set $U$ lying in some $\mr{An}(p_i; 0, r_i)$, $(V_0, \Omega_0)$ is $\A^h$-almost minimizing w.r.t. $\{(\Sigma_j, \Omega_j)\}$ in $U$, and by Theorem \ref{thm:replacement for AM}, $(V_0, \Omega_0)\res U$ is a $C^{1,1}$ and strongly $\A^h$-stationary boundary. So we know that $(V_0, \Omega_0)$ is a $C^{1,1}$ and strongly $\A^h$-stationary boundary in $M\setminus\{p_1, \cdots, p_{\mk m}\}$.

Next, we prove the $C^{1,1}$-version of removable singularity result, that is, $(V_0, \Omega_0)$ extends as a $C^{1,1}$ and strongly $\A^h$-stationary boundary across each $p_i$. The argument will be the same way as in the smooth case; see \cite{ZZ17}*{Step 4, page 479}. We write $V_0$ as $\Sigma_0$. Fix $p_i$ and we drop the index for a moment. Assume that $\Theta^2(\|V_0\|, p)=m\in \mb N$. By Lemma \ref{lem:am implies stationary and stable}, $(\Sigma_0, \Omega_0)$ is $\A^h$-stable in any annulus $\mr{An}\subset \mr{An}(p; 0, r_p)$. By Proposition \ref{prop:compactness} and Proposition \ref{prop:tangent plane}, we have that for any sequence $r_j\to 0$, 
\[ \pmb \mu_{p, r_j}(\Sigma_0) \to m\cdot S, \]
in $C^{1,1}_{loc}(T_pM\setminus\{0\})$ for some 2-plane $S\subset T_pM$. (Note that a prior, $S$ may depend on $\{r_j\}$.) Therefore, there exists $\sigma_0>0$ small enough, such that for any $0<\sigma\leq\sigma_0$, $\Sigma_0$ has an $m$-sheeted, ordered, $C^{1,1}$-graphical decomposition in $\mr{An}(p;\sigma/2, \sigma)$:
\[ \Sigma_0\res\mr{An}(p; \sigma/2, \sigma) = \cup_{i=1}^m\Gamma^i(\sigma). \]
By shrinking $\sigma\to 0$, we can continue each sheet $\Gamma^j(\sigma_0)$ in $\mr{An}(p; \sigma_0/2, \sigma_0)$ to the whole punctured ball $B_{\sigma_0}(p)\setminus\{p\}$, and we denote this sheet by $\Gamma^j$. By Corollary \ref{cor:mean curvature charaterization}, each $\Gamma^j$ has $c$-bounded first variation in $B_{\sigma_0}(p)\setminus\{p\}$ for $c=\|h\|_{L^\infty}$. By a standard cutoff trick, each $\Gamma^j$ can be extended as a varifold with $c$-bounded first variation in $B_{\sigma_0}(p)$. It is easy to see that each tangent varifold $C^j$ of $\Gamma^j$ at $p$ is an integer multiple of some $2$-plane by the $C^{1,1}_{loc}$ convergence. Moreover, the multiplicity of $C^j$ has to be one as $\Theta^2(\|V_0\|, p)=m$. By the Allard regularity theorem, each $\Gamma^j$ extends to a $C^{1, \alpha}$-surface across $p$. Since their union $\Sigma_0\res B_{\sigma_0}(p) = \cup\Gamma^j$ is $\A^h$-stationary, $\Sigma_0$ must be $C^{1,1}$ by the regularity result Proposition \ref{prop:C11 regularity}. The strongly $\A^h$-stationarity also extends across $p$ by the same standard cutoff trick. This completes the proof of Theorem \ref{thm:pmc min-max theorem}.

\section{Passing to limit}\label{sec:passing to limit}

Given an $h\in C^\infty(M)$ and a sequence of positive numbers $\varepsilon_k\to 0$ as $k\to\infty$, for simplicity, we write $\A^k$ for $\A^{\varepsilon_k h}$ in this part. Assume that $X\subset I(m, k_0)$ is a cubic complex and $Z\subset X$ is a sub-complex. For a given $(X, Z)$-homotopy class $\Pi$ generated by some fixed continuous $\Phi_0: X\to \ms E$ relative to $\Phi_0|_Z$, we consider the min-max problems associated with $\Pi$ for each $\A^k$, $k\in \mb N$. We assume that the nontriviality condition \eqref{eq:width nontrivial1} holds for all $k$. For each $k\in \mb N$, applying Theorem \ref{thm:existence of almost minimizing pairs} to the $\A^k$-functional, we obtain a min-max pair $(V_k, \Omega_k)\in \VC(M)$ and an associated min-max sequence $\{(\Sigma_{k, j}, \Omega_{k, j})\}_{j\in \mb N}\subset \ms E$, such that $(V_k, \Omega_k)$ is $\A^k$-stationary and $\A^k$-almost minimizing in small annuli w.r.t. $\{(\Sigma_{k, j}, \Omega_{k, j})\}$. By Theorem \ref{thm:pmc min-max theorem}, $(V_k, \Omega_k)$ is a strongly $\A^k$-stationary, $C^{1,1}$ $(\varepsilon_k h)$-boundary $(\Sigma_k, \Omega_k)$ with $\A^k(\Sigma_k, \Omega_k) = \mf L^{\varepsilon_k h}(\Pi)$.

Let 
\[ \text{$V_\infty\in \mc V(M)$ be a subsequential varifold limit of $\{\Sigma_k\}$}. \]
Then it is clear that $V_\infty$ is stationary for the area functional. 

In this section, we will show that $V_\infty$ is induced by a closed embedded minimal surface, and the weighted genus bound (see \eqref{eq:genus bound0}) holds for specially chosen $h$.

\subsection{Strong convergence}

In this part, we will show the smooth regularity of $V_\infty$ and the $C^{1, 1}_{loc}$-subsequential convergence of $\Sigma_k$ to $V_\infty$.  By Corollary \ref{cor:property R}, we know that for every $L=L(m)$-admissible (see \eqref{eq:L(m)}) collection $\ms C$ of annuli, $(\Sigma_k, \Omega_k)$ is $\A^k$-stable in at least one annulus in $\ms C$. Therefore, we know that $V_\infty$ also satisfies\footnote{It is clear that the varifold limits of $\A^k$-stable pairs as $k\to\infty$ are $\A^0$-stable.} 
\begin{equation}\label{eq:property R2}
\begin{aligned}
    \text{Property {\bf(R')}}:\quad  & \text{for every $L(m)$-admissible collection $\ms C$ of annuli,}\\
    & \text{$V_\infty$ is stable (for area) in at least one annulus in $\ms C$.}
\end{aligned}
\end{equation}

\begin{proposition}\label{prop:annulus picking for Ak almost minimizing}
There exists a subsequence (without relabelling) of $\{(\Sigma_k, \Omega_k)\}_{k\in \mb N}$, 
such that
\begin{equation}\label{eq:property S}
\begin{aligned}
    \text{Property {\bf(S)}}:\quad  & \text{given any $p\in M$, there exists $r_p>0$, such that}\\
    & \text{for each $\mr{An}(p; s, r)$ with $0<s<r<r_p$, }\\
    & \text{$(\Sigma_k, \Omega_k)$ is $\A^k$-stable in $\mr{An}(p; s, r)$ for all sufficiently large $k$.}
\end{aligned}
\end{equation}
\end{proposition}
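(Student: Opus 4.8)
The statement is essentially a diagonalization argument converting "stable in at least one of any $L$-admissible collection" (Property $\mathbf{(R)}$ for each $\A^k$, hence Property $\mathbf{(R')}$ for $V_\infty$) into "stable in all sufficiently small annuli after passing to a subsequence" (Property $\mathbf{(S)}$). The plan is to apply the same combinatorial annulus-picking scheme used to prove Theorem \ref{thm:existence of almost minimizing pairs} (i.e. the argument of \cite{Colding-Gabai_Ketover18}*{Lemma A.3} as abstracted in Appendix \ref{appen:am in a uniform subsequence}), but now to the stability property rather than the almost-minimizing property, and along the sequence $\{(\Sigma_k, \Omega_k)\}$ rather than along a min-max subsequence of a single sweepout.

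Concretely, for each $k$ let $\mc P_k$ be the collection of all open annuli $\mr{An}(p; s, r)$ in which $(\Sigma_k, \Omega_k)$ is $\A^k$-stable. Two properties are immediate: first, if $\mr{An}_1 \subset \mr{An} \in \mc P_k$ then $\mr{An}_1 \in \mc P_k$, since the stability inequality is inherited by sub-annuli (test vector fields on the smaller annulus extend by zero); second, by Corollary \ref{cor:property R} applied to the $\A^k$-functional, for every $L(m)$-admissible collection $\ms C$ we have $\ms C \cap \mc P_k \neq \emptyset$. These are exactly the hypotheses needed to invoke the abstract annulus-picking proposition (Proposition \ref{prop:general property}), which produces a subsequence, still denoted $\{\mc P_k\}$, such that for each $p \in M$ there is $r_p > 0$ with the property that every $\mr{An}(p; s, r)$ satisfying $0 < s < r < r_p$ lies in $\mc P_k$ for all sufficiently large $k$. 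Unwinding the definition of $\mc P_k$, this is precisely Property $\mathbf{(S)}$. I would also note that one should first pass to the subsequence defining $V_\infty$ (so that $\Sigma_k \to V_\infty$ as varifolds), and then extract the further subsequence from the annulus-picking argument; since finitely many passages to subsequences are harmless, the resulting subsequence simultaneously converges to $V_\infty$ and satisfies $\mathbf{(S)}$.

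The only mild subtlety — and the step I expect to require the most care — is confirming that the abstract mechanism of Proposition \ref{prop:general property} genuinely applies here, namely that stability is a "monotone under restriction, and satisfiable in one member of every $L$-admissible family" property in exactly the sense that the proposition is stated for. The almost-minimizing collections $\mc P_j$ in Theorem \ref{thm:existence of almost minimizing pairs} came with parameters $\epsilon_j, \delta_j \to 0$; here stability has no such parameters, which only simplifies matters, but one should check the hypotheses of Proposition \ref{prop:general property} are met verbatim (monotonicity under sub-annuli and nonempty intersection with every $L$-admissible collection). Once that bookkeeping is done, the conclusion follows formally, with no analysis beyond what has already been established. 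I would therefore present the proof as: set up $\mc P_k$, verify the two structural properties, cite Proposition \ref{prop:general property} and Corollary \ref{cor:property R}, and conclude.
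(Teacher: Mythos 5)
Your proposal is correct and follows exactly the paper's argument: define $\mc P_k$ as the annuli in which $(\Sigma_k,\Omega_k)$ is $\A^k$-stable, verify closure under sub-annuli and nonempty intersection with every $L(m)$-admissible collection via Corollary \ref{cor:property R}, and invoke Proposition \ref{prop:general property}. No gaps; the "mild subtlety" you flag is indeed just the bookkeeping the paper also performs.
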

\begin{proof}
Let $\mc P_k$ be the collection of annulli $\mr{An}$ so that $(\Sigma_k, \Omega_k)$ is $\A^k$-stable in $\mr{An}$. Obviously if $\mr{An}\in \mc P_k$, any sub-annulus also belongs to $\mc P_k$. Also we know $\mc P_k \cap \ms C\neq \emptyset$ for any $L$-admissible collection $\ms C$ of annuli. By Proposition \ref{prop:general property}, there exists a subsequence of $\{(\Sigma_k, \Omega_k)\}$, such that for any $p\in M$, there exists $r_p>0$, so that for any $0<s<r<r_p$, $\mr{An}(p; s, r)\in \mc P_k$ for sufficiently large $k$. This is exactly Property {\bf(S)}.
\end{proof}

\begin{theorem}\label{thm:convergence of pmc to minimal}
As above, $\spt\|V_\infty\|$ is a closed embedded minimal surface $\Sigma_\infty$. Moreover, there exists a finite set of points $\mc Y\subset M$, such that up to a subsequence, $\{\Sigma_k\}_{k\in\mb N}$ (without relabeling) converges in $C^{1,1}_{loc}$ to $\Sigma_\infty$ in any compact subset of $M\setminus \mc Y$.
\end{theorem}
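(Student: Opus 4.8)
The plan is to combine Property (S) from Proposition \ref{prop:annulus picking for Ak almost minimizing} with the stable compactness theorem Proposition \ref{prop:compactness} (applied with the prescribing functions $\varepsilon_k h\to 0$ in $C^2$) to obtain smooth convergence of $\Sigma_k$ in small punctured balls, then to deduce that the bad set is contained in the finitely many centers of a fixed good cover of $M$, and finally to remove these point singularities exactly as in the last paragraph of Section \ref{SS:proof of pmc min-max}.

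First I would fix a finite cover $\{B_{r_{p_i}/2}(p_i)\}_{i=1}^{N}$ of $M$, where each $r_{p_i}>0$ is the radius supplied by Property (S), and pass to a single subsequence of $\{(\Sigma_k,\Omega_k)\}$ (diagonalizing over the indices $i$ and over a sequence of inner radii $s_\ell\downarrow 0$) along which the following holds for every $i$. Fix $i$ and $0<s<r<r_{p_i}$; for $k$ large $(\Sigma_k,\Omega_k)$ is $\A^k$-stable in $\mr{An}(p_i;s,r)$, so by Proposition \ref{prop:compactness} a further subsequence converges in $C^{1,\alpha}_{loc}$ there to a strongly $\A^0$-stationary, stable $C^{1,1}$ $0$-boundary $(\Sigma,\Omega)$. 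By Corollary \ref{cor:mean curvature charaterization} the generalized mean curvature of $\Sigma$ vanishes, so by Allard's theorem and elliptic regularity its regular part is smooth minimal; and its ordered sheets, being graphs $u^1\le\cdots\le u^\ell$ solving the minimal surface equation, satisfy by the strong maximum principle that any two either coincide or are disjoint, so $\Sigma$ is a smooth embedded minimal surface. Since $\|V_\infty\|$ is stationary and $\Sigma_k\to V_\infty$ as varifolds, $\Sigma=\spt\|V_\infty\|$ inside $\mr{An}(p_i;s,r)$; letting $s\downarrow 0$ shows that $\spt\|V_\infty\|\cap\big(B_{r_{p_i}}(p_i)\setminus\{p_i\}\big)$ is a smooth embedded minimal surface, and because this limit is smooth, Proposition \ref{prop:compactness}(i) upgrades the convergence to $C^{1,1}_{loc}$ in $B_{r_{p_i}}(p_i)\setminus\{p_i\}$.

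Next let $\mc Y$ be the set of $q\in M$ near which $\Sigma_k$ fails to converge to $\spt\|V_\infty\|$ in $C^{1,1}$ in any full neighborhood of $q$; a clearing-out argument for the surfaces $\Sigma_k$ (which have uniformly bounded first variation) shows $\mc Y\subset\spt\|V_\infty\|$. If $q\in\mc Y$, pick $i$ with $q\in B_{r_{p_i}/2}(p_i)$; the previous paragraph shows every point of $B_{r_{p_i}}(p_i)\setminus\{p_i\}$ lies outside $\mc Y$, so $q=p_i$, whence $\mc Y\subseteq\{p_1,\dots,p_{N}\}$ is finite and $\Sigma_k\to\spt\|V_\infty\|$ in $C^{1,1}_{loc}$ on compact subsets of $M\setminus\mc Y$. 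It then remains to show that $\spt\|V_\infty\|$ extends smoothly across each $p\in\mc Y$, which is the removable singularity argument of Section \ref{SS:proof of pmc min-max} (cf. \cite{ZZ17}*{Step 4, page 479}): using Property (S), Proposition \ref{prop:compactness} and the $C^{1,1}$ graphical decomposition in annuli obtained above, blow-ups of $\Sigma_k$ at $p$ converge in $C^{1,1}_{loc}(T_pM\setminus\{0\})$ to $m\cdot S$ for a $2$-plane $S$ and $m=\Theta^2(\|V_\infty\|,p)\in\mb N$; hence $\spt\|V_\infty\|$ decomposes near $p$ into $m$ ordered $C^{1,1}$ sheets over $S$ on the punctured ball, each stationary with a multiplicity-one tangent plane at $p$, so each extends across $p$ to a $C^{1,\alpha}$ minimal surface by Allard's theorem, hence to a smooth one since the stationary union is smooth by elliptic regularity, and all $m$ sheets coincide by the strong maximum principle. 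Therefore $\Sigma_\infty:=\spt\|V_\infty\|$ is a smooth embedded minimal surface without boundary, and it is closed since it is a closed subset of the compact manifold $M$.

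The main obstacle is the local step of the second paragraph: verifying that the $C^{1,\alpha}_{loc}$ stable-compactness limit in a good annulus is genuinely a \emph{smooth embedded} minimal surface, with no persistent touching or higher-multiplicity folding that would survive as a singular set, and then promoting the convergence to $C^{1,1}_{loc}$; this is precisely where Corollary \ref{cor:mean curvature charaterization}, the strong maximum principle for ordered minimal graphs, and the smooth-limit clause of Proposition \ref{prop:compactness}(i) are essential. Threading a single subsequence through all the local compactness extractions is then routine by diagonalization.
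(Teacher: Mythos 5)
Your proposal is correct and follows essentially the same route as the paper: Property \textbf{(S)} plus the stable compactness of Proposition \ref{prop:compactness} give $C^{1,\alpha}_{loc}$ convergence away from a finite set $\mc Y$, the limit is smooth embedded because $V_\infty$ is stationary (which upgrades the convergence to $C^{1,1}_{loc}$ via Proposition \ref{prop:compactness}(i)), and the singularity at each $p\in\mc Y$ is removed using stability in all small annuli. The only difference is cosmetic: the paper invokes the Schoen--Simon removable singularity theorem directly, whereas you redo the blow-up/Allard argument from Section \ref{SS:proof of pmc min-max}; both are fine.
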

\begin{proof}
By Proposition \ref{prop:compactness}, $\Sigma_k$ convergences subsequentially in $C^{1, \alpha}_{loc}$ to a $C^{1,1}$ almost embedded surface $\Sigma_\infty\subset M\setminus \mc Y$ in any compact subsets of $M\setminus \mc Y$. Then $\Sigma_\infty$ is smoothly embedded as $V_\infty$ is stationary. Hence the convergence is in $C^{1,1}_{loc}$ by Proposition \ref{prop:compactness}(\rom{1}). To show the removable singularity of $\Sigma_\infty$, for a fixed $p\in \mc Y$, given any $\mr{An}(p; s, r)\subset \mr{An}(p; 0, r_p)$, by Property {\bf(S)} \eqref{eq:property S}, we can use the $\A^k$-stability for the sequences $(\Sigma_k, \Omega_k)$ to deduce that $\Sigma_\infty$ is stable in $\mr{An}(p; s, r)$. Then $\Sigma_\infty$ extends smoothly across $p$ by the standard removable singularity theorem (for instance \cite{SS}).
\end{proof}

\subsection{Passing almost minimizing to limit and topology bound}

Now we will prove that the support $\Sigma_\infty$ of $V_\infty$, which is a closed embedded minimal surface, has total genus less than $\mk g_0$ -- the genus of elements in $\ms E$. To approach it, we will find a diagonal subsequence of elements in $\ms E$ approaching $V_\infty$ which satisfies certain almost minimizing in small annuli property.

\begin{proposition}\label{prop:hk to h} 
There exist a subsequence $\{(\Sigma_{k,j(k)},\Omega_{k,j(k)})\}$ and $\epsilon_k\to 0$, $\delta_k\to 0$, such that $\Sigma_{k, j(k)}$ converges to $V_\infty$ as varifolds, and
\begin{enumerate}
\item for any $L(m)$-admissible collection $\ms C$, $(\Sigma_{k,j(k)},\Omega_{k,j(k)})$ is $(\mc A^k,\epsilon_k,\delta_k)$-almost minimizing in at least one of $\ms C$;

\item given any $p\in M$, there exists $r_p>0$ such that for each $\mr{An}(p;s,r)$ with $0<s<r<r_p$, $(\Sigma_{k,j(k)},\Omega_{k,j(k)})$ is $(\mc A^k,\epsilon_k,\delta_k)$-almost minimizing in $\mr{An}(p;s,r)$ for all $k$ sufficiently large. 
\end{enumerate}
\end{proposition}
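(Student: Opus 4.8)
The plan is to produce the diagonal subsequence $\{(\Sigma_{k,j(k)},\Omega_{k,j(k)})\}$ by combining a countable diagonalization across $k$ with the abstract annulus-picking argument (Proposition \ref{prop:general property} in the appendix). First I would recall that for each fixed $k$, Theorem \ref{thm:existence of almost minimizing pairs} (or rather Lemma \ref{lem:AM in L annuli}) gives sequences $\epsilon^{(k)}_j\to 0$ and $\delta^{(k)}_j\to 0$ as $j\to\infty$, and a min-max subsequence $\{(\Sigma_{k,j},\Omega_{k,j})\}_{j\in\mb N}\subset\ms E$ with $(\Sigma_{k,j},\Omega_{k,j})\to(V_k,\Omega_k)$ in the $\ms F$-metric, such that each $(\Sigma_{k,j},\Omega_{k,j})$ is $(\A^k,\epsilon^{(k)}_j,\delta^{(k)}_j)$-almost minimizing in at least one annulus of every $L(m)$-admissible collection $\ms C$. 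Since in addition $(V_k,\Omega_k)\res\Sigma_k$ converges as varifolds to $V_\infty$ (by the choice of $V_\infty$ and Theorem \ref{thm:convergence of pmc to minimal}), a first diagonalization over $k$: for each $k$ choose $j(k)$ large enough that (i) $\ms F\big((\Sigma_{k,j(k)},\Omega_{k,j(k)}),(V_k,\Omega_k)\big)<1/k$, (ii) $\epsilon^{(k)}_{j(k)}<1/k$ and $\delta^{(k)}_{j(k)}<1/k$. Setting $\epsilon_k:=\max\{\epsilon^{(k)}_{j(k)},1/k\}$ and $\delta_k:=\max\{\delta^{(k)}_{j(k)},1/k\}$, we get $\epsilon_k,\delta_k\to 0$, $\Sigma_{k,j(k)}\to V_\infty$ as varifolds (using (i) and $(\Sigma_k,\Omega_k)\to V_\infty$), and item (1) holds verbatim because $(\A^k,\epsilon^{(k)}_{j(k)},\delta^{(k)}_{j(k)})$-almost minimizing implies $(\A^k,\epsilon_k,\delta_k)$-almost minimizing (larger $\epsilon,\delta$ only weakens the condition).

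The remaining task is to upgrade item (1) — almost minimizing in at least one annulus of \emph{any} $L(m)$-admissible collection — to item (2), the statement that for a fixed good radius $r_p$ around each point, \emph{every} sufficiently small concentric annulus at $p$ is eventually $(\A^k,\epsilon_k,\delta_k)$-almost minimizing. This is precisely where the abstract combinatorial argument applies: let $\mc P_k$ denote the collection of all annuli $\mr{An}$ in which $(\Sigma_{k,j(k)},\Omega_{k,j(k)})$ is $(\A^k,\epsilon_k,\delta_k)$-almost minimizing. One checks the two structural hypotheses of Proposition \ref{prop:general property}: first, $\mc P_k$ is closed under taking sub-annuli — if $(\Sigma_{k,j(k)},\Omega_{k,j(k)})$ is $(\A^k,\epsilon_k,\delta_k)$-almost minimizing in $\mr{An}$ and $\mr{An}_1\subset\mr{An}$, then it is also $(\A^k,\epsilon_k,\delta_k)$-almost minimizing in $\mr{An}_1$, since $\mk{Is}(\mr{An}_1)\subset\mk{Is}(\mr{An})$; second, $\mc P_k\cap\ms C\neq\emptyset$ for every $L(m)$-admissible collection $\ms C$, which is exactly item (1). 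Proposition \ref{prop:general property} then furnishes a further subsequence (not relabeled) and, for each $p\in M$, a radius $r_p>0$ such that $\mr{An}(p;s,r)\in\mc P_k$ for all $0<s<r<r_p$ and all $k$ large — this is item (2). Passing to this subsequence does not disturb item (1) (it is preserved under taking subsequences) nor the varifold convergence $\Sigma_{k,j(k)}\to V_\infty$.

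I expect the main (and essentially only) subtlety to be bookkeeping: ensuring that the single subsequence produced is simultaneously good for the varifold convergence, for item (1), and for item (2), and that the parameters $\epsilon_k,\delta_k$ are consistently defined across the two diagonalizations. The monotonicity of $\mc P_k$ under sub-annuli and the weakening of the almost-minimizing condition under enlarging $\epsilon,\delta$ are both elementary, so no new analytic input is needed beyond what is already proved; the content is entirely in the already-established Lemma \ref{lem:AM in L annuli}, Theorem \ref{thm:convergence of pmc to minimal}, and the abstract Proposition \ref{prop:general property}. I would phrase the proof to make explicit that the almost-minimizing property with parameters $(\epsilon,\delta)$ is monotone in $(\epsilon,\delta)$ (increasing $\delta$ enlarges the class of competitor isotopies allowed, increasing $\epsilon$ relaxes the required energy drop), so that all replacements of the original $(\epsilon^{(k)}_{j(k)},\delta^{(k)}_{j(k)})$ by the cleaner $(\epsilon_k,\delta_k)$ are legitimate.
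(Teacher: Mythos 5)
Your proof takes essentially the same route as the paper's: a diagonal choice of $j(k)$ using Lemma \ref{lem:AM in L annuli} (to secure item (1) and the varifold convergence $\Sigma_{k,j(k)}\to V_\infty$), followed by an application of Proposition \ref{prop:general property} to the collections $\mc P_k$ of good annuli to obtain item (2); the verification that $\mc P_k$ is closed under sub-annuli via $\mk{Is}(\mr{An}_1)\subset\mk{Is}(\mr{An})$ is exactly the paper's observation. One correction, though: the almost-minimizing property of Definition \ref{def:ep pmc almost minimizing} is \emph{not} weakened by enlarging $\delta$. Increasing $\delta$ enlarges the class of competitor isotopies whose existence is being excluded, so it \emph{strengthens} the condition; the property is preserved under increasing $\epsilon$ but only under \emph{decreasing} $\delta$. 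Hence the step ``$(\A^k,\epsilon^{(k)}_{j(k)},\delta^{(k)}_{j(k)})$-almost minimizing implies $(\A^k,\epsilon_k,\delta_k)$-almost minimizing'' fails for $\delta_k:=\max\{\delta^{(k)}_{j(k)},1/k\}$ when $\delta^{(k)}_{j(k)}<1/k$. The fix is immediate and costs nothing: since you already arranged $\epsilon^{(k)}_{j(k)},\delta^{(k)}_{j(k)}<1/k$, simply take $\epsilon_k:=\epsilon^{(k)}_{j(k)}$ and $\delta_k:=\delta^{(k)}_{j(k)}$, which tend to zero without any maximum — this is what the paper does.
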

\begin{proof}
Note that $(V_k,\Omega_k)$ is $\mc A^k$-almost minimizing w.r.t. $\{(\Sigma_{k,j},\Omega_{k,j})\}_{j\in \mb N}$. Thus there exist $\epsilon_{k,j},\delta_{k,j}\to 0$ as $j\to \infty$ so that $(\Sigma_{k,j},\Omega_{k,j})$ is $(\mc A^k,\epsilon_{k,j},\delta_{k,j})$-almost minimizing in any $L(m)$-admissible collection of annuli. By taking $j(k)$ sufficiently large, we have that for any $L(m)$-admissible collection $\ms C$, $(\Sigma_{k,j(k)},\Omega_{k,j(k)})$ is $(\mc A^k,\epsilon_k,\delta_k)$-almost minimizing in $\ms C$, where $\epsilon_k:=\epsilon_{k,j(k)}$ and $\delta_k:=\delta_{k,j(k)}$.

Now let $\mc P_k$ be the collection of annuli $\mr{An}$ so that $(\Sigma_{k,j(k)},\Omega_{k,j(k)})$ is $(\mc A^k,\epsilon_k,\delta_k)$-almost minimizing in $\mr{An}$. Thus by Proposition \ref{prop:general property}, there exists a subsequence (without relabeling) so that given $p\in M$, there exists $r_p>0$ such that for each $\mr{An}(p;s,r)$ with $0<s<r<r_p$, $(\Sigma_{k,j(k)},\Omega_{k,j(k)})$ is $(\mc A^k,\epsilon_k,\delta_k)$-almost minimizing in $\mr{An}(p;s,r)$ for all sufficiently large $k$. This finishes the proof of Proposition \ref{prop:hk to h}.    
\end{proof}

Note that $V_\infty$ may not be $\mc A^0$-almost minimizing ($\A^0$ is the area functional) in small annuli (which is the requirement in \cite{Ketover13} to obtain the genus bound). However, we can take a special $h$ so that as $\varepsilon_k\to 0$, the min-max solution $(\Sigma_k,\Omega_k)$ w.r.t. $\mc A^{\varepsilon_k h}$-functional will converge to a minimal surface which still has the genus bound. 

\begin{theorem}[Genus bound]\label{thm:genus bound}
Let $(M, g)$ be a closed, oriented, three dimensional Riemannian manifold, and $V_\infty$ be as above. Suppose that there are finitely many pairwise disjoint balls $B_1,\cdots,B_\alpha\subset M$ such that 
\begin{enumerate}
    \item $\spt\|V_\infty\|\cap B_j$ is a disk for $j=1,\cdots,\alpha$;
    \item $h\equiv 0$ in a small neighborhood of $\spt\|V_\infty\|\setminus \cup_{j}B_j$.
\end{enumerate}
Assume that $ V_\infty=\sum_{i=1}^N m_i[\Gamma_i]$,
where $\{\Gamma_i\}_{i=1}^N$ is a pairwise disjoint collection of connected, closed, embedded, minimal surfaces. 
Denote by $I_O \subset \{1,\cdots,N\}$ (resp. $I_U$) the collection of $i$ such that $\Gamma_i$ is orientable (resp. non-orientable). Then we have
\begin{equation}\label{eq:genus bound0}
\sum_{i\in I_O} m_i\cdot \mk g(\Gamma_i)+\frac{1}{2}\sum_{i\in I_U} m_i \cdot (\mk g(\Gamma_i)-1)\leq \mk g_0,
\end{equation}
where $\mk g_0$ and $\mk g(\Gamma_i)$ are the genus of $\Sigma_0$ and $\Gamma_i$, respectively. 
\end{theorem}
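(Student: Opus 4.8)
The plan is to adapt the curve-lifting argument of Simon-Smith \cite{Smith82}, as carried out in \cite{DeLellis-Pellandini10} and \cite{Ketover13}, to the present $\A^h$-setting. The essential point is that, away from the sub-disks $B_1,\dots,B_\alpha$, the diagonal sequence $(\Sigma_{k,j(k)},\Omega_{k,j(k)})$ produced by Proposition \ref{prop:hk to h} is almost minimizing for the \emph{area} functional, because $h\equiv 0$ there, so $\A^k=\A^{\varepsilon_k h}$ coincides with the area functional in a neighborhood of $\spt\|V_\infty\|\setminus\cup_j B_j$.

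First I would fix a finite collection of points $\mc Y$ (including the centers of the $B_j$'s and the concentration points from Theorem \ref{thm:convergence of pmc to minimal}) and a small radius $r_p$ at each $p\in M\setminus\mc Y$ via Proposition \ref{prop:hk to h}(2). By Theorem \ref{thm:convergence of pmc to minimal}, $\Sigma_{k,j(k)}\to\Sigma_\infty$ in $C^{1,1}_{loc}$ on $M\setminus\mc Y$, so for any geodesic ball $B\subset\subset M\setminus(\mc Y\cup\bigcup_j\closure(B_j))$ small enough, $\Sigma_{k,j(k)}$ is a union of normal graphs over $\Sigma_\infty$ for large $k$. The almost-minimizing property in annuli, together with the fact that the prescribing function vanishes near $\spt\|V_\infty\|$ outside the $B_j$'s, shows $(\Sigma_{k,j(k)},\Omega_{k,j(k)})$ is $(\epsilon_k,\delta_k)$-almost minimizing for area in these annuli (the contribution of $\int_\Omega \varepsilon_k h$ to $\A^k$ is supported away from $\spt\|V_\infty\|$ outside the $B_j$'s and hence does not interfere with isotopies supported near $\Sigma_\infty$ in such annuli; one shrinks $r_p$ so that the relevant annuli miss $\spt(h)$). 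I would then quote the interpolation/replacement results of \cite{Colding-DeLellis03} and \cite{DeLellis-Pellandini10} to replace, on each such annulus, $\Sigma_{k,j(k)}$ by a minimal surface, yielding $C^{1,\alpha}$ (hence genus-controlling) convergence to $\Sigma_\infty$ on that annulus.

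Next I would run the curve-lifting/genus-comparison argument of \cite{Ketover13}*{\S 1 and \S 3} (building on \cite{DeLellis-Pellandini10}): one covers $\Sigma_\infty$ by finitely many such small balls, and on each ball the local almost-minimizing property forces the genus of $\Sigma_{k,j(k)}$ restricted to the ball to converge to at least the genus of the corresponding piece of $\Sigma_\infty$, counted with the multiplicity $m_i$ with which $\Sigma_\infty$ appears; the excision of the finitely many $B_j$'s (where $\Sigma_\infty$ is a disk, contributing nothing to genus) and of the finitely many points in $\mc Y$ changes the genus count only by a bounded amount that is absorbed since the $\Sigma_{k,j(k)}$ all have genus exactly $\mk g_0$. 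For a one-sided component $\Gamma_i$, its neighborhood in $\Sigma_{k,j(k)}$ is a connected double cover, contributing $\tfrac12 m_i(\mk g(\Gamma_i)-1)$ by the Riemann-Hurwitz computation as in \cite{Ketover13}. Summing over all components and all balls and using the additivity of genus under the Mayer-Vietoris type decomposition (as in \cite{DeLellis-Pellandini10}*{Proposition 1.4}) yields \eqref{eq:genus bound0}.

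I expect the main obstacle to be the honest verification that, near $\spt\|V_\infty\|$ but outside $\cup_j B_j$, the $\A^k$-almost-minimizing property genuinely reduces to area-almost-minimizing with \emph{the same} $\epsilon_k,\delta_k$ (up to harmless constants): one must check that every competitor isotopy used in the curve-lifting argument can be taken supported in a region where $h\equiv 0$, so that $\A^k(\psi(t,\Sigma_{k,j(k)},\Omega_{k,j(k)}))=\Area(\psi(t,\Sigma_{k,j(k)}))+\mathrm{const}$ along the isotopy, the constant being the $\int_\Omega\varepsilon_k h$ term which is unaffected. This requires choosing the balls in the cover of $\Sigma_\infty$ (and the radii $r_p$) small enough to be disjoint from $\spt(h)$, which is possible precisely by hypothesis (2); combined with the already-established $C^{1,1}_{loc}$ convergence and the genus-lower-semicontinuity machinery, the remaining steps are routine adaptations of \cite{Ketover13}.
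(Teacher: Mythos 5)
Your proposal is correct and follows essentially the same route as the paper: take the diagonal sequence from Proposition \ref{prop:hk to h}, observe that near curves representing the genus of $\spt\|V_\infty\|$ (which can be isotoped off the disks $B_j$) the hypothesis $h\equiv 0$ makes $(\A^{\varepsilon_k h},\epsilon_k,\delta_k)$-almost minimizing coincide with area-almost-minimizing, and then invoke the curve-lifting argument of \cite{Ketover13} and \cite{DeLellis-Pellandini10}. The obstacle you flag at the end is exactly the point the paper handles by perturbing the curves and shrinking $r_p$ so the relevant balls miss $\spt(h)$.
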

\begin{proof}
Let $\{\gamma_i\}_{i=1}^k$ be a collection of simple closed curves contained in $\cup_{i=1}^N \Gamma_i$. Since $\spt \|V_\infty\|\cap B_j$ is a disk, then we can perturb $\{\gamma_i\}_{i=1}^k$ (not relabelled) in the same isotopy class so that $\cup_i\gamma_i$ does not intersect $\cup_jB_j$. Thus we have that $\varepsilon_k h\equiv 0$ in a neighborhood of $\cup_i\gamma_i$. Note that by Proposition \ref{prop:hk to h}, given $p\in M$, there exists $r_p>0$ such that for each $\mr{An}(p;s,r)$ with $0<s<r<r_p$, $(\Sigma_{k,j(k)},\Omega_{k,j(k)})$ is $(\mc A^{\varepsilon_k h},\epsilon_k,\delta_k)$-almost minimizing in $\mr{An}(p;s,r)$ for all sufficiently large $k$, where $\epsilon_k,\delta_k\to 0$ as $k\to\infty$. In particular, by possibly perturbing $\{\gamma_i\}$ and shrinking $r_p$, we can assume that $(\Sigma_{k,j(k)},\Omega_{k,j(k)})$ is $(\mc A^0,\epsilon_k,\delta_k)$-almost minimizing in $B_{r_p}(p)$ for any $p\in\cup_i\gamma_i$. 
Then one can lift the curves by the same argument (see \cite{Ketover13}*{Remark 1.3}) in \cite{Ketover13}*{Proposition 2.2}, which yields the desired genus bounds; see also \cite{DeLellis-Pellandini10}. Hence Theorem \ref{thm:genus bound} is proved.
\end{proof}

\section{Existence of supersolution}\label{sec:existence of supersolution}

In the above section, we proved that a sequence of $\A^{\varepsilon_k h}$-min-max pairs $\{(\Sigma_k, \Omega_k)\}_{k\in\mb N}$ converges to a limit minimal surface $\Sigma_\infty$ when $\varepsilon_k \to 0$ as $k\to \infty$. In this part, assuming $h$ changes sign on $\Sigma_\infty$, we show that if the convergence has multiplicity greater than one, then $\Sigma_\infty$ admits a nonnegative weak supersolution to a variant of the Jacobi equation (see \eqref{eq:full ineq on Sigma}). The idea is to subtract the mean curvature equations satisfied by the top and bottom sheets and take re-normalized limit. We will present all the subtle details as surfaces involved in our setting are merely $C^{1,1}$.

\medskip

We start with some general assumptions. Let $(\Sigma_k, \Omega_k)$ be a sequence of strongly $\mc A^{\varepsilon_k h}$-stationary, $C^{1,1}$ $(\varepsilon_k h)$-boundary. Suppose that $\Sigma_k$ converges as varifolds to a closed, embedded, minimal surface $\Sigma$ with multiplicity $m\in \mb N$ as $k\to \infty$ ($\varepsilon_k\to 0$). Furthermore, we assume that
\begin{enumerate}[label=(\Alph*)]
    \item\label{item:h changes sign}  $h$ changes sign on $\Sigma$, and
    \item the convergence is $C^{1,1}_{loc}$ in any compact subset away from a finite set $\mc Y\subset M$.
\end{enumerate}

Fix a unit normal $\nu$ of $\Sigma$. We will use Fermi coordinates of $\Sigma$ given by the normal exponential map $(x, z)\to \exp_x\big(z\nu(x)\big)$, $(x, z)\in \Sigma\times (-\delta_0, \delta_0)$. By the $C^{1, 1}_{loc}$-convergence, for any open subset $\mc U\subset\subset \Sigma\setminus \mc Y$, and for all $k$ large enough (depending only on $\mc U$), $\Sigma_k$ has an $m$-sheeted ordered decomposition $\Gamma_k^1\leq \cdots \leq \Gamma_k^m$ inside the thickened neighborhood $\mc U_\delta = \mc U\times (-\delta, \delta)$, and each sheet $\Gamma_k^\iota$ ($1\leq \iota \leq m$) is a normal graph of some $u_k^\iota\in C^{1,1}(\mc U)$, such that
\[ u_k^1\leq \cdots \leq u_k^m, \text{ and } u_k^\iota \to 0 \text{ in $C^{1,1}(\mc U)$ as $k\to\infty$}.\]
By choosing $\mc U$ large enough, we may also assume that $h$ changes sign in $\mc U$. 

We let $H_k^\iota$ denote the generalized mean curvature of $\Gamma_k^\iota$ w.r.t. the upward pointing normal (in the same direction as $\partial/\partial z$). By Corollary \ref{cor:mean curvature charaterization}, we have
\begin{equation}\label{eq:slice mean curvature bound}
|H_k^\iota| \leq \varepsilon_k |h|,\quad \text{$\mc H^2$-a.e. in $\Gamma_k^\iota$, for each $1\leq \iota\leq m$}.
\end{equation}

Denote by $\nu_k$ the unit outer normal of $\Sigma_k$ induced from $\Omega_k$. By Lemma \ref{lem:choice of outer normal}, $\nu_k$ alternates orientation among $\{\Gamma^\iota_k\}$. After possibly switching $\Omega_k$ to $M\setminus\Omega_k$ and $h$ to $-h$ simultaneously and up to a subsequence, we may always assume that $\nu_k$ points upward along the top sheet $\Gamma^m_k$. That is,
\begin{equation*}
\text{$\Omega_k$ does not contain the region above the top sheet $\Gamma_k^m$.}
\end{equation*} 
Then we have by strong $\A^{\varepsilon_k h}$-stationarity that 
\begin{equation}\label{eq:top}
    H_k^m \geq  \varepsilon_kh, \quad \text{$\mc H^2$-a.e. in $\Gamma_k^m$}.
\end{equation} 
In fact, by Proposition \ref{prop:mean curvature compared with pmc function}, we know  that in a neighborhood where $h>0$, we must have $H_k^m =  \varepsilon_kh$, and the strictly inequality $H_k^m> \varepsilon_kh$ can only happen in a neighborhood where $h<0$. Moreover, by Corollary \ref{cor:mean curvature charaterization}, we know that $H_k^{m-1}$ is either equal to 0 or $-\varepsilon_k h$ for all points in $\{h>0\}$. In particular,
\begin{equation}\label{eq:Hm-1}
H_k^{m-1}\leq 0 \quad \text{ for all points in $\{h>0\}$}.
\end{equation}
Applying similar arguments to $\Gamma_k^1$, we have for $\mc H^2$-a.e. in $\Gamma_k^1$,
\begin{equation}\label{eq:bottom}
H_k^1\leq  \varepsilon_kh \quad \text{ if $m$ is odd}; \quad H_k^1\leq - \varepsilon_kh \quad \text{ if $m$ is even}.
\end{equation}

Let $f$ be a $C^{1,1}$ test function defined on $\Sigma$ with compact support in $\mc U$. Denote by 
\[ \varphi_k := u_k^m-u_k^1 \geq 0. \] 
The following are derived in Appendix \ref{appen:jacobi calculations}, in particular \eqref{eq:uniform estimates of coefficients}, \eqref{eq:estimates of sigmapm}, and \eqref{eq:height diff 2 order pde}.
\begin{lemma}\label{lem:small Wj}
There exist Lipschitz 2-tensors ${\bm\alpha}_k$, vector fields ${\bm\beta}_k$, and functions $\zeta_k$, $\sigma^1_k$ and $\sigma^m_k$ defined on $\mc U$, so that 
\begin{align}\label{eq:integral form}
\int_{\mc U}\lb{\nabla \varphi_k, \nabla f} & - \big(\Ric(\nu, \nu)+|A^\Sigma|^2\big) \varphi_k\cdot f\, \mr d\mc H^2(x) \\ \nonumber
& = \int_{\mc U} \bm\alpha_k(\nabla \varphi_k, \nabla f) + \bm\beta_k\cdot(\varphi_k\nabla f + f\nabla \varphi_k) + \zeta_k\varphi_k f\, \mr d\mc H^2(x)\\ \nonumber
& + \int_{\mc U} \big(H_{k}^m\cdot \sigma^m_k - H_{k}^1\cdot \sigma_k^1 \big)\cdot f\, \mr d\mc H^2(x),
\end{align}
where 
\begin{equation}\label{eq:uniform estimates for W_k}
\|\bm\alpha_k\|_{C^{0,1}(\mc U)}, \|\bm\beta_k\|_{C^{0,1}(\mc U)}, \|\zeta_k\|_{C^{0,1}(\mc U)}, \|\sigma^m_k-1\|_{C^{0,1}(\mc U)}, \|\sigma^1_k-1\|_{C^{0,1}(\mc U)} \to 0 \text{ as } k\to\infty.
\end{equation} 
Similarly, we also have for $v_k=u_k^m-u_k^{m-1}\geq 0$ (note that $v_k\leq \varphi_k$),
\begin{align}
\int_{\mc U}\langle\nabla v_k,\nabla f\rangle & -\big(\Ric(\nu,\nu)+|A^{\Sigma}|^2\big)v_k\cdot f\,\mr d\mc H^2(x) \label{eq:integral form for each iota}\\ 
& =\int_{\mc U} \oli{\bm\alpha}_k(\nabla v_k,\nabla f)+ \oli{\bm\beta}_k\cdot(v_k\nabla f + f\nabla v_k)+\oli{\zeta}_kv_kf\, \mr d\mc H^2(x) \nonumber\\
& +\int_{\mc U} \big(H_{k}^m\cdot \sigma_k^m - H_{k}^{m-1}\cdot \sigma_k^{m-1} \big)\cdot f\, \mr d\mc H^2(x),\nonumber
\end{align}
where 
\[ 
\|\oli{\bm\alpha}_k\|_{C^{0,1}(\mc U)}, \|\oli{\bm\beta}_k\|_{C^{0,1}(\mc U)}, \|\oli{\zeta}_k\|_{C^{0,1}(\mc U)}, \|\sigma^m_k-1\|_{C^{0,1}(\mc U)}, \|\sigma^{m-1}_k-1\|_{C^{0,1}(\mc U)} \to 0 \text{ as } k\to\infty. 
\] 
\end{lemma}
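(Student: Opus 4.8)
The plan is to carry out, in Fermi coordinates along $\Sigma$, a Taylor expansion of the (generalized) mean curvature operator of a normal graph about the zero section, and then to subtract the resulting weak identities for the top and bottom sheets. First I would record the quasilinear divergence structure of the mean curvature. In the Fermi chart $(x,z)\mapsto\exp_x(z\nu(x))$ each sheet $\Gamma_k^\iota$ is the normal graph of $u_k^\iota\in C^{1,1}(\mc U)$, and its generalized mean curvature $H_k^\iota$ w.r.t.\ the upward normal (defined $\mc H^2$-a.e.) satisfies, for every $f\in C^{1,1}_c(\mc U)$,
\begin{multline*}
\int_{\mc U}H_k^\iota\,\sigma_k^\iota\,f\,\mr d\mc H^2 = -\int_{\mc U}\Big(\langle\nabla u_k^\iota,\nabla f\rangle-\big(\Ric(\nu,\nu)+|A^\Sigma|^2\big)u_k^\iota f\Big)\,\mr d\mc H^2 \\
+\int_{\mc U}\Big(\mathbf p_k^\iota(\nabla u_k^\iota,\nabla f)+\mathbf q_k^\iota\cdot(u_k^\iota\nabla f+f\nabla u_k^\iota)+\mathbf r_k^\iota\,u_k^\iota f\Big)\,\mr d\mc H^2,
\end{multline*}
where $\sigma_k^\iota=\sigma(x,u_k^\iota,\nabla u_k^\iota)$ for a fixed smooth positive function $\sigma$ with $\sigma(\cdot,0,0)\equiv 1$, and $\mathbf p_k^\iota,\mathbf q_k^\iota,\mathbf r_k^\iota$ are obtained by evaluating fixed smooth $2$-tensor-, vector-, and scalar-valued functions $\mathbf p,\mathbf q,\mathbf r$ of $(x,z,p)$ at $(x,u_k^\iota,\nabla u_k^\iota)$, with $\mathbf p(\cdot,0,0)=\mathbf q(\cdot,0,0)=\mathbf r(\cdot,0,0)\equiv 0$. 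This identity is nothing but the first variation formula \eqref{eq:1st variation of Ah2} for $\Gamma_k^\iota$ tested against $f\,\partial/\partial z$ and rewritten over $\mc U$ (the divergence theorem for Lipschitz fields on a $C^{1,1}$ graph applies), using that $\Sigma$ is minimal, so the mean curvature operator of normal graphs has no zeroth-order term at $u\equiv 0$ and its full linearization there is the Jacobi operator $L_\Sigma=\Delta_\Sigma+\Ric(\nu,\nu)+|A^\Sigma|^2$; the remainder on the right collects the quadratic-and-higher part of the expansion of the coefficient functions together with the Jacobian factor $\sigma_k^\iota$ relating integration over $\Gamma_k^\iota$ to integration over $\mc U$.

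Next I would subtract the $\iota=1$ identity from the $\iota=m$ identity. Writing $\varphi_k=u_k^m-u_k^1\ge 0$: the leading parts combine exactly into the left-hand side of \eqref{eq:integral form}; the terms $\int H_k^\iota\sigma_k^\iota f$ combine into the source term $\int_{\mc U}(H_k^m\sigma_k^m-H_k^1\sigma_k^1)f\,\mr d\mc H^2$, which is kept intact (it is precisely this term whose sign will be read off later from \eqref{eq:top} and \eqref{eq:bottom}); and the two remainders combine into their difference. The key algebraic step is to re-express that difference so that only $\varphi_k$ and $\nabla\varphi_k$ — not the individual sheets — appear: for each coefficient function $\mathbf G\in\{\sigma,\mathbf p,\mathbf q,\mathbf r\}$ one uses $\mathbf G(x,u_k^m,\nabla u_k^m)-\mathbf G(x,u_k^1,\nabla u_k^1)=\big(\int_0^1\partial_z\mathbf G\,ds\big)\varphi_k+\big(\int_0^1\partial_p\mathbf G\,ds\big)\cdot\nabla\varphi_k$, the integrands being evaluated along the segment from $(x,u_k^1,\nabla u_k^1)$ to $(x,u_k^m,\nabla u_k^m)$, together with the bilinear identities $u_k^m\nabla u_k^m-u_k^1\nabla u_k^1=u_k^m\nabla\varphi_k+\varphi_k\nabla u_k^1$ and the like, and one integrates by parts against $f$ whenever a term would otherwise carry two derivatives of $\varphi_k$. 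Collecting terms produces Lipschitz $\bm\alpha_k$, $\bm\beta_k$, $\zeta_k$ realizing the right-hand side of \eqref{eq:integral form}; the sign hypothesis $\varphi_k\ge 0$ is used to route sign-indefinite pointwise bounds of the type $C|\nabla u_k^1|\,\varphi_k$ into $\bm\beta_k$ and $\zeta_k$.

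The remaining and, I expect, principal point is the verification of \eqref{eq:uniform estimates for W_k}, i.e.\ that $\|\bm\alpha_k\|_{C^{0,1}(\mc U)}$, $\|\bm\beta_k\|_{C^{0,1}(\mc U)}$, $\|\zeta_k\|_{C^{0,1}(\mc U)}$, $\|\sigma_k^m-1\|_{C^{0,1}(\mc U)}$, $\|\sigma_k^1-1\|_{C^{0,1}(\mc U)}\to 0$ — this is where the ``subtle details'' of the $C^{1,1}$ setting concentrate. After the above reductions, each of these coefficients is a finite sum of terms of the form $\Theta(x,u_k^\iota,\nabla u_k^\iota)$, $\Theta(x,u_k^\iota,\nabla u_k^\iota)\cdot\nabla u_k^\iota$, or $\Theta(x,u_k^\iota,\nabla u_k^\iota)\cdot\nabla^2 u_k^\iota$, with $\Theta$ a fixed smooth function that vanishes (or whose relevant first derivatives vanish) at the zero jet — the structural consequence of having already subtracted off the Jacobi operator, together with the fact that the $p$-dependence of the mean-curvature coefficients is quadratic near $p=0$, so that the part of the second-order term beyond $\Delta_\Sigma$ has a coefficient vanishing at $p=0$. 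Hence the $C^0$-norm of each coefficient tends to $0$ because $u_k^\iota\to 0$ in $C^1(\mc U)$, and its Lipschitz constant tends to $0$ because, in addition, $\|\nabla^2 u_k^\iota\|_{L^\infty(\mc U)}\to 0$ — which is exactly the strong $C^{1,1}_{loc}$-convergence of the sheets to $\Sigma$ furnished by Proposition~\ref{prop:compactness}(i) and the $C^{1,1}$-estimates of Proposition~\ref{prop:C11 estimates} (the limit $\Sigma$ being smooth, $C^{1,\alpha}_{loc}$-convergence upgrades to $C^{1,1}_{loc}$-convergence, whence $\nabla^2 u_k^\iota\to 0$ in $L^\infty_{loc}$). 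The main obstacle is thus organizational rather than conceptual: one must arrange the quadratic-and-higher remainder so that every occurrence of a Hessian $\nabla^2 u_k^\iota$ is multiplied, after at most one integration by parts, by a factor small in $C^{0,1}$, and so that the differences $\varphi_k,\nabla\varphi_k$ survive in place of the individual $u_k^\iota$. Finally, the identity \eqref{eq:integral form for each iota} for $v_k=u_k^m-u_k^{m-1}$ follows verbatim by subtracting the $\iota=m-1$ identity from the $\iota=m$ identity instead of the $\iota=1$ one, with $\mc U$, $h$, and all estimates unchanged, and $v_k\le\varphi_k$ is immediate from the ordering $u_k^1\le u_k^{m-1}\le u_k^m$.
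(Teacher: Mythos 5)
Your proposal is correct and follows essentially the same route as the paper's Appendix on Jacobi-type fields: write the weak first-variation identity for each graphical sheet, subtract the top and bottom sheets, apply the fundamental theorem of calculus along the linear interpolation $u_t=tu_k^m+(1-t)u_k^1$ to express the difference through coefficients depending on $(x,u_t,\nabla u_t)$, subtract the second variation at the zero section to isolate the Jacobi operator, and deduce \eqref{eq:uniform estimates for W_k} from $u_k^\iota\to 0$ in $C^{1,1}(\mc U)$. The only (harmless) difference is packaging: the paper keeps everything in the divergence form of the elliptic integrand $F(x,z,\bm p)$, so the coefficients $\bm\alpha_k,\bm\beta_k,\zeta_k$ involve no Hessians of $u_k^\iota$ pointwise and no extra integration by parts is needed, whereas you expand the mean curvature operator and must route the $\nabla^2u_k^\iota$ terms by hand.
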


We claim that the $L^2$-magnitude of the height differences should always dominate the size of parameters. We will use the height difference $v_k$ of the top two sheets. 
\begin{lemma}\label{lem:epsilon j upper bound}
There exists $c>0$ such that for all sufficiently large $k$,
\begin{equation}\label{eq:vj controls varepsilonj}
\varepsilon_k \leq c\cdot \|v_k\|_{L^2(\mc U)},\quad \text{and hence}\quad \varepsilon_k \leq c\cdot \|\varphi_k\|_{L^2(\mc U)}.   \end{equation} 
\end{lemma}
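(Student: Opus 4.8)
The plan is to argue by contradiction: suppose no such constant $c>0$ exists, so that along a subsequence (not relabeled) we have $\varepsilon_k/\|v_k\|_{L^2(\mc U)}\to\infty$, i.e. $\|v_k\|_{L^2(\mc U)}=o(\varepsilon_k)$. The strategy is to renormalize the height difference $v_k=u_k^m-u_k^{m-1}\geq 0$ and extract from \eqref{eq:integral form for each iota} a limiting equation that forces $h$ to have a sign on $\mc U$, contradicting assumption \ref{item:h changes sign}.

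First I would set $w_k := v_k/\|v_k\|_{L^2(\mc U)}$, so $\|w_k\|_{L^2(\mc U)}=1$. Dividing \eqref{eq:integral form for each iota} by $\|v_k\|_{L^2(\mc U)}$ gives, for every $C^{1,1}$ test function $f$ compactly supported in $\mc U$,
\[
\int_{\mc U}\langle\nabla w_k,\nabla f\rangle-\big(\Ric(\nu,\nu)+|A^\Sigma|^2\big)w_k f\,\mr d\mc H^2 = (\text{error terms with }\bm{\oli\alpha}_k,\bm{\oli\beta}_k,\oli\zeta_k\to 0) + \int_{\mc U}\frac{H_k^m\sigma_k^m-H_k^{m-1}\sigma_k^{m-1}}{\|v_k\|_{L^2(\mc U)}}\cdot f\,\mr d\mc H^2.
\]
Using the mean curvature bounds \eqref{eq:slice mean curvature bound}, namely $|H_k^m|,|H_k^{m-1}|\leq \varepsilon_k|h|$, together with $\|\sigma_k^m-1\|_{C^{0,1}},\|\sigma_k^{m-1}-1\|_{C^{0,1}}\to 0$, the last integrand is bounded in absolute value by $C\varepsilon_k|h|/\|v_k\|_{L^2(\mc U)}$, which by the contradiction hypothesis $\varepsilon_k/\|v_k\|_{L^2}\to\infty$ does \emph{not} obviously go to zero — this is the crux. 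To handle it I would instead use the \emph{sign} information: \eqref{eq:top} gives $H_k^m\geq\varepsilon_k h$, and \eqref{eq:Hm-1} gives $H_k^{m-1}\leq 0$ where $h>0$. On a subdomain $\mc U^+\subset\mc U$ where $h>0$ one gets $H_k^m-H_k^{m-1}\geq \varepsilon_k h>0$ (after absorbing $\sigma_k^{m-1}\approx 1$), so the right-hand side, divided by $\|v_k\|_{L^2}$, is a nonnegative measure of total mass $\geq c'\varepsilon_k/\|v_k\|_{L^2}\to\infty$; meanwhile the left-hand side applied to a fixed nonnegative test function stays bounded because $w_k$ is bounded in $W^{1,2}$ (standard elliptic estimates from the equation, since the error operators are small perturbations of $L_\Sigma$ and $\|w_k\|_{L^2}=1$). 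This is the desired contradiction.

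The one gap to close is the $W^{1,2}$ (or at least local $W^{1,2}$) bound on $w_k$: one runs a Caccioppoli-type estimate on \eqref{eq:integral form for each iota} divided by $\|v_k\|_{L^2}$, testing with $\eta^2 w_k$ for a cutoff $\eta$, and uses that the perturbation tensors $\bm{\oli\alpha}_k,\bm{\oli\beta}_k,\oli\zeta_k$ are uniformly small so the principal part stays uniformly elliptic; the mean-curvature term is controlled because, tested against $\eta^2 w_k\geq 0$ on $\mc U^+$, it has a favorable sign (or one simply bounds it by $C\varepsilon_k\|h\|_\infty\|\eta^2 w_k\|_{L^1}/\|v_k\|_{L^2}$ and notes that if this already blows up we are done, otherwise it is bounded). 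Carrying this through on a slightly smaller domain $\mc U'\subset\subset\mc U$ with $h$ still sign-changing inside gives $\|w_k\|_{W^{1,2}(\mc U')}\leq C$, hence weak subconvergence $w_k\rightharpoonup w_\infty$ with $\|w_\infty\|_{L^2}$ possibly zero — but the blow-up of the right side forces the contradiction regardless of whether $w_\infty$ vanishes.

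The main obstacle I expect is precisely this last point: making rigorous that the mean-curvature term, when divided by $\|v_k\|_{L^2(\mc U)}$ rather than by $\varepsilon_k$, genuinely produces an unbounded contribution that cannot be cancelled. The resolution is to exploit that $H_k^m-H_k^{m-1}$ has a \emph{definite sign} on $\{h>0\}$ (not merely a bound), so that testing \eqref{eq:integral form for each iota} against a fixed nonnegative $f$ supported in $\{h>0\}$ isolates a term $\geq c'\varepsilon_k\int f h\,\mr d\mc H^2 / \|v_k\|_{L^2}$ with $\int f h\,\mr d\mc H^2>0$ fixed, against a left-hand side that is $O(1)$; letting $k\to\infty$ yields $+\infty\leq$ (finite), which is absurd. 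Hence the assumed ratio blow-up is impossible and \eqref{eq:vj controls varepsilonj} holds; the second inequality in \eqref{eq:vj controls varepsilonj} is immediate from $v_k\leq\varphi_k$.
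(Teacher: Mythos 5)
Your overall strategy is the same as the paper's: argue by contradiction, exploit that $H_k^m-H_k^{m-1}\geq \varepsilon_k h$ on $\{h>0\}$ (from \eqref{eq:top} and \eqref{eq:Hm-1}), and derive $\int hf\leq 0$ against the fact that $h$ changes sign on $\mc U$. However, there is a genuine gap in your normalization. You set $w_k=v_k/\|v_k\|_{L^2(\mc U)}$ and need $\|w_k\|_{W^{1,2}(\mc U')}\leq C$ so that the left-hand side tested against a fixed $f$ is $O(1)$. But when you run the Caccioppoli estimate (test \eqref{eq:integral form for each iota} with $\eta^2 v_k$ and divide by $\|v_k\|_{L^2}^2$), the mean-curvature source term contributes
\[
\frac{1}{\|v_k\|_{L^2}^2}\int_{\mc U}\big|H_k^m\sigma_k^m-H_k^{m-1}\sigma_k^{m-1}\big|\,\eta^2 v_k\,\mr d\mc H^2 \;\leq\; C\,\frac{\varepsilon_k}{\|v_k\|_{L^2}}\int_{\mc U}\eta^2 w_k\,\mr d\mc H^2\;\leq\; C\,\frac{\varepsilon_k}{\|v_k\|_{L^2}},
\]
which is exactly the ratio you have assumed blows up. So the gradient bound you need is not available under the contradiction hypothesis, and your parenthetical escape (``if this already blows up we are done'') is not valid: a divergent \emph{upper} bound in a Caccioppoli inequality yields no contradiction and no information. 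The argument as written is circular.

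The paper resolves this by normalizing by $\varepsilon_k$ instead of by $\|v_k\|_{L^2(\mc U)}$. Then the source term in the Caccioppoli estimate becomes $\int v_k/\varepsilon_k\leq C(\|v_k/\varepsilon_k\|_{L^2}^2+1)$, which is bounded (indeed small) under the contradiction hypothesis $\|v_k\|_{L^2}/\varepsilon_k\to 0$; one gets $\|v_k/\varepsilon_k\|_{W^{1,2}(\mc U')}\leq C$, the weak limit $w$ is zero, and passing to the limit in \eqref{eq:integral form for each iota} divided by $\varepsilon_k$ against a fixed $f\geq 0$ supported in $\{h>0\}\cap\mc U'$ gives $0\geq\int hf>0$, the desired contradiction. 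Alternatively, you could salvage your normalization by integrating all derivatives onto the (smooth, compactly supported) test function, so that the left-hand side is controlled by $\|w_k\|_{L^2}=1$ alone; but as written your proof does not close, and the cleanest repair is the $\varepsilon_k$-normalization.
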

\begin{proof}
Suppose on the contrary that up to a subsequence, as $k\to \infty$,
\begin{equation}\label{eq:assume epsilon large}
(\|v_k\|_{L^2(\mc U)})/\varepsilon_k \to 0 .
\end{equation}
Then for any $\eta\in C^2_c(\mc U)$, by letting $f=\eta^2 v_k$ in \eqref{eq:integral form for each iota}, we obtain
\begin{align*}  
\int_{\mc U}|\nabla v_k|^2\eta^2\,\mr d\mc H^2\leq C\int_{\mc U}|\nabla \eta|^2|v_k|^2+|v_k|^2\eta^2&+\varepsilon_k \eta^2v_k \,\mr d\mc H^2\\
&+\frac{1}{10}\int_{\mc U} (|v_k||\nabla (\eta^2v_k)|+\eta^2|v_k||\nabla v_k|)\,\mr d\mc H^2.
\end{align*}
Simplifying it, we obtain
\[
\int_{\mc U}|\nabla v_k|^2\eta^2\,\mr d\mc H^2\leq C\int_{\mc U}|\nabla \eta|^2|v_k|^2+|v_k|^2\eta^2+\varepsilon_k v_k \,\mr d\mc H^2.
\]
Thus given $\mc U'\subset \subset \mc U$,
\[
\int_{\mc U'}|\nabla v_k/\varepsilon_k|^2\,\mr d\mc H^2\leq C(\mc U,\mc U',M,\sup|h|)\int_{\mc U}|v_k/\varepsilon_k|^2+1\,\mr d\mc H^2.
\]
It follows that up to a subsequence, $v_k/\varepsilon_k$ weakly converges to some $w$ weakly in $W^{1,2}(\mc U')$. Together with \eqref{eq:assume epsilon large}, we have that $w=0$.

Observe that by \eqref{eq:top} and \eqref{eq:Hm-1}, if $h(x,0)>0$, then for sufficiently large $k$,
\[H_k^m(x, u_k^m(x))-H_k^{m-1}(x, u_k^{m-1}(x))\geq  \varepsilon_kh(x, u_k^m(x)). \] 
Letting $k\to\infty$ in \eqref{eq:integral form for each iota}, we then get for any $f\in C^2_c(\{h>0\}\cap \mc U')$ with $f\geq 0$ and $f>0$ somewhere,
\[
\int_{\mc U'}hf\leq 0.
\]
Note that we can choose $\mc U'$ large so that $\{h>0\}\cap \mc U'\neq \emptyset$. This leads to a contradiction. Hence Lemma \ref{lem:epsilon j upper bound} is proved.
\end{proof}

Up to a subsequence without relabeling, we can assume that there exists $c\in [0,\infty)$, 
\begin{equation}\label{eq:ratio of varphi and varepsilon}
\lim_{k\to\infty} \varepsilon_k/ \|\varphi_k\|_{L^2(\mc U)} = c.
\end{equation}
Let $\wti \varphi_k=\varphi_k/\|\varphi_k\|_{L^2(\mc U)}$; (note that $\|\varphi_k\|_{L^2(\mc U)}>0$). We consider the limit of $\wti \varphi_k$ on $\mc U$ as $k\to \infty$. By taking an exhaustion $\{\mc U_k\}$ of $\Sigma\setminus \mc Y$, a diagonal argument will give a limit function defined on $\Sigma\setminus \mc Y$. We will also prove that such a limit is uniformly bounded (and hence non-trivial) and is a supersolution in the following sense.

\begin{lemma}\label{lem:solution away Y}
Up to subsequence, $\wti\varphi_k$ converges to a uniformly bounded $C^{1,\alpha}_c$-function $\varphi:\Sigma\setminus \mc Y\to [0, \infty)$ with $\|\varphi\|_{L^2(\Sigma\setminus \mc Y)}=1$. Furthermore, for any $f\in C^1_c(\Sigma\setminus \mc Y)$ and $f\geq 0$, we have
\begin{enumerate}
    \item if $m\geq 3$ is odd, then 
    \begin{equation}\label{eq:odd away Y}
    \int_{\Sigma\setminus \mc Y}\langle\nabla\varphi,\nabla f\rangle- \big(\Ric(\nu,\nu)+|A^{\Sigma}|^2\big)\varphi f\,\mr d\mc H^2 \geq0;
    \end{equation}
    \item if $m$ is even, then
    \begin{equation}\label{eq:even away Y}
    \int_{\Sigma\setminus \mc Y}\langle\nabla\varphi,\nabla f\rangle- \big(\Ric(\nu,\nu)+|A^{\Sigma}|^2\big)\varphi f\,\mr d\mc H^2 
     \geq 2c\int_{\Sigma\setminus \mc Y}hf\,\mr d\mc H^2.
    \end{equation}
\end{enumerate}
\end{lemma}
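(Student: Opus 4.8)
\textbf{Proof plan for Lemma \ref{lem:solution away Y}.}

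The plan is to run a diagonal argument over an exhaustion together with interior elliptic estimates applied to the identity \eqref{eq:integral form}. First I would fix an exhaustion $\mc U_1\subset\subset \mc U_2\subset\subset \cdots$ of $\Sigma\setminus\mc Y$ by open sets with smooth boundary. For each fixed $\ell$, the rescaled height differences $\wti\varphi_k = \varphi_k/\|\varphi_k\|_{L^2(\mc U_\ell)}$ (or, more carefully, $\varphi_k/\|\varphi_k\|_{L^2(\mc U)}$ for a fixed large $\mc U$, normalized once and for all) are nonnegative $C^{1,1}$ functions; plugging $f=\eta^2\wti\varphi_k$ into \eqref{eq:integral form} with a cutoff $\eta$ supported in $\mc U_{\ell+1}$ and equal to $1$ on $\mc U_\ell$, and absorbing the error terms using the $C^{0,1}\to 0$ smallness of $\bm\alpha_k,\bm\beta_k,\zeta_k$ and the bounds $|H_k^m|,|H_k^1|\le\varepsilon_k|h|$ together with \eqref{eq:vj controls varepsilonj} to control $\varepsilon_k/\|\varphi_k\|_{L^2}$ by a constant, gives a uniform Caccioppoli inequality $\int_{\mc U_\ell}|\nabla\wti\varphi_k|^2 \le C(\ell)\int_{\mc U_{\ell+1}}(\wti\varphi_k^2+1)$. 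I would then need an $L^2$ bound on $\wti\varphi_k$ over each $\mc U_\ell$: this does \emph{not} follow from the normalization alone (the $L^2$ mass could escape to $\mc Y$), so I would invoke the subsolution/supersolution structure and the Harnack inequality for the operator on $\Sigma$ — since $\wti\varphi_k\ge 0$ satisfies $L_{\Sigma}\wti\varphi_k \ge -C\varepsilon_k|h|/\|\varphi_k\|_{L^2} + (\text{small corrections})$, a De Giorgi–Nash–Moser argument bounds $\sup_{\mc U_\ell}\wti\varphi_k$ by $C(\ell)(\|\wti\varphi_k\|_{L^2(\mc U_{\ell+1})} + \varepsilon_k/\|\varphi_k\|_{L^2})$. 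Bootstrapping this up the exhaustion starting from $\|\wti\varphi_k\|_{L^2(\mc U)}\le 1$ yields a uniform $L^\infty(\mc U_\ell)$ bound for each $\ell$, hence via the Caccioppoli inequality a uniform $W^{1,2}(\mc U_\ell)$ bound, and then by Schauder/Calderón–Zygmund (the right-hand side of \eqref{eq:integral form} tends to $0$ in an appropriate norm and the zeroth-order coefficient is bounded) a uniform $C^{1,\alpha}(\mc U_\ell)$ bound.

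With these local bounds in hand, a diagonal subsequence of $\wti\varphi_k$ converges in $C^{1,\alpha/2}_{loc}(\Sigma\setminus\mc Y)$ to a nonnegative function $\varphi\in C^{1,\alpha}_{loc}(\Sigma\setminus\mc Y)$. Passing to the limit in \eqref{eq:integral form} with a test function $f\in C^1_c(\Sigma\setminus\mc Y)$, $f\ge 0$: the left side converges to $\int\lb{\nabla\varphi,\nabla f}-(\Ric(\nu,\nu)+|A^\Sigma|^2)\varphi f$; the $\bm\alpha_k,\bm\beta_k,\zeta_k$ terms vanish by \eqref{eq:uniform estimates for W_k}; and the crucial boundary term $\int(H_k^m\sigma_k^m - H_k^1\sigma_k^1)f$ is handled by the sign information in \eqref{eq:top} and \eqref{eq:bottom}. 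When $m$ is odd, $\nu_k$ points upward along \emph{both} top and bottom sheets, so \eqref{eq:top} gives $H_k^m\ge\varepsilon_k h$ and \eqref{eq:bottom} gives $H_k^1\le\varepsilon_k h$, whence $H_k^m\sigma_k^m - H_k^1\sigma_k^1 \ge \varepsilon_k h(\sigma_k^m-\sigma_k^1) + (\text{terms}\ge 0)$; dividing by $\|\varphi_k\|_{L^2}$ and using $\sigma_k^m-\sigma_k^1\to 0$ in $C^{0,1}$ and $\varepsilon_k/\|\varphi_k\|_{L^2}\to c$, this term tends to $0$, giving \eqref{eq:odd away Y}. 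When $m$ is even, $\nu_k$ points downward along $\Gamma_k^1$, so \eqref{eq:bottom} reads $H_k^1\le-\varepsilon_k h$ with the normal flipped; carrying the orientation flip through the definition of $\sigma_k^1$ (this is where one must be careful about which unit normal $H_k^1$ is computed against in \eqref{eq:integral form}), the combination becomes $H_k^m\sigma_k^m - H_k^1\sigma_k^1 \ge 2\varepsilon_k h\cdot(1+o(1))$, and after normalization the limit of this term is $2c\int hf$, giving \eqref{eq:even away Y}.

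The remaining points are: (i) $\|\varphi\|_{L^2(\Sigma\setminus\mc Y)}=1$ — I would get ``$\le 1$'' from Fatou and the normalization, and ``$\ge 1$'', i.e.\ no $L^2$-concentration at $\mc Y$, from the uniform local $L^\infty$ bounds just established (which a posteriori force $\int_{\mc U_\ell}\wti\varphi_k^2 \to \int_{\mc U_\ell}\varphi^2$ and prevent mass escaping to the finitely many removable points — alternatively one re-normalizes over a fixed large $\mc U$ and absorbs the small missing piece); (ii) $\varphi$ has compact support in the sense that $\varphi\in C^{1,\alpha}_c$ — here ``$c$'' should be read as: $\varphi$ extends $C^{1,\alpha}$ across $\mc Y$ to all of $\Sigma$, which follows from the uniform $L^\infty$ bound near each $p\in\mc Y$ plus the standard removable-singularity argument for the (sub/super)solution inequality on a punctured disk, since a point has zero $W^{1,2}$-capacity in dimension $2$; (iii) $\varphi\ge 0$ is immediate from $\wti\varphi_k\ge 0$. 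The main obstacle I expect is step (i)–(ii) combined: ruling out that the entire $L^2$-mass of $\wti\varphi_k$ escapes into shrinking neighborhoods of $\mc Y$, so that the limit $\varphi$ is genuinely nontrivial. This is exactly where the Harnack/local-boundedness estimate (rather than mere $W^{1,2}$ weak convergence) is indispensable — without it the diagonal limit could be identically zero. A secondary subtlety is bookkeeping the unit-normal conventions in the even case so that the constant $2c$ (and not $-2c$ or $c$) appears in \eqref{eq:even away Y}; this must be reconciled with the earlier normalization that $\Omega_k$ does not contain the region above $\Gamma_k^m$.
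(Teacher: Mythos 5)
Your overall architecture coincides with the paper's: normalize over a fixed $\mc U$, apply interior De Giorgi--Nash--Moser/H\"older and then $C^{1,\alpha}$ estimates to the renormalized identity \eqref{eq:integral form} (the paper cites \cite{GT}*{Theorems 8.24 and 8.32}), extract a diagonal $C^{1,\alpha}_{loc}$ limit over an exhaustion of $\Sigma\setminus\mc Y$, and pass to the limit in the mean-curvature term using \eqref{eq:top}, \eqref{eq:bottom}, \eqref{eq:uniform estimates for W_k} and Lemma \ref{lem:epsilon j upper bound}; your sign bookkeeping in both the odd and even cases is correct and matches \eqref{eq:odd:low bound for mean curvature difference} and the computation leading to \eqref{eq:<2ch on U}.

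There is, however, a genuine gap exactly at the point you flag as the ``main obstacle'': the uniform pointwise bound on $\wti\varphi_k$ near the punctures $\mc Y$, which is what both the claim ``uniformly bounded'' and the claim $\|\varphi\|_{L^2(\Sigma\setminus\mc Y)}=1$ require. Your proposed mechanism --- Caccioppoli plus De Giorgi--Nash--Moser/Harnack bootstrapped up the exhaustion --- only yields $\sup_{\mc U_\ell}\wti\varphi_k\leq C(\ell)$ with constants $C(\ell)$ that blow up as $\mc U_\ell$ exhausts $\Sigma\setminus\mc Y$ (a Harnack chain reaching into an annulus $B_{3r}(y)\setminus B_{\rho}(y)$ has length of order $\log(r/\rho)$, so the constant degenerates as $\rho\to 0$). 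Consequently these interior estimates cannot rule out that $\wti\varphi_k$ blows up, or that its $L^2$ mass concentrates, in shrinking neighborhoods of $\mc Y$; ``a posteriori'' convergence of $\int_{\mc U_\ell}\wti\varphi_k^2$ for each fixed $\ell$ does not prevent loss of mass at $\mc Y$ when the normalization is only $\|\wti\varphi_k\|_{L^2(\mc U)}=1$ for limits taken on larger sets. The paper closes this gap with a different, geometric tool: near each $y\in\mc Y$ one uses the constant mean curvature foliation of \cite{Zhou19}*{Proposition D.1} and the sliding argument of \cite{Zhou19}*{Page 802, Part 7} to prove $\wti\varphi_k(x)\leq C\big(\sup_{\partial B_{3r}(y)}\wti\varphi_k+1\big)$ for all $x$ in the punctured ball, uniformly in $k$ and uniformly up to the puncture. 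This maximum-principle/barrier estimate on the sheet separation is not a consequence of the linear elliptic theory for \eqref{eq:integral form}, and without it (or a substitute) your proof of the uniform bound and of $\|\varphi\|_{L^2(\Sigma\setminus\mc Y)}=1$ does not go through. (A minor additional remark: the $C^{1,\alpha}$ extension of $\varphi$ across $\mc Y$ that you sketch is not part of this lemma; it is carried out in Proposition \ref{prop:existence of supsolution} via a $W^{1,2}$ bound obtained from \eqref{eq:integral form} with log-cutoffs.)
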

\begin{proof}
Recall that by \eqref{eq:slice mean curvature bound}, we always have
\begin{equation}\label{eq:H difference upper bound}
    |H_k^m| + |H_k^1| \leq \varepsilon_k\big(|h(x, u_k^m(x))| + |h(x, u_k^1(x))|\big).
\end{equation}
Therefore, if we renormalize the weak equation \eqref{eq:integral form} by $\|\varphi_k\|_{L^2(\mc U)}$, the renormalized terms of $(H_k^m\cdot \sigma_k^m -H_k^{1}\cdot\sigma_k^1)$  will have uniform $L^\infty$ upper bound by \eqref{eq:uniform estimates for W_k} and \eqref{eq:vj controls varepsilonj}. 
Then by applying the interior H\"older estimates \cite{GT}*{Theorem 8.24} to the renormalized weak equation of \eqref{eq:integral form}, we have for any open domain $\mc U'\subset\subset \mc U$, 
\[  
\|\wti \varphi_k\|_{C^{\alpha}(\overline{\mc U'})}\leq C(\|\wti\varphi_k\|_{L^2(\mc U)}+1)\leq C,
\]
where $C$ is a constant independent of $k$. Applying the $C^{1,\alpha}$-estimates \cite{GT}*{Theorem 8.32}, we know that a subsequence of $\wti\varphi_k$ converges to a nonnegative function $\varphi\in C^{1,\alpha}(\mc U)$ in the sense of $C^{1,\alpha}_{loc}(\mc U)$. Note that above argument works for any $\mc U\subset\subset\Sigma\setminus\mc Y$. Taking an exhaustion of $\Sigma\setminus \mc Y$, we can extend $\varphi$ to $\Sigma\setminus \mc Y$. Next we will show that $\varphi$ is a supersolution.

\medskip
{\em Now we first consider the case when $m\geq 3$ is odd.} By \eqref{eq:top} and \eqref{eq:bottom}, we know that for $\mc H^n$-a.e. $x\in U$, 
\begin{equation}\label{eq:odd:low bound for mean curvature difference}
    H_k^m-H_k^1\geq \varepsilon_k\big(h(x, u_k^m(x)) - h(x, u_k^1(x))\big) \geq  -C\varepsilon_k\varphi_k,
\end{equation}
for some constant $C$ independent of $k$. Plugging \eqref{eq:odd:low bound for mean curvature difference} and \eqref{eq:uniform estimates for W_k} into \eqref{eq:integral form}, and then taking the limit, we have 
\begin{equation*}
\int_{\mc U}\langle\nabla\varphi,\nabla f\rangle- \big(\Ric(\nu,\nu)+|A^{\Sigma}|^2\big)\varphi f\,\mr d\mc H^n\geq 0, \quad \forall f\in C^1_c(U) \text{ and } f\geq 0.
\end{equation*}
Taking an exhaustion of $\Sigma\setminus \mc Y$, we can extend $\varphi$ to $\Sigma\setminus \mc Y$ satisfying \eqref{eq:odd away Y}.

\medskip
{\em Next we consider the case when $m\geq 2$ is even.} Recall that by \eqref{eq:top} and \eqref{eq:bottom},
\[ H_k^m - H_k^1 \geq \varepsilon_k \big( h(x,u_k^m(x)) + h(x,u_k^1(x))\big).  \]
Plugging it into \eqref{eq:integral form} and using \eqref{eq:uniform estimates for W_k}, and then taking the limit, we have 
\begin{equation}\label{eq:<2ch on U}
     \int_{\mc U}\langle\nabla\varphi,\nabla f\rangle- \big(\Ric(\nu,\nu)+|A^{\Sigma}|^2\big)\varphi f\,\mr d\mc H^n\geq  2c\int_{\mc U} hf\,\mr d\mc H^n , \quad \forall f\in C^1_c(U) \text{ and } f\geq 0. 
     \end{equation}
Using an exhaustion of $\Sigma\setminus \mc Y$ again, we can extend $\varphi$ to $\Sigma\setminus \mc Y$ satisfying \eqref{eq:even away Y}.

\medskip
It remains to show that $\wti\varphi_k$ is pointwisely bounded independent of $k$. We sketch the proof when $m$ is even and the other case is similar. Taking $r$ small enough so that the constant mean curvature foliation \cite{Zhou19}*{Proposition D.1} exists in a neighborhood of $B_{3r}(y)\cap \Sigma$ for each $y\in \mc Y$. Then by the argument above for $\mc U=\Sigma\setminus \cup_{y\in\mc Y} \overline B_{r}(y)$, $\mc U'=\Sigma\setminus \cup_{y\in \mc Y}\overline B_{2r}(y)$, there exists a function $\varphi\in C^{1,\alpha}(\mc U')$ satisfying \eqref{eq:<2ch on U}. Note that by a standard argument \cite{Zhou19}*{Page 802, Part 7}, one can prove that for any $x\in \mr{dom}(\wti\varphi_k)\cap B_{3r}(y)\setminus \{y\}$,
\[ \wti \varphi_k(x)\leq C(\sup_{\partial B_{3r}(y)}\wti \varphi_k + 1). \]
Thus, $\wti\varphi_k$ is uniformly bounded independent of $k$. This implies that the $L^2$-norm of $\wti\varphi_k$ cannot concentrate near $\mc Y$, and hence we must have $\|\varphi\|_{L^2(\Sigma\setminus \mc Y)}=1$. 
This completes the proof of Lemma \ref{lem:solution away Y}.
\end{proof}

In the next, we prove that $\varphi\in W^{1,2}(\Sigma\setminus \mc Y)$, which implies that $\varphi$ can be extended across $\mc Y$.
\begin{proposition}\label{prop:existence of supsolution}
Let $\{(\Sigma_k, \Omega_k\}_{k\in\mb N}$ be a sequence of strongly $\mc A^{ \varepsilon_kh}$-stationary, $C^{1, 1}$ $ \varepsilon_k h$-boundary in $(M,g)$ with $\varepsilon_k\to 0$ as $k\to 0$. Suppose that $\Sigma_k$ converges as varifolds to a closed, embedded, two-sided, minimal surface $\Sigma$ with multiplicity $m\geq 2$. Suppose in addition that the convergence is $C^{1, 1}_{loc}$ away from a finite set $\mc Y$. Then $\Sigma$ admits a nonnegative function $\varphi\in W^{1,2}(\Sigma)$ with $\|\varphi\|_{L^2(\Sigma)}=1$ and a constant $c\geq 0$ satisfying 
\begin{equation}\label{eq:full ineq on Sigma} 	\int_{\Sigma}\langle\nabla\varphi,\nabla f\rangle - \big(\Ric(\nu,\nu) + |A^{\Sigma}|^2\big)\varphi f\,\mr d\mc H^2 \geq \int_{\Sigma}2c hf\,\mr d\mc H^2, \ \forall f\in C^1(\Sigma) \text{ and } f\geq 0.
\end{equation}
Here $c=0$ if $m\geq 3$ is odd.
\end{proposition}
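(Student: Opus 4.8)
The plan is to take the function $\varphi$ produced on $\Sigma\setminus\mc Y$ by Lemma \ref{lem:solution away Y} and remove the finite singular set $\mc Y$ by a logarithmic cutoff argument, using crucially that $\varphi$ is bounded and that $\dim\Sigma=2$.

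First I would invoke Lemma \ref{lem:solution away Y}: after passing to a subsequence it provides a constant $c\geq 0$ (with $c=0$ when $m\geq 3$ is odd by \eqref{eq:odd away Y}, and $c\geq 0$ in general as the limit in \eqref{eq:ratio of varphi and varepsilon}) and a nonnegative $\varphi\in C^{1,\alpha}_{loc}(\Sigma\setminus\mc Y)$ which is uniformly bounded, has $\|\varphi\|_{L^2(\Sigma\setminus\mc Y)}=1$, and satisfies the inequality \eqref{eq:full ineq on Sigma} with $\Sigma\setminus\mc Y$ in place of $\Sigma$, for every nonnegative $f\in C^1_c(\Sigma\setminus\mc Y)$, and hence (by mollification) for every nonnegative Lipschitz $f$ with compact support in $\Sigma\setminus\mc Y$. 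Since $\Sigma$ is a smooth closed minimal surface and $\varphi\in L^\infty$, the function $g:=\big(\Ric(\nu,\nu)+|A^\Sigma|^2\big)\varphi+2ch$ lies in $L^\infty(\Sigma)$, so $\varphi$ is a bounded nonnegative weak supersolution $-\Delta_\Sigma\varphi\geq g$ on $\Sigma\setminus\mc Y$.

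The core step is to prove $\varphi\in W^{1,2}(\Sigma)$ via a Caccioppoli-type estimate near $\mc Y$. Put $\Phi:=\sup_{\Sigma\setminus\mc Y}\varphi<\infty$, and for small $\rho>0$ let $\eta_\rho$ be the standard two-dimensional logarithmic cutoff: $0\leq\eta_\rho\leq 1$, $\eta_\rho\equiv 1$ on $\Sigma\setminus\bigcup_{y\in\mc Y}B_\rho(y)$, $\eta_\rho\equiv 0$ on $\bigcup_{y\in\mc Y}B_{\rho^2}(y)$, and $\int_\Sigma|\nabla\eta_\rho|^2\,\mr d\mc H^2\to 0$ as $\rho\to 0$ (it is here that $\dim\Sigma=2$, i.e. the vanishing $W^{1,2}$-capacity of points, enters). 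Testing the weak inequality with the nonnegative Lipschitz function $f=\eta_\rho^2(\Phi-\varphi)$ (legitimate since $\Phi-\varphi\geq 0$ and $f$ vanishes near $\mc Y$), expanding $\lb{\nabla\varphi,\nabla(\eta_\rho^2(\Phi-\varphi))}=-\eta_\rho^2|\nabla\varphi|^2+2\eta_\rho(\Phi-\varphi)\lb{\nabla\varphi,\nabla\eta_\rho}$, and absorbing the cross term with Cauchy--Schwarz together with $\|g\|_{L^\infty}<\infty$, $\|\varphi\|_{L^\infty}<\infty$ and $\mc H^2(\Sigma)<\infty$, yields
\[ \int_\Sigma\eta_\rho^2|\nabla\varphi|^2\,\mr d\mc H^2 \leq C + 2(\Phi+\|\varphi\|_{L^\infty})\Big(\int_\Sigma\eta_\rho^2|\nabla\varphi|^2\,\mr d\mc H^2\Big)^{1/2}\Big(\int_\Sigma|\nabla\eta_\rho|^2\,\mr d\mc H^2\Big)^{1/2}, \]
with $C$ independent of $\rho$; this forces $\int_\Sigma\eta_\rho^2|\nabla\varphi|^2\leq C'$ uniformly in $\rho$, and letting $\rho\to 0$ gives $\int_{\Sigma\setminus\mc Y}|\nabla\varphi|^2\,\mr d\mc H^2<\infty$. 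Since $\mc Y$ is finite, hence removable for $W^{1,2}$, and $\varphi\in L^\infty$, we conclude $\varphi\in W^{1,2}(\Sigma)$, with $\|\varphi\|_{L^2(\Sigma)}=\|\varphi\|_{L^2(\Sigma\setminus\mc Y)}=1$ as $\mc Y$ is a null set.

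Finally I would upgrade the inequality to hold across $\mc Y$. For any nonnegative $f\in C^1(\Sigma)$, test the weak inequality on $\Sigma\setminus\mc Y$ with $\eta_\rho f$ and let $\rho\to 0$: the terms $\int\eta_\rho\lb{\nabla\varphi,\nabla f}$, $\int\big(\Ric(\nu,\nu)+|A^\Sigma|^2\big)\varphi\eta_\rho f$ and $\int 2ch\eta_\rho f$ converge to the corresponding integrals over $\Sigma$ by dominated convergence (using $\nabla\varphi\in L^2(\Sigma)$), while the new error term $\int f\lb{\nabla\varphi,\nabla\eta_\rho}$ is controlled by $\|f\|_{L^\infty}\,\|\nabla\varphi\|_{L^2(\{\nabla\eta_\rho\neq 0\})}\,\|\nabla\eta_\rho\|_{L^2}\to 0$. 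This gives \eqref{eq:full ineq on Sigma}, with $c=0$ in the odd case and $c\geq 0$ in general. I expect the only genuinely nontrivial point to be the Caccioppoli estimate of the third paragraph; its one subtlety is the use of the test function $\eta_\rho^2(\Phi-\varphi)$ rather than $\eta_\rho^2\varphi$, which is forced by the sign in the supersolution inequality and is possible precisely because $\varphi$ is bounded by Lemma \ref{lem:solution away Y}.
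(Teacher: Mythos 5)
Your proposal is correct, but the key step --- the uniform bound on $\int_{\Sigma\setminus\mc Y}|\nabla\varphi|^2$ --- is obtained by a genuinely different route than the paper's. The paper explicitly remarks that the limiting supersolution inequalities \eqref{eq:odd away Y}--\eqref{eq:even away Y} are ``not enough'' to derive the gradient bound, and therefore returns to the pre-limit \emph{identity} \eqref{eq:integral form}, tests it with $\eta_r^2\varphi_k$, controls the right-hand side via the mean curvature estimate \eqref{eq:H difference upper bound} and the comparison \eqref{eq:vj controls varepsilonj}, and only then passes to the limit. You instead work entirely at the level of the limit $\varphi$, using the test function $\eta_\rho^2(\Phi-\varphi)$ with $\Phi=\sup\varphi$; this is the standard Caccioppoli estimate for \emph{bounded} nonnegative supersolutions, and it is valid here precisely because Lemma \ref{lem:solution away Y} supplies the uniform $L^\infty$ bound (without boundedness the supersolution property alone genuinely fails to give $W^{1,2}$ --- e.g.\ $-\log|x|$ in two dimensions --- which is what the paper's remark is really about). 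Your argument is arguably cleaner, since it avoids re-invoking the approximating sequence and the estimates \eqref{eq:H difference upper bound}, \eqref{eq:vj controls varepsilonj}; what the paper's route buys is that it never needs the $L^\infty$ bound on $\varphi$ for the Caccioppoli step itself (though that bound is established in Lemma \ref{lem:solution away Y} anyway and is used to conclude $\varphi\in W^{1,2}(\Sigma)$ and $\|\varphi\|_{L^2}=1$). The remaining ingredients of your write-up --- extension of the test class to compactly supported nonnegative Lipschitz functions by mollification, the two-dimensional logarithmic cutoff, removability of the finite set $\mc Y$ for $W^{1,2}$, and the final passage $\eta_\rho f\to f$ using $\nabla\varphi\in L^2$ to kill the error term $\int f\langle\nabla\varphi,\nabla\eta_\rho\rangle$ --- all match the paper's concluding step and are correct.
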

\begin{proof}
We will first show that there is a constant $C>0$ such that 
\begin{equation}\label{eq:gradient L2 bound}
\int_{\Sigma\setminus \mc Y}|\nabla \varphi|^2\,\mr d\mc H^2\leq C\int_{\Sigma\setminus \mc Y}\varphi^2+1\,\mr d\mc H^2.  
\end{equation}
Together with the fact that $\varphi$ is uniformly bounded, we conclude that $\varphi$ could be extended to be a function $\varphi\in W^{1,2}(\Sigma)$. Hence \eqref{eq:full ineq on Sigma} can be derived from \eqref{eq:odd away Y} and \eqref{eq:even away Y} using a standard log-cutoff trick.

In general, the supersolution inequalities \eqref{eq:odd away Y} and \eqref{eq:even away Y} are not enough to derive the bound \eqref{eq:gradient L2 bound}. Instead, we will use the identity \eqref{eq:integral form} together with the mean curvature estimates \eqref{eq:H difference upper bound} and the comparison estimates \eqref{eq:vj controls varepsilonj} to prove the desired bound \eqref{eq:gradient L2 bound}. In fact, let $\{\eta_r\}_{r>0}$ be a family of log-cut-off functions so that $\eta_r=0$ in $\cup_{y\in \mc Y}B_r(y)$, and as $r\to 0$,
\[
0\leq \eta_r\leq 1, \quad \eta_r\to 1 \text{ on } \Sigma\setminus\mc Y, \quad \text{and} \quad \int_\Sigma|\nabla \eta_r|^2\to 0. 
\]
Then by taking $f=\eta_r^2\varphi_k$ in \eqref{eq:integral form} and then applying \eqref{eq:H difference upper bound} and \eqref{eq:vj controls varepsilonj}, we obtain that for all sufficiently large $k$,
\begin{align*}
\int_{\Sigma\setminus \mc Y}|\nabla  \varphi_k|^2\cdot \eta_r^2\,\mr d\mc H^2&\leq C\int_{\mc U}|\nabla \eta_r|^2|\varphi_k|^2+|\varphi_k|^2\eta_r^2+\varepsilon_k\varphi_k\cdot \eta_r^2\,\mr d\mc H^2+\\
&\hspace{10em}   +\frac{1}{10}\int_{\mc U} \eta_r^2\varphi_k|\nabla \varphi_k|+\varphi_k|\nabla (\eta_r^2 \varphi_k)|\,\mr d\mc H^2\\
&\leq C\int_{\mc U}|\nabla \eta_r|^2|\varphi_k|^2+|\varphi_k|^2\eta_r^2+\varepsilon_k\varphi_k\,\mr d\mc H^2+\frac{1}{2}\int_{\mc U}|\nabla\varphi_k|^2\cdot\eta_r^2\,\mr d\mc H^2.
\end{align*}
Simplifying it and using \eqref{eq:ratio of varphi and varepsilon}, we obtain
\[  \int_{\Sigma\setminus \mc Y}|\nabla \wti \varphi_k|^2\cdot \eta_r^2\,\mr d\mc H^2\leq C\int_{\Sigma\setminus \mc Y}\wti \varphi_k^2(\eta_r^2+|\nabla \eta_r|^2)+(c+1)\wti \varphi_k\,\mr d\mc H^2.     \]
Taking $k\to \infty$, it follows that
\[  \int_{\Sigma\setminus \mc Y} |\nabla \varphi|^2\cdot \eta_r^2\,\mr d\mc H^2\leq C\int_{\Sigma\setminus \mc Y}\varphi^2(\eta_r^2+|\nabla \eta_r|^2)+1\,\mr d\mc H^2.
\]
Recall that $\varphi$ is uniformly bounded. Hence, as $r\to0$, \eqref{eq:gradient L2 bound} follows immediately. This completes the proof of Proposition \ref{prop:existence of supsolution}.
\end{proof}
\begin{remark}
Let $\Sigma_k$ be the same as in Proposition \ref{prop:existence of supsolution}. Suppose that the limit surface $\Sigma$ is one-sided. Then by the same argument, the connected double cover $\wti \Sigma$ of $\Sigma$ admits a non-negative function $\varphi\in W^{1,2}(\wti\Sigma)$ satisfying \eqref{eq:full ineq on Sigma}.
\end{remark}

\section{Multiplicity one for Simon-Smith min-max theory}\label{sec:multiplicity one}

In this section, we will prove two multiplicity one theorems in the Simon-Smith setting, that is, for relative min-max in the space of oriented separating surfaces in Section \ref{ss:multiplicity one theorem for relative min-max}, and for the classical min-max in the space of un-oriented surfaces in Section \ref{SS:multiplicity one for Simon-Smith}.

\subsection{Multiplicity one for relative Simon-Smith min-max}\label{ss:multiplicity one theorem for relative min-max}
In this part, we will show how to choose the correct prescribing function $h$ so as prove the first multiplicity one type result. Recall that the space $\ms E$ of embedded separating surfaces of genus $\mk g_0$ is defined in \eqref{eq:ms E}.

We have the following compactness for minimal surfaces with bounded area and satisfying Property ${\bf(R')}$. 
\begin{theorem}\label{thm:compactness}
Let $L$ be a positive integer and $\Lambda>0$ be a constant. Let $\Sigma_k$ be a sequence of closed, embedded, minimal surfaces satisfying
\begin{itemize}
    \item $\mc H^2 (\Sigma_k)\leq \Lambda$, and
    \item Property {\bf(R')} \eqref{eq:property R2} (with $\Sigma_k$ in place of $V_\infty$ and $L$ given in the assumption).
\end{itemize}
Then $\Sigma_k$ converges subsequentially to a closed, embedded, minimal surface $\Sigma$ possibly counted with integer multiplicity in the sense of varifolds. Furthermore, if $\Sigma_k\neq \Sigma$ for infinitely many $k$, then $\Sigma$ is degenerate.
\end{theorem}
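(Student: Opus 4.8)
The plan is to obtain the convergence half of the statement from the same machinery as in Section~\ref{sec:passing to limit}, now applied directly to the area functional, and then to extract a nontrivial Jacobi field on the limit whenever $\Sigma_k\neq\Sigma$ for infinitely many $k$. For the compactness: by the standard compactness theory for integral varifolds with uniformly bounded mass (Allard \cite{Al75}), after passing to a subsequence $\Sigma_k\to V$ as varifolds for a stationary integral $2$-varifold $V$. Property \textbf{(R')} combined with Schoen's curvature estimates for stable minimal surfaces \cites{Schoen83, SS} controls the local geometry exactly as in Proposition~\ref{prop:annulus picking for Ak almost minimizing}: around every $p\in M$, after a diagonal subsequence (via Proposition~\ref{prop:general property}) there is $r_p>0$ so that each $\Sigma_k$ is stable in $\mathrm{An}(p;s,r)$ for all $0<s<r<r_p$ and all large $k$; Schoen's estimate then bounds $|A^{\Sigma_k}|$ on every fixed sub-annulus, while the maximum principle \cite{Whi10} rules out curvature accumulation at $\partial B_{r_p}(p)$, so $\sup_{\Sigma_k\cap(B_{r_p}(p)\setminus\{p\})}|A^{\Sigma_k}|\le C_p$ for large $k$. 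Covering the compact manifold $M$ by finitely many such balls gives a finite set $\mc Y=\{p_1,\dots,p_N\}$ off which $(|A^{\Sigma_k}|)$ is locally uniformly bounded, so $\Sigma_k\to\spt\|V\|$ in $C^\infty_{loc}(M\setminus\mc Y)$ with locally constant integer multiplicity; the removable singularity theorem (finite area plus locally bounded curvature) extends $\spt\|V\|$ smoothly across each $p_i$. Thus $V=\sum_{i}m_i[\Gamma_i]$ with $\{\Gamma_i\}$ a pairwise disjoint collection of connected, closed, embedded minimal surfaces, which is the first assertion.

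For the degeneration, assume $\Sigma_k\neq\Sigma$ for infinitely many $k$, pass to such a subsequence, and first treat the case where the convergence to some component $\Gamma:=\Gamma_i$ has multiplicity $m\ge 2$. On a thickened neighborhood of an open $\mc U\subset\subset\Gamma\setminus\mc Y$ write $\Sigma_k$ as $m$ ordered normal graphs $u_k^1\le\cdots\le u_k^m$ with $u_k^\iota\to 0$ in $C^\infty(\mc U)$, and set $v_k:=u_k^m-u_k^{m-1}\ge 0$. Since $\Sigma_k$ is embedded, $v_k\not\equiv 0$ (if $v_k$ vanished on an open set, unique continuation for the minimal surface equation would force the top two sheets to coincide). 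Subtracting the graphical minimal surface equations of the top two sheets — the $h\equiv 0$ analogue of the computation behind Lemma~\ref{lem:small Wj} — yields, for $f\in C^1_c(\mc U)$,
\[ \int_{\mc U}\langle\nabla v_k,\nabla f\rangle-\big(\Ric(\nu,\nu)+|A^\Sigma|^2\big)v_k f\,\mr d\mc H^2 = \int_{\mc U}\big(o(1)\cdot(v_k+|\nabla v_k|)\big)\cdot\big(|f|+|\nabla f|\big)\,\mr d\mc H^2 . \]
Normalizing $\wti v_k:=v_k/\|v_k\|_{L^2(\mc U)}$ and applying the interior H\"older and $C^{1,\alpha}$ estimates exactly as in Lemma~\ref{lem:solution away Y}, a subsequence converges in $C^{1,\alpha}_{loc}(\Gamma\setminus\mc Y)$ to some $\varphi\ge 0$ with $L_\Gamma\varphi=0$ weakly on $\Gamma\setminus\mc Y$. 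The Harnack/foliation argument near each point of $\mc Y$ (as in the last part of the proof of Lemma~\ref{lem:solution away Y}, using \cite{Zhou19}*{Proposition D.1}) gives a uniform pointwise bound for $\wti v_k$ near $\mc Y$, so no $L^2$ mass escapes and $\|\varphi\|_{L^2(\Gamma)}=1$; a log-cutoff argument as in Proposition~\ref{prop:existence of supsolution} then gives $\varphi\in W^{1,2}(\Gamma)$, so $\varphi$ extends across $\mc Y$ to a nonnegative, nontrivial weak Jacobi field. Hence $\Gamma$ is degenerate (indeed stable with $\lambda_1(L_\Gamma)=0$); if $\Gamma$ is one-sided, the same argument on its connected double cover produces a degenerate direction there.

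If instead every component is attained with multiplicity one but $\Sigma_k\neq\Sigma$ infinitely often, then for large $k$ the component of $\Sigma_k$ converging to the relevant $\Gamma=\Gamma_i$ is, away from $\mc Y$, a single normal graph $u_k$ over $\Gamma$ with $u_k\to 0$ in $C^\infty$ and $u_k\not\equiv 0$ (by unique continuation for minimal surfaces, since $\Sigma_k\neq\Gamma$). The graphical minimal surface equation reads $L_\Gamma u_k=Q_k$ with $\|Q_k\|_{C^0}=o(\|u_k\|_{C^2})$; normalizing $u_k/\|u_k\|_{L^2}$ and passing to the limit (extending across $\mc Y$ as above) yields a nontrivial $w$ with $L_\Gamma w=0$, so $\Gamma$ is again degenerate. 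This exhausts all cases.

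I expect the main obstacle to be the analysis near the finite bad set $\mc Y$ in the multiplicity-$\ge 2$ case: showing that the normalized height difference of the top two sheets stays pointwise bounded near $\mc Y$ so that the $L^2$-normalization survives the limit, and that the limit lies in $W^{1,2}$ across $\mc Y$. This is precisely where the catenoid-type estimate and the foliation argument of \cite{Zhou19} enter, and it is the direct analogue of what was carried out in Section~\ref{sec:existence of supersolution}; the purely minimal (rather than prescribed-mean-curvature) setting lightens the bookkeeping but not the essential difficulty.
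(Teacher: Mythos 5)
Your proposal is correct and follows essentially the same route as the paper, whose proof simply invokes the convergence argument of Theorem \ref{thm:convergence of pmc to minimal} (Property {\bf(R')} plus annuli picking, stable curvature estimates away from a finite set $\mc Y$, and removable singularities) and then cites \cite{Sharp17} for the construction of a nontrivial Jacobi field when $\Sigma_k\neq\Sigma$ infinitely often. You have merely written out the details that the paper delegates to those references.
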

\begin{proof}
The proof of convergence is the same as that of Theorem \ref{thm:convergence of pmc to minimal}. Indeed, we know that away from a finite set of points $\mc Y$, the convergence $\Sigma_k\to \Sigma_\infty$ is locally smooth. If $\Sigma_k\neq \Sigma$ for infinitely many $k$, then one can construct a nontrivial Jacobi field along $\Sigma_\infty$ in the same way as \cite{Sharp17}, and hence $\Sigma_\infty$ is degenerate.
\end{proof}

\begin{theorem}[Multiplicity one for relative min-max]\label{thm:multiplicity one for relative min-max}
Let $(M,g)$ be a three dimensional closed Riemannian manifold.  Let $X\subset I(m,k_0)$ be a cubical complex and $Z\subset X$ be a subcomplex. Let $\Phi_0:X\to \ms E$ be a continuous map and $\Pi$ be the $(X,Z)$-homotopy class of $\Phi_0$. Assume that
\[ \mf L(\Pi) > \max_{x\in Z} \mc H^2\big(\Phi_0(x)\big).\]
Then there exists a pairwise disjoint collection of connected, closed, smoothly embedded, minimal surfaces $\Gamma=\cup_{i=1}^N\Gamma_i$ and positive integers $\{m_i\}_{i=1}^N$, so that
\[\mf L(\Pi)=\sum_{i=1}^Nm_i\mc H^2 (\Gamma_i).\]
and 
\begin{enumerate}
     \item if $\Gamma_i$ is two-sided and unstable, then $m_i=1$;
     \item if $\Gamma_i$ is one-sided, then the connected double cover of $\Gamma_i$ is stable.
\end{enumerate}
Furthermore, if $M$ is orientable, then
\begin{equation}\label{eq:genus bound1}
     \sum_{i\in I_O}m_i\mk g(\Gamma_i)+\frac{1}{2}\sum_{i\in I_U}m_i(\mk g(\Gamma_i)-1)\leq \mk g_0,
\end{equation}
where $\mk g_0$ is the genus associated with $\ms E$, and $I_O$ (resp. $I_U$) is the collection of $i$ such that $\Gamma_i$ is orientable (resp. non-orientable).
\end{theorem}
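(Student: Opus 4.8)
The plan is to run the $\A^{\varepsilon_k h}$-min-max machinery developed in the previous sections, with a carefully chosen prescribing function $h$, and then take $\varepsilon_k\to 0$. First I would set up the approximation: for a sequence $\varepsilon_k\to 0$, apply Theorem \ref{thm:pmc min-max theorem} and Theorem \ref{thm:existence of almost minimizing pairs} to the $(X,Z)$-homotopy class $\Pi$ for each functional $\A^{\varepsilon_k h}$, using that the nontriviality condition \eqref{eq:width nontrivial1} holds for all $k$ provided $\varepsilon_k$ is small (since $\mf L(\Pi)>\max_{x\in Z}\mc H^2(\Phi_0(x))$ is a strict inequality and $\A^{\varepsilon_k h}$ converges uniformly to the area functional on bounded-mass sweepouts). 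This produces a strongly $\A^{\varepsilon_k h}$-stationary $C^{1,1}$ $(\varepsilon_k h)$-boundary $(\Sigma_k,\Omega_k)$ with $\A^{\varepsilon_k h}(\Sigma_k,\Omega_k)=\mf L^{\varepsilon_k h}(\Pi)$, Property {\bf(R)}, and genus bounded by $\mk g_0$ via the construction of approximating almost-minimizing pairs. By Theorem \ref{thm:convergence of pmc to minimal}, a subsequence of $\{\Sigma_k\}$ converges in $C^{1,1}_{loc}$ away from a finite set $\mc Y$ to a closed embedded minimal surface $\Sigma_\infty=\spt\|V_\infty\|$, say $V_\infty=\sum_i m_i[\Gamma_i]$, and one checks $\mf L^{\varepsilon_k h}(\Pi)\to\mf L(\Pi)$ so that $\mf L(\Pi)=\sum_i m_i\mc H^2(\Gamma_i)$.

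Next comes the choice of $h$, which is the heart of the argument. Since the metric is (implicitly, as in the sketch) bumpy, the set of closed embedded minimal surfaces with area $\le\mf L(\Pi)$ is finite, hence all possible limit surfaces $\Gamma_i$ come from a finite list $\{S_1,\dots,S_p\}$; each $S_a$ (or its connected double cover if one-sided) has a well-defined first eigenfunction $\phi_1^a$ of its Jacobi operator $L_{S_a}$. I would choose $h\in C^\infty(M)$ so that along each $S_a$ the restriction $h|_{S_a}$ is $L^2$-orthogonal to $\phi_1^a$ and moreover $h|_{S_a}$ changes sign on every connected component — this is a finite list of linear constraints and a finite list of open non-vanishing conditions, so such an $h$ exists; additionally arrange (as in the genus-bound step) that $h\equiv0$ near $\Sigma_\infty$ except on finitely many sub-disks, which is compatible since we may perturb. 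With this $h$ fixed, run the above limiting procedure. Suppose some component $\Gamma_i$ appears with multiplicity $m_i\ge 2$ and is two-sided. By our choice, $h$ changes sign on $\Gamma_i$, so hypothesis \ref{item:h changes sign} of Proposition \ref{prop:existence of supsolution} is met; that proposition then yields a nonnegative $\varphi\in W^{1,2}(\Gamma_i)$ with $\|\varphi\|_{L^2}=1$ and a constant $c\ge 0$ satisfying $\int_{\Gamma_i}\langle\nabla\varphi,\nabla f\rangle-(\Ric(\nu,\nu)+|A|^2)\varphi f\ge 2c\int_{\Gamma_i}hf$ for all $f\ge0$.

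The final step is the stability/eigenvalue argument. Test the inequality with $f=\varphi$ (legitimate after the log-cutoff argument already established in the proof of Proposition \ref{prop:existence of supsolution}, since $\varphi\in W^{1,2}$), obtaining $Q_{\Gamma_i}(\varphi,\varphi):=\int_{\Gamma_i}|\nabla\varphi|^2-(\Ric(\nu,\nu)+|A|^2)\varphi^2\ge 2c\int_{\Gamma_i}h\varphi^2$. When $m_i$ is odd we have $c=0$, so $Q_{\Gamma_i}(\varphi,\varphi)\ge0$ with $\varphi\ge0$ a genuine admissible test function, forcing $\lambda_1(L_{\Gamma_i})\ge0$, i.e. $\Gamma_i$ is stable, contradicting instability. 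When $m_i$ is even we need the orthogonality: writing $\varphi=c_1\phi_1+\varphi^\perp$ in the $L^2$-eigenbasis of $L_{\Gamma_i}$, one shows that the nonnegative supersolution $\varphi$ must be a positive multiple of $\phi_1$ plus a controlled error, and then $2c\int h\varphi^2$ — when one plugs in that $\varphi$ is close to $\phi_1$ and $h\perp_{L^2}\phi_1^2$ appropriately — the cross term vanishes to leading order, so again $Q_{\Gamma_i}(\varphi,\varphi)\ge0$ up to terms that can be absorbed, yielding $\lambda_1\ge0$. (The precise version: since $\varphi\ge 0$ is nonzero, if $\lambda_1<0$ then $\varphi$ has a nontrivial $\phi_1$-component; pairing the supersolution inequality against $\phi_1$ and using $\int h\phi_1\cdot(\text{stuff})$ with the orthogonality gives a sign contradiction.) This rules out unstable two-sided components with multiplicity $\ge 2$, giving conclusion (1). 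For conclusion (2), if $\Gamma_i$ is one-sided, the analogous construction on the connected double cover $\widetilde\Gamma_i$ (Remark after Proposition \ref{prop:existence of supsolution}) produces a nonnegative supersolution on $\widetilde\Gamma_i$; combined with the $\A^{\varepsilon_k h}$-stability in annuli (Property {\bf(R)}) passing to the limit, one gets that $\widetilde\Gamma_i$ is stable. The genus bound \eqref{eq:genus bound1} is exactly Theorem \ref{thm:genus bound} applied with the sub-disk structure of $h$.

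I expect the main obstacle to be the even-multiplicity case of the eigenvalue argument: one must extract, from the merely-$W^{1,2}$ nonnegative weak supersolution $\varphi$ on a limit surface that is only reached through $C^{1,1}$ (not smooth) approximants with possibly large self-touching sets, enough regularity and rigidity to conclude $\lambda_1\ge0$ — in particular handling the constant $c$ and the sign of $\int h\varphi^2$ simultaneously, and making sure the $L^2$-orthogonality of $h|_{\Gamma_i}$ to $\phi_1$ genuinely kills the obstruction rather than just shifting it. A secondary technical point is verifying that the finite list of limit minimal surfaces is independent of the choices (so that a single $h$ works for all components that could arise), which uses bumpiness together with the uniform area bound $\mf L(\Pi)$ and the compactness Theorem \ref{thm:compactness}.
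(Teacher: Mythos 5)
Your overall strategy coincides with the paper's: approximate by $\A^{\varepsilon_k h}$-min-max, use bumpiness and Theorem \ref{thm:compactness} to reduce the possible limits to a finite list $\{S_1,\dots,S_\alpha\}$ satisfying Property {\bf(R')}, choose $h$ supported in small disks so that it changes sign on each $S_i$ and satisfies $\int_{S_i}h\phi_i=0$ for the first eigenfunction $\phi_i$, invoke Proposition \ref{prop:existence of supsolution} to get a nontrivial nonnegative supersolution on any two-sided component of multiplicity $\ge 2$, and apply Theorem \ref{thm:genus bound} for the genus bound. The setup, the choice of $h$, and the reduction are all correct and match the paper.

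The gap is in your final eigenvalue step. Testing the supersolution inequality with $f=\varphi$ to get $Q_{\Gamma_i}(\varphi,\varphi)\ge 0$ does \emph{not} force $\lambda_1(L_{\Gamma_i})\ge 0$: since $\lambda_1=\inf_{\|f\|_{L^2}=1}Q_{\Gamma_i}(f,f)$, nonnegativity of the quadratic form on the single function $\varphi$ gives only an upper bound on $\lambda_1$, which is useless here; $\lambda_1$ could still be negative with the infimum attained elsewhere. Your subsequent discussion of splitting $\varphi=c_1\phi_1+\varphi^\perp$ and of ``$h\perp_{L^2}\phi_1^2$'' is not the right orthogonality and does not repair this. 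The correct (and much shorter) argument, which is what the paper does and what your parenthetical remark gestures at, is to test the supersolution inequality against $f=\phi_1>0$ itself: integrating by parts (legitimate since $\varphi\in W^{1,2}$ and $\phi_1$ is smooth) gives
\begin{equation*}
0=2c\int_{\Gamma_i}h\,\phi_1\,\mr d\mc H^2 \;\le\; \int_{\Gamma_i}\varphi\, L_{\Gamma_i}\phi_1\,\mr d\mc H^2 \;=\; \lambda_1(\Gamma_i)\int_{\Gamma_i}\varphi\,\phi_1\,\mr d\mc H^2,
\end{equation*}
where the left-hand side vanishes precisely by the $L^2$-orthogonality $\int h\phi_1=0$ built into the choice of $h$ (no decomposition of $\varphi$ and no ``if $\lambda_1<0$'' case analysis is needed). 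Since $\phi_1>0$ everywhere and $\varphi\ge 0$ is nontrivial, $\int\varphi\phi_1>0$, whence $\lambda_1(\Gamma_i)\ge 0$ and $\Gamma_i$ is stable; this handles the odd and even multiplicity cases uniformly. The one-sided case follows by running the identical pairing on the connected double cover, not by appealing to Property {\bf(R)} in the limit as you suggest.
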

\begin{proof}
Suppose that $(M,g)$ is bumpy. Then for a given constant $\Lambda$ (e.g. $\Lambda:=\mf L(\Pi)+1$), let $\mc M(\Lambda)$ be the collection of closed embedded minimal surfaces $\Gamma$ satisfying $\mc H^2 (\Gamma)\leq \Lambda$ and Property {\bf(R')} \eqref{eq:property R2} for $L=L(m)$. Note that by Theorem \ref{thm:compactness}, $\mc M(\Lambda)$ is a finite set since $g$ is bumpy. Denote by $\{S_1,\cdots, S_\alpha\}$ the collection of those embedded minimal surfaces. Then we take $p_1,\cdots,p_\alpha$ and $q_1,\cdots, q_\alpha$ in $M$ so that $p_i,q_i\in  S_j$ if and only if $j=i$. Let $r>0$ be a small number so that
\begin{itemize} 
\item $B_r(p_1),\cdots,B_r(p_\alpha)$, $B_r(q_1),\cdots,B_r(q_\alpha)$ are pairwise disjoint;
\item $B_r(p_i)\cup B_r(q_i)$ intersects $ S_j$ if and only if $j=i$;
\item $B_r(p_i)\cap  S_i$ and $B_r(q_i)\cap  S_i$ are both embedded disks for all $i=1,\cdots, \alpha$.
\end{itemize}
Next we take a smooth function $h:M\to \mb [-1,1]$ satisfying that for all $i=1,\cdots ,\alpha$,
\begin{enumerate}
\item $h=0$ outside $\cup_i(B_r(p_i)\cup B_r(q_i))$;
\item $h> 0$ in $B_{r/2}(p_i)\cap  S_i$ and $h<0$ in $B_{r/2}(q_i)\cap  S_i$;
\item if $ S_i$ is two-sided, then $\int_{ S_i}h\phi_i\,\mr d\mc H^2=0$, where $\phi_i$ is the first eigenfunction of the Jacobi operator on $ S_i$;
\item if $ S_i$ is one-sided, then $\int_{\wti  S_i}h\phi_i\,\mr d\mc H^2=0$, where $\phi_i$ is the first eigenfunction of the Jacobi operator on $\wti  S_i$ and $\wti S_i$ is the connected double cover of $ S_i$.
	\end{enumerate} 
 
Now we choose $\varepsilon_k\to 0$. Then for sufficiently large $k$, we have 
\[     
\mf L^{\varepsilon_k h}(\Pi)>\max\left\{\max_{x\in Z}\mc A^{\varepsilon_kh}\big(\Phi_0(x)\big),0\right\}.
\] 
Applying Theorem \ref{thm:pmc min-max theorem} to the $\A^{\varepsilon_k h}$-functional for each $k$, we obtain a min-max pair $(V_k, \Omega_k)\in \VC(M)$, which is a strongly $\A^k$-stationary, $C^{1,1}$ $(\varepsilon_k h)$-boundary with $\A^{ \varepsilon_kh}(V_k, \Omega_k) = \mf L^{\varepsilon_k h}(\Pi)$. By Theorem \ref{thm:convergence of pmc to minimal}, up to subsequence, $V_k$ (not relabelled) converges as varifolds to $V_\infty$ with $V_\infty=\sum_{i=1}^N m_i[\Gamma_i]$, where $\{\Gamma_i\}$ is pairwise disjoint collection of connected, closed, embedded, minimal surfaces. Since $ \varepsilon_k\to0$, we have
\[ \lim_{k\to\infty}\mf L^{ \varepsilon_kh}(\Pi) = \mf L(\Pi), 
\]
which yields that $\mc H^2 (\Gamma_i)\leq \Lambda$. Furthermore, by the discussion before Proposition \ref{prop:annulus picking for Ak almost minimizing}, we know that $\Gamma_\infty = \cup \Gamma_i$ satisfies Property {\bf(R')} \eqref{eq:property R2}. Hence $\Gamma_i$ is one of $S_1,\cdots, S_\alpha$. By relabelling, we assume that $\Gamma_i=S_i$ for $i=1,\cdots,N$.  Observe that by the construction of $h$, the sign of $h$ changes on $\Gamma_i$. Since the convergence is locally $C^{1,1}$ away from finitely many points, and $(V_k,\Omega_k)$ is strongly $\mc A^{ \varepsilon_k h}$-stationary, then by Proposition \ref{prop:existence of supsolution}, any two-sided connected component $\Gamma_i$ with multiplicity $m_i\geq 2$ admits a nontrivial and nonnegative $\varphi\in W^{1,2}(\Gamma_i)$ such that 
\[
\int_{\Gamma_i}\langle\nabla\varphi,\nabla f\rangle- \big(\Ric(\nu,\nu) +|A^{\Gamma_i}|^2\big)\varphi f\,\mr d\mc H^n\geq  \int_{\Gamma_i}2c hf\,\mr d\mc H^n , \ \forall f\in C^1(\Gamma_i) \text{ and } f\geq 0,
\]
for some constant $c\geq 0$. Let $\phi_i$ be the first eigenfunction of the Jacobi operator of $\Gamma_i$. By the choice of $h$,
\begin{align*}
	0=\int_{\Gamma_i} 2ch\phi_i\,\mr d\mc H^2 \leq \int_{ \Gamma_i}\varphi L_{\Gamma_i}\phi_i\,\mr d\mc H^2 = \lambda_1(\Gamma_i) \int_{\Gamma_i}\varphi\phi_i\,\mr d\mc H^2.
	\end{align*}
Recall that $\phi_i>0$ everywhere and $\varphi\geq 0$ with $\varphi>0$ somewhere. It follows that $\int_{\Gamma_i} \varphi \phi_i\,\mr d\mc H^2>0$. Thus we conclude that the first eigenvalue $\lambda_1(\Gamma_i) \geq 0$, that is, if $\Gamma_i$ is two-sided and $m_i\geq 2$, then $\Gamma_i$ is stable. This proves the first item.

For one-sided connected component $\Gamma'\subset \spt\|V_\infty\|$, the same argument gives that the double cover of $\Gamma'$ is stable.

Note that by the choice of $h$, for each $\Gamma_i$, we have that  $h=0$ outside two disjoint balls $B_r(p_i)$ and  $B_r(q_i)$. Moreover, $B_r(p_i)\cap \Gamma_i$ and $B_r(q_i)\cap \Gamma_i$ are both disks. Hence the desired genus bound follows from Theorem \ref{thm:genus bound}.   

\medskip

For the general case when $g$ is not bumpy, one can take a sequence of bumpy metrics $g_i$ converging to $g$ in the sense of $C^3$. Then the theorem follows from the conclusion for bumpy metrics, Property {\bf(R')}, and standard compactness theorem of closed embedded minimal surfaces; see Theorem \ref{thm:compactness} or \cite{Sharp17}.
\end{proof}

\subsection{Multiplicity one for classical Simon-Smith min-max theory}\label{SS:multiplicity one for Simon-Smith}

In this subsection, we use Theorem \ref{thm:multiplicity one for relative min-max} together with the double cover lifting argument in \cite{Zhou19} to prove a multiplicity one type theorem for the classical Simon-Smith min-max theory \cites{Smith82, Colding-DeLellis03}. Here we will use the version of Simon-Smith theory for un-oriented smoothly embedded surfaces. 

Let $(M,g)$ be a three dimensional closed manifold and $\Sigma_0$ be a connected closed surface of genus $\mk g_0$. Then denote 
\[ \ms X(\Sigma_0):=\{\phi(\Sigma_0) \big| \phi : \Sigma_0 \to M \text{ is a smooth separating embedding}\},\]
and 
\[  
\ms Y(\Sigma_0):=\{\phi(\Sigma_0)\big|\phi:\Sigma\to M \text{ is a smooth map whose image is a 0-or 1-dimensional graph}\}.
\]
For simplicity, we denote 
\[ \overline {\ms X}(\Sigma_0) = \ms X(\Sigma_0)\cup \ms Y(\Sigma_0).\] 
We endow $\oli{\ms X}(\Sigma_0)$ with the un-oriented smooth topology for immersions. We sometime simply write $\ms X, \ms Y, \oli{\ms X}$ when there is no ambiguity.

Let $X\subset I(m, k_0)$ be a cubical complex, and $Z_0\subset X$ a subcomplex. 
Fix a continuous map: 
\[\Phi_0: X\to \oli{\ms X}(\Sigma_0), \text{ such that } \Phi_0(Z_0)\subset \ms Y(\Sigma_0). \]
We let $\Pi$ be the set of all continuous maps $\Phi: X\to \oli{\ms X}(\Sigma_0)$ which is homotopic to $\Phi_0$ relative to $\Phi_0|_{Z_0}$. We call such $\Phi$ {\em an $(X, Z_0)$-sweepout by $\Sigma_0$}, or simply {\em a sweepout}. Note that we always have
\[ \mc H^2\big(\Phi(x)\big) =0, \quad \text{for any } x\in Z_0. \]
Define
\[ \mf L(\Pi):=\inf_{\Phi\in \Pi}\sup_{x\in X}\mc H^2(\Phi(x)).\]

Note that $\overline {\ms X}(\Sigma_0)$ or ${\ms X}(\Sigma_0)$ endowed with the oriented smooth topology (see \eqref{eq:ms E}), denoted as $\wti{\overline {\ms X}}(\Sigma_0)$ or $\wti{\ms X}(\Sigma_0)$, forms a nontrivial double cover over $\overline {\ms X}(\Sigma_0)$ or ${\ms X}(\Sigma_0)$. Denote $\overline{\lambda}\in H^1\big(\overline{\ms X}, \mb Z_2\big)$ as the dual to the nontrivial element of $\pi_1(\overline{\ms X})$ coming from the projection $\bm\pi: \wti{\overline {\ms X}} \to \overline {\ms X}$. Note that given any $\phi: S^1 = [0, 1]/\{0\sim 1\} \to \overline {\ms X}$, $\overline{\lambda}[\phi]\neq 0$ if and only if the lifting of $\wti\phi: [0, 1]\to \wti{\overline {\ms X}}$ satisfies that $\wti\phi(1)$ is $\wti\phi(0)$ with the opposite orientation. In this case $\phi$ forms a sweepout of $M$ in the sense of Almgren-Pitts \cite{MN17}*{Definition 3.4}.

\begin{theorem}[Theorem \ref{thm:a rough multiplicity one theorem}]
\label{thm:classical multiplicity one}
Let $(M,g)$ be a closed three dimensional Riemannian manifold. Suppose that $\Pi$ is a homotopy class of $(X, Z_0)$-sweepouts by $\Sigma_0$ with 
\[ \mf L(\Pi)>\sup_{x\in Z_0}\mc H^2\big(\Phi_0(x)\big)=0 .\]
Then there exist a pairwise disjoint collection of connected, closed, embedded, minimal surfaces $\{\Gamma_j\}_{j=1}^N$ and positive integers $m_j$ so that 
\[  \mf L(\Pi)= \sum_{j=1}^N m_j\mc H^2 (\Gamma_j) ,\]	
and
\begin{enumerate}
    \item if $\Gamma_j$ is unstable and two-sided, then $m_j=1$;
    \item if $\Gamma_j$ is one-sided, then the connected double cover of $\Gamma_i$ is stable.
\end{enumerate}
Furthermore, if $M$ is orientable, then the genus bound \eqref{eq:genus bound1} holds with $\mk g_0 = \mk g(\Sigma_0)$.
\end{theorem}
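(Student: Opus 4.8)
The plan is to deduce Theorem \ref{thm:classical multiplicity one} from the relative version Theorem \ref{thm:multiplicity one for relative min-max} by passing to a suitable double cover of the space of surfaces, following the double-cover-lifting strategy of \cite{Zhou19}. There are two cases, according to whether the nontrivial cohomology class $\oli{\lambda}\in H^1(\oli{\ms X}, \mb Z_2)$ restricts to zero on the image of $\Phi_0$ or not. First I would treat the case where $\Phi_0^*\oli{\lambda} = 0$ in $H^1(X, \mb Z_2)$: then $\Phi_0$ lifts to a continuous map $\wti\Phi_0 : X \to \wti{\oli{\ms X}}(\Sigma_0)$, which is precisely a map into the space $\oli{\ms E}$ of oriented separating surfaces (together with the degenerate configurations). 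Since $Z_0$ maps into $\ms Y(\Sigma_0)$, the lift sends $Z_0$ into the oriented degenerate locus, and areas are unchanged by the covering projection, so $\mf L$ of the lifted homotopy class equals $\mf L(\Pi)$ and the nontriviality hypothesis is preserved. Applying Theorem \ref{thm:multiplicity one for relative min-max} to the lifted class $\wti\Pi$ then produces exactly the collection $\{\Gamma_j\}$ with the asserted multiplicity and genus conclusions.

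The more substantial case is when $\Phi_0^*\oli{\lambda}\neq 0$; here $\Phi_0$ does not lift, and one must instead work on the pullback double cover $\wti X \to X$ determined by the class $\Phi_0^*\oli{\lambda}$, with $\wti Z_0$ the preimage of $Z_0$. On $\wti X$ the composition with $\Phi_0$ does lift to $\wti\Phi_0 : \wti X \to \wti{\oli{\ms X}}(\Sigma_0) = \oli{\ms E}$, and the deck transformation $\tau$ of $\wti X\to X$ corresponds under the lift to reversing the orientation of the separating surface, i.e. swapping $\Omega \leftrightarrow M\setminus\Omega$; since $\A^0$ is the area functional, it is invariant under this swap, so the lifted sweepout is $\tau$-equivariant in area and descends correctly. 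One then runs the relative min-max / multiplicity one machinery in the $\tau$-equivariant setting: the homotopy class $\wti\Pi$ of $\tau$-equivariant $(\wti X, \wti Z_0)$-sweepouts has width equal to $\mf L(\Pi)$, the nontriviality condition transfers, and the same choice of prescribing function $h$ (orthogonal to the first eigenfunctions of all the finitely many minimal surfaces in $\mc M(\Lambda)$ satisfying Property \textbf{(R$'$)}) works. The output of Theorem \ref{thm:multiplicity one for relative min-max}, applied equivariantly, is again a collection $\{\Gamma_j\}$ with unstable two-sided components of multiplicity one and one-sided components having stable connected double covers, and the weighted-genus bound \eqref{eq:genus bound1} with $\mk g_0 = \mk g(\Sigma_0)$.

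In carrying this out I would: (i) set up the double cover $\wti{\oli{\ms X}}(\Sigma_0)$ of oriented separating surfaces and identify it with (the degenerate completion of) $\ms E$, checking the topology matches; (ii) in the non-lifting case, construct $\wti X$, $\wti Z_0$ and the equivariant lift $\wti\Phi_0$, and verify $\mf L(\wti\Pi) = \mf L(\Pi)$ by noting the covering map is area-preserving and two-to-one on sweepouts; (iii) check that every step of Sections \ref{sec:min-max and tightening}--\ref{sec:multiplicity one} — tightening, existence of almost minimizing pairs, $C^{1,1}$ regularity, passing to the limit, and the supersolution construction — goes through verbatim in the $\tau$-equivariant category (this is where one invokes that the $\A^{\varepsilon_k h}$ functional, and the whole argument, is compatible with the $\mb Z_2$-action once $h$ is chosen $\tau$-anti-equivariant, i.e. $h\circ\tau = -h$ under the orientation flip, which is consistent with condition (2) in the proof of Theorem \ref{thm:multiplicity one for relative min-max}); and (iv) conclude that the min-max varifold, which is $\tau$-invariant, is supported on the claimed minimal surfaces and descends to $M$. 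The main obstacle I anticipate is step (iii): making sure the equivariance is genuinely preserved through the replacement-chain regularity argument and the tightening process — in particular that the prescribing function can be chosen simultaneously $\tau$-anti-equivariant and $L^2$-orthogonal to all the relevant first eigenfunctions, and that the supersolution $\varphi$ produced on a potential multiplicity-$\geq 2$ component is compatible with the $\mb Z_2$-symmetry so that the stability conclusion $\lambda_1(L_{\Gamma_j})\geq 0$ still follows. The rest is essentially bookkeeping on top of Theorem \ref{thm:multiplicity one for relative min-max}.
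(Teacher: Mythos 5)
Your Case 1 (when $\Phi_0^*\oli\lambda=0$ and the whole sweepout lifts) is fine, but the substantive case is Case 2, and there your route has a genuine gap. You propose to pass to the pullback double cover $\wti X\to X$ and run the entire $\A^{\varepsilon_k h}$-min-max machinery $\tau$-equivariantly, with $h$ chosen ``$\tau$-anti-equivariant.'' The problem is that the deck transformation of $\wti{\oli{\ms X}}(\Sigma_0)\to\oli{\ms X}(\Sigma_0)$ acts on pairs by $(\Sigma,\Omega)\mapsto(\Sigma, M\setminus\Omega)$; it does not act on $M$ at all, so the condition $h\circ\tau=-h$ is not meaningful. Concretely, $\A^h(\Sigma, M\setminus\Omega)=\mc H^2(\Sigma)-\int_M h+\int_\Omega h$, so $\A^h$ is invariant under the deck transformation only if $h\equiv 0$: the prescribing-mean-curvature functional is inherently an \emph{oriented} functional and cannot descend to, or be made consistently equivariant over, the unoriented space. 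Since the whole point of the perturbation is to take $h\neq 0$ near the candidate minimal surfaces, there is no equivariant $\A^h$-min-max problem to set up, and the strategy collapses before one even reaches the (already substantial, and unaddressed) task of making the tightening, combinatorial, and replacement-chain arguments equivariant.

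The paper avoids this entirely by never lifting over all of $X$. It first performs an extra tightening (Lemma \ref{lem:a dichotomy} and the deformation following it) to decompose $X=Y_i\cup Z_i$ so that $\sup_{x\in Z_i}\mc H^2(\Phi_i(x))<\mf L(\Pi)$ while $\Phi_i(Y_i)$ lies in a small $\mf F$-neighborhood of the \emph{finite} set $\mc S$ of candidate minimal surfaces (finiteness coming from bumpiness plus Property {\bf(R')}). The key topological observation \eqref{eq:pullback trivial on Zi} is that $\iota_i^*(\lambda_i)=0$ on $Y_i$, because no loop mapping into such a neighborhood can be an Almgren--Pitts sweepout of $M$; hence $\Phi_i|_{Y_i}$ lifts to $\wti{\ms X}(\Sigma_0)=\ms E$ with no equivariance needed, and \cite{Zhou19}*{Lemma 5.8} guarantees $\mf L(\wti\Pi_i)\geq\mf L(\Pi)>\sup_{\partial Y_i}$. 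Theorem \ref{thm:multiplicity one for relative min-max} is then applied to the ordinary (non-equivariant) relative class $(Y_i,\partial Y_i)$, and one concludes by letting $i\to\infty$ using the finiteness of $\mc S$. If you want to salvage your write-up, you should replace the equivariant lift in Case 2 by this localization-then-lift argument; the extra tightening step and the vanishing of $\oli\lambda$ near the critical set are the two ingredients your proposal is missing.
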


\begin{proof}
The proof will follow in the same structure as the proof of \cite{Zhou19}*{Theorem 5.2}. As we are using continuous sweepouts, the arguments here are simpler as compared with \cite{Zhou19}. For sake of completeness, we will provide necessary details. Note that all notations related to min-max construction in Section \ref{sec:min-max and tightening} are valid in the current setting, that is, $h\equiv 0$.

We can assume that $g$ is bumpy, and the general case follows by approximation in the same was as in Theorem \ref{thm:multiplicity one for relative min-max}.

Following the same procedure as Theorem \ref{thm:pull tight}, Theorem \ref{thm:existence of almost minimizing pairs} and Theorem \ref{thm:pmc min-max theorem}, we can find a pull-tight minimizing sequence $\{\Phi_i\}_{i\in\mb N}\subset \Pi$, such that every $V\in \mf C(\{\Phi_i\})$ is stationary (for area), and moreover, if $V\in\mf C(\{\Phi_i\})$ is almost minimizing (for area) in every $L=L(m)$-admissible collection of annuli w.r.t. some min-max subsequence, then $V$ has support a closed, smoothly embedded, minimal surface $\Sigma$ with $\|V\|(M) = \mf L(\Pi)$, and satisfies Property {\bf(R')} \eqref{eq:property R2}, that is, give any $L$-admissble collection of annuli $\ms C$, $\Sigma$ is stable in at least one of them. 
 Lemma \ref{lem:AM in L annuli} implies such $V$ always exists. 

\medskip
\noindent\textit{Step 1. We will do an extra tightening process to find another minimizing sequence, still denoted as $\{\Phi_i\}$, such that for $i$ sufficiently large, either $\Phi_i(x)$ is close to a smooth min-max minimal surface, or the area $\mc H^2 (\Phi_i(x))$ is strictly less than ${\mf L}(\Pi)$.}
\medskip

Let $\mc S$ be the collection of all stationary 2-varifolds with mass lying in $[\mf L(\Pi)-1, \mf L(\Pi)+1]$, whose support is a closed, smoothly embedded, minimal surface satisfying Property {\bf(R')} \eqref{eq:property R2} for $L=L(m)$. By the bumpiness assumption, $\mc S$ is a finite set.

Choose a small $\overline{\epsilon}>0$, and let 
\[ Z_i = \{x\in X: \mf F(\Phi_i(x), \mc S)\geq \overline{\epsilon} \}, \text{ and } Y_i = \closure(X\setminus Z_i). \]
Clearly we can make $Z_0\subset Z_i$ for $\oli\epsilon$ small enough. Consider the sub-coordinating sequence $\{\Phi_i|_{Z_i}\}_{i\in \mb N}$. Then we can define $\mf L(\{\Phi_i|_{Z_i}\})$ and $\mf C(\{\Phi_i|_{Z_i}\})$ in the same way as in Section \ref{ss:min-max problem}. 
\begin{lemma}[c.f. \cite{Zhou19}*{Lemma 5.7}]\label{lem:a dichotomy}
We have the following dichotomy:
\begin{enumerate}[label=\roman*)]
    \item either no element $V\in \mf C(\{\Phi_i|_{Z_i}\})$ is almost minimizing in every $L=L(m)$-admissible collection of annuli w.r.t. some min-max subsequence, 
    \item or $\mf L(\{\Phi_i|_{Z_i}\}) < \mf L(\Pi)$.
\end{enumerate}
\end{lemma}
\begin{proof}
    The proof is the same as \cite{Zhou19}*{Lemma 5.7} using the pull-tight and min-max regularity results, so we omit it.
\end{proof}

Let $\lambda_i = \Phi_i^*(\oli\lambda)\in H^1(X, \mb Z_2)$. Note that $\Phi_i(Y_i)$ lies in the $\oli\epsilon$-neighborhood of a finite set $\mc S$ (in the $\mf F$-metric). When $\overline{\epsilon}$ is small enough, we know that no continuous map $\phi$ from $S^1$ to such a neighborhood can form a sweepout of $M$, and hence must satisfy: $\oli\lambda[\phi] = 0$ by the discussion above this theorem. 

Consider the inclusion maps $\iota_i: Y_i \to X$.  
When $\overline{\epsilon}$ is small enough, we must have 
\begin{equation}\label{eq:pullback trivial on Zi}
\iota_{i}^*(\lambda_i) = 0 \in H^1(Y_i, \mb Z_2).    
\end{equation}

\begin{lemma}
As above, there exists another minimizing sequence $\{\Phi'_i\}_{i\in \mb N}$ with $\Phi'_i$ homotopic to $\Phi_i$ for each $i$, such that Lemma \ref{lem:a dichotomy}(\rom{2}) holds true.
\end{lemma}
\begin{proof}
We may assume $\mf L(\{\Phi_i|_{Z_i}\}) = \mf L(\Pi)$, then  Lemma \ref{lem:a dichotomy}(\rom{1}) must be true, that is, no element $V\in \mf C(\{\Phi_i|_{Z_i}\})$ is almost minimizing in every $L=L(m)$-admissible collection of annuli w.r.t. some min-max subsequence. By the same argument in the proof Lemma \ref{lem:AM in L annuli}, we can find $\epsilon_0>0$, such that for any $\delta>0$, $i>1/\epsilon_0$, and any $x\in Z_i$ satisfying:
\[ \mc H^2 \big(\Phi_i(x) \big) \geq \mf L(\Pi) - \epsilon_0,\]
there exists an $L$-admissible collection $\ms C_{i, x}$ such that $\Phi_i(x)$ is not $(\epsilon_0, \delta)$-almost minimizing in any annulus in $\ms C_{i, x}$. We can then follow the same deformation process as in the proof of Lemma \ref{lem:AM in L annuli} to deform $\Phi_i$ using isotopies to some $\Phi'_i: X\to \oli{\ms X}(\Sigma_0)$, such that 
\[ \sup_{x\in Z_i}\mc H^2 \big(\Phi'_i(x)\big)\leq \sup_{x\in Z_i}\mc H^2 \big(\Phi_i(x)\big) - \epsilon_0/4 <\mf L(\Pi), \]
for $i_0$ sufficiently large. This completes the proof.
\end{proof}

Since $\Phi_i\to \Phi'_i$ is a homotopic deformation, we know that \eqref{eq:pullback trivial on Zi} still holds true for $\Phi'_i$. 

Note that $\Phi_i(Y_i)$ lies in an $\oli\epsilon$-neighborhood of $\mc S$ under the $\mf F$-metric, so each $\Phi_i(x)$, $x\in Y_i$, has area bounded uniformly away from zero, and hence $\Phi_i(Y_i)$ lies inside $\ms X(\Sigma_0)$; (that is, $\Phi_i(Y_i)$ does not contain degenerate graphs). The deformation process used isotopies, so $\Phi_i'(Y_i)$ also lies inside $\ms X(\Sigma_0)$.

In the following, we will still write $\Phi'_i$ as $\Phi_i$.

\medskip
\noindent\textit{Step 2. For $i$ large enough, we can lift the maps $\Phi_i: Y_i \to {\ms X}(\Sigma_0)$ to its double cover $\wti\Phi_i: Y_i \to \wti{\ms X}(\Sigma_0)$, the existence of which is guaranteed by \eqref{eq:pullback trivial on Zi}.}
\medskip

Note that $\partial Y_i\subset Z_i$, so we have $\sup_{x\in \partial Y_i}\mc H^2 \big(\wti\Phi_i(x)\big) < \mf L(\Pi)$ for $i$ large enough.  
We claim that
\begin{claim}[c.f. \cite{Zhou19}*{Lemma 5.8}]
Let $\wti\Pi_i$ be the $(Y_i, \partial Y_i)$-homotopy class associated with $\wti\Phi_i|_{Y_i}$ in $\wti{\ms X}(\Sigma_0)$, then we must have
\[ {\mf L}(\wti\Pi_i) \geq {\mf L}(\Pi) > \sup_{x\in \partial Y_i}\mc H^2 \big(\wti\Phi_i(x)\big). \]
\end{claim}
The proof is essentially the same as that of \cite{Zhou19}*{Lemma 5.8}, so we omit it. 

\medskip
\noindent\textit{Step 3. We apply Theorem \ref{thm:multiplicity one for relative min-max} to $\wti\Pi_i$ and finish the proof by taking $i\to\infty$.}
\medskip

Note that $\wti{\ms X}(\Sigma_0)$ can be identified with our total space $\ms E$ \eqref{eq:ms E} in Section \ref{ss:min-max problem}. Applying Theorem \ref{thm:multiplicity one for relative min-max} to each $\wti\Pi_i$, we obtain a disjoint collection $\Gamma_i = \cup_{j=1}^{N_i}m_{i,j}\Gamma_{i, j}$ of connected, closed, smoothly embedded, minimal surfaces with integer multiplicity $m_{i,j}$, satisfying all conclusions in Theorem \ref{thm:multiplicity one for relative min-max}. By the proof therein, $\Gamma_i$ also satisfies Property {\bf(R')} \eqref{eq:property R2} for all $i$. Note that ${\mf L}(\wti\Pi_i)\leq \sup_{x\in Y_i}\mc H^2 (\Phi_i(x))\to {\mf L}(\Pi)$. Since there are only finitely many such $\Gamma_i$, we know that for $i$ sufficiently large ${\mf L}(\wti\Pi_i) = {\mf L}(\wti\Pi_{i+1}) = \cdots = {\mf L}(\Pi)$.
Hence we finish the proof of Theorem \ref{thm:classical multiplicity one}.
\end{proof}

\section{Existence of minimal spheres}\label{sec:existence of minimal spheres}

In this section, we apply the multiplicity one theorem in Riemannian three-spheres $(M,g)$ and prove the existence of four distinct embedded minimal two-spheres if $(M,g)$ does not contain degenerate-and-stable minimal two-spheres. Without loss of generality, we always assume that $(M,g)$ contains only finitely many embedded minimal two-spheres.

Note that the three-sphere admits a nontrivial homotopy class of $[-1, 1]\ttimes\mb{RP}^{k-1}$-sweepouts of two-spheres for each $k=1,2,3,4$. (Here $[-1, 1]\ttimes\mb{RP}^{k-1}$ is the twisted interval bundle over $\mb{RP}^{k-1}$.) If $(M,g)$ has no stable minimal two-spheres, the Multiplicity One Theorem \ref{thm:classical multiplicity one} applies directly to give an embedded minimal two-sphere for each homotopy class of $[-1, 1]\ttimes\mb{RP}^{k-1}$-sweepouts. The well-known Lusternik–Schnirelmann theory implies that the min-max widths are strictly increasing in a bumpy metric, and hence these are distinct two-spheres. We will provide more details for this part in Section \ref{subsec:four sweepouts}.

If $(M,g)$ contains stable minimal-two spheres, we will follow the strategy due to A. Song \cite{Song18} to cut $M$ along a disjoint collection of stable minimal two-spheres and consider the remaining compact manifolds glued with cylindrical ends. To develop the min-max theory in such a non-compact manifold, we approximate it by a sequence of compact domains with mean concave boundary and generalize the work of M. Li \cite{Li-CPAM} for free boundary minimal surfaces with controlled topology.

\medskip
In Section \ref{subsec:min-max in compact domains}, we introduce some notations from \cite{Li-CPAM} and generalize our PMC min-max Theorem \ref{thm:pmc min-max theorem} to the free boundary setting when the prescribing function is supported in the interior of the compact manifolds. In Section \ref{subsec:construction of non-compact}, we present the construction of non-compact manifolds with cylindrical ends and sequences of compact manifolds approximating them. Moreover, we will also prove that as the compact manifolds having enough long ``tails'', the free boundary min-max theory will produce closed minimal surfaces with genus bounds.  Next we prove the uniform upper bound of the first two widths in those compact manifolds in Section \ref{subsec:2 width upper bound}. Finally, Section \ref{subsec:four minimal S2} is devoted to the proof of the main theorem.

\subsection{Min-max theory in compact manifolds with boundary}\label{subsec:min-max in compact domains}
We now recall the min-max theory for free boundary minimal surfaces with controlled topology by M. Li \cite{Li-CPAM}.

Let $(M,g)$ be a Riemannian three-sphere and $N\subset M$ be a compact domain with smooth boundary $\partial N$. Denote by $\mk{Is}$ the space of all isotopies on $M$. Define 
\[\mk{Is}^{\mr{out}}:=\big\{\{\varphi_s\}\in \mk{Is}; N\subset \varphi_s(N) \text{ for all } s\in[0,1] \big\}\]
to be the isotopies in $M$ that can push points out of the compact set $N$, but not into $N$. Given an open subset $U\subset M$, we define $\mk{Is}^{\mr{out}}(U)$ to be those in $\mk{Is}^{\mr{out}}$ that is supported in $U$.

We will use Theorem \ref{thm:pmc min-max theorem} and the regularity theorem in \cite{Li-CPAM}*{Theorem 4.7} to prove that for such a compact manifold $N\subset M$, the relative min-max theory will produce free boundary minimal surfaces. We now generalize some notions to their free boundary counterparts.

Let $h: N \to \mb R$ be a fixed smooth function, such that
\[ \text{$h=0$ in a neighborhood of $\partial N$.} \]
Recall that $X\subset I(m, k_0)$ is a cubical complex and $Z\subset X$ is a subcomplex, and $\ms E$ is the space of embedded separating surfaces of genus $\mk g_0$ in $M$ \eqref{eq:ms E}. A continuous map $\psi: X\to C^\infty(M, M)$ is said to be \textit{outward isotopic deformation}, if for each $x\in X$, there exists an outward isotopy $\{\varphi_{x, s}\}_{s\in[0,1]}\in \mk{Is}^{\mr {out}}$ such that $\psi_x := \psi(x)$ is equal to $\varphi_{x, 1}$.  Let $\Phi_0:X\to \ms E$ be a continuous map. A family $\bm \Xi$ of $(X,Z)$-sweepouts homotopic to $\Phi_0$ relative to $\Phi_0|_Z$ is said to be \textit{saturated}, if for any $\Phi\in \bm \Xi$, and any outward isotopic deformation $\psi: X\to C^\infty(M, M)$ with $\psi|_Z = \Id$,
\[ \Phi'(x) := (\psi_x)_\#\Phi(x) \quad \text{also belongs to $\bm \Xi$.} \]

To produce free boundary solutions in $N$, we will only count the area and volume restricted to $N$. Precisely, given $(V,\Omega)\in \VC(M)$, we define
\begin{equation*}
\mc A^h_N(V,\Omega)= \|V\|(N)-\int_{\Omega\cap N} h\,\mr d \Vol.     
\end{equation*}
For any saturated family $\bm\Xi$ of $(X,Z)$-sweepouts, we define
\begin{equation*}
\mf L^h_N(\bm\Xi)=\inf_{\Phi\in \bm\Xi}\max_{x\in X}\mc A^h_N \big(\Phi(x)\big).
\end{equation*}
We also use $\mc A^h_N(V,\Omega;g)$ and $\mf L^h_N(\bm\Xi; g)$ to indicate the metrics. We can adapt the notions related to min-max construction in Definition \ref{def:min-max sequence} in a straightforward manner to this setting. Given a fixed $\bm\Xi$, we can similarly define \textit{minimizing sequences, min-max subsequences, and critical sets}. In particular, the critical set of a minimizing sequence $\{\Phi_i\}\subset \bm\Xi$ is defined by 
\[
    \mf C(\{\Phi_i\})=\left\{ (V, \Omega)\in \VC(N) \left|
    \begin{aligned}  
         &\ \exists \text{ a min-max subsequence }\{\Phi_{i_j}(x_j)\} \text{ such }\\
         &\text{ that }   \ms F\big(\Phi_{i_j}(x_j)\res N, (V, \Omega)\big) \to  0 \text { as } j\to\infty 
    \end{aligned}\right\}\right..
\]
Note that the $\ms F$-metric is defined on $N$.

We can also extend the notion of $C^{1,1}$-almost embedded surfaces in Section \ref{SS:C11 almost embedded surface} to the current setting. Let $\Sigma$ be a compact smooth surface (2-dimensional manifold) with smooth boundary $\partial\Sigma$. A $C^{1,1}$-immersion $\phi: \Sigma\to N$ with $\phi(\partial\Sigma)\subset \partial N$ is called \textit{almost embedded} if Definition \ref{def:C11 almost embedding} holds in the interior $\interior(N)$, and \cite{LZ16}*{Definition 2.6}\footnote{\cite{LZ16}*{Definition 2.6} refers to the notion of almost proper embedding, which says that $\Sigma$ is an embedding into $M$ near $\partial N$, $\phi(\partial\Sigma)\subset \partial N$, but $\phi(\interior(\Sigma))$ may touch $\partial N$.} holds near $\partial N$. We will simply write $\phi(\Sigma, \partial\Sigma)$ as $(\Sigma, \partial\Sigma)$. Similarly, we can define \textit{$C^{1,1}$ boundary (c.f. Definition \ref{def:c11 boundary}) and $C^{1,1}$ free $h$-boundary} (c.f. Definition \ref{def:C11 h-boundary}). To be precise, a $C^{1,1}$ almost embedded surface $(\Sigma, \partial\Sigma)\subset (N, \partial N)$ is called a \textit{boundary} if there exists $\Omega\in \C(N)$, such that $\Sigma\res \interior(N) = \partial\Omega\res\interior(N)$\footnote{Since $h=0$ near $\partial N$, we only need this boundary structure in the interior $\interior(N)$.}, and a triple $(\Sigma, \partial\Sigma, \Omega)$ is called a \textit{free $h$-boundary} if for any vector field $X\in \mk X(N, \Sigma)$\footnote{This means $X(q)\in T_q(\partial N)$ for all $q$ in a neighborhood of $\partial \Sigma$ in $\partial N$.} (see \cite{LZ16}*{page 501}), we have $\delta \A^h_N|_{\Sigma, \Omega}(X) = 0$. By the first variation formula, $\Sigma$ must meet $\partial N$ orthogonally along $\partial\Sigma$, and $\interior(\Sigma)$ is minimal near $\partial N$; see \cite{LZ16}*{Definition 2.8}. Note that since the touching set $\mc S(\Sigma)$ (where $\Sigma$ touches with itself) lies in the interior $\interior(N)$, we can extend the \textit{strong $\A^h$-stationarity} (Definition \ref{def:strong one-sided stationarity}) without any change to this setting.

We have the following extension of \cite{Li-CPAM}*{Theorem 4.7}. Note that the proper embeddedness in the regularity statement claimed in \cite{Li-CPAM}*{Theorem 4.7} is incorrect; this part has been corrected in \cite{LZ16}*{Theorem 5.2}, and the corresponding statement is almost proper embeddedness. 

\begin{theorem}\label{thm:relative min-max in any compact domain} 
Let $\bm\Xi$ be a saturated family of $(X,Z)$-sweepouts relative to $\Phi_0|_Z$. Suppose 
\begin{equation*}
\mf L^h_N(\bm \Xi)>\max \left\{\max_{x\in Z}\mc A^h_N\big(\Phi_0(x)\big), 0\right\}.
\end{equation*}
Then there exist a minimizing sequence $\{\Phi_i\}_{i\in\mb N}\subset \bm \Xi$, and a strongly $\mc A^h$-stationary, $C^{1,1}$, free $h$-boundary $(\Sigma,\Omega)$ lying in the critical set $\mf C(\{\Phi_i\})$, that is, $(\Sigma,\Omega)$ is the $\ms F$-limit of some min-max subsequence $\{\Phi_{i_j}(x_j)\res N\}$, and 
\[ \mc A^h_N(\Sigma,\Omega)=\mf L^h_N(\bm\Xi). \]
Moreover, the min-max sequence $\{\Phi_{i_j}(x_j)\}_{j\in\mb N}$ can be chosen so that there exist $\epsilon_j,\delta_j\to 0$ such that $\Phi_{i_j}(x_j)$ is $(\mc A^h,\epsilon_j,\delta_j)$-almost minimizing in any $L(m)$-admissible collection of annuli $\mr{An}(p;s_1,r_1),\cdots, \mr{An}(p;s_L,r_L)\subset N\setminus \partial N$. 
\end{theorem}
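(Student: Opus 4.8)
\textbf{Proof plan for Theorem \ref{thm:relative min-max in any compact domain}.}

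The plan is to follow the same four-module structure as the proof of the interior PMC Min-Max Theorem \ref{thm:pmc min-max theorem}, namely: tightening, existence of almost minimizing pairs in small annuli, interior $C^{1,1}$ regularity via replacements, and boundary regularity. The new ingredient is that we work in a fixed compact domain $N\subset M$ with mean-concave boundary $\partial N$ and only count area and volume restricted to $N$, so we must track the boundary behaviour using M.~Li's free boundary min-max framework \cite{Li-CPAM} together with the correction in \cite{LZ16}. Since $h\equiv 0$ in a neighborhood of $\partial N$, all the subtle self-touching phenomena controlled by strong $\A^h$-stationarity occur in $\interior(N)$, so the arguments of Sections \ref{sec:min-max and tightening}--\ref{sec:regularity of min-max pairs} apply verbatim away from $\partial N$.

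First I would adapt the tightening process of Section \ref{SS:tightening} to the saturated family $\bm\Xi$: the only change is that the competitor vector fields must be chosen in $\mk X^{\mr{out}}$, i.e.\ generating outward isotopies $\mk{Is}^{\mr{out}}$, so that the deformed sweepouts stay in $\bm\Xi$. The compactness of the relevant sublevel sets $A^L$ in $\VC(N)$ under the $\ms F$-metric on $N$ is unchanged, and the nontriviality hypothesis $\mf L^h_N(\bm\Xi)>\max\{\max_{x\in Z}\mc A^h_N(\Phi_0(x)),0\}$ lets us run the pull-tight argument exactly as in Theorem \ref{thm:pull tight}, producing a minimizing sequence $\{\Phi_i\}$ all of whose critical pairs $(V,\Omega)\in\mf C(\{\Phi_i\})$ are either $\A^h_N$-stationary (in the free boundary sense, i.e.\ stationary under $\mk X(N,\spt\|V\|)$) or lie in $\Phi_0(Z)$. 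Next, the Almgren--Pitts combinatorial argument of Section \ref{ss:existence of almost minimizing pairs} carries over, using only annuli $\mr{An}(p;s,r)\subset N\setminus\partial N$ in the admissible collections, to give an $\A^h_N$-stationary pair $(V_0,\Omega_0)\in\mf C(\{\Phi_i\})$ and a min-max subsequence $\{\Phi_{i_j}(x_j)\}$ such that $(V_0,\Omega_0)$ is $\A^h$-almost minimizing in small interior annuli and is stable in at least one annulus of every $L(m)$-admissible interior collection.

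For the interior regularity, away from $\partial N$ the replacement-chain machinery of Section \ref{ss:initial regularity}--\ref{ss:regularity for almost minimizing pairs} applies directly: $(V_0,\Omega_0)\res U$ is a strongly $\A^h$-stationary, stable $C^{1,1}$ $h$-boundary for every small open $U\subset\subset N\setminus\partial N$, and the $C^{1,1}$ removable singularity argument of Section \ref{SS:proof of pmc min-max} extends it across the finitely many bad points in $\interior(N)$. Near $\partial N$, since $h=0$ there, the replacements are for the area functional with free boundary and mean-concave barrier $\partial N$, so the regularity and almost proper embeddedness follow from \cite{Li-CPAM}*{Theorem 4.7} as corrected in \cite{LZ16}*{Theorem 5.2}: the touching set $\mc S(\Sigma)$ stays in $\interior(N)$, $\Sigma$ meets $\partial N$ orthogonally, and $\interior(\Sigma)$ is minimal near $\partial N$. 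Gluing the interior $C^{1,1}$ free $h$-boundary with the free boundary piece near $\partial N$ (using a version of the Gluing Lemma \ref{lem:glue}, which is unproblematic since the two descriptions agree on the overlap where $h=0$) yields the strongly $\A^h$-stationary $C^{1,1}$ free $h$-boundary $(\Sigma,\Omega)$ with $\mc A^h_N(\Sigma,\Omega)=\mf L^h_N(\bm\Xi)$, and the last sentence about $(\mc A^h,\epsilon_j,\delta_j)$-almost minimizing in interior annuli is recorded along the way exactly as in Theorem \ref{thm:existence of almost minimizing pairs} and Proposition \ref{prop:hk to h}. The main obstacle I anticipate is bookkeeping the interaction between the outward-isotopy constraint (needed to stay in $\bm\Xi$) and the two-sided deformations used in the strong $\A^h$-stationarity near a touching point: one must check that all test deformations used to certify strong $\A^h$-stationarity are supported in $\interior(N)$, where the outward constraint is vacuous, so that no conflict arises; this is where the hypothesis that $h$ vanishes near $\partial N$ is essential.
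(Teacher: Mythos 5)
Your proposal follows essentially the same route as the paper's proof: adapt the tightening and the Almgren--Pitts combinatorial argument to the free boundary setting (as in \cite{Li-CPAM}*{Proposition 5.1}), obtain interior regularity from Theorem \ref{thm:pmc min-max theorem} applied in small annuli contained in $N\setminus\partial N$, and invoke \cite{Li-CPAM}*{Theorem 4.7} together with \cite{LZ16}*{Theorem 5.2} for the regularity near $\partial N$, where $h\equiv 0$. Your closing observation that the test deformations certifying strong $\A^h$-stationarity are supported in $\interior(N)$ (where the outward-isotopy constraint is vacuous) is a correct and worthwhile remark that the paper leaves implicit.
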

\begin{proof}
The existence of the desired minimizing sequence follows by adapting to the free boundary setting the tightening process in Section \ref{SS:tightening} (in a similar way as \cite{Li-CPAM}*{Proposition 5.1}) and the existence of almost minimizing pairs in Theorem \ref{thm:existence of almost minimizing pairs}. Indeed, for any $L(m)$-admissible collection of annuli in $N\setminus \partial N$, the min-max sequence can be chosen so that $\Phi_{i_j}(x_j)$ is $(\mc A^h,\epsilon_j,\delta_j)$-almost minimizing in at least one of them.

Now the interior regularity follows from Theorem \ref{thm:pmc min-max theorem}.

It remains to consider the regularity around $\partial N$. Note that $h\equiv 0$ in a neighborhood of $\partial N$. Then by the free boundary min-max theory \cite{Li-CPAM}*{Theorem 4.7} and \cite{LZ16}*{Theorem 5.2}, $\Sigma$ is a almost embedded free boundary minimal surface in a neighborhood of $\partial N$.
\end{proof}

\subsection{Construction of non-compact manifolds with cylindrical ends}\label{subsec:construction of non-compact}
We recall the construction of non-compact manifolds with cylindrical ends by A. Song in \cite{Song18}*{Section 2.2}. Let $(N,\partial N,g)$ be a compact three dimensional Riemannian manifold such that $\partial N$ is a closed, embedded, stable minimal surface  with \textit{a contracting neighborhood}, that is, there is a map 
\[ \varphi:\partial N\times [0,\hat t]\to N,\]
so that $\varphi$ is a diffeomorphism to its image, $\varphi(\partial N\times \{0\})=\partial N$, and for all $t\in (0,\hat t]$, $\varphi(\partial N\times \{t\})$ has non-zero mean curvature vector\footnote{Here the mean curvature vector is defined as $-\dv(\nu)\nu$ for a choice of unit normal $\nu$.} pointing towards $\partial N$. We endow $\partial N\times [0,+\infty)$ with the product metric. Let $\ms C(N)$ be the non-compact manifold
\[   N\cup (\partial N\times [0,+\infty))  \]
by identifying $\partial N$ with $\partial N\times \{0\}$. We endow it with the metric $\hat g$ such that $\hat g=g$ on $N$ and is equal to the product metric on $\partial N\times [0,\infty)$. Note that $\hat g$ is Lipschitz continuous. 

Now we approximate $\ms C(N)$ by compact manifolds as follows. 
Let $N_\epsilon:=N\setminus \varphi(\partial N\times [0,\epsilon))$. Denote by $\nu$ the unit outward normal vector field of $\partial N_\epsilon$. For a small constant $\delta_\epsilon>0$, the map
\[   \gamma_\epsilon:\partial N_\epsilon\times [-\delta_\epsilon,0]\to N_\epsilon , \quad (x,t)\longmapsto \exp(x,t\nu) \]
is well-defined and gives Fermi coordinates on one side of $\partial N_\epsilon$. Then by A. Song \cite{Song18}*{Section 2.2}, there exist smooth metrics $g_\epsilon$ on $N_\epsilon$ satisfying Lemma 4, Lemma 5 and Lemma 7 in \cite{Song18}, such that $(N_\epsilon, g_\epsilon)$ approaches $(\ms C(N),\hat g)$ in appropriate sense. In particular, 
\begin{enumerate}[label=(\roman*)]
    \item $g_\epsilon = g$ in $N_\epsilon\setminus \gamma_\epsilon(\partial N_\epsilon\times [-\delta_\epsilon, 0])$;
    \item for $t \in [-\delta_\epsilon,0]$, the slices $\gamma_\epsilon(\partial N_{\epsilon}\times  \{t\})$ have non-zero mean curvature vector pointing towards $\partial N_\epsilon$ with respect to the new metric $g_\epsilon$;
    \item\label{item:gepsilon=g along surfaces} $\gamma_\epsilon^*(g_\epsilon)=\gamma_\epsilon^*(g)$ on $\partial N_\epsilon\times \{t\}$ for all $t\in [-\delta_\epsilon,0]$.
\end{enumerate}

Assume that $(N,\partial N,g)$ is isometrically embedded into a closed three-manifold $(M,\wti g)$. Then one can extend the metric $g_\epsilon$ to a metric $\wti g_\epsilon$ on $M$ so that $\wti g_\epsilon=g_\epsilon$ on $N_\epsilon$.

Given a continuous $\Phi_0: X\to \oli{\ms X}(\Sigma_0)$ (see Section \ref{SS:multiplicity one for Simon-Smith} for notations), we can similarly define saturated families $\bm\Xi$ of $(X, Z_0)$-sweepouts in $\oli{\ms X}(\Sigma_0)$ homotopic to $\Phi_0$ w.r.t. outward isotopic deformations for $N_{\epsilon}$.

\begin{proposition}\label{prop:min max in Nk}
Let $\epsilon_k\to 0$ be a sequence of positive constants. With notions as above, we use $N_k$ and $\wti g_k$ to denote $N_{\epsilon_k}$ and $\wti g_{\epsilon_k}$ for simplicity. Suppose that 
\[\liminf_{k\to\infty}\mf L_{N_{k}}(\bm \Xi;\wti g_{k})<\infty.\]
Then up to a subsequence, for sufficiently large $k$, there exist a collection of pairwise disjoint, connected, closed, embedded, minimal surfaces $\Gamma^k_1,\cdots, \Gamma^k_{I_k}\subset (N\setminus \partial N,g)$ and positive integers $m^k_1,\cdots, m^k_{I_k}$, such that
\[\mf L_{N_{k}}(\bm \Xi;\wti g_{k})= \sum_{i=1}^{I_k}m^k_i\mc H^2(\Gamma^k_i),\]
where $m^k_i=1$ if $\Gamma^k_i$ is unstable. Moreover, the varifold $\sum [\Gamma^k_i]$ satisfies Property {\bf(R')} \eqref{eq:property R2} for any $L(m)$-admissible collection of annuli in $N_k\setminus \partial N_k$.

Furthermore, if $M$ is orientable, then the genus bound \eqref{eq:genus bound1} holds with $\mk g_0 = \mk g(\Sigma_0)$.
\end{proposition}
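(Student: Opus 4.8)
The plan is to run the classical Simon--Smith min-max scheme (as in Theorem \ref{thm:classical multiplicity one}) simultaneously on the whole family $\{(N_k,\wti g_k)\}$, extract a uniform mass bound from the hypothesis $\liminf_k \mf L_{N_k}(\bm\Xi;\wti g_k)<\infty$, and then push everything into the cylindrical ends away. First I would apply Theorem \ref{thm:relative min-max in any compact domain} (in its double-cover/unoriented incarnation, exactly as Theorem \ref{thm:classical multiplicity one} was deduced from Theorem \ref{thm:multiplicity one for relative min-max}) to each $(N_k,\wti g_k)$ with $h\equiv 0$: since $\mf L_{N_k}(\bm\Xi;\wti g_k)>\sup_{Z_0}\mc H^2(\Phi_0(x))=0$, this yields, for each $k$, a strongly $\mc A^0$-stationary, $C^{1,1}$, free $h$-boundary $(\Sigma_k,\Omega_k)$ in $N_k$ with $\mc H^2(\Sigma_k;\wti g_k)=\mf L_{N_k}(\bm\Xi;\wti g_k)$, which is a smooth almost embedded free boundary minimal surface meeting $\partial N_k$ orthogonally, together with a min-max subsequence that is $(\mc A^0,\epsilon_{k,j},\delta_{k,j})$-almost minimizing in every $L(m)$-admissible collection of annuli in $N_k\setminus\partial N_k$, and which satisfies Property {\bf(R')}. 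Passing to the subsequence along which $\mf L_{N_k}(\bm\Xi;\wti g_k)$ stays bounded by some $\Lambda$, the areas $\mc H^2(\Sigma_k;\wti g_k)$ are uniformly bounded.

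The central point is then to show that for $k$ large the free boundary minimal surface $\Sigma_k$ does not touch $\partial N_k$, so that it is in fact a \emph{closed} embedded minimal surface contained in the fixed region $(N\setminus\partial N, g)$. This is precisely where Song's construction enters: by property (ii) of the metrics $g_\epsilon$ (the slices $\gamma_\epsilon(\partial N_\epsilon\times\{t\})$, $t\in[-\delta_\epsilon,0]$, have nonzero mean curvature vector pointing \emph{towards} $\partial N_\epsilon$), the boundary $\partial N_k$ is strictly mean-concave from inside $N_k$, and the foliation by $\gamma_{\epsilon_k}(\partial N_{\epsilon_k}\times\{t\})$ acts as a barrier. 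Combined with the fact that $\Sigma_k$ meets $\partial N_k$ orthogonally with the correct almost-embeddedness, the maximum principle (in the form used in \cite{Li-CPAM, LZ16}, or the interior/boundary maximum principle of \cite{Whi10}) forces $\Sigma_k$ into $N_{\epsilon_k}\setminus\gamma_{\epsilon_k}(\partial N_{\epsilon_k}\times[-\delta_{\epsilon_k},0])$; since $g_{\epsilon_k}=g$ there by property (i), $\Sigma_k$ is a closed embedded minimal surface in $(N\setminus\partial N, g)$ with area $\le\Lambda$. Moreover, to obtain a uniform interior separation from $\partial N$ one can iterate this: the ``long tail'' $\partial N\times[0,\infty)$ built into $\ms C(N)$ means that as $\epsilon_k\to0$ the available collar of barrier slices becomes arbitrarily long, so $\Sigma_k$ is eventually pushed into a compact region of $N\setminus\partial N$ independent of $k$ — this is exactly the mechanism of \cite{Song18}*{Section 2.2 and Lemmas 4, 5, 7}, and I would cite it rather than reprove it. The main obstacle in writing this cleanly is handling the almost-embeddedness of $\Sigma_k$ at $\partial N_k$ in the barrier argument (the interior of $\Sigma_k$ could a priori touch $\partial N_k$ tangentially from inside), which requires the boundary maximum principle for almost properly embedded free boundary minimal surfaces as corrected in \cite{LZ16}.

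Once $\Sigma_k\subset(N\setminus\partial N,g)$ is closed, embedded, and minimal with $\mc H^2(\Sigma_k;g)=\mf L_{N_k}(\bm\Xi;\wti g_k)\le\Lambda$, I would invoke the structure already established: $\Sigma_k$ decomposes (in the sense of varifolds, via Theorem \ref{thm:convergence of pmc to minimal} with $h\equiv0$, or directly by the regularity in Theorem \ref{thm:relative min-max in any compact domain}) as $\sum_{i=1}^{I_k} m^k_i[\Gamma^k_i]$ with the $\Gamma^k_i$ pairwise disjoint, connected, closed, embedded, minimal, and $m^k_i=1$ whenever $\Gamma^k_i$ is unstable — this is the conclusion of Theorem \ref{thm:classical multiplicity one} applied verbatim on $(N_k,\wti g_k)$, using that $h\equiv0$ and that the min-max sequence is almost minimizing in small annuli in the interior. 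Property {\bf(R')} for $\sum[\Gamma^k_i]$ on $L(m)$-admissible annuli in $N_k\setminus\partial N_k$ is inherited from Corollary \ref{cor:property R} and Proposition \ref{prop:annulus picking for Ak almost minimizing} exactly as in the closed case, since all the relevant annuli lie in the interior where $\wti g_k=g$ eventually. Finally, if $M$ is orientable the genus bound \eqref{eq:genus bound1} with $\mk g_0=\mk g(\Sigma_0)$ follows from Theorem \ref{thm:genus bound} / Theorem \ref{thm:multiplicity one for relative min-max}: the diagonal min-max subsequence $(\Sigma_{k,j(k)},\Omega_{k,j(k)})$ is $(\mc A^0,\epsilon_k,\delta_k)$-almost minimizing for the area functional in small interior annuli (no sub-disks need to be removed since $h\equiv0$), so the curve-lifting argument of \cite{Ketover13, DeLellis-Pellandini10} applies directly to give $\sum_{i\in I_O}m^k_i\mk g(\Gamma^k_i)+\tfrac12\sum_{i\in I_U}m^k_i(\mk g(\Gamma^k_i)-1)\le\mk g_0$. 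I would close by recording that all of these conclusions hold for the chosen subsequence and all sufficiently large $k$, which is the statement of Proposition \ref{prop:min max in Nk}.
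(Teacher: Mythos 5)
Your overall skeleton — reduce to the separating/relative setting via the double cover, use the mean-concave foliation near $\partial N_k$ as a barrier to force the min-max surface into the fixed region where $\wti g_k=g$, and then quote the genus bound — does match the paper's proof. But there is a genuine gap at the heart of the argument: the multiplicity one conclusion.

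You run the min-max with $h\equiv 0$ throughout and then assert that the decomposition with $m^k_i=1$ for unstable components is ``the conclusion of Theorem \ref{thm:classical multiplicity one} applied verbatim on $(N_k,\wti g_k)$, using that $h\equiv 0$.'' This does not work. Theorem \ref{thm:classical multiplicity one} is stated for closed manifolds, and its proof does \emph{not} use $h\equiv 0$: the multiplicity one for unstable components is obtained only by approximating the area functional by $\A^{\varepsilon_j h}$ for a carefully chosen \emph{nonzero} $h$ (changing sign on each candidate limit surface and $L^2$-orthogonal to its first eigenfunction), and then letting $\varepsilon_j\to 0$ to build the supersolution of Proposition \ref{prop:existence of supsolution}. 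Running the Simon--Smith scheme directly with the area functional produces only a stationary integral varifold with no multiplicity control — which is exactly the problem this paper is designed to overcome. What the paper actually does here is: (i) use bumpiness, the compactness Theorem \ref{thm:compactness}, Song's monotonicity formula, and the mean-concave foliation to show that the finite candidate set $\ms D_k$ of minimal surfaces lies in a fixed ball $B(p,d_0;\wti g_k)$ with $d_0$ independent of $k$; (ii) choose $h$ supported in $B(p,d_0+1;\wti g_k)$ — this is essential because Theorem \ref{thm:relative min-max in any compact domain} requires $h=0$ near $\partial N$; (iii) run the $\A^{\varepsilon_j h}$ min-max via Theorem \ref{thm:relative min-max in any compact domain} and apply the barrier/monotonicity argument to the PMC solutions $(\Sigma_{k,j},\Omega_{k,j})$ (not to an $h\equiv 0$ solution) to show they are closed for large $k$; and only then (iv) let $\varepsilon_j\to 0$ to extract multiplicity one. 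Your barrier step is applied to the wrong object and your multiplicity one step is missing its entire mechanism.

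A secondary, smaller point: the paper's argument that the $\Gamma^k_i$ are minimal for the original metric $g$ is not a direct consequence of the barrier alone; it goes through the Hausdorff convergence of $\cup_i\Gamma^k_i$ as $k\to\infty$ to a minimal surface of $(N\setminus\partial N,g)$ disjoint from $\partial N$, which forces $\Gamma^k_i\subset N_k\cap\{\wti g_k=g\}$ for large $k$. Your phrasing (``eventually pushed into a compact region of $N\setminus\partial N$ independent of $k$'') gestures at this but should be made precise along those lines.
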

\begin{proof}
Without loss of generality, we assume that for all $k$,
\[ \mf L_{N_k}(\bm \Xi;\wti g_k)\leq \Lambda_0<\infty.\]
We can assume that $\wti g_{k}$ is bumpy, and the general case follows by approximating $\wti g_k$ by bumpy metrics. 
Denote by $\ms D_k:=\{S_1,\cdots, S_{\beta_k}\}$ the collection of connected, closed, embedded, minimal surface $S$ in $(N_k\setminus \partial N_k,\wti g_k)$ satisfying
\[\mc H^2 (S)\leq \Lambda_0+1,\]
and Property {\bf(R')} \eqref{eq:property R2} for $L=L(m)$. 
Then using the monotonicity formula for minimal surfaces \cite{Song18}*{Lemma 2} and the mean concave foliation near $\partial N_k$, we know that there exists $d_0>0$ independent of $k$, such that  
\[ \cup_{j=1}^{\beta_k}S_j\subset B(p,d_0;\wti g_k),\] 
where $p$ is a fixed point in $N\setminus \partial N$.

Now we claim that one can adapt the argument in Theorem \ref{thm:classical multiplicity one} to prove that for all sufficiently large $k$, there exists a stationary varifold $V_k=\sum_{i=1}^{I_k}m^k_i[\Gamma^k_i]$ that achieves $\mf L_{N_k}(\bm \Xi;\wti g_k)$, where $\Gamma^k_i\in \ms D_k$. Indeed, by the same arguments in Step 1 and Step 2 in the proof of Theorem \ref{thm:classical multiplicity one}, we have that for each sufficiently large $k$, one can approximate $\mf L_{N_k}(\bm \Xi;\wti g_k)$ by a sequence of widths of relative sweepouts of separating surfaces. Then it suffices to adapt the argument in Theorem \ref{thm:multiplicity one for relative min-max} to the current free boundary settings. 

We now indicate the modification in Theorem \ref{thm:multiplicity one for relative min-max}. By the same strategy, we will choose a suitable prescribing function $h$ on $M$ and then approximate $\mf L_{N_k}(\bm\Xi; \wti g_k)$ by $\mf L_{N_k}^{\varepsilon_j h}(\bm\Xi; \wti g_k)$. Note that the minimal surfaces in $\ms D_k$ are all closed and lie in the interior. Then the prescribing function $h$ can be chosen such that $\spt(h)\subset  B(p,d_0+1;\wti g_k)$. Next we replace Theorem \ref{thm:pmc min-max theorem} by Theorem \ref{thm:relative min-max in any compact domain} to obtain a strongly $\A^{\varepsilon_j h}$-stationary, $C^{1,1}$, free $\varepsilon_jh$-boundary $(\Sigma_{k,j}, \Omega_{k, j})$ 
in $(N_k,\wti g_k)$ for each homotopy class of relative sweepouts; moreover, $(\Sigma_{k, j}, \Omega_{k, j})$ satisfies Property {\bf(R)} \eqref{eq:property R} for any $L=L(m)$-admissble collection of anuuli in $N_k\setminus\partial N_k$. Observe that $\gamma_{\epsilon_k}(\partial N_k\times \{t\})$ has non-zero mean curvature vector pointing towards $\partial N_k$ and $\partial N_k$ is mean concave. Thus, $\Sigma_{k, j}$ cannot contain $\partial N_k$ (since $\interior(\Sigma_{k, j})$ is minimal near $\partial N_k$), and by the area bound and the monotonicity formula, $\Sigma_{k,j}$ can not touch $\partial N_k$, that is, $\Sigma_{k,j}$ is closed, for all sufficiently large $k$. Then by letting $\varepsilon_j\to 0$, using the same argument as in Theorem \ref{thm:multiplicity one for relative min-max}, we know that there exists a stationary varifold $V_k=\sum_{i=1}^{I_k}m^k_i[\Gamma^k_i]$ lying in $(N_k\setminus \partial N_k, \wti{g}_k)$ that achieves $\mf L_{N_k}(\bm \Xi;\wti g_k)$, where $\Gamma^k_i\in \ms D_k$, and 
\begin{itemize}
    \item $m^k_i=1$ if $\Gamma^k_i$ is unstable;
    \item $V_k$ satisfies Property {\bf(R')} \eqref{eq:property R2} for any $L(m)$-admissible collection of annuli in $N_k\setminus \partial N_k$;
    \item if $M$ is orientable, then the genus bound \eqref{eq:genus bound1} holds with $\mk g_0 = \mk g(\Sigma_0)$.
\end{itemize}

It remains to prove that $V_k$ is stationary w.r.t. the metric $g$ for all sufficiently large $k$. Indeed, by the same argument in \cite{Song18}, the limit of $\cup_i\Gamma_i^k$ is a minimal surface in $(N\setminus \partial N,g)$. Note that such a limit does not intersect $\partial N$. By the Hausdorff convergence of $\Gamma_i^k$, we conclude that $\Gamma_i^k\subset N_k\cap\{\wti g_k = g\}$ for all sufficiently large $k$. In particular, $\Gamma_i^k$ is a minimal surface in $(N\setminus \partial N,g)$. 
This completes the proof of Proposition \ref{prop:min max in Nk}.
\end{proof}

\subsection{Sweepouts in three-spheres}\label{subsec:four sweepouts}
In this subsection, we will first recall the fact that the three-sphere always admits a nontrivial $k$-parameter ($k=1,2,3,4$) sweepout of two-spheres; see also \cite{HK19}*{Section 2}. Then we apply the Multiplicity One Theorem for Simon-Smith min-max theory (Theorem \ref{thm:classical multiplicity one}) to construct four distinct embedded minimal two-spheres if the manifold has no stable minimal two-spheres.

Let $\mb{S}^3\subset\mb R^4$ be the standard unit round three-sphere, and $x_1,\cdots, x_4$ be the four coordinate functions. Consider the spaces
\[
\ms X:=\{\phi(\mb S^2)\big|\phi: \mb S^2\to \mb S^3 \text{ is a smooth embedding}\},
\]
and 
\[
\ms Y:=\{\phi(\mb S^2)\big|\phi:\mb S^2\to\mb S^3 \text{ is a smooth map whose image is a point or an interval}\}.
\]
Denote by $\oli{\ms X}=\ms X\cup \ms Y$ endowed with un-oriented smooth topology.

For each $i=1, 2, 3, 4$, let $\ms P_i$ be the collection of continuous maps $\Phi: X\to \oli{\ms X}$, 
with $\Phi(Z_0)\subset \ms Y$, so that there exists some $\oli\lambda\in H^1(\oli {\ms X},\ms Y;\mb Z_2)$, such that $\alpha = \Phi^*(\oli\lambda)$ satisfies 
\[ \alpha^i \neq 0 \in H^i(X, Z_0;\mb Z_2). \]

Next we describe four explicit sweepouts that belongs to $\ms P_i$ for each $i=1, 2, 3, 4$.  
We use $[-1, 1]\ttimes \mb{RP}^3$ to denote the twisted $[-1,1]$-bundle over $\mb{RP}^3$, and $[a_0, a_1,a_2,a_3,a_4]$ to denote a point in $[-1, 1]\ttimes\mb{RP}^3$; that is $a_0\in [-1, 1]$, $a_1^2+\cdots+ a_4^2=1$, and $(a_0, a_1, a_2, a_3, a_4)$ is identified with $(-a_0, -a_1, -a_2, -a_3, -a_4)$. 
When $a_0\neq \pm 1$, we denote
\[ \mc G([a_0, a_1,a_2,a_3,a_4]):=\{a_1x_1+a_2x_2+a_3x_3+a_4x_4= a_0\}\cap \mb S^3;\]
when $a_0=\pm 1$, $\mc G(a_0, a_1, a_2, a_3, a_4)$ denotes a point given by $\pm(a_1, a_2, a_3, a_4)\in \mb S^3$.

We now define four maps:
\begin{gather*} 
\Psi_1: [-1,1]\ttimes \mb{RP}^0\to \oli{\ms X}, \quad a_0 \longmapsto \mc G(a_0,1,0,0,0);\\
\Psi_2: [-1,1]\ttimes \mb{RP}^1\to \oli{\ms X}, \quad [a_0,a_1,a_2]\longmapsto \mc G(a_0,a_1,a_2,0,0);\\
\Psi_3: [-1,1]\ttimes\mb{RP}^2\to \oli{\ms X}, \quad [a_0,a_1,a_2,a_3]\longmapsto \mc G(a_0,a_1,a_2,a_3,0);\\
\Psi_4: [-1,1]\ttimes\mb{RP}^3\to\oli{\ms X}, \quad [a_0,a_1,a_2,a_3,a_4]\longmapsto \mc G(a_0,a_1,a_2,a_3,a_4).
\end{gather*}
For simplicity, we use $\mc X_i$ to denote $[-1,1]\ttimes\mb {RP}^{i-1}$, (and as compared with our definition of $\ms P_i$, $Z_0=\partial \mc X_i$ for each $i$.) Let $f: [-1, 1] \to \mc X_i$ be an arbitrary embedding of a fiber of the $[-1,1]$-bundle.

We now show that $\Psi_i\in\ms P_i$ for $i=1,2,3,4$. Denote by $\bm{\iota}: \oli{\ms X} \to \mc Z_2(S^3; \mb Z_2)$ the natural inclusion map into the space of mod-2 integral cycles. Note that $\bm{\iota}(\ms Y) = \{0\}$, that is, the image of each element in $\ms Y$ is a zero cycle.  Consider the chain of maps:
\[
\begin{tikzcd}
({[-1, 1]}, \partial{[-1, 1]}) \arrow[r, "f"]
&(\mc X_i, \partial \mc X_i) \arrow[r, "\Psi_i"]
&(\oli{\ms X}, \ms Y) \arrow[r, "\bm{\iota}"] 
&(\mc Z_2(S^3;\mb Z_2),\{0\});
\end{tikzcd}
\]
the composition map $\bm{\iota}\circ \Psi_i\circ f: ([-1, 1], \partial{[-1, 1]}) \to (\mc Z_2(S^3;\mb Z_2),\{0\})$ is then a sweepout in the sense of Almgren; see \cite{Alm62}, \cite[Definition 3.4]{Marques-Neves16} and \cite[Theorem 5.8]{Zhou15}. Therefore, 
we know that $(\bm{\iota}\circ \Psi_i\circ f)^*: H^1(\mc Z_2(S^3;\mb Z_2),\{0\}; \mb Z_2) \to H^1([-1, 1], \partial{[-1, 1]}; \mb Z_2) = \mb Z_2$ is nontrivial, and by the chain of pull-back maps:
\[  
\begin{tikzcd}
H^1(\mc Z_2(S^3;\mb Z_2),\{0\};\mb Z_2) \arrow[r,"\bm{\iota}^*"] &H^1(\oli{\ms X},\ms Y;\mb Z_2)\arrow[r,"\Psi_i^*"] &H^1(\mc X_i,\partial \mc X_i;\mb Z_2)\\ \arrow[r,"f^*"]
&H^1([-1, 1], \partial[-1, 1]; \mb Z_2),
\end{tikzcd}
\]
we also know that $\Psi_i^*: H^1(\oli{\ms X}, \ms Y; \mb Z_2) \to H^1(\mc X_i, \partial\mc X_i; \mb Z_2)$ is nontrivial. This together with the structure of the relative cohomology ring 
\[
H^*(\mc X_i,\partial \mc X_i;\mb Z_2)\simeq \mb Z_2[\alpha]/[\alpha^{i+1}] \] 
implies that $\Psi_i\in \ms P_i$ for $i=1,2,3,4$.

Recall that 
 \[ \mf L(\ms P_i):=\inf_{\Phi\in \ms P_i}\sup_{x\in \mr{dom} \Phi}\mc H^2(\Phi(x)).\] 
The next result follows from Lusternik–Schnirelmann theory. We will provide a detailed proof in Appendix \ref{appen:LS inequality}, which is borrowed from \cite{MN17}*{Theorem 6.1} with minor modifications.
\begin{lemma}
\label{lem:LS theory}
Suppose that $(M,g)$ contains only finitely many embedded minimal two-spheres. Then 
\[   0<\mf L(\ms P_1)<\mf L(\ms P_2)<\mf L(\ms P_3)<\mf L(\ms P_4).  \]
\end{lemma}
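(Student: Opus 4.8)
The plan is to prove the strict chain of inequalities $0<\mf L(\ms P_1)<\mf L(\ms P_2)<\mf L(\ms P_3)<\mf L(\ms P_4)$ by the standard Lusternik--Schnirelmann argument, exactly as in \cite{HK19}*{Theorem 5.2}. First, the positivity $\mf L(\ms P_1)>0$ is immediate: any $\Phi\in\ms P_1$ is a nontrivial sweepout of $\mb S^3$ by two-spheres, so by a standard isoperimetric/coarea argument (or simply because the map must sweep out positive volume) $\sup_x\mc H^2(\Phi(x))$ is bounded below by a positive constant depending only on $g$. The monotonicity $\mf L(\ms P_1)\leq\mf L(\ms P_2)\leq\mf L(\ms P_3)\leq\mf L(\ms P_4)$ follows because $\Psi_{i}$ restricted to an appropriate sub-$\mb{RP}^{i-2}$ realizes a family in $\ms P_{i-1}$; more precisely, the cup-product structure $H^*(\oli{\ms X},\partial\ms X;\mb Z_2)=\mb Z_2[\alpha]/[\alpha^5]$ forces $\ms P_{i}$-sweepouts to contain $\ms P_{i-1}$-sweepouts in the sense that $\Phi^*(\alpha^i)\neq 0$ implies $\Phi^*(\alpha^{i-1})\neq 0$ on a subcomplex, so the widths are nondecreasing.

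The heart of the matter is the \emph{strict} inequality $\mf L(\ms P_{i-1})<\mf L(\ms P_i)$. Here I would argue by contradiction following the classical Lusternik--Schnirelmann scheme adapted to min-max widths (see \cite{Guth09}, \cite{MN17}, and \cite{HK19}*{\S 5}). Suppose $\mf L(\ms P_{i-1})=\mf L(\ms P_i)=:W$. Apply the Multiplicity One Theorem \ref{thm:classical multiplicity one} to $\ms P_i$ to obtain a min-max minimal surface of total area $W$; since $(M,g)$ contains only finitely many embedded minimal two-spheres (our standing assumption) and, crucially, \emph{no nontrivial multiplicities occur for the sphere sweepouts} — the limit minimal varifold is an embedded minimal two-sphere with multiplicity one, because a two-sphere sweepout cannot degenerate to a higher-genus surface (genus bound \eqref{eq:genus bound1} with $\mk g_0=0$) nor to a stable component with multiplicity (that would require the stable sphere case, which is excluded in this subsection's hypotheses; more precisely, if $M$ has no stable minimal two-spheres then Theorem \ref{thm:classical multiplicity one} gives all $m_j=1$). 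Thus the min-max surface for both $\ms P_{i-1}$ and $\ms P_i$ at level $W$ must be one of the finitely many embedded minimal two-spheres of area exactly $W$; call this finite set $\mathcal K_W$. The Lusternik--Schnirelmann argument then shows that the ``difference'' $\ms P_i\setminus\ms P_{i-1}$ would have to be detected by sweepouts concentrating near $\mathcal K_W$, but a neighborhood of a finite set of surfaces is too ``small'' topologically to carry the cohomology class $\alpha^i/\alpha^{i-1}$; this yields a contradiction with $\Phi^*(\alpha^i)\neq 0$.

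Concretely, the key step is: given $\Phi\in\ms P_i$ with $\sup_x\mc H^2(\Phi(x))\leq W+\eta$ for small $\eta$, consider $A=\{x:\mc H^2(\Phi(x))>W-\eta\}$ (a neighborhood of where the area is near-maximal). On $X\setminus A$ the area is $\leq W-\eta<\mf L(\ms P_{i-1})$ (if the strict inequality failed we'd have $W-\eta<W=\mf L(\ms P_{i-1})$, forcing $\Phi|_{X\setminus A}^*(\alpha^{i-1})=0$), while on $A$ the sweepout is $\mf F$-close to $\mathcal K_W$, and since $\mathcal K_W$ is finite the restriction $\Phi|_A$ factors (up to homotopy) through a space homotopy equivalent to a finite union of small balls, which has trivial first cohomology, so $\Phi|_A^*(\alpha)=0$. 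Combining via the cup-product formula $\alpha^i=\alpha^{i-1}\smile\alpha$ and the Mayer--Vietoris/relative cohomology sequence for $(X,A,X\setminus A)$ gives $\Phi^*(\alpha^i)=0$, contradicting $\Phi\in\ms P_i$. The main obstacle I anticipate is making the ``$\mf F$-close to a finite set implies trivial cohomology on that piece'' step rigorous in the continuous Simon--Smith setting — one needs a deformation retraction of an $\mf F$-neighborhood of $\mathcal K_W$ in $\oli{\ms X}$ onto something $1$-connected, using the non-degeneracy (bumpiness, or finiteness) of the minimal spheres in $\mathcal K_W$; this is precisely the technical content of \cite{HK19}*{\S 5} (building on \cite{MN17}), and I would invoke it, citing Lemma \ref{lem:LS theory} as stated.
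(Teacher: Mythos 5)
Your proposal is the standard Lusternik--Schnirelmann argument, and this is essentially the same approach as the paper, which gives no independent proof but simply cites \cite{HK19}*{Theorem 5.2} and \cite{MN17}*{Theorem 6.1}, whose content is precisely the scheme you outline (cup-product monotonicity $\alpha^i=\alpha^{i-1}\smile\alpha$, plus the deformation argument showing that the near-maximal slices, which concentrate on a finite critical set, cannot carry the class $\alpha$). One small correction: your detour through Theorem \ref{thm:classical multiplicity one} and the absence of stable spheres is both unnecessary and unavailable under the lemma's hypothesis (which only assumes finitely many embedded minimal two-spheres) --- all the argument needs is that the set of stationary integral varifolds of mass $W$ supported on unions of the finitely many minimal two-spheres is finite, and the rigorous form of your ``too small topologically'' step is the observation the paper itself uses in Step~1 of the proof of Theorem \ref{thm:classical multiplicity one}: a loop lying in a small $\mf F$-neighborhood of a finite set of varifolds cannot be an Almgren--Pitts sweepout of $M$, hence $\oli\lambda$ (and so $\alpha$) vanishes on it.
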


\begin{remark}
Haslhofer-Ketover \cite{HK19}*{Theorem 5.2} proved a similar result. 
\end{remark}

\begin{theorem}\label{thm:no stable}
Suppose that $(M,g)$ is a Riemannian 3-sphere. If $(M,g)$ does not contain any embedded stable minimal two-sphere, then $(M,g)$ admits at least four distinct embedded minimal two-spheres.
\end{theorem}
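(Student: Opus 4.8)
The plan is to apply the Multiplicity One Theorem \ref{thm:classical multiplicity one} to each of the four saturated families $\ms P_1, \ms P_2, \ms P_3, \ms P_4$ constructed above, and then use the Lusternik--Schnirelmann inequality of Lemma \ref{lem:LS theory} to distinguish the resulting minimal two-spheres. The first step is to record the nontriviality condition: by Lemma \ref{lem:LS theory}, $\mf L(\ms P_i)>0$ for each $i$, and since every element of the relevant subcomplex $Z_0 = \partial([-1,1]\ttimes\mb{RP}^{i-1})$ is mapped by $\Psi_i$ into $\ms Y$ (a union of lower-dimensional graphs with zero area), we have $\sup_{x\in Z_0}\mc H^2(\Phi_0(x)) = 0 < \mf L(\ms P_i)$. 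Hence Theorem \ref{thm:classical multiplicity one} applies to each $\ms P_i$ and produces a pairwise disjoint collection of connected, closed, embedded minimal surfaces $\{\Gamma^i_j\}_{j=1}^{N_i}$ with positive integer multiplicities $\{m^i_j\}$ so that $\mf L(\ms P_i) = \sum_j m^i_j \mc H^2(\Gamma^i_j)$, and (since $M = S^3$ is orientable and the sweepouts are by two-spheres, so $\mk g_0 = 0$) the genus bound \eqref{eq:genus bound1} forces $\mk g(\Gamma^i_j) = 0$ for every orientable component and, since $S^3$ contains no embedded $\mb{RP}^2$ (it is orientable and simply connected, so has no one-sided embedded surfaces), there are in fact no one-sided components. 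Therefore each $\Gamma^i_j$ is an embedded two-sphere.

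The second step is to upgrade multiplicity to one. By hypothesis $(M,g)$ contains no embedded \emph{stable} minimal two-sphere, so every $\Gamma^i_j$ is unstable and two-sided; by conclusion (1) of Theorem \ref{thm:classical multiplicity one}, $m^i_j = 1$ for all $i,j$. Thus for each $i$ we obtain a nonempty disjoint collection of embedded minimal two-spheres whose total area equals $\mf L(\ms P_i)$; in particular there is at least one embedded minimal two-sphere $\Sigma^{(i)}$ with $\mc H^2(\Sigma^{(i)}) \le \mf L(\ms P_i)$, and if $N_i = 1$ then $\mc H^2(\Sigma^{(i)}) = \mf L(\ms P_i)$ exactly. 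More importantly, since each component realizes a piece of the width and the components are disjoint, one of them has area at most $\mf L(\ms P_i)$ and, for distinguishing purposes, the \emph{whole} min-max varifold for $\ms P_i$ has total area exactly $\mf L(\ms P_i)$.

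The third step — distinguishing the four families — uses Lemma \ref{lem:LS theory}, which gives $0 < \mf L(\ms P_1) < \mf L(\ms P_2) < \mf L(\ms P_3) < \mf L(\ms P_4)$. Suppose toward a contradiction that $(M,g)$ admits only finitely many embedded minimal two-spheres (as we may always assume, since otherwise there is nothing to prove), and that the total number is at most three. Each $\ms P_i$ produces a min-max varifold $V_i = \sum_j [\Gamma^i_j]$ supported on a sub-collection of this finite set, with $\|V_i\|(M) = \mf L(\ms P_i)$. Since the four numbers $\mf L(\ms P_1),\dots,\mf L(\ms P_4)$ are pairwise distinct, the four varifolds $V_1,\dots,V_4$ are pairwise distinct. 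The number of distinct integral varifolds supported on a set of $\le 3$ disjoint embedded minimal two-spheres, each with multiplicity one (which is what we have shown) is $2^3 - 1 = 7$ in principle, so a naive counting does not immediately yield a contradiction; instead I will argue as in \cite{HK19} and \cite{MN17} that the Lusternik--Schnirelmann structure forces the \emph{supports} to be genuinely new. Concretely, following the argument of \cite{HK19}*{proof of Theorem 1.3} and \cite{MN17}*{Section 6}: if two of the widths $\mf L(\ms P_i) < \mf L(\ms P_{i+1})$ were realized using the same collection of minimal spheres, then the strict inequality of Lemma \ref{lem:LS theory} would be violated, because a min-max sweepout for $\ms P_{i+1}$ could be modified near that common collection to lower the width to $\mf L(\ms P_i)$. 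Hence the four collections of minimal spheres underlying $V_1, V_2, V_3, V_4$ are mutually distinct as \emph{subsets} of the finite set of embedded minimal two-spheres, and by the pigeonhole principle (a strictly increasing chain of four distinct subsets of a finite set, ordered by the strictly increasing areas) there must be at least four distinct embedded minimal two-spheres. The main obstacle is this last step: making precise the claim that distinct widths force distinct \emph{supports} rather than merely distinct varifolds, for which one must invoke the deformation argument underlying Lusternik--Schnirelmann theory exactly as in \cite{HK19, MN17}; once that is granted, the conclusion of Theorem \ref{thm:no stable} follows immediately.
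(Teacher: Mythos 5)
Your proposal follows the paper's overall strategy (four families $\ms P_i$, Theorem \ref{thm:classical multiplicity one}, Lemma \ref{lem:LS theory}), but it has a genuine gap at exactly the step you flag as "the main obstacle," and the resolution you sketch does not work. Distinct widths only force the four min-max \emph{varifolds} to be distinct; since each is a multiplicity-one sum over a subset of the finite set of minimal spheres, four distinct widths are perfectly consistent with only three spheres existing (e.g., spheres of areas $1$, $2$, $4$ realize the four values $1,2,3,4$ via the subsets $\{1\},\{2\},\{1,2\},\{4\}$ — your own count $2^3-1=7$ shows this). Your attempted fix, that "a min-max sweepout for $\ms P_{i+1}$ could be modified near that common collection to lower the width to $\mf L(\ms P_i)$," is not what Lusternik--Schnirelmann theory gives: it yields strict monotonicity of the widths when there are finitely many critical surfaces, not that distinct widths have distinct \emph{individual components} in their supports, and the four supports are in no sense a nested chain.

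The missing idea, which is how the paper closes the argument, is elementary given the hypothesis: since $(M,g)$ contains no stable minimal two-sphere, \emph{any two embedded minimal two-spheres in $(M,g)$ must intersect} (otherwise one could minimize area in the region between two disjoint ones and produce a stable minimal sphere). The min-max varifold for $\ms P_i$ is supported on a \emph{pairwise disjoint} collection, so it must consist of a single connected sphere $\Gamma_i$ with some multiplicity $m_i$; instability gives $m_i=1$, hence $\mc H^2(\Gamma_i)=\mf L(\ms P_i)$ exactly. The strict inequalities $\mf L(\ms P_1)<\cdots<\mf L(\ms P_4)$ then immediately force $\Gamma_1,\dots,\Gamma_4$ to be four distinct embedded minimal two-spheres. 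With this one observation inserted, the rest of your argument (nontriviality of the widths, genus bound forcing each component to be a sphere, multiplicity one from instability) is correct and matches the paper.
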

\begin{proof}
Without loss of generality, we assume that $(M,g)$ contains only finitely many embedded minimal 2-spheres. Thus by Lemma \ref{lem:LS theory}, 
\[   0<\mf L(\ms P_1)<\mf L(\ms P_2)<\mf L(\ms P_3)<\mf L(\ms P_4).  \]
Note that $\ms P_i$ may contain many different homotopy classes of $(X, Z_0)$-sweepouts, but since there are only finitely many minimal $2$-spheres in $(M, g)$, the min-max values of these homotopy classes have to stabilize.  Therefore, by the Multiplicity One Theorem for Simon-Smith min-max theory (Theorem \ref{thm:classical multiplicity one}), each $\mf L(\ms P_i)$ is realized by a disjoint union of some closed minimal two-spheres with integer multiplicities. Since $(M,g)$ has no stable minimal spheres, any two embedded minimal $2$-spheres have to intersect with each other. Thus $\mf L(\ms P_i)$ is achieved by some minimal two-sphere $\Gamma_i$ with integer multiplicity $m_i$. Since $\Gamma_i$ is unstable by assumption, we have $m_i=1$. Thus we conclude that $\Gamma_1,\cdots,\Gamma_4$ are four embedded minimal two-spheres with 
\[  
0<\mc H^2 (\Gamma_1)<\mc H^2 (\Gamma_2)<\mc H^2 (\Gamma_3)<\mc H^2 (\Gamma_4).
\]
This finishes the proof of Theorem \ref{thm:no stable}.
\end{proof}

\subsection{Simon-Smith width upper bound}\label{subsec:2 width upper bound}
In this subsection, we will assume that $N$ and $M$ in Section \ref{subsec:construction of non-compact} are diffeomorphic to the three-ball and the three-sphere, respectively.
Then the construction in Section \ref{subsec:four sweepouts} can be applied. In particular, $\ms P_j$ is well-defined for $j=1,2,3,4$. The goal of this subsection is to give a uniform upper bound for $\mf L_{N_\epsilon}(\ms P_j;\wti g_\epsilon)$ (independent of $\epsilon$), where $N_\epsilon$ and $\wti g_\epsilon$ are as in Section \ref{subsec:construction of non-compact}.

We first introduce the following result which is from the mean curvature flow with surgery.
\begin{lemma}[{\citelist{\cite{HK19}*{Theorem 8.1}\cite{LM20}*{Proposition 3.6}}}]\label{lem:one paramter foliation}
Let $(N,\partial N,g)$ be a compact Riemannian three-ball whose boundary $\partial N$ is a stable minimal sphere. If $(N\setminus \partial N,g)$ does not contain any stable minimal spheres, there exists a minimal sphere $S$ of index one and a smooth foliation $\{S_t\}_{t\in[-1,1]}$ of $N$ such that $S_{-1}$ is a point, $S_0=S$, $S_1=\partial N$ and 
\[  \mc H^2(S_t)\leq \mc H^2(S).\]
\end{lemma}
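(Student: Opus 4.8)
This is established in \cite{HK19}*{Theorem 8.1} and \cite{LM20}*{Proposition 3.6}; I sketch the approach. The plan has two stages: first produce an index-one embedded minimal sphere $S$ in $N\setminus\partial N$ by one-parameter min-max, then build the foliation by sweeping $S$ monotonically to an interior point on one side and to $\partial N$ on the other, using the absence of interior stable minimal spheres to keep the sweep from stalling and the fact that $N$ is a three-ball to keep every slice spherical.

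For the first stage, I would run Simon-Smith min-max over one-parameter families interpolating between a point and $\partial N$, with width $W$. Every such family contains $\partial N$ as a slice, so $W\geq\mc H^2(\partial N)$; equality is excluded because a stable minimal surface cannot be the min-max critical surface of a nontrivial one-parameter family, so $W>\mc H^2(\partial N)$ and the min-max surface $S$ lies in $N\setminus\partial N$. By the regularity theory for Simon-Smith min-max, $S$ is a closed embedded minimal sphere with $\mc H^2(S)=W$ and Morse index at most one, where higher multiplicity is ruled out via the catenoid estimate of \cite{KMN16} and the stable surface $\partial N$ as a barrier (alternatively the multiplicity one theorems of this paper apply). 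Since $N\setminus\partial N$ contains no stable minimal sphere, $S$ is unstable, hence of index exactly one.

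For the second stage, let $\phi_1>0$ be a first eigenfunction of the Jacobi operator of $S$; since $S$ is unstable, displacing $S$ by $\pm\,\varepsilon\,\phi_1\,\nu$ strictly decreases area, and the interpolating family $\{\mr{graph}(s\,\phi_1\,\nu)\}_{|s|\leq\varepsilon}$ has area at most $\mc H^2(S)$. I would run a mean curvature flow with surgery from each of the two displaced surfaces. In the three-ball $N$, each surgery on an embedded sphere again yields embedded spheres; area is non-increasing along the flow and strictly drops at surgeries, so every slice so produced has area less than $\mc H^2(S)$; and since $N\setminus\partial N$ has no stable minimal sphere, neither flow can subconverge to an interior minimal surface, so---using $\partial N$ as a barrier and the topology of the three-ball---one flow becomes extinct at an interior point while the other converges to $\partial N$. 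Reversing time, smoothly interpolating across the finitely many surgery times and at the extinction point, and concatenating with the family through $S$, one obtains a smooth foliation $\{S_t\}_{t\in[-1,1]}$ with $S_{-1}$ a point, $S_0=S$, $S_1=\partial N$, and $\mc H^2(S_t)\leq\mc H^2(S)$ by construction.

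The main obstacle is making the second stage rigorous: classical mean curvature flow with surgery requires two-convexity, which is unavailable here, so one must either invoke the structure theory of weak (Brakke or level-set) flows of surfaces in three-manifolds to run and interpolate the flow, or replace it by the variational ``shrinking by min-max'' construction of \cite{LM20}, in which the hypothesis that $N\setminus\partial N$ has no stable minimal sphere is precisely what forces the monotone family to continue all the way to a point and all the way out to $\partial N$. Producing a genuinely smooth foliation through the surgery and interpolation times---rather than a merely continuous sweepout---is the technical heart, and is exactly what is carried out in \cite{HK19}*{Theorem 8.1} and \cite{LM20}*{Proposition 3.6}.
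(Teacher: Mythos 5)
Your proposal is correct and takes essentially the same route as the paper's own (brief) proof: an index-one embedded minimal sphere $S$ from one-parameter Simon--Smith min-max, an expanding neighborhood of $S$ coming from instability (your first-eigenfunction displacement), and mean curvature flow with surgery on both sides, with the absence of interior stable minimal spheres preventing the flow from stalling before reaching a point on one side and $\partial N$ on the other. The technical issues you flag (two-convexity for surgery, smoothing across surgery times) are exactly what the paper delegates to \cite{HK19} and \cite{LM20}.
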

\begin{proof}
By \cite{KLS19}*{Lemma 8.1} and the Simon-Smith min-max theory, there exists an embedded minimal sphere $S$ with index one. Clearly, $N\setminus S$ has two connected components $N^+$ and $N^-$. Since $S$ is unstable, then $S$ has {\em an expanding neighborhood} $U$ in $N$, that is, $U$ can be foliated by spheres and $\partial U$ has non-zero mean curvature vector pointing away from $U$. Thus one can use the mean curvature flow with surgery to obtain a smooth foliation of $N\setminus U$. Combining with the foliation of $U$, we obtain a foliation of $N$. One can modify the foliation slightly around $\partial U$ to get a smooth foliation satisfying all of the requirements. This completes the proof. 
\end{proof}

Denote by $W$ the area of $S$ in Lemma \ref{lem:one paramter foliation}. Let $(N_{\epsilon},g_{\epsilon})$ and $(M,\wti g_\epsilon)$ be the manifolds constructed in the previous Subsection \ref{subsec:construction of non-compact}. Without loss of generality, we assume that there exists a smooth foliation $\{K_t\}_{t\in[0,1]}$ of $M\setminus (N\setminus \partial N)$ so that $K_0=\partial N$, $K_1=q'\in M\setminus N$ is a point, and 
\begin{equation}\label{eq:foliation out N} 
\mc H^2(K_t)\leq \mc H^2(\partial N)+\epsilon.
\end{equation}
Note that this can be done because one can arbitrarily deform the metric in $M\setminus N$.

\begin{lemma}\label{lem:2 width upper bound}
As above, if $(N\setminus \partial N,g)$ does not contain any stable minimal spheres, then for all sufficiently small $\epsilon$, we have that
\[ 
\mf L_{N_\epsilon}(\ms P_1;\wti g_{\epsilon})\leq W; \quad \mf L_{N_\epsilon}(\ms P_2;\wti g_{\epsilon})\leq 2W.
\]
\end{lemma}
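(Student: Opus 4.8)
The goal is to produce, for each of the two sweepout families $\ms P_1$ and $\ms P_2$ on the approximating manifold $(N_\epsilon, \wti g_\epsilon)$, an explicit competitor sweepout whose maximal slice area is bounded by $W$ (resp.\ $2W$) plus a quantity that vanishes as $\epsilon\to 0$. The building blocks are: (a) the index-one foliation $\{S_t\}_{t\in[-1,1]}$ of $(N,\partial N,g)$ from Lemma \ref{lem:one paramter foliation}, with $\mc H^2(S_t)\le W$; (b) the auxiliary foliation $\{K_t\}_{t\in[0,1]}$ of $M\setminus(N\setminus\partial N)$ with $\mc H^2(K_t)\le \mc H^2(\partial N)+\epsilon$ from \eqref{eq:foliation out N}; and (c) the fact that $\partial N$ is a \emph{stable minimal} sphere, so its own area is no larger than that of nearby surfaces, and crucially, the cylindrical tail region $\gamma_\epsilon(\partial N_\epsilon\times[-\delta_\epsilon,0])$ together with the model cylinder $\partial N\times[0,\infty)$ sweep out with area comparable to $\mc H^2(\partial N)$ (indeed, by property \ref{item:gepsilon=g along surfaces}, the slices $\partial N_\epsilon\times\{t\}$ have area equal to their $g$-area, and $\mc H^2(\partial N_\epsilon)\to\mc H^2(\partial N)$ as $\epsilon\to 0$). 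Since $\mc H^2(\partial N)\le W$ by the area monotonicity in the foliation $\{S_t\}$ (as $S_1=\partial N$ and $S_0=S$ has area $W$ — wait, one needs $\mc H^2(\partial N)\le W$, which follows directly from $\mc H^2(S_t)\le \mc H^2(S)=W$ applied at $t=1$), all the tail contributions are absorbed into $W$.

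For $\ms P_1$ (the one-parameter family $\Psi_1:[-1,1]\ttimes\mb{RP}^0\to\oli{\ms X}$), first I would build a sweepout of the compact piece $N_\epsilon$ by concatenating three pieces: start at a point, run through the foliation $\{S_t\}$ restricted and reparametrized to lie in $N_\epsilon$ (the part of $N$ surviving after removing the collar), each slice having area $\le W$; then run through the collar slices $\gamma_\epsilon(\partial N_\epsilon\times\{t\})$, $t\in[-\delta_\epsilon,0]$, which have area $\le \mc H^2(\partial N)+o(1)\le W+o(1)$; and the resulting one-parameter family of embedded spheres, after smoothing the concatenation points and degenerating the ends to graphs, defines an element of $\ms P_1$ on $(N_\epsilon,\wti g_\epsilon)$. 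Its maximal $\mc A_{N_\epsilon}$-area (which only counts mass inside $N_\epsilon$) is $\le W + o_\epsilon(1)$; since the width is an infimum over the saturated family, this yields $\mf L_{N_\epsilon}(\ms P_1;\wti g_\epsilon)\le W$ for $\epsilon$ small (the $o_\epsilon(1)$ error is removable because the final inequality is between fixed quantities and can be taken in the limit, or simply by replacing $W$ with $W+\eta$ for arbitrary $\eta>0$ and small $\epsilon$). One must check the family detects $\alpha\neq 0$, i.e.\ represents the correct cohomology class: this is the standard observation that a foliation from a point to a point (here, point $S_{-1}$ through to $\partial N_\epsilon$, then one could further cap using the $K_t$ foliation to reach a point in $M\setminus N$) is topologically nontrivial, exactly as in \cite{HK19}.

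For $\ms P_2$ (the family $\Psi_2:[-1,1]\ttimes\mb{RP}^1\to\oli{\ms X}$, homotopically a two-parameter sweepout detecting $\alpha^2$), the standard trick is to take the one-parameter sweepout $\Phi$ constructed above and form the two-parameter family by ``doubling'': parametrize by $\mb{RP}^1\cong S^1/\pm$ an equatorial rotation, so that over each point of $\mb{RP}^1$ one has essentially the same sweepout of $N_\epsilon$ but anchored along a rotated axis; the maximal slice is then a configuration consisting of at most two of the original slices (appearing when the two one-parameter sweepouts ``cross''), so the area is bounded by $2(W+o_\epsilon(1))=2W+o_\epsilon(1)$. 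Concretely, one uses the structure of $[-1,1]\ttimes\mb{RP}^1$ and the Lusternik--Schnirelmann-type concatenation: a representative of $\ms P_2$ can be taken as (a rotated family of) the union of two copies of the $\ms P_1$-sweepout, and the worst slice is a disjoint union of two spheres each with area $\le W+o_\epsilon(1)$. The key point making this bound $2W$ rather than something larger is that in the cylindrical tail the slices are \emph{parallel} copies of $\partial N_\epsilon$ and can be arranged to not pile up more than twice, and $\mc H^2(\partial N)\le W$. Therefore $\mf L_{N_\epsilon}(\ms P_2;\wti g_\epsilon)\le 2W$ for $\epsilon$ small.

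The main obstacle I anticipate is the careful construction near $\partial N_\epsilon$ and the cylindrical end: one must verify that the foliation $\{S_t\}$ of $N$ (which is a foliation of the \emph{original} $N$, not $N_\epsilon$) can be modified to a genuine sweepout of $N_\epsilon$ with the metric $\wti g_\epsilon$ \emph{without increasing areas beyond $W+o_\epsilon(1)$}, using the mean-concavity of the new slices $\gamma_\epsilon(\partial N_\epsilon\times\{t\})$ (which lets the surfaces slide monotonically toward $\partial N_\epsilon$) together with property \ref{item:gepsilon=g along surfaces} (which controls the area of the collar slices), and that the resulting families are continuous in the smooth topology on $\oli{\ms X}$ and lie in the prescribed saturated family with the correct boundary behavior $\Phi(Z_0)\subset\ms Y$. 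A secondary technical point is confirming the cohomological nontriviality (that the competitor truly lies in $\ms P_1$, resp.\ $\ms P_2$), but this is entirely parallel to \cite{HK19}*{Section 8} and \cite{LM20}, and I would cite those for the topological bookkeeping while supplying the metric/area estimates here.
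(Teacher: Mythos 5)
Your construction matches the paper's: for $\ms P_1$ the paper also concatenates the index-one foliation $\{S_t\}$ of Lemma \ref{lem:one paramter foliation} with the collar slices $\gamma_\epsilon(\partial N_\epsilon\times\{t\})$ (whose $\wti g_\epsilon$-areas are controlled by property \ref{item:gepsilon=g along surfaces}) and the exterior foliation $\{K_t\}$, and for $\ms P_2$ the paper simply invokes \cite{HK19}*{Theorem 4.1}, which is exactly the ``doubling'' of a one-parameter sweepout that you sketch. So the route is essentially the same.

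There is, however, one step where your justification does not work as written: the removal of the $o_\epsilon(1)$ error to get the \emph{exact} bound $\mf L_{N_\epsilon}(\ms P_1;\wti g_\epsilon)\le W$. The width $\mf L_{N_\epsilon}$ depends on $\epsilon$, so an estimate of the form $\mf L_{N_\epsilon}\le W+o_\epsilon(1)$, or ``$\le W+\eta$ for every $\eta>0$ provided $\epsilon\le\epsilon(\eta)$,'' does not yield $\mf L_{N_\epsilon}\le W$ for any \emph{fixed} small $\epsilon$; you cannot ``take the limit'' in an inequality whose left-hand side changes with $\epsilon$. The correct repair, which is what the paper does, is to observe that all of the perturbations (smoothing the concatenation between $S_{1-\tau}$ and the collar, interpolating across $\gamma_\epsilon(\partial N_\epsilon\times[-2\delta_\epsilon,0])$, and matching with $\{K_t\}$) are localized near slices whose areas are \emph{strictly} below $W$: one has $\mc H^2(S_{1-\tau})<W$ and $\mc H^2(\partial N)<W$ (strictness coming from the construction of the foliation in Lemma \ref{lem:one paramter foliation}, where areas strictly decrease away from the index-one sphere $S$), so the modified slices remain $\le W$ after a small perturbation. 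Your write-up records only $\mc H^2(\partial N)\le W$ and leans on the incorrect limiting argument, so this strictness needs to be made explicit for the proof to close. The remaining points you flag (mean-concavity of the collar slices, continuity in $\oli{\ms X}$, and the cohomological nontriviality of the competitor) are handled as you suggest, by the properties of the Song-type collar and by \cite{HK19}.
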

\begin{proof}
By Lemma \ref{lem:one paramter foliation}, there exists a smooth foliation $\{S_t\}_{t\in[-1,1]}$ such that $S_{-1}$ is a point, $S_1=\partial N$ and 
\[\mc H^2(S_t)\leq W.\]
Now we take a sufficiently small constant $\tau>0$, so that $S_{1-\tau}$ is sufficiently close to $\partial N$ (in the $C^3$ topology) as smooth graphs. Fix this $\tau$. Note that for sufficiently small $\epsilon$, $\gamma_\epsilon(\partial N_\epsilon\times \{-2\delta_\epsilon\})$ is also a smooth graph over $\partial N$. Then one can foliate the region between $S_{1-\tau}$ and $\gamma_{\epsilon}(\partial N_\epsilon\times \{-2\delta_\epsilon\})$ by spheres with area close to $\mc H^2(S_{1-\tau})<W$. Recall that (see \ref{item:gepsilon=g along surfaces} in Section \ref{subsec:construction of non-compact}) $\gamma_\epsilon(\partial N_{\epsilon}\times [-2\delta_\epsilon,0])$ has a natural foliation $\{\gamma_\epsilon(\partial N_\epsilon \times \{t\})\}_{t\in[-2\delta_\epsilon, 0]}$ with 
\[  \mc H^2(\partial N_\epsilon\times \{t\};\wti g_\epsilon)= \mc H^2(\partial N_\epsilon\times \{t\};g)< W.  \] 
Combining with the foliation on $M\setminus N$ satisfying \eqref{eq:foliation out N}, we obtain a foliation of $M$ and each leaf has area (w.r.t. $\wti g_\epsilon$) less than or equal to $W$. One can modify the foliation around $S_{1-\tau}$ and $\partial N$ slightly to get a smooth foliation $\{\wti S_t\}_{t\in[-1,1]}$ (by reparametrization) of $(M, \wti g_{\epsilon})$. Since the area of $S_{1-\tau}$ and $\partial N$ is strictly less than $W$, the smooth foliation can be chosen so that 
\[  \mc H^2(\wti S_t)\leq W.\]
This gives the first inequality. Then the second part follows from \cite{HK19}*{Theorem 4.1}.
\end{proof}

\subsection{Existence of four minimal two-spheres}\label{subsec:four minimal S2}
This subsection is devoted to prove the existence of four distinct minimal two-spheres in any Riemannian three-sphere $(M,g)$ which does not contain any degenerate-and-stable minimal two-spheres. Note that this has been proven when $(M,g)$ has no stable minimal spheres; see Theorem \ref{thm:no stable}. The next result gives that if a Riemannian three-ball $(N,g)$ has a stable minimal two-sphere as its boundary, then $(N,g)$ admits at least two unstable minimal two-spheres.

\begin{proposition}
\label{prop:two minimal S2 in ball}
Let $(N,\partial N,g)$ be a compact three-ball with smooth boundary. Suppose that $\partial N$ is a stable minimal two-sphere with a contracting neighborhood in $N$. If $N\setminus \partial N$ does not contain any stable minimal two-sphere, $N\setminus \partial N$ admits at least two distinct embedded minimal two-spheres. 
\end{proposition}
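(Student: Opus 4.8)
The plan is to apply the non-compact min-max machinery of Section \ref{subsec:construction of non-compact} to the two distinct families of sweepouts $\ms P_1$ and $\ms P_2$, using the width upper bounds of Lemma \ref{lem:2 width upper bound}, and then argue that the two minimal spheres produced are genuinely distinct. First I would form the cylindrical manifold $\ms C(N)$ together with the compact approximations $(N_\epsilon, g_\epsilon)$ and their extensions $(M, \wti g_\epsilon)$, exactly as in Section \ref{subsec:construction of non-compact}; here $M \cong S^3$ and $N \cong B^3$, so the explicit sweepouts $\Psi_1, \Psi_2$ of Section \ref{subsec:four sweepouts} are available. By Lemma \ref{lem:2 width upper bound}, for all small $\epsilon$ one has $\mf L_{N_\epsilon}(\ms P_1; \wti g_\epsilon) \le W$ and $\mf L_{N_\epsilon}(\ms P_2; \wti g_\epsilon) \le 2W$, so the hypothesis $\liminf_k \mf L_{N_k}(\bm\Xi; \wti g_k) < \infty$ of Proposition \ref{prop:min max in Nk} holds for each of the two saturated families. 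Applying Proposition \ref{prop:min max in Nk} to $\ms P_1$ and to $\ms P_2$ separately, and passing to a subsequence $\epsilon_k \to 0$, I obtain for each family a varifold $V^{(j)}_k = \sum_i m^k_i [\Gamma^k_i]$ supported on pairwise disjoint, connected, closed, embedded minimal surfaces in $(N \setminus \partial N, g)$, realizing the respective width, with $m^k_i = 1$ on unstable components, satisfying Property {\bf(R')}, and with the genus bound \eqref{eq:genus bound1} for $\mk g_0 = \mk g(S^2) = 0$.

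The next step is to extract honest minimal \emph{spheres}. Since $\mk g_0 = 0$, the genus bound \eqref{eq:genus bound1} forces every orientable component $\Gamma^k_i$ to have genus $0$ (and since $N \cong B^3 \subset S^3$ there are no one-sided components), hence each $\Gamma^k_i$ is a two-sphere. By hypothesis $N \setminus \partial N$ contains no stable minimal two-sphere, so every component is unstable and therefore appears with multiplicity one; thus $\mf L_{N_\epsilon}(\ms P_j; \wti g_\epsilon)$ is a sum of areas of distinct unstable embedded minimal two-spheres. After passing to a further subsequence so that the combinatorial type stabilizes, the widths stabilize (there are only finitely many embedded minimal two-spheres in $(N, g)$ by the standing finiteness assumption, so I may reduce to that case or invoke the stabilization of min-max values over the finitely many homotopy classes inside $\ms P_j$), and I obtain a fixed finite collection of minimal two-spheres realizing $\mf L(\ms P_1)$ and $\mf L(\ms P_2)$ respectively. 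In particular each width is realized by at least one unstable minimal two-sphere; call them $\Gamma^{(1)}$ (from $\ms P_1$) and $\Gamma^{(2)}$ (from $\ms P_2$).

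Finally I must show $\Gamma^{(1)} \neq \Gamma^{(2)}$, which is the main obstacle. The clean way is a Lusternik--Schnirelmann argument in the cylindrical setting: one shows $\mf L(\ms P_1) < \mf L(\ms P_2)$ for the limiting widths, just as in Lemma \ref{lem:LS theory}, using that $\ms P_1^* (\alpha) \neq 0$ and $\ms P_2^*(\alpha^2) \neq 0$ in the cohomology of $\oli{\ms X}$ and that the presence of a single minimal two-sphere carrying both widths would force $\mf L(\ms P_2) \le 2\,\mf L(\ms P_1)$ with equality only in degenerate situations ruled out by finiteness; the strict inequality then gives two spheres of strictly different area. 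The subtlety is that the LS argument of Lemma \ref{lem:LS theory} is stated for the closed three-sphere, whereas here the widths live on the compact approximations $(N_\epsilon, \wti g_\epsilon)$ and only the limits are intrinsic to $(N, g)$; I would handle this by running the LS comparison at the level of the approximating metrics $\wti g_k$ (where $\oli{\ms X}(S^2)$ and its cohomology ring $\mb Z_2[\alpha]/(\alpha^5)$ are exactly as in Section \ref{subsec:four sweepouts}), concluding $\mf L_{N_k}(\ms P_1; \wti g_k) < \mf L_{N_k}(\ms P_2; \wti g_k)$ after a small perturbation if needed, and then passing to the limit using the convergence $(N_k, \wti g_k) \to (\ms C(N), \hat g)$ together with Proposition \ref{prop:min max in Nk} to transfer strict inequality (or at least distinctness of the supporting spheres) to $(N, g)$. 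An alternative, should the strict-inequality route be delicate at the boundary, is to invoke the index/multiplicity information directly: $\Gamma^{(1)}$ may be taken of low index coming from the one-parameter family, and if $\Gamma^{(1)} = \Gamma^{(2)}$ then the two-parameter sweepout $\ms P_2$ would be swept out within a neighborhood foliated by $\Gamma^{(1)}$, contradicting the nontriviality of $\ms P_2^*(\alpha^2)$. Either way, the output is two distinct unstable embedded minimal two-spheres in $N \setminus \partial N$, completing the proof.
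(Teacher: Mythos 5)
Your proposal is correct and follows essentially the same route as the paper: cylindrical approximations $(N_{\epsilon_k},\wti g_{\epsilon_k})$, the width bound $\mf L_{N_k}(\ms P_2;\wti g_k)\le 2W$ from Lemma \ref{lem:2 width upper bound}, Proposition \ref{prop:min max in Nk} to realize the width by multiplicity-one unstable minimal two-spheres in $(N\setminus\partial N,g)$, and the Lusternik--Schnirelmann comparison (as in Lemma \ref{lem:LS theory}) at the level of the metrics $\wti g_k$ to get $\mf L_{N_k}(\ms P_1;\wti g_k)<\mf L_{N_k}(\ms P_2;\wti g_k)$ and hence distinctness. The only cosmetic difference is that the paper obtains the first sphere directly from \cite{KLS19}*{Lemma 8.1} plus Simon--Smith min-max rather than by running Proposition \ref{prop:min max in Nk} on $\ms P_1$ as well, and it does not need your hedged ``alternative'' argument for distinctness.
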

\begin{proof}
The existence of the first minimal two-sphere follows from \cite{KLS19}*{Lemma 8.1} and the Simon-Smith min-max theory. Denote by $W$ the area of this minimal sphere. Without loss of generality, we assume that $N$ contains only finitely many embedded minimal two-spheres. Let $(N_{\epsilon_k},g_{\epsilon_k})\subset (M,\wti g_{\epsilon_k})$ be a sequence of domains constructed in Section \ref{subsec:construction of non-compact}. 
We use $N_k$ and $\wti g_k$ to denote $N_{\epsilon_k}$ and $\wti g_{\epsilon_k}$, respectively. Then by Lemma \ref{lem:2 width upper bound},
\[ 
\mf L_{N_k}(\ms P_2;\wti g_k)\leq 2W.
\]
Thus for all sufficiently large $k$, Proposition \ref{prop:min max in Nk} can be applied to produce embedded minimal two-spheres $\Sigma_k\subset (N\setminus \partial N,g)$ (with multiplicity) that achieves $\mf L_{N_k}(\ms P_2;\wti g_k)$. Since $(N\setminus \partial N,g )$ has no stable minimal two-spheres, $\Sigma_k$ is unstable, which yields that it has multiplicity one.

Summarizing that for all sufficiently large $k$, $\mf L_{N_k}(\ms P_2;\wti g_k)$ is achieved by an embedded minimal two-sphere with multiplicity one in $(N,\partial N)$. Then applying the Lusternik–Schnirelmann theory as in Lemma \ref{lem:LS theory}, one can prove that 
\[\mf L_{N_k}(\ms P_2;\wti g_k)>\mf L_{N_k}(\ms P_1;\wti g_k).\] 
Thus we have proved that $(N\setminus \partial N,g)$ contains at least two distinct embedded minimal two-spheres. This finishes the proof of Proposition \ref{prop:two minimal S2 in ball}.
\end{proof}

Now we are ready to prove the main theorem.
\begin{theorem}[Theorem \ref{thm:theoremA}]
\label{thm:4 minimal spheres in bumpy}
Let $(M,g)$ be a Riemannian three dimensional sphere so that $(M, g)$ does not contain any degenerate-and-stable minimal two-spheres. Then $(M,g)$ contains at least four distinct embedded minimal two-spheres. In particular, if $g$ is bumpy or if $\mr{Ric}_g>0$, then $(M, g)$ admits at least four distinct embedded minimal two-spheres.	
\end{theorem}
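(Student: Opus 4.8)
The plan is to reduce the theorem to the two results already established --- Theorem \ref{thm:no stable} and Proposition \ref{prop:two minimal S2 in ball} --- via a dichotomy on the existence of stable minimal two-spheres. If $(M,g)$ carries infinitely many embedded minimal two-spheres there is nothing to prove, so I would assume there are only finitely many. If $(M,g)$ contains \emph{no} stable embedded minimal two-sphere, then Theorem \ref{thm:no stable} already produces four distinct embedded minimal two-spheres and we are done. The substantive case is therefore when $(M,g)$ contains a stable embedded minimal two-sphere $S$; by the hypothesis that no minimal two-sphere is both degenerate and stable, such an $S$ is non-degenerate, hence strictly stable.

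In this case I would argue as follows. Since $M\cong S^3$ is simply connected, $S$ is separating, and by the three-dimensional Schoenflies theorem $M\setminus S=\Omega_1\sqcup\Omega_2$ where each $B_i:=\overline{\Omega_i}$ is a smooth compact three-ball with $\partial B_i=S$. Fix $i\in\{1,2\}$. The set of stable embedded minimal two-spheres contained in $B_i$ is finite and nonempty (it contains $S$); for each such $S'$ let $B_i(S')\subseteq B_i$ be the closed three-ball bounded by $S'$ inside $B_i$, with the convention $B_i(S):=B_i$, and pick $S_i^*$ so that $\Vol_g(B_i(S_i^*))$ is as small as possible. Set $B_i^*:=B_i(S_i^*)$. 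A standard innermost argument then shows that $B_i^*\setminus\partial B_i^*$ contains no stable embedded minimal two-sphere, since such a sphere would bound a ball inside $B_i$ of strictly smaller volume, contradicting minimality. Moreover $S_i^*$ is stable, hence by hypothesis non-degenerate, hence strictly stable, and a strictly stable minimal two-sphere bounds a contracting neighbourhood on each of its sides --- the standard fact underlying the cylindrical-end construction recalled in Section \ref{subsec:construction of non-compact} --- so $S_i^*$ has a contracting neighbourhood inside $B_i^*$. Thus $(B_i^*,\partial B_i^*,g)$ satisfies all hypotheses of Proposition \ref{prop:two minimal S2 in ball}, which yields two distinct embedded minimal two-spheres $\Sigma_i^{(1)},\Sigma_i^{(2)}\subseteq B_i^*\setminus\partial B_i^*\subseteq\Omega_i$.

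It then remains to check that $\Sigma_1^{(1)},\Sigma_1^{(2)},\Sigma_2^{(1)},\Sigma_2^{(2)}$ are pairwise distinct. For each fixed $i$, $\Sigma_i^{(1)}\neq\Sigma_i^{(2)}$ because the Lusternik--Schnirelmann argument inside the proof of Proposition \ref{prop:two minimal S2 in ball} forces them to have different areas; and for $i\neq j$ the spheres from index $i$ lie in $\Omega_i$ while those from index $j$ lie in $\Omega_j$, and $\Omega_1\cap\Omega_2=\emptyset$, so no coincidence is possible. This produces at least four distinct embedded minimal two-spheres (in fact $S$ is a fifth). Finally, the ``in particular'' clause follows at once: a bumpy metric admits no degenerate closed embedded minimal hypersurface, and a metric with $\mr{Ric}_g>0$ admits no closed stable minimal hypersurface at all (apply the stability inequality to the constant function $1$), so in either case there is no degenerate-and-stable minimal two-sphere.

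The analytic content is entirely packaged into Theorem \ref{thm:no stable} and Proposition \ref{prop:two minimal S2 in ball}, so the remaining work is organizational; the two points that need care are the passage from ``non-degenerate and stable'' to ``admits a contracting neighbourhood'' (to be cited from the literature and Section \ref{subsec:construction of non-compact}), and ensuring that the innermost-ball choices on the two sides of $S$ land in the disjoint regions $\Omega_1$ and $\Omega_2$, which is exactly what makes the final count of four legitimate.
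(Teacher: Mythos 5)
Your proposal is correct and follows essentially the same route as the paper: the same dichotomy on the existence of stable minimal two-spheres, with the no-stable case handled by Theorem \ref{thm:no stable} and the stable case reduced to Proposition \ref{prop:two minimal S2 in ball} applied to two disjoint ball regions with strictly stable minimal boundary. The only (cosmetic) difference is that the paper cuts along a maximal pairwise disjoint collection of stable spheres and selects two ball components, whereas you cut along one stable sphere and run an innermost-volume argument on each side; your write-up is in fact more explicit than the paper's about verifying the hypotheses of Proposition \ref{prop:two minimal S2 in ball} (no stable spheres in the interior, and strict stability yielding a contracting neighbourhood).
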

\begin{proof}
Without loss of generality, we assume that $M$ contains only finitely many embedded minimal two-spheres. If $(M,g)$ does not contain any stable minimal two-sphere, then the conclusion follows from Theorem \ref{thm:no stable}.

Now we assume that $(M,g)$ contains stable minimal embedded two-spheres. Then by assumption, those stable minimal two-spheres in $(M,g)$ are non-degenerate, that is, strictly stable. Then one can cut $M$ along a collection of pairwise disjoint stable embedded minimal two-spheres. Denote by $N_1,\cdots,N_k$ the metric completion of those connected components. Clearly, at least two of them (denoted by $N_1,N_2$) are diffeomorphic to a three-ball. Then Proposition \ref{prop:two minimal S2 in ball} can be applied to construct two distinct embedded minimal two-spheres in each $N_i\setminus \partial N_i$ for $i=1,2$. It follows that $(M,g)$ contains at least 5 embedded minimal two-spheres. This completes the proof of Theorem \ref{thm:4 minimal spheres in bumpy}.
\end{proof}

\begin{remark}
Finally, we remark that the min-max theory can produce degenerate-and-stable closed minimal surfaces with higher multiplicity by the authors' earlier work \cite{Wang-Zhou22}. Note that even the results in \cite{Wang-Zhou22} were stated for the Almgren-Pitts theory, they also hold true for the Simon-Smith theory.
\end{remark}

\appendix

\section{$C^{1,1}$-estimates for multilayer $\A^h$-stationary boundaries}\label{appen:C11 regularity and estimates}

We recall the $C^{1,1}$-regularity and estimates for multilayer $\A^h$-stationary boundaries in 
\cite{Wang-Zhou-C11-2023}; see also \cite{SS23}*{Section 11}. Fix $h\in C^\infty(M)$ and an open subset $W\subset M$. Let $(\Sigma, \Omega)\in\VC(W)$ be a $C^{1,\alpha}$-boundary\footnote{Note that Definition \ref{def:c11 boundary} can be straightforwardly adapted to the $C^{1,\alpha}$-setting.} in $W$. Suppose that $\Sigma$ decomposes to finitely many ordered $C^{1, \alpha}$-surfaces $\Gamma^1\leq \cdots \leq \Gamma^\ell\subset W$, where $\partial \Gamma^i\cap W =\emptyset$ for each $i=1, \cdots, \ell$. 
\begin{proposition}[\cite{Wang-Zhou-C11-2023}*{Theorem 1.4, Corollary 1.5}]
\label{prop:C11 regularity}
Assume that $(\Sigma, \Omega)$ is $\A^h$-stationary in $W$, and each $\Gamma^i$ has $\|h\|_{L^\infty(M)}$-bounded first variation in $W$. Then each $\Gamma^i$ is $C^{1,1}$ in $W$.  
\end{proposition}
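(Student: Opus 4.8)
The statement is local, so the plan is to fix $p\in\Sigma\cap W$ and work in Fermi coordinates along $T_p\Sigma$, in which the ordered sheets $\Gamma^1\leq\cdots\leq\Gamma^\ell$ passing through a small neighborhood of $p$ are graphs of $C^{1,\alpha}$ functions $u^1\leq\cdots\leq u^\ell$ over a common domain $\mc W\subset\mb R^2$, and to prove $u^i\in C^{1,1}_{loc}(\mc W)$ with bounds depending only on $\|h\|_{L^\infty(M)}$, $\sup|Du^i|$ and the ambient geometry. Since $\Gamma^i$ is a $C^{1,\alpha}$ submanifold with $\|h\|_{L^\infty(M)}$-bounded first variation, the first-variation measure of $\Gamma^i$ is represented by a generalized mean curvature $H^i\in L^\infty$ with $\|H^i\|_{L^\infty}\le\|h\|_{L^\infty(M)}$, so $u^i$ is a strong $W^{2,q}_{loc}$-solution, for every $q<\infty$, of a quasilinear uniformly elliptic mean-curvature equation $\mathcal M_g[u^i]=H^i$. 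This is the starting point; it gives $u^i\in C^{1,\beta}$ for all $\beta<1$, but not yet $C^{1,1}$.

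Next I would identify $H^i$ more precisely using the dichotomy between the regular and touching sets. On the open ``free'' set $\{u^{i-1}<u^i<u^{i+1}\}$ (with the conventions $u^0\equiv-\infty$, $u^{\ell+1}\equiv+\infty$) the sheet lies in $\mc R(\Sigma)$, so by the argument of Lemma \ref{lem:regular part is PMC} together with the first-variation identity \eqref{eq:1st variation of Ah2} adapted to $C^{1,\alpha}$ graphs, $H^i=\pm h|_{\Gamma^i}$, which is a Lipschitz function of $x$; hence interior Schauder estimates give $u^i\in C^{2,\alpha}$ there, with bounds. On the coincidence set where $u^i$ agrees with a block of neighbouring sheets, the standard fact that $\Hess u^i=\Hess u^j$ a.e. on $\{u^i=u^j\}$ for $W^{2,q}$ functions forces all mean curvatures in the block to coincide a.e.; combined with the $\A^h$-stationarity relation between the alternating sum of the block's mean curvatures and the alternating sum of the corresponding values of $h$ (an identity of the type \eqref{eq:h and H:new}), each $H^i$ on the coincidence set equals an explicit rational multiple of $h$ evaluated along the common graph. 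In particular $|H^i|\le\|h\|_{L^\infty(M)}$ and $H^i$ is, along $\Gamma^i$, a piecewise combination of $0$ and such multiples of $h$ — but in general only an $L^\infty$ function, which may jump across the topological boundary of the touching set.

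The main obstacle is exactly this last point: a discontinuous $L^\infty$ right-hand side only yields $W^{2,q}$ by $L^p$ theory, so reaching $W^{2,\infty}=C^{1,1}$ must exploit the obstacle structure. The plan is to treat the two extreme sheets first: $u^1$ (resp. $u^\ell$) is constrained from above by $u^2$ (resp. from below by $u^{\ell-1}$) and solves on its free set a mean-curvature equation with Lipschitz data while having, everywhere, a one-sided bound $\mathcal M_g[u^1]\ge -\|h\|_{L^\infty(M)}$; this is precisely a single-obstacle problem, whose regularity theory (obstacle with Lipschitz datum and bounded mean curvature, plus the Harnack/Schauder analysis of the free boundary) upgrades $u^1$ and $u^\ell$ to $C^{1,1}$ with estimates. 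Once the extreme sheets are $C^{1,1}$, the interior sheets follow by induction inward: on $\{u^i=u^{i+1}\}$ one has $\Hess u^i=\Hess u^{i+1}$ a.e. and $u^{i+1}$ is already controlled, and symmetrically on $\{u^i=u^{i-1}\}$, so $\Hess u^i$ is bounded on the entire touching set, while it is bounded on the free set by the Schauder estimate above. Patching these local bounds over a finite cover of $\Sigma\cap W$ gives $\Gamma^i\in C^{1,1}$ in $W$. The delicate part throughout is the interaction of the free boundaries of adjacent sheets near multiple-touching points, where a priori the ``obstacle'' (the neighbouring sheet) is only known to be $C^{1,\alpha}$, so the estimates must be bootstrapped from the extremal sheets inward rather than proved one sheet at a time.
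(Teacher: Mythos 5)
First, a point of comparison: the paper does not actually prove Proposition \ref{prop:C11 regularity} — it is quoted from \cite{SS23}*{Proposition 11.1}, and Appendix \ref{appen:C11 regularity and estimates} only records the statement and the accompanying estimate. So there is no in-paper proof to match your argument against, and your proposal must stand on its own. Its skeleton is sound: the reduction to ordered graphs, the $W^{2,p}$ starting point from $\|h\|_{L^\infty}$-bounded first variation, the identification of $H^i$ as $\pm h$ on the set where sheet $i$ is separated from its neighbours and via an \eqref{eq:h and H:new}-type identity on coincidence sets, and the diagnosis that the whole difficulty is the jump of $H^i$ across the free boundary are all correct.

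The gap is in the final step, in two places. (i) For the extremal sheets you invoke ``single-obstacle problem'' regularity, but the $C^{1,1}$ theory for the obstacle problem needs both a $C^{1,1}$ obstacle and a one-sided \emph{variational inequality} across the contact set. Here the obstacle for $u^1$ is $u^2$, which at that stage is only $C^{1,\alpha}\cap W^{2,p}$; and since the proposition assumes only $\A^h$-stationarity of the union plus bounded first variation of each sheet (not strong $\A^h$-stationarity, which is a separate hypothesis in Definition \ref{def:strong one-sided stationarity}), the only information on a single sheet is $|H^1|\le\|h\|_{L^\infty}$ a.e., which reproduces $W^{2,p}$ but supplies no supersolution structure. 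Your closing remark that the estimates are ``bootstrapped from the extremal sheets inward'' does not break this circularity, because the extremal sheets' obstacles are interior sheets. (ii) For the interior sheets you assert that $\Hess u^i$ is bounded on the free set ``by the Schauder estimate above''; but the interior Schauder estimate on a ball $B_r(x)$ contained in the free set degenerates as $r\to 0$, so it gives no uniform Hessian bound as one approaches the free boundary. Controlling exactly that is the content of obstacle-problem $C^{1,1}$ regularity (the quadratic detachment estimate), so this step assumes what is to be proved. What is missing is a mechanism that uses the stationarity of the \emph{union} quantitatively: the identity \eqref{eq:h and H:new} holds distributionally for the whole stack, so a suitable combination of the sheets (e.g.\ their sum) solves a quasilinear equation with H\"older right-hand side and is genuinely $C^{2,\alpha}$ with estimates; the quadratic growth of each ordered difference $u^{i+1}-u^i\ge 0$ off its zero set is then extracted from this controlled quantity together with the ordering, and only after that does each individual sheet inherit a pointwise Hessian bound. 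This is the step your proposal needs and does not supply.
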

Without loss of generality, for an arbitrary point $q\in \Gamma^1\cap \cdots\cap \Gamma^\ell \cap W$, we may choose a local coordinate system in a neighborhood of $q$, still denoted as $W$, such that each $\Gamma^i$ is written as a graph over the unit disk $B_2$ of the common tangent plane $P=T_q\Gamma^i$, (note that $\{\Gamma^i\}$ meet tangentially at $q$ by assumption). 
By \cite[Remark 1.5]{Wang-Zhou-C11-2023}, we can choose a minimal surface $B'$ (near $B_2$) containing $q$ and rewrite $\Gamma^i$ as graphs of $C^{1,\alpha}$-functions $u^i: B'\to \mb R$
with
\[ u^1\leq u^2\leq \cdots \leq u^\ell. \]
For simplicity, denote by $B'_r$ the geodesic ball in $B'$ centered at $q$.
We recall the $C^{1,1}_{loc}$-estimates of this system.
\begin{proposition}[\cite{Wang-Zhou-C11-2023}*{Theorem 1.4, Corollary 1.5}]
\label{prop:C11 estimates}
Under the same assumption as in Proposition \ref{prop:C11 regularity}, assume further that $\|u^i\|_{C^{1,\alpha}(B'_1)}<1$ for each $i$, then we have
\[ \sum_{i=1}^\ell \|u_i\|_{C^{1,1}(B'_{1/2})}\leq C\big(\sum_{i=1}^\ell \|u_i\|_{C^{1,\alpha}(B'_1)} + \|h\|_{C^{1}(B'_1\times (-1, 1))} \big),\]
for some constant $C>1$ depending only on $\ell$, $\alpha$, and the metric $g$ in $W$.
\end{proposition}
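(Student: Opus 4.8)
The statement is \cite{SS23}*{Corollary 11.2}, and my plan is to present it as the quantitative companion to Proposition \ref{prop:C11 regularity}, so that the qualitative $C^{1,1}$-regularity may be assumed throughout and only the dependence of constants needs to be tracked. First I would work in the graphical coordinates already fixed in the statement: the normalization $\|u^i\|_{C^1(B_1)}<1$ keeps the graph operator uniformly elliptic, and each $u^i$ then weakly solves a prescribed mean curvature equation $\dv\big(\mathbf a(x,\nabla u^i)\big)=g^i$ on $B_1$, where $\mathbf a(x,\cdot)$ is the divergence-form operator of the area functional in the metric $g$ and $g^i$ is the base density of the generalized mean curvature of $\Gamma^i$, with $|g^i|\le C\|h\|_{C^0}$ a.e. by the bounded-first-variation hypothesis. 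Testing the $\A^h$-stationarity of the union $\Sigma$ against vertical vector fields gives in addition the pointwise relation $\sum_i(-1)^{i-1}g^i=\sum_i(-1)^{i-1}h(\cdot,u^i(\cdot))$ a.e.; this forces $g^i=\pm h(\cdot,u^i)$ on the part of $\Gamma^i$ lying strictly off the other sheets (its regular set, cf. Corollary \ref{cor:mean curvature charaterization}), and, combined with the elementary fact that the Hessians of the $u^i$ agree $\mc H^2$-a.e. on a set where two of them coincide, controls the second-order behavior on the touching set.

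The heart of the argument is the $C^{1,1}$, i.e.\ $W^{2,\infty}$, bound, which is genuinely of obstacle-problem type: the right-hand sides $g^i$ are only $L^\infty$, so Schauder estimates applied to a single $u^i$ give no more than $C^{1,\beta}$, and one must exploit the nesting $u^1\le\cdots\le u^\ell$. I would estimate the second fundamental form of each sheet by splitting $B_{1/2}$ into three regions. On the open set where $u^i$ is strictly separated from its neighbours it solves a genuine PMC equation $\dv(\mathbf a(x,\nabla u^i))=\pm h(\cdot,u^i)$ with Hölder right-hand side, so interior $C^{2,\alpha}$ estimates (in scale-invariant form) apply. On the interior of a coincidence set $\{u^i=u^{i\pm1}\}$ the Hessian of $u^i$ equals a.e.\ that of the neighbouring sheet. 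Near the free boundary separating these regions one compares $u^i$ on a small ball $B_\rho(\bar q)$ with the solution $v$ of the honest PMC equation with the same boundary data; the one-sided inequality for $g^i$ coming from the stationarity relation together with the ordering makes $u^i$ a one-sided solution relative to $v$, and the standard quadratic-detachment estimate for obstacle problems then yields, for $x$ near $\bar q$,
\[
\big|u^i(x)-u^i(\bar q)-\nabla u^i(\bar q)\cdot(x-\bar q)\big|\ \le\ C(\ell,\alpha,g)\Big(\|h\|_{C^{1,\alpha}}+\textstyle\sum_{j}\|u^j\|_{C^{1,\alpha}(B_1)}\Big)\,|x-\bar q|^2 .
\]
Patching the three regions gives a bound on each $\|u^i\|_{C^{1,1}(B_{1/2})}$, and summing over $i$ gives the asserted inequality.

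The delicate part, and the one where I would follow \cite{SS23}*{\S 11} closely rather than improvise, is the bookkeeping that runs all of this simultaneously over the $\ell$ nested sheets: one has to peel the configuration (e.g.\ from the outermost sheets inward, reducing the number of mutually touching sheets at a given point) so that at each stage the sheet being estimated is compared only to an honest PMC solution and to already-controlled neighbours, even though only the full union $\Sigma$ is $\A^h$-stationary and the mean curvature of an individual sheet may vanish on a large portion of its touching set. The uniformity of the constant in $\ell$, $\alpha$ and $g$ — and its independence of the particular configuration — then comes out of the scale invariance of the PMC equation under the normalization $\|u^i\|_{C^1(B_1)}<1$, together with the fact that the error introduced by freezing the coefficients of $\mathbf a$ is likewise absorbed into $\sum_j\|u^j\|_{C^{1,\alpha}(B_1)}$.
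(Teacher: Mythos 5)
First, a point of comparison: the paper does not actually reprove this estimate. Its proof consists of citing \cite{SS23}*{Corollary 11.2} and verifying that the structural hypotheses on the coefficients of the system \cite{SS23}*{(11.3)} depend only on the metric in $W$, and that the quantity $\|H\|_{C^{1,\alpha}}$ appearing in \cite{SS23}*{(11.7)} is controlled by $\|h\|_{C^{1,\alpha}(B_1\times(-1,1))}$. Your proposal instead tries to reconstruct the estimate itself. The architecture you describe (scale-invariant Schauder on the separated region, a.e.\ agreement of Hessians on coincidence sets, quadratic detachment at the free boundary, constants tracked through the normalization $\|u^i\|_{C^1}<1$) is the right shape of argument, and your last paragraph does address the constant-tracking that is the actual content of the paper's proof.

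As a proof, however, there are two problems. The concrete one: the ``one-sided inequality for $g^i$'' on which you base the obstacle-problem comparison is not available under the stated hypotheses. Proposition \ref{prop:C11 regularity} assumes only that the union $(\Sigma,\Omega)$ is $\A^h$-stationary and that each sheet has $\|h\|_{L^\infty}$-bounded first variation; testing stationarity against vertical fields yields an alternating-sum identity for the $g^i$, and $g^i=\pm h(\cdot,u^i)$ off the touching set, but a one-sided sign condition on an individual $g^i$ at touching points is precisely the content of strong $\A^h$-stationarity (Definition \ref{def:strong one-sided stationarity}), which is not assumed here. The step is repairable --- quadratic detachment of $w=u^{i+1}-u^i\ge 0$ at a touching point $\bar q$ follows from the two-sided bound $|Lw|\le C$ (a consequence of the per-sheet first-variation bound) together with $w\ge 0$ and $w(\bar q)=|\nabla w(\bar q)|=0$, via the weak Harnack inequality and the local sup-estimate --- but as written the justification is wrong. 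The second problem is that the multi-sheet bookkeeping you explicitly defer to \cite{SS23}*{\S 11} is not peripheral: the circularity (bounding $D^2u^i$ on its coincidence set with $u^{i+1}$ requires a bound on $D^2u^{i+1}$ there, and conversely) is the real difficulty, and ``peeling from the outermost sheets inward'' does not break it, since the top sheet's Hessian on its coincidence set with $u^{\ell-1}$ is no better controlled than the latter's; the known resolutions couple consecutive sheets through their sum (which solves an equation with H\"older right-hand side by the stationarity identity) and their difference (nonnegative with bounded $Lw$). So what you have is a plausible outline of \cite{SS23}*{\S 11} with one wrong ingredient and the crux deferred to the reference; given that the crux is cited either way, the paper's shorter verification of the constants is the more economical route.
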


\section{Proof of Lemma \ref{lem:minimizing in small ball}}\label{appen:proof of minimizing lemma in small ball}
The proof is adapted from Colding-De Lellis \cite{Colding-DeLellis03}*{\S 7.4} with minor modifications to include volume terms.  

Denote by $(V^*,\Omega^*)$ the limit of  $(\Sigma_k,\Omega_k)$. Then $(V^*,\Omega^*)$ is $\mc A^h$-stationary in $U$. Denote by $m_0$ the upper bound of $\|V^*\|(M)$. Denote 
\[ d:=\dist_M(U',\partial U).\] 
We take $r_1$ small enough so that for any integral varifold which has $c$-bounded first variation in $B_{r_1}(x)$, one has 
\[  2\frac{\|V\|(B_r(x))}{ r^2} \geq \frac{\|V\|(B_t(x))}{t^2} \quad \text { for all } r_1\geq r\geq t>0,\ x\in U'. \]
Note that $r_1$ depends only on $c$ and $M$. 
Let $\rho_0<\min\{d/10, r_1\}$ be a small constant which will be specified later. Then for all $B_\rho(x)\subset U'$ with $\rho<\rho_0$
\[
\|V^*\|(B_{2\rho}(x))\leq 2\|V^*\|(B_d(x))\cdot d^{-2}\cdot 4\rho^2\leq  8m_0d^{-2}\cdot \rho^2.
\]
By taking sufficiently large $k$, we have
\[   \|\Sigma_k\|(B_{2\rho}(x))< 9m_0d^{-2}\cdot \rho^2.  \]
Then by the slicing theorem \cite{Si}*{\S 28.1}, there exists $\tau_k\in (\rho,2\rho)$ such that
\begin{equation}\label{eq:tauk}
\text{$\Sigma_k$ is transversal to $\partial B_{\tau_k}(x)$, and $\mc H^1(\Sigma_k\cap \partial B_{\tau_k}(x))<9m_0d^{-2} \rho$}.  \end{equation}
Since $\Sigma_k$ is transversal to $\partial B_{\tau_k}(x)$, then $t\mapsto\mc H^1(\Sigma_k\cap \partial B_t(x))$ is continuous at $t=\tau_k$. Hence there exists a small interval $(\sigma_k,s_k)\subset (\rho, 2\rho)$ around $\tau_k$, so that \eqref{eq:tauk} holds for every $\tau\in (\sigma_k,s_k)$.  Now we consider the radial isotopy $\psi\in \mk{Is}\big(B_{s_k}(x)\big)$, so that for some $\eta\ll \sigma_k$ to be specified later,
\[  \psi\big(t, B_{s_k}(x)\big) = B_{s_k}(x) \text{ for all }t\in[0,1], \text{ and }  \psi\big(1, B_{\sigma_k}(x)\big)=B_\eta(x).\] 
By computation, we have
\[ \mc H^2\big(\psi(t,\Sigma_k)\big)\leq \mc H^2(\Sigma_k)+C m_0d^{-2}\rho^2, \]
which implies that 
\begin{equation}\label{eq:psi error}
\mc A^h\big(\psi(t,\Sigma_k, \Omega_k)\big)\leq \mc A^h(\Sigma_k,\Omega_k)+Cm_0d^{-2}\rho^2+ C\rho^3.  
\end{equation}
Here $C$ is a uniform constant which may change from line to line. For simplicity, denote 
\[ \wti \Sigma_k=\psi(1,\Sigma_k), \quad \wti \Omega_k=\psi(1,\Omega_k).\]
Note that any isotopy $\varphi\in \mk{Is}\big(B_\rho(x)\big)$ will correspond to a new isotopy $\wti \varphi:=\psi_1\circ \varphi \circ \psi^{-1}_1$ of $\psi_1(B_\rho(x))\subset B_\eta(x)$, where $\psi_1(\cdot )=\psi(1,\cdot)$. 
A direct computation gives that 
\[ \mc A^h \big(\wti \varphi(t,\wti \Sigma_k,\wti\Omega_k)\big)\leq \mc A^h(\wti \Sigma_k,\wti\Omega_k)+ Cm_1\eta^2\rho^{-2}+C\eta^3, \]
where $m_1:=\sup_t \mc H^2\big(\varphi(t,\Sigma_k)\big)$. By taking sufficiently small $\eta$ (depending on $C,m_1,\rho$), we have 
\begin{equation}\label{eq:wti varphi error}
\mc A^h\big(\wti\varphi(t,\wti \Sigma_k,\wti\Omega_k)\big) \leq \mc A^h(\wti\Sigma_k,\wti \Omega_k)+\rho^3.    \end{equation}
Now let 
\[ \hat  \Sigma_k:=\wti \varphi(1,\wti \Sigma_k)=\psi_1(\varphi(1,\Sigma_k)), \quad \hat \Omega_k:=\wti \varphi(1,\wti \Omega_k)=\psi_1(\varphi(1,\Omega_k)).\]
Finally, we take the $\psi^{-1}$ to deform $(\hat\Sigma_k,\hat \Omega_k)$. Then by the same argument as in \eqref{eq:psi error},
\begin{equation}\label{eq:psi-1 error}
\mc A^h\big(\psi^{-1}(t,\hat \Sigma_k,\hat \Omega_k)\big)\leq \mc A^h\big(\hat \Sigma_k,\hat \Omega_k\big)+Cm_0d^{-2}\rho^2+C\rho^3.
\end{equation}

Now we define an isotopy $\Phi$ by concatenating $\psi$, $\wti \varphi$ and $\psi^{-1}$. Notice that 
\[ \Phi(1,\cdot) = \varphi(1,\cdot).\]
Combining \eqref{eq:psi error}, \eqref{eq:wti varphi error} with \eqref{eq:psi-1 error}, we obtain
\[ \mc A^h\big(\Phi(t,\Sigma_k,\Omega_k)\big)\leq \mc A^h(\Sigma_k,\Omega_k)+ Cm_0d^{-2}\rho^2+C\rho^3.  \]
The lemma follows by taking $\rho_0>0$ so that 
\[C m_0 d^{-2}\rho_0^2+C\rho^3\leq \delta, \quad  \rho_0\leq d/10,\quad \text {and } \rho_0\leq r_1.\]
Here $r_1$ depends only on $M$ and $c$. Hence $\rho_0$ depends only on $M$, $c$, $m_0$, $\delta$ and $\dist_{M}(U',\partial U)$.

\section{Calculations related to construction of Jacobi type fields}\label{appen:jacobi calculations}

In this part, we use the notion of elliptic functionals to present the calculations regarding the PDE satisfied by the height difference of two small graphs surrounding an embedded minimal surface. The setups are as follows.
\begin{itemize}
    \item $\Sigma^n$ denotes a closed embedded minimal hypersurface with a unit normal $\nu$ in a closed Riemannian manifold $(M^{n+1}, g)$. 
    \item $(x, z)\in \Sigma\times (-\delta_0, \delta_0)$ denotes the Fermi coordinates induced by the normal exponential map: $(x, z)\mapsto \exp_x\big(z\nu(x)\big)$. 
    \item Given an open subset $\mc U\subset \Sigma$ and a small $\delta_0>0$, we let $\mc U_{\delta_0} = \mc U\times (-\delta_0, \delta_0)$. 
    \item The area element is a function:
    \[ F: T\Sigma\times (-\delta_0, \delta_0) \to [0, \infty), \]
    defined as follows: for any $(x, z, \bm p)\in \Sigma\times (-\delta_0, \delta_0)\times T_x\Sigma$\footnote{Here we choose to use $(x, z, \bm p)$ to denote a point in $T\Sigma\times(-\delta_0, \delta_0)$ to be coherent with classical notations related to elliptic integrands.}, we use $P_{x, z, \bm p}$ and $P_x$ to denote respectively the $n$-dim parallelograms in $T_{(x, z)}M$ and $T_x\Sigma$ generated by
    \begin{gather*}
    \{e_1, \cdots, e_n\}, \text{ where } e_i = \partial_{x^i} + \lb{\bm p, \partial_{x^i}}_\Sigma \partial_z,\\
    \text{ and }\quad \{\partial_{x^1}, \cdots, \partial_{x^n}\}   
    \end{gather*}
    Then
    \[ F(x, z, \bm p) = \frac{\text{$n$-volume of $P_{x, z, \bm p}$ under $g(x, z)$}}{\text{$n$-volume of $P_x$ under $g(x, 0)$}}.\]
    \item $\sigma(x, z, \bm p)$ denotes the inner product (under $g(x, z)$) between $\partial_z$ and the unit normal of $P_{x, z, \bm p}$  multiplied with $F(x, z, \bm p)$.
\end{itemize}

\begin{lemma}\label{lem:F and sigma are smooth}
    $F(x, z, \bm p)$ and $\sigma(x, z, \bm p)$ are smooth functions over $(x, z, \bm p)$. Moreover, 
    \[ F(x, 0, 0) \equiv 1 \quad \text{ and } \quad \sigma(x, 0, 0) \equiv 1. \]
\end{lemma}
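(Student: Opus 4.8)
\textbf{Proof proposal for Lemma \ref{lem:F and sigma are smooth}.}

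The plan is to exhibit $F$ and $\sigma$ as compositions of manifestly smooth maps and then to evaluate them at the reference point $(x,0,0)$. First I would fix a point $x\in\Sigma$ and work in a chart; the Fermi coordinate map $(x,z)\mapsto\exp_x(z\nu(x))$ is smooth, so the pulled-back metric $g(x,z)$ (and in particular the metric coefficients $g_{ij}(x,z)$, $g_{iz}(x,z)$, $g_{zz}(x,z)$) depends smoothly on $(x,z)$. The frame vectors $e_i=\partial_{x^i}+\langle\bm p,\partial_{x^i}\rangle_\Sigma\,\partial_z$ depend linearly, hence smoothly, on $\bm p\in T_x\Sigma$. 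Therefore the Gram matrix $G(x,z,\bm p)=\big(g(x,z)(e_i,e_j)\big)_{i,j}$ is a smooth matrix-valued function of $(x,z,\bm p)$, and the $n$-volume of the parallelogram $P_{x,z,\bm p}$ is $\sqrt{\det G(x,z,\bm p)}$. Since $G(x,0,0)$ is the induced metric on $\Sigma$ in these coordinates, $\det G(x,0,0)>0$; by continuity $\det G$ stays positive (hence $\sqrt{\det G}$ stays smooth) in a neighborhood of the zero section, which is all we use. Dividing by the (smooth, nonvanishing) reference volume $\sqrt{\det\big(g_{ij}(x,0)\big)}$ shows $F$ is smooth.

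For $\sigma$, I would note that the unit normal $\mathbf n(x,z,\bm p)$ to the hyperplane $\mathrm{span}\{e_1,\dots,e_n\}\subset T_{(x,z)}M$ can be written using the Hodge-star / cross-product construction relative to $g(x,z)$, which is a smooth algebraic operation in the $e_i$ and the metric coefficients as long as the $e_i$ stay linearly independent (again guaranteed near the zero section). Then $\sigma(x,z,\bm p)=g(x,z)\big(\partial_z,\mathbf n(x,z,\bm p)\big)$ is a smooth function of $(x,z,\bm p)$, with the sign of $\mathbf n$ chosen so that $\sigma>0$ near the reference configuration, which is consistent and smooth.

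It remains to evaluate at $(x,0,0)$. When $z=0$ and $\bm p=0$ we have $e_i=\partial_{x^i}$, so $P_{x,0,0}$ is exactly the parallelogram $P_x$ in $T_x\Sigma$ and the ratio defining $F$ is $1$; wait --- the statement claims $F(x,0,0)\equiv 0$, so the intended normalization must be $F(x,z,\bm p)=\big(\text{volume ratio}\big)-1$, or the displayed formula is up to this additive normalization; I would state $F$ with the $-1$ included (matching how elliptic integrands are usually perturbed around the minimal reference) so that $F(x,0,0)=0$. For $\sigma$: at $(x,0,0)$ the hyperplane $\mathrm{span}\{\partial_{x^i}\}$ is exactly $T_x\Sigma\subset T_xM$, whose $g$-unit normal is $\pm\partial_z$ since $\partial_z=\nu$ along $\Sigma$ and the Fermi coordinates are orthonormal in the normal direction; choosing the orientation consistently gives $\mathbf n(x,0,0)=\partial_z$, hence $\sigma(x,0,0)=g(\partial_z,\partial_z)=1$. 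The only mild subtlety --- and the main thing to be careful about rather than a genuine obstacle --- is fixing the additive/orientation conventions so that the two stated identities $F(x,0,0)\equiv 0$ and $\sigma(x,0,0)\equiv 1$ come out exactly; once those conventions are pinned down, smoothness is immediate from the composition argument above and there is no hard analytic step.
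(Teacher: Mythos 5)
Your proposal is correct, and it is worth noting that the paper supplies no proof of this lemma at all: it is asserted as an obvious fact, so your composition argument (smoothness of the Fermi metric coefficients $g(x,z)$, linear dependence of the frame $e_i$ on $\bm p$, smoothness and nonvanishing of the Gram determinant near the zero section, and the Hodge-star construction of the unit normal) is precisely the justification the authors omitted. Your evaluation $\sigma(x,0,0)=g(\partial_z,\partial_z)=1$ is also exactly right, since in Fermi coordinates $\partial_z=\nu$ along $\Sigma$ and $\mathrm{span}\{\partial_{x^i}\}=T_x\Sigma$ there.

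You are moreover correct to balk at the identity $F(x,0,0)\equiv 0$: with the definition as written, $P_{x,0,0}=P_x$ and both volumes are computed with $g(x,0)$, so the ratio equals $1$, not $0$. This is a typo in the paper (or an implicit ``$-1$'' normalization of the integrand that was never written down), and it is harmless: the value $F(x,0,0)$ is never used in Appendix \ref{appen:jacobi calculations}. What the downstream estimates \eqref{eq:uniform estimates of coefficients}--\eqref{eq:estimates of sigmapm} actually require is (i) that $F$ and $\sigma$ are smooth, so that $\bm A$, $\bm b$, $d$, $\sigma$ are Lipschitz in $(z,\bm p)$ uniformly in $x$, and (ii) that $\sigma(x,0,0)\equiv 1$; in addition, \eqref{eq:second variation when u=0} uses $\partial_{\bm p}F(x,0,0)=0$ and $\partial_z F(x,0,0)=0$, the first of which holds because in Fermi coordinates ($g_{iz}=0$, $g_{zz}=1$) the Gram matrix depends quadratically on $\bm p$, and the second of which is exactly the minimality of $\Sigma$, consistent with \eqref{eq:general first variation0}. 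Your argument establishes all of these, so there is no genuine gap; just record the convention (either $F(x,0,0)\equiv 1$, or redefine $F$ with the subtracted constant) explicitly.
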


\begin{itemize}
    \item Given a graph $\Graph_u\subset \mc U_{\delta_0}$ of $u\in C^{1,1}(\mc U)$, its $n$-dim area is given by
    \[ \Area(\Graph_u) = \int_{\mc U} F\big(x, u(x), \nabla u(x)\big) \,\mr d\mc H^n(x),\]
    where $d\mc H^n$ is the $n$-dim Hausdorff measure of $\Sigma$. 
    \item Since $\Graph_u$ is a $C^{1,1}$-hypersurface, its generalized mean curvature exists almost everywhere; we denote the one w.r.t. upward unit normal (in the direction of $\partial_z$) by $H_u(x)\in L^\infty(\mc U)$.
\end{itemize}

The first variation formula of $\Area(\Graph_u)$ w.r.t. variations $t\mapsto \Graph_{u+t f}$ for a fixed $f\in C^{1,1}_c(\mc U)$ is given by
\begin{align}
    \delta\Area_u(f) & = \int_{\mc U} \frac{\partial}{\partial \bm p}F\big(x, u(x), \nabla u(x)\big)\cdot \nabla f + \frac{\partial}{\partial z} F\big(x, u(x), \nabla u(x)\big)\cdot f\, \mr d\mc H^n(x)\label{eq:general first variation0}\\ \nonumber
    & = \int_{\mc U} H_u(x)  \cdot \sigma\big(x, u(x), \nabla u(x)\big) \cdot f(x)\, \mr d\mc H^n(x).
\end{align}

We now introduce the following notations:
\begin{align*}
& \bm A(x, z, \bm p) = \Big(\frac{\partial^2}{\partial\bm p^i \partial\bm p^j}F\big(x, z, \bm p\big) \Big)_{1\leq i, j\leq n},\\
& \bm b(x, z, \bm p) = \Big(\frac{\partial^2}{\partial z\partial \bm p^i}F\big(x, z, \bm p\big) \Big)_{1\leq i\leq n}, \\
& d(x, z, \bm p) = \frac{\partial^2}{\partial z^2}F\big(x, z, \bm p\big).
\end{align*}

Given $\varphi\in C^{1,1}(\mc U)$, the second variation formula is given by
\begin{align}
    \delta^2\Area_u(\varphi, f) & := \frac{d}{dt}\Big|_{t=0} \delta\Area_{u+t\varphi}(f)\label{eq:general second variation0} \\ \nonumber
    & = \int_{\mc U}\bm A(x, u, \nabla u )\big(\nabla\varphi, \nabla f\big) + \bm b(x, u, \nabla u)\cdot (\varphi \nabla f  + f \nabla\varphi)\\ \nonumber
    &\quad\quad + d(x, u, \nabla u) \cdot \varphi f\,\mr  d\mc H^n(x). 
\end{align}

Note that since $\Sigma$ is a minimal hypersurface, the above equation \eqref{eq:general second variation0} reduces to the classical second variation formula when $u\equiv 0$:
\begin{equation}\label{eq:second variation when u=0}
\delta^2\Area_0(\varphi, f) = \int_{\mc U} \lb{\nabla\varphi, \nabla f}- \big(\Ric(\nu, \nu)+|A^\Sigma|^2\big) \varphi\cdot f\, \mr d \mc H^n(x).    
\end{equation}

Now let $u^+$ and $u^-$ be two functions in $C^{1,1}(\mc U)$. Then by subtracting \eqref{eq:general first variation0} for $u^+$ and $u^-$, we have
\begin{align}
   \int_{\mc U} \Big[\frac{\partial}{\partial \bm p}F\big(x, u^+, & \nabla u^+\big) - \frac{\partial}{\partial \bm p}F\big(x, u^-, \nabla u^-\big)\Big]\cdot\nabla f \, \mr d\mc H^n(x)\label{eq:subtracting first variation}\\ \nonumber
   & +\int_{\mc U} \Big[\frac{\partial}{\partial z} F\big(x, u^+, \nabla u^+\big) - \frac{\partial}{\partial z} F\big(x, u^-, \nabla u^-\big)\Big] \cdot f\, \mr d\mc H^n(x)\\ \nonumber
   & = \int_{\mc U} \Big[ H_{u^+} \cdot \sigma\big( x, u^+, \nabla u^+\big) - H_{u^-} \cdot \sigma\big(x, u^-, \nabla u^-\big)\Big] \cdot f(x)\, \mr d\mc H^n(x). 
\end{align}
Consider the linear interpolation $u_t:=tu^++(1-t)u^-$, $t\in [0, 1]$. Then we have 
\begin{align*}
\frac{\partial}{\partial \bm p}F\big(x, u^+, \nabla u^+\big) & -\frac{\partial}{\partial \bm p}F\big(x, u^-, \nabla u^-\big)=\int_0^1\frac{d}{dt} \Big[\frac{\partial}{\partial \bm p}F\big(x, u_t, \nabla u_t\big)\Big]\,\mr dt\\
& = \int_0^1 \bm A(x, u_t,\nabla u_t)\cdot \nabla (u^+-u^-)+ \bm b(x, u_t,\nabla u_t)(u^+-u^-) \,\mr dt,
\end{align*}
and
\begin{align*}
\frac{\partial}{\partial z} F\big(x, u^+, \nabla u^+\big) & - \frac{\partial}{\partial z} F\big(x, u^-, \nabla u^-\big)=\int_0^1\frac{d}{dt}\Big[\frac{\partial}{\partial z} F\big(x, u_t,\nabla u_t\big)\Big]\,\mr dt\\
& = \int_0^1 \bm b(x,u_t,\nabla u_t)\cdot \nabla (u^+-u^-) + d(x, u_t,\nabla u_t) \cdot (u^+-u^-)\, \mr dt.
\end{align*}
Let 
\[\wti{\bm A}(x) = \int_0^1 \bm A(x, u_t, \nabla u_t)\, \mr dt,\quad \wti{\bm b}(x) = \int_0^1 \bm b(x, u_t, \nabla u_t)\,\mr dt, \quad \wti{d}(x) = \int_0^1 d(x, u_t, \nabla u_t)\,\mr dt, \]
and
\[ \bm\alpha(x) = \wti{\bm A}(x) - \bm A(x, 0, 0), \quad \bm\beta(x) = \wti{\bm b}(x) - \bm b(x, 0, 0), \quad \zeta(x) = \wti{d}(x) - d(x, 0, 0), \]
and
\[ \sigma^+(x) = \sigma(x, u^+, \nabla u^+), \quad \sigma^-(x) = \sigma(x, u^-, \nabla u^-). \]
Since $F(\cdot, \cdot, \cdot)$ is smooth, we know that $\wti{\bm A}, \wti{\bm b}, \wti{d}, \bm\alpha, \bm\beta, \zeta, \sigma^+, \sigma^-$ are all in $C^{0,1}(\mc U)$. Moreover, using Lemma \ref{lem:F and sigma are smooth}, we know that their norms satisfy:
\begin{gather}
\|\bm\alpha\|_{C^{0,1}(\mc U)} + \|\bm\beta\|_{C^{0,1}(\mc U)} + \|\zeta\|_{C^{0,1}(\mc U)} \leq C \big( \|u^+\|_{C^{1,1}(\mc U)} + \|u^-\|_{C^{1,1}(\mc U)}\big), \label{eq:uniform estimates of coefficients}\\ 
 \|\sigma^+ -1 \|_{C^{0,1}(\mc U)} + \|\sigma^- -1\|_{C^{0,1}(\mc U)} \leq C \big( \|u^+\|_{C^{1,1}(\mc U)} + \|u^-\|_{C^{1,1}(\mc U)}\big). \label{eq:estimates of sigmapm}
\end{gather}
for some uniform constant $C>0$.

Plugging everything back to \eqref{eq:subtracting first variation}, and writing $w = u^+-u^-$ we have
\begin{align}
\int_{\mc U} \wti{\bm A}(x)\big(\nabla w, \nabla f\big) & + \wti{\bm b}(x)\cdot \big(w\nabla f + f\nabla w\big) + \wti{d}(x) w\cdot f \, \mr d\mc H^n(x)\label{eq:equation for height difference} \\ \nonumber
& = \int_{\mc U} \Big(H_{u^+}\cdot\sigma^+( x) - H_{u^-}\cdot\sigma^-(x)\Big) f\, \mr d\mc H^n(x).
\end{align}
Together with \eqref{eq:second variation when u=0}, we can rewrite \eqref{eq:equation for height difference} (by subtracting with \eqref{eq:general second variation0} when $u\equiv 0$ and $\varphi = w$) as 
\begin{align}\label{eq:height diff 2 order pde}
\int_{\mc U}\lb{\nabla w, \nabla f} & - \big(\Ric(\nu, \nu)+|A^\Sigma|^2\big) w\cdot f\, \mr d\mc H^n(x) \\ \nonumber
& = \int_{\mc U} \bm\alpha(x)(\nabla w, \nabla f) + \bm\beta(x)\cdot(w\nabla f + f\nabla w) + \zeta(x) wf\, \mr d\mc H^n(x)\\ \nonumber
& + \int_{\mc U} \big(H_{u^+}\cdot \sigma^+(x) - H_{u^-}\cdot \sigma^-(x) \big)\cdot f\, \mr d\mc H^n(x).
\end{align}

\section{Annuli picking argument}\label{appen:am in a uniform subsequence}
In this appendix, we present a general diagonal argument which has been used several times in this paper. The proof here 
follows that in \cite{Colding-Gabai_Ketover18}*{Lemma A.3}.

Recall that an {\em $L$-admissible collection of annuli} consists of a collection of concentric geodesic annuli 
\[ \mr{An}(p;s_1,r_1),\mr {An}(p;s_2,r_2), \cdots, \mr {An}(p;s_L,r_L), \]
so that $2r_{j+1} < s_j$ for $j=1,\cdots,L-1$. Denote by $\mk A $ the collection of all annuli in $M$.

\begin{proposition}\label{prop:general property}
Let $\mc P_1,\mc P_2,\cdots \subset \mk A$ be countably many sub-collections so that if $\mr{An}\in \mc P_i$, then any sub-concentric annuli of $\mr{An}$ also belongs to $\mc P_i$. Suppose that for any $L$-admissible collection of annuli, $\mc P_k$ contains at least one of them. Then there exists a subsequence (still denoted by $\mc P_k$) so that for each $p\in M$, there exists $r_p>0$ such that for all $s<r<r_p$, $\mr{An}(p;s,r)\in \mc P_k$ for all sufficiently large $k$.
\end{proposition}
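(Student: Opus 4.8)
The plan is a Vitali-covering / diagonal-extraction argument, following the structure in Colding–Gabai–Ketover. The key combinatorial input is the hypothesis that every $L$-admissible collection meets each $\mathcal P_k$. Fix $L$ once and for all as the integer appearing in that hypothesis. First I would observe that, since each $\mathcal P_k$ is closed under passing to sub-concentric annuli, it suffices to produce for each point $p$ a radius $r_p>0$ so that the \emph{single} annulus $\mathrm{An}(p;0,r_p)$ (interpreted as the requirement ``$\mathrm{An}(p;s,r)\in\mathcal P_k$ for all $0<s<r<r_p$, for all large $k$'') lies eventually in the $\mathcal P_k$'s; sub-annuli then come for free. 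The main point is to do this for all $p\in M$ simultaneously with a single subsequence.

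Second, I would set up the extraction. For each $k$ and each $p\in M$, define
\[
\rho_k(p)=\sup\{\, r>0 : \mathrm{An}(p;s,t)\in\mathcal P_k \text{ for all } 0<s<t<r \,\}\in[0,\infty],
\]
with $\rho_k(p)=0$ if no such $r$ exists. One checks $\rho_k$ is lower semicontinuous (or at least that $\{\rho_k> c\}$ behaves well under small perturbations of the center, using the comparability of nested annuli with slightly moved centers). The hypothesis prevents $\rho_k$ from being small on too large a set: precisely, for a fixed small scale, the points $p$ with $\rho_k(p)$ very small cannot contain $L$ nested ``bad'' annuli at all scales, because such a tower of bad annuli would form an $L$-admissible collection disjoint from $\mathcal P_k$, contradicting the hypothesis. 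Quantitatively, this forces $\liminf_k \rho_k(p)>0$ after passing to a subsequence adapted to a countable dense set, or more robustly: a Vitali-type argument on a fixed fine grid shows that on the complement of a set that shrinks as $k\to\infty$, $\rho_k$ is bounded below. I would cover $M$ by finitely many balls, run the pigeonhole/Vitali argument on each at a sequence of dyadic scales, and diagonalize over the scales and the cover to extract one subsequence along which $\rho_k$ converges (in an appropriate pointwise-liminf sense) to a strictly positive function $\rho_\infty$.

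Third, having the subsequence, set $r_p=\tfrac12\rho_\infty(p)>0$. Then for each $p$, by construction $\rho_k(p)\geq \tfrac23\rho_\infty(p)>r_p$ for all large $k$, which by definition of $\rho_k$ gives $\mathrm{An}(p;s,r)\in\mathcal P_k$ for all $0<s<r<r_p$ and all large $k$. This is exactly the conclusion. I would present the argument carefully only at the level of the grid/pigeonhole step, citing \cite{Colding-Gabai_Ketover18}*{Lemma A.3} for the routine book-keeping.

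The step I expect to be the main obstacle is making the diagonalization uniform over \emph{all} $p\in M$ rather than just a countable dense set: the function $\rho_k$ is only lower semicontinuous, not continuous, so one cannot naively pass from control on a dense set to control everywhere. The fix is to exploit again the monotonicity of $\mathcal P_k$ under shrinking annuli together with the elementary fact that $\mathrm{An}(q;s,r)\supset \mathrm{An}(p;s+\mathrm{dist}(p,q),\,r-\mathrm{dist}(p,q))$ when $\mathrm{dist}(p,q)$ is small; this lets a good annulus at a nearby grid point $q$ be shrunk to a good annulus centered at $p$, so a uniform lower bound for $\rho_k$ on a sufficiently fine finite net propagates to a (slightly smaller) uniform lower bound at every point. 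Getting the bookkeeping of these radius-losses to close up against the dyadic scales is the only real subtlety; everything else is a standard compactness-and-pigeonhole exercise.
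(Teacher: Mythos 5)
Your overall architecture --- a fixed-center claim driven by the ``$L$ nested bad annuli would form an $L$-admissible collection'' contradiction, followed by a covering/diagonal step to globalize --- is the same as the paper's, but both of the mechanisms you propose for the two hard steps break down.

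The more fundamental gap is in your second and third steps: the intermediate statement you aim for, namely that along a subsequence $\rho_k(p)\geq\tfrac{2}{3}\rho_\infty(p)>0$ for all large $k$, is not a consequence of the hypotheses. Take $L=2$, fix $q\in M$ and $\epsilon_k\to 0$, and let $\mc P_k$ consist of all annuli except those of the form $\mr{An}(q;s,r)$ with $s\leq\epsilon_k\leq r$. This family is closed under sub-concentric annuli, and its complement contains no $L$-admissible collection (two concentric annuli with $2r_2<s_1$ cannot both have inner radius $\leq\epsilon_k$ and outer radius $\geq\epsilon_k$), yet $\rho_k(q)=\epsilon_k\to 0$ along \emph{every} subsequence. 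The proposition nevertheless holds here because its conclusion has a weaker quantifier than the one you are chasing: the threshold in ``for all sufficiently large $k$'' is allowed to depend on the individual annulus, and for fixed $s>0$ the annulus $\mr{An}(q;s,r)$ is bad only for the finitely many $k$ with $\epsilon_k\geq s$. The paper's fixed-center Claim is built around exactly this quantifier: it extracts nested subsequences along which $\mr{An}(p;t_j,r_1)\in\mc P_k$ with $t_j\to 0$, and for a given $(s,r)$ it waits until $t_j<s$ before invoking sub-concentric closure, so the threshold in $k$ depends on $s$; the $L$-admissibility hypothesis only enters to bound the number of times the outer radius must be restarted. Any route through a uniform-in-$k$ lower bound on $\rho_k$ is attempting to prove something false.

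The propagation step you flag as ``the only real subtlety'' also does not close as written. First, the hypothesis only gives closure under sub-annuli with the \emph{same} center, so a set inclusion between annuli with different centers transfers nothing by itself. Second, even granting closure under arbitrary containment (which does hold in the paper's applications), your inclusion $\mr{An}(p;s+d,r-d)\subset\mr{An}(q;s,r)$ with $d=\dist(p,q)$ only controls annuli centered at $p$ with inner radius at least $d$, whereas the conclusion requires all $0<s<r<r_p$, including $s\ll d$; and no annulus centered at $q$ contains $\mr{An}(p;s,r)$ once $s<d<r$, since that annulus contains $q$ itself. The inclusion that actually works in the regime $r<d$ is $\mr{An}(p;s,r)\subset\mr{An}(q;d-r,d+r)$, which handles all inner radii $s$ at once at the price of requiring control at $q$ out to radius $d+r$. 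The paper's globalization is organized so that this is never an issue: it greedily selects centers $p_j$ with near-maximal subsequential radius $\mk r_j$, extracts a dedicated subsequence for each, and uses the $L$-admissibility hypothesis once more (a tower of bad annuli at a putative point left uncovered by the balls $B_{\mk r_j}(p_j)$) to force the selection to terminate after finitely many steps, so that only finitely many extractions are needed. If you want to keep a net-propagation step, you must replace your inclusion by the one above and track that the radius of control at the net point dominates twice the distance to $p$; as stated, your fix misses precisely the small-inner-radius annuli that the statement is about.
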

\begin{proof}
We first prove a weaker version of the proposition.

\begin{claim}\label{claim:fix p}
Given $p\in M$, there exist a subsequence (still denoted by $\mc P_k$) and $\delta_p>0$ such that for all $s<r<\delta_p$, $\mr{An}(p;s,r)\in \mc P_k$ for all sufficiently large $k$. 
\end{claim}

\begin{proof}[Proof of Claim \ref{claim:fix p}]
Take $0<t_1<r_1$ and a subsequence $\{\mc P_k^1\}\subset \{\mc P_k\}$ so that $\mc P_k^1$ contains $\mr{An}(p;t_1,r_1)$. Such $t_1, r_1$ exist because one can construct an $L$-admissible collection of annuli with outer radius less than any given positive number. We will continue to choose annuli by induction. Suppose that we have chosen $t_j$ and $\{\mc P_k^j\}_k$ such that $\mr{An}(p; t_j, r_1)\in \mc P^j_k$ for all $k$. So long as there exists a subsequence $\{\mc P_k^{j+1}\}\subset \{\mc P_k^j\}$ such that $\mr{An}(p; t_j/2,r_1)\in \mc P_k^{j+1}$ for all $k$, we let $t_{j+1}:=t_j/2$. Otherwise, let $s_1:=t_j/2$. Continuing the argument, we have two possibilities:
\begin{enumerate}
    \item $s_1>0$ and there exists a subsequence $\{\mc P_k'\}\subset \{\mc P_k\}$ so that $\mr{An}(p;s_1,r_1)\notin \mc P_k'$ for all $k$;
    \item there exist $t_1,t_2,\cdots \to 0$ and $\{\mc P_k^j\}\subset \{\mc P_k^{j-1}\}\subset\cdots\subset \{\mc P_k\}$ such that $\mr {An}(p;t_j,r_1)\in \mc P_k^j$ for all $k$.
\end{enumerate}
For the later case, Claim \ref{claim:fix p} follows from a diagonal argument. For the first case, then we can take $r_2<s_1/2$ and use the same argument to find $s_2$. Continuing the argument, we have that either Claim \ref{claim:fix p} holds true or there exist $\mr{An}(p;s_1,r_1)$, $\mr{An}(p;s_2,r_2)$, $\cdots$ and $\{\hat {\mc P}_k^1\}\supset \{\hat{\mc P}_k^2\}\supset\cdots$ such that for each $j\geq 1$, $\mr{An}(p;s_j,r_j)\notin \hat {\mc P}_k^j$ for all $k$. This contradicts the assumption if $j\geq L$. Hence Claim \ref{claim:fix p} is proved.
\end{proof}

Now we will prove the proposition by taking a finite open cover. Given $p\in M$, let $r_p^1$ be the supremum of $r$ so that there exists a subsequence $\{\mc{P}_k'\}\subset \{\mc{P}_k\}$ such that for any $s<r$, $\mr{An}(p;s,r)\in \mc{P}_k'$ for all sufficiently large $k$. By Claim \ref{claim:fix p}, $r_p^1>0$ for each $p\in M$. Let 
\[ \mk r_1:=\frac{1}{2}\sup\left\{r_p^1\,;\,p\in M\right\}.\]
Then we can take $p_1\in M$ so that $r_{p_1}>\mk r_1$, which implies that there exists a subsequence $\{\wti {\mc P}_k^1\}\subset \{\mc P_k\}$  such that for any given $0<s<r<\mk r_1$, $\mr{An}(p_1;s,r)\in\wti {\mc P}_k^1$ for all sufficiently large $k$. Now we define $\mk r_j$, $p_j$ and $\{\wti{\mc P}_k^j\}_k$ inductively. Suppose that we have chosen $\mk r_i$, $p_i$ and $\{\wti{\mc P}_k^i\}_k$ for $i=1,\cdots, j$. Then let $r_p^{j+1}$ be the supremum of $r$ so that there exists a subsequence $\{\mc{P}_k'\}\subset \{\wti{\mc{P}}_k^j\}$ such that for all $0<s<r$, $\mr{An}(p;s,r)\in \mc{P}_k'$ for sufficiently large $k$. Define 
\[
\mk r_{j+1}:=\frac{1}{2}\sup\left\{r_p^{j+1}\,;\,p\in M\setminus \cup_{i=1}^jB_{\mk r_i}(p_i)\right\}.
\]
Then there exist $p_{j+1}$ and $\{\wti {\mc P}_k^{j+1}\}$ so that for any given $0<s<\mk r_{j+1}$, $\mr{An}(p_{j+1};s,\mk r_{j+1})\in \wti{\mc P}_k^{j+1}$ for all $k$ sufficiently large.
Since $\{\wti {\mc P}_k^j\}\subset \{\wti {\mc P}_k^{j-1}\}$, we have that $r_p^{j+1}\leq r_p^j$ for all $p\in M$. It follows that $\mk r_{j+1}\leq \mk r_j$.

Next we will prove that $\{\mk r_j\}$ is finite. Suppose not, observing that $\{B_{\mk r_j/2}(p_j)\}$ are pairwise disjoint balls (since $\mk r_{j+1}\leq \mk r_j$), thus we have $\mk r_j\to 0$ as $j\to \infty$. By the contradiction assumption, there exists $q\in M\setminus \cup_jB_{\mk r_j}(p_{j})$. By the definition of $\mk r_j$, we have $r_q^j\leq 3\mk r_j$. This gives that there exist $\mk s_j>0$ and $k_j>0$ such that $ \mr{An}(q;\mk s_j,3 \mk r_j)\notin \wti {\mc P}_k^j $ for all $k\geq k_j$. Since $\mk r_j\to 0$, then by possibly taking a subsequence of $\{\wti {\mc P}_k^1\}_k$, $\{\wti {\mc P}_k^2\}_k$, $\{\wti {\mc P}_k^3\}_k$, $\cdots$ (not relabelled), we have that $6\mk r_{j+1}<\mk s_j$. Observe that for $k\geq k_L$, $\mr{An}(q;\mk s_j,3\mk r_j)\notin \wti{\mc P}_k^L$ for all $j=1,\cdots ,L$. This leads to a contradiction. Hence $\{\mk r_j\}$ is finite. 

This completes the proof of Proposition \ref{prop:general property}.
\end{proof}

\section{Proof of the Lusternik–Schnirelmann inequality}\label{appen:LS inequality}
\begin{proof}[Proof of Lemma \ref{lem:LS theory}]
We prove the last inequality and the others are similar. 
By the definition of $\mf L(\ms P_4)$, there exist a sequence of $\{\Phi_i:(X_i,Z_i)\to (\oli{\ms X},\ms Y)\}\subset \ms P_4$ such that 
\[   
\mf L(\ms P_4)=\lim_{i\to\infty}\max_{x\in X_i} \Area(\Phi_i(x)).
\]
Denote by $\mc S$ the collection of integral varifolds, with mass equal to $\mf L(\ms P_4)$, whose support are disjoint union of embedded minimal spheres.
Given $\eta_1>0$, define 
\begin{gather*}
    Y_i:=\{x\in X_i:\,\mf F(|\Phi_i(x)|,\mc S)\geq \eta_1\};\quad      K_i:=\oli{X_i\setminus Y_i}.
\end{gather*}
Note that $K_i\subset\interior(X_i)$ for small enough $\eta_1$. Denote by $\iota_1:K_i\to X_i$ and $\iota_2:Y_i\to X_i$ the two natural inclusion maps. Since $\Phi_i\in \ms P_4$, there exists $\bar\lambda\in H^1(\oli{\ms X},\ms Y;\mb Z_2)$ such that $[\Phi_i^*(\bar\lambda)]^4\neq 0\in H^4(X_i,Z_i;\mb Z_2)$. Observe that the following diagram
\[
\begin{tikzcd}
    & H^1(\oli{\ms X},\ms Y;\mb Z_2)\arrow[r,"\wti j^*"] \arrow[d,"\Phi_i^*"]
        & H^1(\oli{\ms X};\mb Z_2)\arrow[d,"\wti{ \Phi_i^*}"]\\
    H^1(X_i,K_i\cup Z_i;\mb Z_2)\arrow[r,"j_1^*"] 
        & H^1(X_i,Z_i;\mb Z_2)\arrow[r,"\iota^*_1"]
            & H^1(K_i\cup Z_i,Z_i;\mb Z_2)\simeq H^1(K_i;\mb Z_2)
\end{tikzcd}
\]
is commutative. Since $\mc S$ is a finite set, one can take $\eta_1$ small enough so that $\wti {\Phi_i^*}(H^1(\oli{\ms X};\mb Z_2))=\{0\}$. To see this, consider the chain of maps
$
\begin{tikzcd}
 K_i \arrow[r, "\iota_1"]
& \interior(X_i) \arrow[r, "\Phi_i"]
& \ms X \arrow[r, "\bm{\iota}"]
&\mc Z_2(S^3; \mb Z_2),
\end{tikzcd}
$
where $\bm{\iota}: \ms X\to \mc Z_2(S^3; \mb Z_2)$ is the natural inclusion map. By the argument in \cite[Section 6]{MN17} and our choice of $K_i$, we know that $(\bm{\iota}\circ\Phi_i\circ\iota_1)^*: H^1(\mc Z_2(S^3; \mb Z_2); \mb Z_2)\to H^1(K_i; \mb Z_2)$ is trivial for small enough $\eta_1$. By the conclusion of the Smale's conjecture \cite{Hat83}, we also know that $\bm{\iota}^*: H^1(\mc Z_2(S^3; \mb Z_2);\mb Z_2) \to H^1(\ms X; \mb Z_2)$ is an isomorphism. All together imply that $(\Phi_i\circ\iota_1)^*: H^1(\ms X;\mb Z_2)\to H^1(K_i;\mb Z_2)$ is trivial, and this implies $\wti {\Phi_i^*}(H^1(\oli{\ms X};\mb Z_2))=\{0\}$.

It then follows that 
\begin{equation*}
    \iota_1^*\circ \Phi_i^*(\bar\lambda)=0.
\end{equation*}
Note that the sequence in the second line is exact. Hence there exists $\alpha\in H^1(X_i,K_i\cup Z_i;\mb Z_2)$ such that $j_1^*(\alpha)=\Phi_i^*(\bar\lambda)$. On the other hand, the following sequence 
\[   
\begin{tikzcd}
H^3(X_i,Y_i;\mb Z_2)\arrow[r,"j_2^*"] 
        &H^3(X_i,Z_i;\mb Z_2)\arrow[r,"\iota^*_2"]
            &H^3(Y_i,Z_i;\mb Z_2)
\end{tikzcd}
\]
is also exact. Since $Y_i\cup K_i=X_i$, we have
\[ j_1^*\big(H^1(X_i,K_i\cup Z_i;\mb Z_2)\big)\cup j_2^*\big(H^3(X_i,Y_i;\mb Z_2)\big)\subset H^4(X_i,X_i;\mb Z_2)=\{0\}.\] 
Together with the fact that 
\[ j_1^*(\alpha)\cup [\Phi_i^*(\bar\lambda)]^3=\Phi_i^*(\bar\lambda)\cup [\Phi_i^*(\bar\lambda)]^3\neq 0,\]
we then conclude that 
\[ [\Phi_i^*(\bar\lambda)]^3\notin \mr{Im} j_2^*=\ker \iota_2^*;\]
that is, $\iota_2^*[\Phi_i^*(\bar\lambda)]^3\neq 0\in H^3(Y_i,Z_i; \mb Z_2)$. Hence we have that $\{\Phi_i:(Y_i,Z_i)\to (\oli{\ms X},\ms Y)\}\subset \ms P_3$. Then by the tightening process (see Section \ref{SS:tightening}), we can derive that
\[ \mf L(\ms P_3)<\mf L(\ms P_4).\]
This completes the proof.
\end{proof}

\bibliographystyle{amsalpha}
\bibliography{minmax}
\end{document}